\newif\ifforceblacklinks\forceblacklinksfalse
\def\PublicationTitle{Galois theory for semiclones}%
\def\CorrespondingAuthor{Mike Behrisch}%
\def\TUWname{\foreignlanguage{german}{%
             Tech\-ni\-sche Uni\-ver\-si\-t\"{a}t Wien}}%
\def\InstitutCL{\foreignlanguage{german}{%
             In\-sti\-tut f\"{u}r Com\-pu\-ter\-spra\-chen}}%
\def\PostleitzahlWien{\mbox{A-1040} Vienna}%
\def\PublicationTopic{Galois theory for semiclones %
                      (iterative algebras)}%
\def\PublicationKeywords{iterative algebra,
                         semiclone,
                         relation pair clone,
                         Galois theory%
                         }
\theoremstyle{plain}
\newtheorem{theorem}{Theorem}[section]
\newtheorem{lemma}[theorem]{Lemma}
\newtheorem{proposition}[theorem]{Proposition}
\newtheorem{corollary}[theorem]{Corollary}
\theoremstyle{definition}
\newtheorem{definition}[theorem]{Definition}
\newtheorem{remark}[theorem]{Remark}
\newenvironment{prooflist}%
  {\begin{list}{}{%
    \setlength{\labelwidth}{0pt}
    \setlength{\leftmargin}{0.5\parindent}
    \setlength{\itemindent}{-\leftmargin}
    \setlength{\listparindent}{\parindent}
    }}%
  {\end{list}}
\newenvironment{easyproof}%
  {\begin{proof}}%
  {\end{proof}}%
\newenvironment{easykeepproof}%
  {\begin{proof}}%
  {\end{proof}}%
\newenvironment{straightforwardproof}%
  {\begin{proof}}%
  {\end{proof}}%
\DeclareMathOperator{\pr}{pr}%
\DeclareMathOperator{\OP}{OP}
\DeclareMathOperator{\LOOP}{\mathbf{L}\mathbf{O}}
\DeclareMathOperator{\PolpOp}{Polp}%
\DeclareMathOperator{\InvpOp}{Invp}%
\DeclareMathOperator{\SemiCloneLatOp}{\mathcal{S}}%
\newcommand{\Sclones}[1][\DefaultCarrier]{\ensuremath{\SemiCloneLatOp_{#1}}}
\newcommand{\genInvPair}[2][F]{\ensuremath{\Gamma_{#1}\apply{#2}}}%
\DefineRelCloneClosureLikeCmd{\genSclone}{\Op}%
\newcommand{\Relp}[1]{\RelpOp_{#1}}%
\DefineRelCloneClosureLikeCmd{\genRpclone}{\Relp}%
\newcommand{\LO}[2][s]{\ensuremath{\LOOP_{#1}\apply{#2}}}%
\newcommand{\genComp}[3]{\ensuremath{\bigwedge^{#1}_{#2}#3}}%
\DefinePolLikeCmd{\Polp}{\PolpOp}
\DefinePolLikeCmd{\Invp}{\InvpOp}
\def\DefaultCarrier{\PolInv@defaultsetA}
\newcommand{\CarrierSet}[1][]{%
  \ifthenelse{\equal{#1}{}}{%
    \ensuremath{\DefaultCarrier}
  }{%
    \ensuremath{\DefaultCarrier^{#1}}
  }
}
\DeclareMathOperator{\ConstantsOp}{C}%
\newcommand{\Constants}[2][]{\@OpWithOptionalArityAndCombination{#1}{#2}{\ConstantsOp}{\@combinewithindex}}
\newcommand{\Ops}[1][]{\Op[#1]{\DefaultCarrier}}%
\newcommand{\TrivOps}[1][]{\J[#1]{\DefaultCarrier}}%
\let\Clones=\LA%
\newcommand*{\Rels}[1][]{\Rel[#1]{\DefaultCarrier}}%
\DeclareMathOperator{\RelpOp}{Rp}%
\newcommand*{\Relps}[1][]{%
  \ifthenelse{\equal{#1}{}}{%
    \ensuremath{\RelpOp_{\DefaultCarrier}}
  }{%
    \ensuremath{\RelpOp_{\DefaultCarrier}^{\apply{#1}}}
  }
}
\newcommand{\qleq}{\preceq}%
\newcommand{\enc}[1]{\ensuremath{%
  {}^{\scriptscriptstyle\rightarrow}#1{}^{\scriptscriptstyle\leftarrow}}}%
\newcommand*{\Palatalization}[1]{%
  \bgroup\fontencoding{T1}\selectfont\v{#1}\egroup}
\newcommand{\polymer}[2]{\ensuremath{\delta_{#1}\apply{#2}}}%
\numberwithin{equation}{section} %
\begin{document}

\thispagestyle{empty}
\selectlanguage{british}

\title{\PublicationTitle}
\thanks{Supported by the Austrian Science Fund (FWF) under grant I836-N23.}
\author[M. Behrisch]{\CorrespondingAuthor}%
\address{\InstitutCL\\
         \TUWname\\
         \PostleitzahlWien\\
         Austria}%
\email{behrisch@logic.at}%
\date{\today}
%% AMS subject classification; see http://www.ams.org/msc
%% Only one Primary. Possibly several Secondary.
\subjclass[2010]{Primary:
  08A40; % General algebraic systems, Algebraic structures,
         % Operations, polynomials, primal algebras
  Secondary:
  08A02, % General algebraic systems, Algebraic structures,
         % Relational systems, laws of composition
  08A99. % General algebraic systems, Algebraic structures,
         % None of the above, but in this section
  }%

\keywords{\PublicationKeywords}

\begin{abstract}
We present a \name{Galois} theory connecting finitary operations with
pairs of finitary relations one of which is contained in the other. The
\name{Galois} closed sets on both sides are characterised as locally
closed subuniverses of the full iterative function algebra (semiclones)
and relation pair clones, respectively.
Moreover, we describe the modified closure operators if only functions and relation
pairs of a certain bounded arity, respectively, are considered.
\end{abstract}
\maketitle

\section{Introduction}\label{sect:intro}
\emph{Clones of operations}, \ie\ composition closed sets of operations
containing all projections
(cf.~\cite{PoeKal,%
           SzendreiClonesInUniversalAlgebra,%
           LauFunctionAlgebrasOnFiniteSets,%
           GoldsternPinskerSurveyOfClonesOnInfiniteSets}),
play an important role in universal algebra as they encode structural
properties independently of the similarity type of the algebra. It is
well\dash{}known
(see~\cite{BodnarcukKaluzninKotovRomovGaloisTheoryForPostAlgebras,%
           GeigerClosedSystemsOfFunctionsAndPredicates},
 translations available
 in~\cite{BodnarcukKaluzninKotovRomovGaloisTheoryForPostAlgebrasI,%
          BodnarcukKaluzninKotovRomovGaloisTheoryForPostAlgebrasII})
that on finite carrier sets clones are in a one\dash{}to\dash{}one
correspondence with structures called \emph{relational clones}.
This is established via the \name{Galois} correspondence
\m{\PolOp\text{-}\InvOp}, which is induced
by the relation of ``functions \emph{preserving} relations''. In general,
\ie\ including in particular the case of infinite sets, so\dash{}called
\emph{local closure operators} come into play
(see~\cite{GeigerClosedSystemsOfFunctionsAndPredicates,%
PoeGeneralGaloisTheoryForOperationsAndRelations,%
PoeConcreteRepresentationOfAlgebraicStructuresAndGeneralGaloisTheory,%
KerkhoffGeneralGaloisTheoryFunRelInCats,%
BehClonesWithNullaryOperations}),
and also the notion of relational clone as known from finite domains needs
to be generalised (cf.~ibid.). In this way the \name{Galois} connection
singles out certain \emph{locally closed} clones from the lattice of all
clones on a given set. These clones can also be seen as those which are
topologically closed \wrt\ the topology that one gets by endowing each
set~\m{\CarrierSet[{\CarrierSet[n]}]}, \m{n\in\N}, with the product topology
arising from~\m{\CarrierSet} initially carrying the discrete topology
(see \eg~\cite{BodirskyPinskerTopologicalBirkhoff,
               BodirskyPinskerMinimalFunctionsRandomGraph}).
\par

By equipping the set of all finitary functions on a fixed
set~\m{\CarrierSet} with a finite
number of operations (including permutation of variables, identification of
variables, introduction of fictitious variables, a certain binary
composition operation and a projection as a constant; we present more
details later on), one obtains the \emph{full function algebra} of finitary
functions on~\m{\CarrierSet}. It is known
(cf.\ \eg~\cite{PoeKal,LauFunctionAlgebrasOnFiniteSets}) that the clones
on~\m{\CarrierSet} are exactly the carrier sets of subalgebras of this
structure. This relationship is a special case of the one between the
\emph{full iterative function algebra}, also known as \emph{iterative
\name{Post} algebra} (introduced by Ma\Palatalization{l}cev
in~\cite{MalcevIterativeAlgebrasAndPostVarieties1966}), and its
subuniverses (called \emph{\name{Post} algebras}
in~\cite{BodnarcukKaluzninKotovRomovGaloisTheoryForPostAlgebras}), which
have often only been referred to as \emph{closed classes} of functions in
the Russian literature
(\eg~\cite{JablonskijFunctionalConstructions1958,%
           JablonskijGavrilovKudrjavcevFunctionsAlgLogicAndPostClasses}).
These are similar in spirit to
clones, but they do not need to contain the projections
(\emph{selectors} in the terminology
of~\cite{MalcevIterativeAlgebrasAndPostVarieties1966}) as the iterative
\name{Post} algebra omits the projection constant in its signature
compared to the full function algebra.
\par

In analogy to the \m{\PolOp\text{-}\InvOp} \name{Galois} connection, a
\name{Galois} correspondence \m{\PolpOp\text{-}\InvpOp} has been developed
in~\cite{Harnau8Habil}
(see also~\cite{Harnau7AVerallgRelationenbegriffI,%
                Harnau7BVerallgRelationenbegriffII,%
                Harnau7CVerallgRelationenbegriffIII})
based on the notion of functions preserving pairs \m{\apply{\rho,\rho'}} of
relations \m{\rho'\subs\rho}.
For finite carrier sets the \name{Galois} closed sets have been
characterised to be precisely the subuniverses of the full iterative
\name{Post} algebra and the subuniverses of a suitably defined relation
pair algebra, respectively. To the best knowledge of the author, a
generalisation of this result to arbitrary base sets has not yet appeared
in the literature.
In particular the general (and thus infinite) case is also missing in
Table~1 of~\cite[p.~296]{%
CouceiroLehtonenGaloisTheoryPermCylindrificationComposition}
summarising related \name{Galois} connections and characterisations of
their closure operators.
\par

In this article it is our aim to fill in this gap. We first coin the
notion of a \emph{semiclone}, which relates to transformation semigroups in
the same way as clones relate to transformation monoids. It is not hard to
figure out that semiclones and subuniverses of the full iterative
\name{Post} algebra coincide. However, not only is the name shorter, but
also do we feel that the way how a semiclone is defined is much more
similar to the usual definition of a clone and easier to grasp than that
of a subalgebra of the full iterative function algebra; hence the
proposition of the new terminology of semiclones.
Unfortunately, our semiclones are different from those appearing
in~\cite{SchmidtClonesAndSemiclones}, which are closed \wrt\ a different
form of composition, do have to contain the identity operation, but are
not necessarily closed under variable substitutions.
\par

In a similar fashion as one needed to generalise the notion of relational
clone to accommodate the closed sets of \m{\Inv{\Pol{}}} for infinite carrier
sets~\m{\CarrierSet}, it will be necessary to modify the relation pair
algebra proposed by Harnau in~\cite{Harnau8Habil}. We shall refer to the
corresponding (new) subuniverses as \emph{relation pair clones}.
\par

Using the same local closure operator as introduced
in~\cite{GeigerClosedSystemsOfFunctionsAndPredicates,%
   PoeConcreteRepresentationOfAlgebraicStructuresAndGeneralGaloisTheory,%
   PoeGeneralGaloisTheoryForOperationsAndRelations} for sets of
functions (the topological closure), and appropriately modifying the local
closure on the side of relation pairs, we shall prove the following two
main results: the \name{Galois} closed sets of operations \wrt\
\m{\Polp{\Invp{}}} are exactly the locally closed semiclones.
Dually, the closed sets of \m{\Invp{\Polp{}}} are precisely the locally
closed relation pair clones.
\par

Because it fits nicely in this context, we shall
more specifically study and characterise what it means that a semiclone can
be described in the form \m{\Polp{Q}} for some set~\m{Q} of at most
\nbdd{s}ary relation pairs, and that a relation pair clone is given by
\m{\Invp{F}} using a set~\m{F} of at most \nbdd{s}ary operations. As
in~\cite{PoeGeneralGaloisTheoryForOperationsAndRelations} this involves
certain \emph{\nbdd{s}local} closure operators, and, in general, the reader
may find that quite a few results in our text are analogous to
those in~\cite{PoeGeneralGaloisTheoryForOperationsAndRelations}, where
similar questions have been studied \wrt\ \m{\PolOp\text{-}\InvOp}.
\par

We mention that a related, in some sense more general, \name{Galois}
connection has been studied in~\cite{PippengerGaloisTheoryForMinors}
(finite case) and~\cite{%
CouceiroFoldesClosedSetsRelConstrFuncsClosedWrtVarSubst,%
CouceiroGaloisConnectionsExternalOperationsRelationalConstraints}. There,
for fixed sets~\m{\CarrierSet} and~\m{B}, functions
\m{\functionhead{f}{\CarrierSet[n]}{B}} have been related to pairs of
relations \m{R\subs\CarrierSet[m]}, \m{S\subs B^m} for some \m{m\in\Np},
called \emph{relational constraints}. In this situation the \name{Galois}
closed sets on the functional side are also closed \wrt\ variable
substitutions (as our semiclones), but already for syntactic reasons
cannot be closed \wrt\ compositions. So even if one considers the special
case that \m{B=A}, the results
from~\cite{CouceiroFoldesClosedSetsRelConstrFuncsClosedWrtVarSubst}
and~\cite{CouceiroGaloisConnectionsExternalOperationsRelationalConstraints}
describe similar but differently closed sets of functions due to other
objects on the dual side (there is no containment condition for the
relations as in our setting since for general~\m{\CarrierSet} and~\m{B}
there cannot be one).
\par

We acknowledge that, perhaps, it could be possible to derive
our results by restricting the relational side of the \name{Galois}
correspondence studied
in~\cite{CouceiroFoldesClosedSetsRelConstrFuncsClosedWrtVarSubst,%
CouceiroGaloisConnectionsExternalOperationsRelationalConstraints}, but we
think that the way of describing the closed objects on the dual side used
there is (and has to be) more complicated (using so\dash{}called
conjunctive minors), in fact, too technical for our situation.
Besides, our strategy of proof
exhibits more similarities with the classical arguments known from clones
and relational clones. Also the local closures developed for relational
constraints
in~\cite{CouceiroFoldesClosedSetsRelConstrFuncsClosedWrtVarSubst,%
         CouceiroGaloisConnectionsExternalOperationsRelationalConstraints}
necessarily need to be modified (see Remark~\ref{rem:local-closures}) to
be used with our relation pairs due to the inclusion requirement in their
definition.
\par

Still a different weakening of the notion of clone and an associated
\name{Galois} theory for arbitrary domains has been considered
in~\cite{LehtonenClosedClassesOfFunctionsGeneralizedConstraintsClusters}:
there sets of functions that contain (as clones do) all projections, are
closed under substitution of one function into the first place of
another one, permutation of positions and addition of
fictitious variables but are not necessarily closed under variable
identification (as semiclones are) have been characterised in terms of
closed sets of so\dash{}called \emph{clusters}. For the classes of functions
characterised
in~\cite{LehtonenClosedClassesOfFunctionsGeneralizedConstraintsClusters}
contain all projections, these results explore a separate direction and
cannot be exploited either to obtain the missing general (infinite) case
for semiclones.
\par

\subsection*{Acknowledgements}
The author expresses his gratitude to Erhard \mbox{Aichinger} for an
invitation to the Institute for Algebra at Johannes Kepler University
Linz, which enabled fruitful discussions on some aspects of the topic with
members of the institute including Erhard Aichinger, Peter Mayr, Keith
Kearnes and \'{A}gnes Szendrei. The author wishes to thank them, too, for
their valuable comments and contributions.
\par

\section{Preliminaries}

\subsection{Notation, functions and relations}\label{subsect:notations}
In this article the symbol~\m{\N} will denote the set of all natural
numbers (including zero), and~\m{\Np} will be used for
\m{\N\setminus\set{0}}.
Moreover, we shall make use of the standard set theoretic representation of
natural numbers by \name{John von Neumann},
\ie\ \m{n = \lset{i\in \N}{i <n}} for \m{n \in \N}. The \emph{power set} of
a set~\m{S} will be denoted by~\m{\powerset{S}}.
\par

When discussing semiclones, relation pair clones and their
\name{Galois} theory we shall make no further assumptions on the carrier
set, which we usually represent by~\m{\CarrierSet}. Any finite (including 0)
or infinite cardinality is allowed for~\m{\CarrierSet}.
\par

For sets~\m{A} and~\m{B} we write~\m{A^B} for the set of all mappings
from~\m{B} to~\m{A}. The order of composition employed in this article is from
right to left, \ie\ \m{g\circ f \in C^A} for \m{f \in B^A} and
\m{g \in C^B}. That is, \m{g\circ f} maps elements \m{a \in A} to
\m{g(f(a))}. For any index set~\m{I}, sets~\m{A} and
\m{\apply{B_i}_{i\in I}} and maps
\m{\apply{\functionhead{f_i}{A}{B_i}}_{i\in I}}, their \emph{tupling} is
the unique map \m{\functionhead{h}{A}{\prod_{i\in I} B_i}} satisfying
\m{\pi_i \circ h = f_i} for each \m{i\in I}, where
\m{\functionhead{\pi_i}{\prod_{j\in I} B_j}{B_i}} is the \nbdd{i}th
projection map belonging to the \name{Cartes}ian product
\m{\prod_{j\in I} B_j}. As any ambiguity can usually be resolved from the
context, we denote the tupling~\m{h} by \m{\apply{f_i}_{i\in I}}, in the
same way as the tuple~\m{\apply{f_i}_{i\in I}}.
\par

The notion of tupling is, of course, meaningful (by definition) in any
category having suitable products, and hence the following simple lemma
about composition of tuplings can be proven in such a general context. We
recall here just its instance for the category of sets
(cf.~\cite[Lemma~2.5]{BehClonesWithNullaryOperations}):

\begin{lemma}\label{lem:superassociativity}
Let~\m{I} and~\m{J} be arbitrary index sets, \m{k,m,n \in \N} natural
numbers, and \m{A, B, D, X} and~\m{B_i} (\m{i\in I}), \m{C_j} (\m{j \in J})
be sets.
Furthermore, suppose that we are given
mappings \m{\functionhead{r}{A}{B}}, \m{\functionhead{r_i}{A}{B_i}} (\m{i
\in I}), \m{\functionhead{g_j}{B}{C_j}} (\m{j \in J}), and
\m{\functionhead{f}{\prod_{j \in J}C_j}{D}}.
\begin{enumerate}[(a)]
\item\label{item:pre-superassociativity}
      One has
      \m{\ds \apply{g_j}_{j \in J} \circ r = \apply{g_j \circ r}_{j \in J}}.
\item\label{item:general-superassociativity}
      If \m{B = \prod_{i\in I} B_i}, then
      \m{\ds \apply{g_j}_{j \in J} \circ \apply{r_i}_{i \in I}
           = \apply{g_j \circ \apply{r_i}_{i \in I}}_{j\in J}}, and thus
      \[ \apply{f \circ \apply{g_j}_{j \in J}} \circ \apply{r_i}_{i \in I}
           = f \circ \apply{g_j \circ \apply{r_i}_{i \in I}}_{j \in J}.\]
\item\label{item:superassociativity}
      If \m{B_i=C_j=D=X} for \m{i \in I} and \m{j \in J}, \m{A= X^k} and
      \m{I=m} and \m{J=n}, then we have
      \begin{multline*}
         \apply{f \circ \apply{g_0,\dotsc,g_{n-1}}} \circ
            \apply{r_0,\dotsc,r_{m-1}}\\
           = f \circ \apply{g_0 \circ \apply{r_0,\dotsc,r_{m-1}}, \dotsc,
                            g_{n-1} \circ \apply{r_0,\dotsc,r_{m-1}}},
      \end{multline*}
      the \emph{superassociativity law} for finitary operations on~\m{X}.
\end{enumerate}
\end{lemma}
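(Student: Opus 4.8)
The plan is to establish part~\ref{item:pre-superassociativity} directly from the universal property that characterises the tupling, and then to obtain parts~\ref{item:general-superassociativity} and~\ref{item:superassociativity} as purely formal consequences, using nothing beyond associativity of composition and the definition of the tupling. The whole lemma ultimately rests on the uniqueness clause built into the notion of tupling, so the argument is short once that clause is invoked at the right place.

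First I would prove~\ref{item:pre-superassociativity}. By definition, the tupling \m{\apply{g_j \circ r}_{j\in J}} is the \emph{unique} map \m{\functionhead{h}{A}{\prod_{j\in J}C_j}} satisfying \m{\pi_j \circ h = g_j \circ r} for every \m{j\in J}. Hence it suffices to verify that the map \m{\apply{g_j}_{j\in J}\circ r} enjoys this same defining property. Using associativity of composition together with the defining equation \m{\pi_j \circ \apply{g_j}_{j\in J} = g_j} of the tupling \m{\apply{g_j}_{j\in J}}, I would compute
\[ \pi_j \circ \apply{\apply{g_j}_{j\in J}\circ r}
   = \apply{\pi_j \circ \apply{g_j}_{j\in J}} \circ r
   = g_j \circ r \]
for each \m{j\in J}, and then conclude the claimed equality by the uniqueness of the tupling.

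Part~\ref{item:general-superassociativity} would then follow with essentially no further work. Its first equation is just the instance of~\ref{item:pre-superassociativity} obtained by substituting the tupling \m{\apply{r_i}_{i\in I}} (which maps \m{A} into \m{\prod_{i\in I}B_i = B} precisely because of the hypothesis \m{B=\prod_{i\in I}B_i}) for the map~\m{r}. For the displayed consequence I would simply combine associativity of composition with this first equation, writing
\[ \apply{f\circ\apply{g_j}_{j\in J}}\circ\apply{r_i}_{i\in I}
   = f\circ\apply{\apply{g_j}_{j\in J}\circ\apply{r_i}_{i\in I}}
   = f\circ\apply{g_j\circ\apply{r_i}_{i\in I}}_{j\in J}. \]

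Finally, part~\ref{item:superassociativity} is the verbatim specialisation of~\ref{item:general-superassociativity} to the case \m{B_i=C_j=D=X}, \m{A=X^k}, \m{I=m}, \m{J=n}, in which the now finite tuplings are spelled out as the tuples \m{\apply{g_0,\dotsc,g_{n-1}}} and \m{\apply{r_0,\dotsc,r_{m-1}}}; no additional argument is required beyond rewriting. I do not expect a genuinely hard step in any of this: the only thing that needs care is the bookkeeping of domains and codomains, ensuring in particular that the product \m{B=\prod_{i\in I}B_i} matches the common source of the \m{g_j} so that all the displayed compositions are actually defined.
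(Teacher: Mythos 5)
Your proposal is correct and takes precisely the approach the paper intends: the paper itself gives no proof of this lemma, citing instead Lemma~2.5 of the referenced paper on clones with nullary operations and remarking that the statement holds in any category with suitable products, which is exactly the universal-property argument you carry out. Your treatment of part (a) via the uniqueness clause of the tupling, and of parts (b) and (c) as formal consequences by associativity and specialisation, is complete and matches that intended argument.
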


As in our modelling natural numbers are sets, we consequently interpret
tuples as maps, too: if \m{B=n \in \N} is a natural number, then
\m{A^B=A^n} is the set of all \nbd{\m{n}}tuples \m{x = \apply{x(i)}_{i<n}}.
We shall often write~\m{x_i} for the entry~\m{x(i)} (\m{i \in n}), and,
whenever convenient, we shall also refer to the entries of tuples by
different indexing, \eg\ \m{x=\apply{\liste{x}{n}}}.
Note that the sole element of
\m{A^0=A^\emptyset} is the empty mapping (tuple), whose graph is the empty
relation. It will consistently be denoted by~\m{\emptyset}.
As tuples are functions we may compose them with other functions: for
instance, if \m{x \in A^n} and \m{\alpha\colon m \to n}, (\m{m,n \in \N}),
then \m{x \circ \alpha} is the tuple in~\m{A^m} whose entries are
\m{x_{\alpha(i)}} (\m{i\in m}). Similarly, if \m{g\colon A \to B}, then
\m{g \circ x = \apply{g(x_i)}_{i\in n}} is an element of~\m{B^n}.\par

Any mapping \m{f \in \CarrierSet[{\CarrierSet[n]}]} (\m{n \in \N}) is
called an \emph{\nbd{\m{n}}ary operation} on~\m{\CarrierSet}, and the
number~\m{n} is referred to as its \emph{arity}, denoted by
\m{\arity\apply{f}}. The set of
all \emph{finitary operations} on~\m{\CarrierSet} is
\m{\Ops\defeq \biguplus_{k \in \N} \CarrierSet[{\CarrierSet[k]}]}. Note
that we explicitly include nullary operations here, which is slightly
uncommon in standard clone theory. For a set of operations
\m{F \subs \Ops} we denote its \emph{\nbd{\m{n}}ary part} by
\m{\Fn{F}\defeq F \cap \CarrierSet[{\CarrierSet[n]}]}. We extend this
notation to operators yielding subsets of operations:
if \m{\OP\colon S \to \powerset{\Ops}} is
an operator on a set~\m{S}, %
then we define \m{\functionhead{\Fn{\OP}}{S}{\powerset{\Ops[n]}}} by
the restriction \m{\Fn{\OP}(s) \defeq \Fn{\apply{\OP(s)}}} for \m{s\in S}.
Based on this, we put moreover
\m{\Fn[{\liste{n}{k}}]{\OP}(s)\defeq
   \biguplus_{i=1}^{k} \Fn[n_{i}]{\OP}(s)}
for \m{s\in S} and a finite list of arities \m{\liste{n}{k}}, \m{k>0}.
We also abbreviate \m{\Fn[{0,\dotsc,n}]{\OP}} as
\m{\Fn[\leq n]{\OP}}, and for \m{s\in S} we let
\m{\Fn[{>0}]{\OP}(s)\defeq \OP(s)\setminus\Fn[0]{\OP}(s)}.
\par

The projection operations belonging to the finite \name{Cartes}ian powers
of the carrier set play a special role. For \m{n\in\N} and \m{i\in n}, we
denote by \m{\eni{i}\in\Ops[n]} the \emph{\nbdd{n}ary projection on the
\nbdd{i}th coordinate}. Evidently, there do not exist any nullary
projections. Therefore, the \emph{set of all projections}
on~\m{\CarrierSet}, denoted by \m{\TrivOps}, equals
\m{\bigcup_{n\in\Np}\lset{\eni{i}}{0\leq i<n}}. For the identity operation
\m{\eni[1]{0}} we occasionally also use the notation \m{\id_{\CarrierSet}}.
\par

Knowing about re\dash{}indexing tuples, we can recollect the notion of
\emph{polymer}. If \m{m,n\in \N} are arities,
\m{\functionhead{\alpha}{n}{m}} is any indexing map and \m{f\in\Ops[n]},
then \m{\polymer{\alpha}{f}} is the operation in \m{\Ops[m]} given by
\m{\polymer{\alpha}{f}\apply{x}\defeq f\apply{x\circ \alpha}}
for \m{x\in \CarrierSet[m]}. Any operation derived from \m{f\in\Ops[n]} by
some map \m{\functionhead{\alpha}{n}{m}}, \m{m\in\N}, is said to be a
\emph{polymer} of~\m{f}. Clearly any polymer of~\m{f} can be obtained by
composition with a suitable tupling of projections:
\m{\polymer{\alpha}{f} = f\circ\apply{\eni[m]{\alpha\apply{i}}}_{i\in n}}.
\par

Besides operations we shall also need \emph{relations}: for \m{m \in \N}
any subset \m{\rho \subs \CarrierSet[m]} of \nbd{\m{m}}tuples is an
\emph{\nbd{\m{m}}ary relation} on~\m{\CarrierSet}. Thus
\m{\powerset{\CarrierSet[m]}} is the set of all
\emph{\nbd{\m{m}}ary relations}, and, again allowing arity equal to null,
the set of \emph{all finitary relations} is defined by
\m{\Rels \defeq \bigcup_{\ell \in \N} \powerset{\CarrierSet[\ell]}}.
If \m{Q\subs\Rels}, we use
\m{\Fn[m]{Q}\defeq Q\cap \powerset{\CarrierSet[m]}} to
denote its \emph{\nbdd{m}ary part}.
Moreover, if \m{\OP\colon S \to \powerset{\Rels}} is an operator on a
set~\m{S}, we put
\m{\functionhead{\Fn[m]{\OP}}{S}{\powerset{\Rels[m]}}}, mapping
\m{s\in S} to  \m{\Fn[m]{\OP}(s) \defeq \Fn[m]{\apply{\OP(s)}}}.
Similarly as for operations, for \m{s\in S} we define
\m{\Fn[{\leq m}]{\OP}(s)\defeq \bigcup_{k=0}^{m} \Fn[k]{\OP}(s)},
\m{\Fn[{\geq m}]{\OP}(s)\defeq
\bigcup_{k\in\N,k\geq m}\Fn[k]{\OP}(s)}, and we let
\m{\Fn[{>m-1}]{\OP}\defeq \Fn[{\geq m}]{\OP}}.
\par

A \emph{relation pair} of arity \m{m\in\N} (\cite[p.~15]{Harnau8Habil}
or~\cite[p.~11]{Harnau7BVerallgRelationenbegriffII}) is any pair
\m{\apply{\rho,\rho'}}, where \m{\rho,\rho'\in\Rels[m]} and
\m{\rho'\subs\rho}. We collect all \nbdd{m}ary relation pairs in the set
\m{\Relps[m]}; the disjoint union (the importance of this
technical aspect is
discussed on page~\pageref{page:importance-of-disj-union})
\m{\Relps\defeq \biguplus_{\ell\in\N}\Relps[\ell]} denotes
the set of \emph{all finitary relation pairs}. As before, we abbreviate
\nbdd{m}ary parts as \m{\Fn[m]{Q}\defeq {\Relps[m]}\cap Q} for any
\m{Q\subs\Relps} and define operator restrictions
\m{\functionhead{\Fn[m]{\OP}}{S}{\powerset{\Relps[m]}}} by mapping
\m{s\in S} to  \m{\Fn[m]{\OP}(s) \defeq \Fn[m]{\apply{\OP(s)}}} for any
\m{\OP\colon S \to \powerset{\Relps}}.
Further we put
\m{\Fn[{\leq m}]{\OP}(s)\defeq \biguplus_{k=0}^{m}\Fn[k]{\OP}(s)}
for \m{s\in S}.
\par

There is a natural order relation on \m{\Relps[m]} for each \m{m\in\N},
which is given by set inclusion in both components. That is, we write
\m{\apply{\sigma,\sigma'}\leq \apply{\rho,\rho'}} for
\m{\apply{\rho,\rho'},\apply{\sigma,\sigma'}\in \Relps[m]} if and only if
\m{\sigma\subs \rho} and \m{\sigma'\subs\rho'}. Moreover, we shall need the
quasiorder on \m{\Relps[m]}, \m{m\in\N}, that is specified by just ordering
the first components: \m{\apply{\sigma,\sigma'}\qleq\apply{\rho,\rho'}}
holds by definition if and only if \m{\sigma\subs\rho}.

We say that a relation pair \m{\apply{\sigma,\sigma'}\in\Relps} is a
\emph{relaxation of} some other pair \m{\apply{\rho,\rho'}\in\Relps}
(cf.~\cite[p.~153]{%
CouceiroFoldesClosedSetsRelConstrFuncsClosedWrtVarSubst}) if
\m{\rho'\subs\sigma'} and \m{\sigma\subs\rho}. A collection
\m{Q\subs\Relps} is \emph{closed \wrt\ relaxations} if with each pair
\m{\apply{\rho,\rho'}\in Q} it also contains any of its relaxations,
\ie\ if
\m{\enc{Q}\defeq
   \lset{\apply{\sigma,\sigma'}\in\Relps}{%
         \exists\apply{\rho,\rho'}\in Q\colon
         \rho'\subs\sigma'\subs\sigma\subs\rho}}
is a subset of (equal to)~\m{Q}. Since set inclusion is transitive, the
collection \m{\enc{Q}} is the least subset of \m{\Relps} (\wrt\ \m{\subs})
that contains~\m{Q} and is closed \wrt\ relaxations. We call \m{\enc{Q}}
the \emph{closure} of~\m{Q} \emph{\wrt\ relaxation}.
In~\cite[Definition~1, p.~16]{Harnau7BVerallgRelationenbegriffII} the
closure \wrt\ relaxation has been handled by so\dash{}called
multioperations~\m{d_{\mathrm{v}}} and~\m{d_{\mathrm{h}}}.
\par

\subsection{The \name{Galois} correspondence
\texorpdfstring{\m{\PolpOp\text{-}\InvpOp}}{Polp-Invp}}%
\label{subsect:Polp-Invp}

Here we recall the \name{Galois} connection \m{\PolpOp\text{-}\InvpOp} as
defined in~\cite[p.~15]{Harnau8Habil}
and~\cite[p.~11]{Harnau7BVerallgRelationenbegriffII}. The formulation is
identical except for extending the scope by allowing nullary operations
and relations.
\par

\begin{definition}\label{def:preservation}
For an \nbd{\m{n}}ary operation \m{f \in \Ops[n]} (\m{n\in\N}) and an
\nbd{\m{m}}ary relation pair \m{\apply{\rho,\rho'} \in \Relps[m]}
(\m{m \in \N}) on a set~\m{\CarrierSet}, we say that \emph{\m{f} preserves
\m{\apply{\rho,\rho'}}} and write \m{f \preserves \apply{\rho,\rho'}} if
the following equivalent conditions hold:
\begin{enumerate}[(i)]
\item\label{cond:preservation-tupling}
      For every tuple \m{\mathbf{r} \in \rho^n}, the composition of~\m{f}
      with the tupling \m{\apply{\mathbf{r}}} of the tuples
      in~\m{\mathbf{r}} belongs to the smaller relation:
      \m{\composition{f}{\mathbf{r}}\in \rho'}.
\item\label{cond:preservation} For every \nbd{\m{\apply{m \times n}}}matrix
      \m{X \in \CarrierSet[{m \times n}]} the columns~\m{X_{-,j}}
      (\m{j \in n}) of which are tuples in~\m{\rho}, the tuple
      \m{\apply{f(X_{i,-})}_{i \in m}} obtained by row\dash{}wise
      application of~\m{f} to~\m{X} yields a tuple of \m{\rho'}.
\end{enumerate}
\end{definition}

Note in this respect that for any tuple
\m{\mathbf{r}=\apply{r_j}_{0\leq j< n}\in\apply{\CarrierSet[m]}^n} where
for \m{0\leq j< n} each tuple is given as
\m{r_j = \apply{r_{ij}}_{0\leq i< m}},
the definition of tupling precisely yields that
\m{f\circ \apply{\mathbf{r}} =
               \apply{f\apply{\apply{r_{ij}}_{0\leq j<n}}}_{0\leq i<m}},
\ie\ the result of applying~\m{f} row\dash{}wise to the matrix
\m{\apply{r_{ij}}_{\apply{i,j}\in m\times n}\in\CarrierSet[{m\times n}]}.
\par

Note furthermore, that for \m{\rho\in\Rels} and \m{f\in\Ops} the condition
\m{f\preserves \apply{\rho,\rho}} coincides with the usual preservation
condition for functions and relations
(cf.~\cite[Definition~2.3]{BehClonesWithNullaryOperations} for the
framework involving nullary operations).
\par

Based on the preservation condition we introduce a \name{Galois}
correspondence in the usual way: for a set \m{F\subs\Ops} we denote by
\begin{align*}
\Invp{F}&\defeq
      \lset{\apply{\rho,\rho'}\in\Relps}{\forall f\in F\colon
                                         f\preserves\apply{\rho,\rho'}}\\
\intertext{the set of its \emph{invariant relation pairs},
           and, dually, for \m{Q\subs\Rels}, the set}
\Polp{Q} &\defeq\lset{f\in\Ops}{\forall \apply{\rho,\rho'}\in Q\colon
                                  f\preserves\apply{\rho,\rho'}}
\end{align*}
contains all \emph{polymorphisms of relation pairs} in~\m{Q}. The pair
\m{\apply{\Polp{},\Invp{}}} forms the \name{Galois} correspondence
\m{\PolpOp\text{-}\InvpOp}.
\par
If we restrict the latter just to relation pairs \m{\apply{\rho,\rho'}} where
\m{\rho=\rho'}, then we get the standard \name{Galois} connection
\m{\PolOp\text{-}\InvOp{}}: for \m{F\subs\Ops} we have
\begin{equation*}
\lset{\apply{\rho,\rho}}{%
      \rho\in\Rels\land\,\forall f\in F\colon f\preserves\apply{\rho,\rho}}
=\lset{\apply{\rho,\rho}}{\rho\in\Inv{F}},
\end{equation*}
where
\m{\Inv{F} = \lset{\rho\in\Rels}{\forall f\in F\colon f\preserves \rho}};
and for \m{Q\subs\lset{\apply{\rho,\rho}}{\rho\in\Rels}}, letting
\m{Q'\defeq\lset{\rho\in\Rels}{\apply{\rho,\rho}\in Q}}, it is the case that
\begin{equation*}
\Polp{Q} = \lset{f\in\Ops}{\forall \apply{\rho,\rho}\in Q\colon
                            f\preserves\apply{\rho,\rho}}\\
=\Pol{Q'},
\end{equation*}
wherein
\m{\Pol{Q'}\defeq\lset{f\in\Ops}{\forall\rho\in Q'\colon f\preserves \rho}}.
\par

The name \emph{polymorphism} attributed to the functions in \m{\Pol{Q}} for
sets of relations \m{Q\subs\Rels} comes from the fact that an operation
\m{f\in\Ops} belongs to \m{\Pol{Q}} if and only if it is a homomorphism
from the power \m{\RelAlgebra{A}^{\arity\apply{f}}} into the
relational structure \m{\RelAlgebra[\apply{\rho}_{\rho\in Q}]{A}}.
This characterisation can be generalised in the following way.
\begin{lemma}\label{lem:char-Polp-hom}
For \m{Q\subs\Relps} and any arity \m{n\in\N} an operation \m{f\in\Ops[n]}
satisfies \m{f\in\Polp{Q}} if and only if
\m{\functionhead{f}{\algwops{A}{\apply{\rho}_{\apply{\rho,\rho'}\in Q}}^{n}%
                  }{\algwops{A}{\apply{\rho'}_{\apply{\rho,\rho'}\in Q}}}}
is a homomorphism of relational structures.
\end{lemma}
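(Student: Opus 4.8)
The plan is to prove the equivalence by unravelling both sides into a common elementary condition. First I would fix the relevant conventions. The relational signature underlying both structures in the statement is indexed by the set~\m{Q} itself: to each pair \m{\apply{\rho,\rho'}\in Q} of some arity~\m{m} we attach one \nbd{\m{m}}ary relation symbol, interpreted as the first component~\m{\rho} in the source structure \m{\algwops{A}{\apply{\rho}_{\apply{\rho,\rho'}\in Q}}} and as the second component~\m{\rho'} in the target structure \m{\algwops{A}{\apply{\rho'}_{\apply{\rho,\rho'}\in Q}}}. It is essential to index by the pairs rather than by relations, since distinct pairs in~\m{Q} may share a first or a second component. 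Next I would make the \nbd{\m{n}}th direct power explicit: its carrier is~\m{\CarrierSet[n]}, and for the symbol belonging to \m{\apply{\rho,\rho'}} the power relation consists of those \nbd{\m{m}}tuples \m{\apply{a_0,\dotsc,a_{m-1}}} of elements of~\m{\CarrierSet[n]} all of whose \m{n} coordinate slices \m{\apply{a_0(j),\dotsc,a_{m-1}(j)}}, \m{j<n}, lie in~\m{\rho}.

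The key observation is the dictionary between an \nbd{\m{m}}tuple \m{\apply{a_0,\dotsc,a_{m-1}}} over~\m{\CarrierSet[n]} and the matrix \m{X\in\CarrierSet[{m\times n}]} given by \m{X_{i,j}\defeq a_i(j)}. Under this correspondence, membership of the tuple in the power relation is exactly the statement that every column~\m{X_{-,j}} (\m{j<n}) lies in~\m{\rho}, while the image tuple \m{\apply{f(a_0),\dotsc,f(a_{m-1})}} equals the row\dash{}wise application \m{\apply{f(X_{i,-})}_{i\in m}}. Thus, for a fixed pair \m{\apply{\rho,\rho'}\in Q}, the homomorphism requirement for its relation symbol---that a tuple of the power relation be mapped into~\m{\rho'}---coincides verbatim with the matrix\dash{}formulated preservation condition~\ref{cond:preservation} of Definition~\ref{def:preservation}, that is, with \m{f\preserves\apply{\rho,\rho'}}.

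I would then conclude by quantifying. By definition \m{f} is a homomorphism precisely when this implication holds simultaneously for every relation symbol, \ie\ for every pair \m{\apply{\rho,\rho'}\in Q}; and \m{f\in\Polp{Q}} means exactly that \m{f\preserves\apply{\rho,\rho'}} holds for all \m{\apply{\rho,\rho'}\in Q}. Since the two per\dash{}pair conditions have just been shown to agree, so do their conjunctions over~\m{Q}, which yields the claimed equivalence.

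The argument is essentially a bookkeeping exercise, so I do not expect a genuine obstacle; the only points demanding care are keeping the signature indexed by the \emph{pairs} of~\m{Q}, so that source and target share the same symbols while differing only in interpretation, and the treatment of degenerate arities. For \m{m=0} one has \m{\CarrierSet[0]=\set{\emptyset}}, a nullary relation is either~\m{\emptyset} or~\m{\set{\emptyset}}, and one should verify that the matrix\dash{}and\dash{}column reading of both the power relation and of preservation still matches (vacuously or trivially); likewise the case \m{n=0}, where~\m{f} is nullary and~\m{\CarrierSet[n]} is a singleton, should be checked to fit the same scheme. These are the same degenerate checks already implicit in the single\dash{}relation case recalled just before the lemma, and they go through without difficulty.
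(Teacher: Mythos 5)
Your proposal is correct: the paper omits the proof precisely because it is ``a straightforward rewriting of the definitions,'' and your argument -- indexing the signature by the pairs in~\m{Q}, identifying tuples over~\m{\CarrierSet[n]} with \nbd{\m{(m\times n)}}matrices, and matching the power-structure homomorphism condition per symbol with condition~(ii) of Definition~\ref{def:preservation} -- is exactly that rewriting. The attention to indexing by pairs (rather than by relations) and to the degenerate arities \m{m=0}, \m{n=0} is sound and consistent with the paper's conventions.
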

The proof is a straightforward rewriting of the definitions and is therefore
omitted.
\par

It is an evident consequence of the definition of preservation that sets of
the form \m{\Invp{F}}, \m{F\subs\Ops}, are closed \wrt\ relaxation
(cf.~\cite[{Lemma~8, p.~16}]{Harnau7BVerallgRelationenbegriffII}).
\begin{lemma}\label{lem:closure-encirclement}
For \m{F\subs\Ops} we have \m{\Invp{F}=\enc{\!\Invp{F}}}.
\end{lemma}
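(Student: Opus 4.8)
The plan is to observe that the inclusion \m{\Invp{F}\subs\enc{\Invp{F}}} holds automatically, because \m{\enc{\,\cdot\,}} was just identified as producing the least relaxation-closed superset of its argument. Hence it only remains to verify that \m{\Invp{F}} is itself closed \wrt\ relaxation, whence \m{\enc{\Invp{F}}=\Invp{F}} follows. Since \m{\Invp{F}=\bigcap_{f\in F}\Invp{\set{f}}} and an arbitrary intersection of relaxation-closed collections is again relaxation-closed, it would in principle suffice to treat a single operation \m{f\in F}; the argument below, however, runs uniformly over all \m{f\in F} at once.

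Concretely, I would fix \m{f\in\Ops[n]}, a pair \m{\apply{\rho,\rho'}\in\Invp{F}} of some arity \m{m}, and an arbitrary relaxation \m{\apply{\sigma,\sigma'}} of it, \ie\ \m{\rho'\subs\sigma'\subs\sigma\subs\rho} with \m{\sigma,\sigma'\in\Rels[m]}. The preservation \m{f\preserves\apply{\sigma,\sigma'}} is then checked directly against condition~(i) of Definition~\ref{def:preservation}: for any tuple \m{\mathbf{r}\in\sigma^n} the inclusion \m{\sigma\subs\rho} gives \m{\mathbf{r}\in\rho^n}; as \m{f\preserves\apply{\rho,\rho'}} this yields \m{\composition{f}{\mathbf{r}}\in\rho'}; and finally \m{\rho'\subs\sigma'} delivers \m{\composition{f}{\mathbf{r}}\in\sigma'}. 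Since \m{\mathbf{r}} was arbitrary, \m{f\preserves\apply{\sigma,\sigma'}} holds, and as this applies to every \m{f\in F} we conclude \m{\apply{\sigma,\sigma'}\in\Invp{F}}, which is exactly relaxation-closedness.

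The real content is simply the \emph{monotonicity} of preservation in its relational argument: shrinking the source relation \m{\rho} only discards tuples that would otherwise need testing, while enlarging the target relation \m{\rho'} only relaxes the membership conclusion. Accordingly I expect no genuine obstacle. The single point deserving care is the bookkeeping of arities under the disjoint union \m{\Relps=\biguplus_{\ell\in\N}\Relps[\ell]}: one must record that a relaxation of an \m{m}-ary pair is again \m{m}-ary (the chain \m{\rho'\subs\sigma'\subs\sigma\subs\rho} is meaningful only within a fixed arity), so that no cross-arity confusion arises, and correspondingly keep the two inclusions oriented in their respective directions.
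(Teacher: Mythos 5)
Your proof is correct, but it is not the argument the paper gives. You verify the claim directly from Definition~\ref{def:preservation}: preservation is monotone under relaxation, since shrinking \m{\rho} to \m{\sigma} only removes tuples \m{\mathbf{r}} to be tested and enlarging \m{\rho'} to \m{\sigma'} only weakens the required conclusion \m{\composition{f}{\mathbf{r}}\in\sigma'}; hence \m{\Invp{F}} is itself relaxation\dash{}closed and coincides with \m{\enc{\!\Invp{F}}}. The paper explicitly acknowledges this route (``the statement easily follows from the definition'') but deliberately takes a different one: it sandwiches \m{\Invp{F}\subs\enc{\!\Invp{F}}\subs\LOC{\Invp{F}}=\Invp{F}}, invoking two results proved only later, namely Corollary~\ref{cor:relaxation-subs-LOC} (relaxation closure is contained in the local closure \m{\LOC{}}) and Corollary~\ref{cor:Invp-LOC} (sets \m{\Invp{F}} are locally closed). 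What the paper's argument buys is integration into the local\dash{}closure machinery that is central to the rest of the article, showing relaxation closedness as a shadow of the stronger local closedness; the price is forward references. Your argument buys self\dash{}containedness and immediacy, and your closing observations — that the essential content is monotonicity of preservation, and that relaxations live within a fixed arity so the disjoint union \m{\Relps=\biguplus_{\ell\in\N}\Relps[\ell]} causes no cross\dash{}arity issues — are both accurate.
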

\begin{easyproof}
Although the statement easily follows from the definition, we present here
another argument, based on the local closure \m{\LOC{}} of sets of
relation pairs (see Definition~\ref{def:s-local-closure}). For any
\m{Q\subs\Relps} we have \m{Q\subs\enc{Q}}, and we shall prove
\m{\enc{Q}\subs\LOC{Q}} in Corollary~\ref{cor:relaxation-subs-LOC}. Hence,
we get the inclusions
\m{\Invp{F}\subs\enc{\!\Invp{F}}\subs\LOC{\Invp{F}}}, and we shall
see in Corollary~\ref{cor:Invp-LOC} that \m{\LOC{\Invp{F}}=\Invp{F}}.
\end{easyproof}
\par

The following result provides a simple reformulation of the previous lemma.
\begin{corollary}\label{cor:closure-encirclement}
For every \m{Q\subs\Relps} we have \m{\Polp{Q} = \Polp{\enc{Q}}}.
\end{corollary}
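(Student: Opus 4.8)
The plan is to obtain the equality as a purely formal consequence of Lemma~\ref{lem:closure-encirclement}, exploiting that \m{\apply{\Polp{},\Invp{}}} is a \name{Galois} correspondence. Since \m{\Polp{}} is antitone and \m{Q\subs\enc{Q}} always holds, the inclusion \m{\Polp{\enc{Q}}\subs\Polp{Q}} is immediate and costs nothing beyond monotonicity; thus only the reverse inclusion \m{\Polp{Q}\subs\Polp{\enc{Q}}} requires an argument.

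For that reverse inclusion I would set \m{F\defeq\Polp{Q}}. The closure law of the \name{Galois} connection gives \m{Q\subs\Invp{\Polp{Q}}=\Invp{F}}, and Lemma~\ref{lem:closure-encirclement} tells us that \m{\Invp{F}} is closed \wrt\ relaxation. Because \m{\enc{Q}} was shown earlier to be the least relaxation\dash{}closed subset of \m{\Relps} containing~\m{Q}, it follows that \m{\enc{Q}\subs\Invp{F}=\Invp{\Polp{Q}}}. Applying the antitone \m{\Polp{}} and using the standard identity \m{\Polp{}\circ\Invp{}\circ\Polp{}=\Polp{}} then yields \m{\Polp{Q}=\Polp{\Invp{\Polp{Q}}}\subs\Polp{\enc{Q}}}, completing the proof.

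Should one prefer to avoid invoking the lemma, the same inclusion can be checked directly from the definitions. Given \m{f\in\Polp{Q}} of arity~\m{n} and a pair \m{\apply{\sigma,\sigma'}\in\enc{Q}}, the definition of \m{\enc{Q}} supplies \m{\apply{\rho,\rho'}\in Q} with \m{\rho'\subs\sigma'\subs\sigma\subs\rho}. For any \m{\mathbf{s}\in\sigma^{n}} the inclusion \m{\sigma\subs\rho} gives \m{\mathbf{s}\in\rho^{n}}, so \m{f\circ\apply{\mathbf{s}}\in\rho'} because \m{f\preserves\apply{\rho,\rho'}}; then \m{\rho'\subs\sigma'} forces \m{f\circ\apply{\mathbf{s}}\in\sigma'}, whence \m{f\preserves\apply{\sigma,\sigma'}}.

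I do not anticipate any genuine obstacle: as the surrounding text signals, the statement is essentially a reformulation of Lemma~\ref{lem:closure-encirclement}. The only points demanding a little care are to invoke the \name{Galois} adjunction in the correct direction and to be sure that the relaxation\dash{}closedness of \m{\Invp{F}} is legitimately available --- which it is, precisely by the preceding lemma. I would present the brief \name{Galois}\dash{}theoretic argument as the main proof, since it makes the advertised dependence on Lemma~\ref{lem:closure-encirclement} transparent, and perhaps note in passing that the elementary computation above reproves it from scratch.
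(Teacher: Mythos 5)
Your main argument is correct and is essentially the paper's own proof: both establish \m{\enc{Q}\subs\Invp{\Polp{Q}}} from Lemma~\ref{lem:closure-encirclement} (the paper via monotonicity of \m{\enc{\,\cdot\,}} applied to \m{Q\subs\Invp{\Polp{Q}}}, you via the minimality of \m{\enc{Q}} among relaxation\dash{}closed supersets of~\m{Q}) and then conclude by applying the antitone \m{\Polp{}} together with the identity \m{\Polp{\Invp{\Polp{Q}}}=\Polp{Q}}. The supplementary elementary verification from the definition of preservation is also correct, but it is an optional extra rather than a different route.
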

\begin{proof}
By Lemma~\ref{lem:closure-encirclement}, %
\m{Q\subs\enc{Q}\subs\enc{\!\Invp{\Polp{Q}}} =\Invp{\Polp{Q}}} for
\m{Q\subs\Relps}, so
\m{\Polp{Q}\sups\Polp{\enc{Q}}\sups\Polp{\Invp{\Polp{Q}}}=\Polp{Q}}.
\ovflhbx{1.3em}%
\end{proof}

Some relation pairs are preserved by no operation. If they are part of a
set \m{Q\subs\Relps} or, more generally, part of \m{\Invp{\Polp{Q}}}, then
\m{\Polp{Q}} is forced to be empty. The next lemma characterises when this
happens (cf.~\cite[p.~15]{Harnau8Habil}
and~\cite[p.~12]{Harnau7BVerallgRelationenbegriffII}).
\begin{lemma}\label{lem:char-empty-Polp}
For \m{Q\subs\Relps} we have \m{\Polp{Q}=\emptyset} if and only if
\m{\Invp{\Polp{Q}}} contains a relation pair of the form
\m{\apply{\rho,\emptyset}} with \m{\rho\neq\emptyset}, which happens
precisely if \m{\apply{\CarrierSet[0],\emptyset}\in\Invp{\Polp{Q}}}.
\end{lemma}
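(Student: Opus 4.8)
The plan is to prove the two equivalences simultaneously by closing a short cycle of implications between the three assertions
(a)~$\Polp{Q}=\emptyset$,
(b)~$\Invp{\Polp{Q}}$ contains some pair $\apply{\rho,\emptyset}$ with $\rho\neq\emptyset$, and
(c)~$\apply{\CarrierSet[0],\emptyset}\in\Invp{\Polp{Q}}$.
Establishing (a)~$\Rightarrow$~(c)~$\Rightarrow$~(b)~$\Rightarrow$~(a) would yield at once both the principal biconditional (a)~$\Leftrightarrow$~(b) and the supplementary one (b)~$\Leftrightarrow$~(c).

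The load-bearing observation, already announced in the paragraph preceding the statement, is that a relation pair of the shape $\apply{\rho,\emptyset}$ with $\rho\neq\emptyset$ is preserved by \emph{no} operation whatsoever. To see this I would unwind Definition~\ref{def:preservation} for an arbitrary $f\in\Ops[n]$: preservation would require $f\circ\apply{\mathbf{r}}\in\emptyset$ for every $\mathbf{r}\in\rho^{n}$, which, as the empty relation has no members, can only hold when $\rho^{n}=\emptyset$. The crucial point to argue carefully is that $\rho^{n}\neq\emptyset$ for \emph{every} arity $n\in\N$ as soon as $\rho\neq\emptyset$; in particular the nullary case $n=0$ must not be overlooked, since there $\rho^{0}=\set{\emptyset}$ is a one-element set independently of $\rho$. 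Hence some $\mathbf{r}\in\rho^{n}$ always exists and the preservation condition fails.

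Granting this, the cycle is immediate. For (b)~$\Rightarrow$~(a): if such a pair $\apply{\rho,\emptyset}$ lies in $\Invp{\Polp{Q}}$, then by the very definition of $\Invp$ every member of $\Polp{Q}$ would have to preserve it; since no operation can, $\Polp{Q}$ must be empty. For (a)~$\Rightarrow$~(c): once $\Polp{Q}=\emptyset$, vacuous quantification gives $\Invp{\Polp{Q}}=\Invp{\emptyset}=\Relps$, which certainly contains $\apply{\CarrierSet[0],\emptyset}$, using that $\CarrierSet[0]=\set{\emptyset}\neq\emptyset$. Finally (c)~$\Rightarrow$~(b) needs nothing, as $\apply{\CarrierSet[0],\emptyset}$ is itself of the required form, its first component $\CarrierSet[0]$ being nonempty.

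I do not anticipate a genuine obstacle; the delicate part is purely the bookkeeping around the empty set and nullary arities. I must keep firmly apart the empty relation $\emptyset$ occupying the second component from the one-element sets $\CarrierSet[0]=\set{\emptyset}$ and $\rho^{0}=\set{\emptyset}$, whose sole element is the empty tuple. These conventions are exactly what single out $\apply{\CarrierSet[0],\emptyset}$ as the canonical witness and guarantee that it escapes preservation even by nullary operations.
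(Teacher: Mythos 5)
Your proposal is correct and follows essentially the same route as the paper's own proof: the same key observation that $\rho\neq\emptyset$ forces $\rho^{n}\neq\emptyset$ for every arity $n$ (including $n=0$), so that no operation can preserve a pair $\apply{\rho,\emptyset}$, combined with the vacuous-quantification identity $\Invp{\emptyset}=\Relps$ and the fact that $\CarrierSet[0]$ is nonempty even for $\CarrierSet=\emptyset$. The only cosmetic difference is that where the paper invokes the \name{Galois} identity $\Polp{Q}=\Polp{\Invp{\Polp{Q}}}$, you argue directly from the definition of $\InvpOp$, which is equally valid.
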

\begin{proof}
If \m{\rho\in\Relps} is non\dash{}empty, then also \m{\rho^n\neq\emptyset}
for any possible \m{n\in\N}. Therefore, the condition in
Definition~\ref{def:preservation}\eqref{cond:preservation-tupling}
with \m{\rho'=\emptyset} is not satisfiable for any function \m{f\in\Ops}.
Hence, \m{\Polp{Q}= \Polp{\Invp{\Polp{Q}}} =\emptyset}, whenever
\m{\apply{\rho,\emptyset}\in\Invp{\Polp{Q}}}.\par
Conversely, if \m{\Polp{Q} =\emptyset},
then \m{\Invp{\Polp{Q}} = \Invp{\emptyset}=\Relps}, which clearly contains
the relation pair \m{\apply{\CarrierSet[0],\emptyset}}. The nullary
relation~\m{\CarrierSet[0]} is never empty, even for
\m{\CarrierSet=\emptyset}, so the exhibited example is of the right form.
\end{proof}

\begin{remark}\label{rem:nullary-rels}
The previous lemma demonstrates the necessity to include nullary relations
in the framework, caused by our wish not to impose any restriction on the
carrier set~\m{\CarrierSet}. Namely, for \m{\CarrierSet=\emptyset}, we have
\m{\CarrierSet[m]=\emptyset} for all \m{m\in\Np}, and thus
\m{\Rels[m]=\powerset{\CarrierSet[m]}=\set{\emptyset}}. Hence,
\m{\Relps=\set{\apply{\CarrierSet[0],\emptyset}}\uplus\biguplus_{m\in\Np}
               \set{\apply{\emptyset,\emptyset}}},
which allows us to distinguish between
\m{\Polp{\Relps} = \Polp{\set{\apply{\CarrierSet[0],\emptyset}}}=\emptyset}
and \m{\Polp{\set{\apply{\emptyset,\emptyset}}} = \Polp{\emptyset} = \Ops}.
Both sets are evidently semiclones (subalgebras of the iterative
\name{Post} algebra), on any carrier set~\m{\CarrierSet}, so, in view of
our overall objective, it is more than desirable
to be able to model them with our \name{Galois} correspondence.
Restricting to relations of positive arity, this would clearly be
impossible for \m{\CarrierSet=\emptyset}.
\end{remark}

\subsection{Local closure operators for functions and relation pairs}%
\label{subsect:local-closures}
For the \name{Galois} connection \m{\PolOp\text{-}\InvOp} in the case of
infinite carrier sets, there exist examples \m{F\subs\Ops} where the
inclusion \m{\genClone{F}\subs\Pol{\Inv{F}}} is proper. Hence, in order to
characterise the \name{Galois} closure, an additional local closure
operator is needed. A similar situation arises with
\m{\PolpOp\text{-}\InvpOp}:
for operations we can indeed reuse the same local closure operators as
known from \m{\PolOp\text{-}\InvOp}. For the side of relation pairs, we
have to introduce a new variant of local closure.
\par

In fact, in order to characterise \name{Galois} closures of sets of at most
\nbdd{s}ary operations / relations (\m{s\in\N}) we define more specific
variants of \nbdd{s}local closure operators. Note that apart from
extending the scope of the definition to \m{s=0} and nullary operations,
the operators \m{\sLoc{}} and \m{\Loc{}} we define coincide with those
from~\cite[1.9, p.~15]{PoeGeneralGaloisTheoryForOperationsAndRelations}
(see also~\cite[1.5, p.~255 et seq.]{%
PoeConcreteRepresentationOfAlgebraicStructuresAndGeneralGaloisTheory}).
\par
\begin{definition}\label{def:s-local-closure}
For \m{s\in\N}, \m{F\subs\Ops} and \m{Q\subs\Relps} we set
\begin{align*}
\sLoc{F}&
\defeq \biguplus_{n\in\N}\lset{g\in\Ops[n]}{%
            \forall B\subs\CarrierSet[n], \abs{B}\leq s\,
            \exists\,f\in \Fn{F}\colon
                    g\Restriction_B = f\Restriction_B},\\
\Loc{F}&
\defeq \bigcap_{s\in\N}\sLoc{F},\\
\sLOC{Q}&
\defeq \biguplus_{m\in\N}\lset{\apply{\sigma,\sigma'}\in\Relps[m]}{%
            \begin{aligned}[c]
            \forall B\subs\sigma,\abs{B}\leq s\,
            &\exists\apply{\rho,\rho'}\in \Fn[m]{Q}\colon\\
                   &B\subs \rho\land \rho'\subs\sigma'
            \end{aligned}},\\
\LOC{Q}&
\defeq\bigcap_{s\in\N}\sLOC{Q},
\end{align*}
and call these \emph{\nbdd{s}local} and \emph{local closure operators},
respectively.
\end{definition}
\par

It is easy to check that \m{\sLoc{}}, \m{\Loc{}}, \m{\sLOC{}} and
\m{\LOC{}} are indeed closure operators on the sets of finitary operations
and relation pairs, respectively.
Likewise, it is not hard to see that for every \m{s,n\in\N} we have
\m{\sLoc[s][n]{F} = \sLoc{\apply{\Fn{F}}}} and
\m{\Loc[n]{F}=\Loc{\apply{\Fn{F}}}}
for \m{F\subs\Ops}, and similarly we have
\m{\sLOC[s][n]{Q} = \sLOC{\apply{\Fn{Q}}}} and
\m{\LOC[n]{Q}=\LOC{\apply{\Fn{Q}}}}
for any set \m{Q\subs\Relps}.
To make a technical remark:\label{page:importance-of-disj-union}
if we had not insisted on using the disjoint
union for the definition of \m{\Relps}, then for any \m{n\in\N} we would
have \m{\sLOC[0]{\apply{\Fn[n]{Q}}}=\Relps} whenever
\m{\apply{\emptyset,\emptyset}\in Q} (as in this case
\m{\apply{\emptyset,\emptyset}\in \Fn[m]{{\Fn{Q}}}} were true for all
\m{m\in\N}), and this would obviously violate the equality mentioned
above:  \m{\Relps = \sLOC[0]{\apply{\Fn{Q}}} \not\subseteq
           \sLOC[0][n]{Q} \subs \Relps[n]}.
\par

Moreover, it follows directly from the
definition that \m{\sLoc[t]{F}\subs\sLoc[s]{F}} and
\m{\sLOC[t]{Q}\subs\sLOC{Q}} hold for all \m{F\subs\Ops} and
\m{Q\subs\Relps} whenever \m{s\leq t}, \m{s,t\in\N}.
Therefore, for \m{F\subs\Ops}, \m{Q\subs\Relps} and \m{s\in\N} we have the
inclusions
\begin{alignat*}{12}
\sLoc[0]{F}&\sups\dotsm&&\sups \sLoc{F}&&\sups \sLoc[{\apply{s+1}}]{F}
&&\sups\dotsm&&\sups \Loc{F}&&\sups F,\\
\sLOC[0]{Q}&\sups\dotsm&&\sups \sLOC{Q}&&\sups \sLOC[{\apply{s+1}}]{Q}
&&\sups\dotsm&&\sups \LOC{Q}&&\sups Q.
\end{alignat*}
\par

It follows from these relations that
\begin{align*}
\sLoc{\sLoc[t]{F}} &= \sLoc[{\apply{\min\set{s,t}}}]{F}
\intertext{holds for all \m{F\subs\Ops} and}
\sLOC{\sLOC[t]{Q}} &= \sLOC[{\apply{\min\set{s,t}}}]{Q}
\end{align*}
for all \m{Q\subs\Relps} and any
\m{s,t\in\N\cup\set{\infty}}
(cp.~\cite[Proposition~1.10, p.~16]{%
PoeGeneralGaloisTheoryForOperationsAndRelations}),
where we have temporarily put
\m{\sLoc[{\infty}]{}\defeq\Loc{}} and \m{\sLOC[{\infty}]{}\defeq\LOC{}}.
\par

Note that our definition of \nbdd{s}local closure of relation pairs for
\m{s\in\Np} entails the corresponding one for relations given
in~\cite[1.9, p.~16]{PoeGeneralGaloisTheoryForOperationsAndRelations} in
the following way: for \m{Q'\subs\Rels\setminus\Rels[0]} put
\m{Q\defeq\biguplus_{m\in\Np}
                          \lset{\apply{\rho,\rho}}{\rho\in \Fn[m]{Q'}}}.
Then given \m{s>0}, one can check that
\m{\sLOC{Q}=\biguplus_{m\in\Np}
                \lset{\apply{\sigma,\sigma}}{\sigma\in\sLOC[s][m]{Q'}}}
(see Lemma~\ref{lem:sLOC-vs-sLOC} for further details),
in which \m{\sLOC{Q'}} denotes the set
\m{\lset{\sigma\in\Rels}{\forall B\subs\sigma,\abs{B}\leq s\,
\exists\rho\in Q'\colon B\subs\rho\subs\sigma}}\label{page:sLOC-rel}.
Hence one may reconstruct \m{\sLOC[s]{Q'}} as
\m{\lset{\sigma\in\Rels}{\apply{\sigma,\sigma}\in\sLOC{Q}}}. The local
closure \m{\LOC{}} of sets of non\dash{}nullary relations can be
handled in a similar way.
\par

Furthermore, the following characterisation is also simple to verify.
\begin{lemma}\label{lem:char-loc-by-finite-interpolation}
For any set~\m{A}, collections \m{F\subs\Ops} and \m{Q\subs\Relps} we have
\begin{align*}
\Loc{F}&
= \biguplus_{n\in\N}\lset{g\in\Ops[n]}{%
            \forall B\subs\CarrierSet[n], \abs{B}<\aleph_{0}\,
            \exists\,f\in \Fn[n]{F}\colon
                    g\Restriction_B = f\Restriction_B},\\
\LOC{Q}&
= \biguplus_{m\in\N}\lset{\apply{\sigma,\sigma'}\in\Relps[m]}{%
            \begin{aligned}[c]
            \forall B\subs\sigma,\abs{B}<\aleph_{0}\,
            &\exists\apply{\rho,\rho'}\in \Fn[m]{Q}\colon\\
                   &B\subs \rho\land \rho'\subs\sigma'
            \end{aligned}}.
\end{align*}
\end{lemma}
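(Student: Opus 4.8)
The plan is to unfold the local closures as the intersections \m{\Loc{F}=\bigcap_{s\in\N}\sLoc{F}} and \m{\LOC{Q}=\bigcap_{s\in\N}\sLOC{Q}} from Definition~\ref{def:s-local-closure}, and then to exploit the elementary observation that the finite subsets of a set are exactly those of cardinality \m{\leq s} for some \m{s\in\N}. I would treat both equalities in parallel, the relation\dash{}pair case being a verbatim transcription of the operational one. The only point that deserves care — and the nearest thing to an obstacle — is the order of quantifiers: the interpolating witness for a given \m{B} is chosen \emph{after} \m{B} but \emph{independently} of \m{s}, so intersecting over \m{s} does not strengthen the interpolation demand on any individual finite \m{B}, it merely enlarges the range of \m{B}'s to all finite subsets.

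For the first equality I would fix \m{n\in\N} and \m{g\in\Ops[n]} and rewrite \m{g\in\Loc{F}} according to the definition of the intersection: this holds precisely when, for every \m{s\in\N} and every \m{B\subs\CarrierSet[n]} with \m{\abs{B}\leq s}, there is some \m{f\in\Fn[n]{F}} with \m{g\Restriction_B=f\Restriction_B}. Since for a fixed \m{B} the condition \m{\exists f\in\Fn[n]{F}\colon g\Restriction_B=f\Restriction_B} does not refer to \m{s}, and since \m{\bigcup_{s\in\N}\lset{B\subs\CarrierSet[n]}{\abs{B}\leq s}} is exactly the collection of finite subsets of \m{\CarrierSet[n]}, the conjunction over all \m{s\in\N} is equivalent to imposing that condition for every finite \m{B\subs\CarrierSet[n]}. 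This is precisely the stated right\dash{}hand side.

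For the second equality I would argue identically: fixing \m{m\in\N} and \m{\apply{\sigma,\sigma'}\in\Relps[m]}, membership in \m{\LOC{Q}} unfolds to the requirement that for every \m{s\in\N} and every \m{B\subs\sigma} with \m{\abs{B}\leq s} there exist \m{\apply{\rho,\rho'}\in\Fn[m]{Q}} with \m{B\subs\rho} and \m{\rho'\subs\sigma'}. As the witnessing pair for a fixed \m{B} is again independent of \m{s}, and the families \m{\lset{B\subs\sigma}{\abs{B}\leq s}} (\m{s\in\N}) exhaust exactly the finite subsets of \m{\sigma}, this is equivalent to demanding the same for all finite \m{B\subs\sigma}, which is the asserted right\dash{}hand side. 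Both equalities therefore follow by a direct rewriting of the defining set\dash{}builder expressions.
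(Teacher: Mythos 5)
Your proof is correct and is exactly the ``simple to verify'' argument the paper has in mind: it states this lemma without proof, and the intended verification is precisely your unfolding of \m{\Loc{}} and \m{\LOC{}} as intersections of the \nbdd{s}local closures, combined with the observation that the families \m{\lset{B}{\abs{B}\leq s}}, \m{s\in\N}, exhaust the finite subsets while the interpolation witness for a fixed~\m{B} is independent of~\m{s}. Nothing is missing.
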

\par
From this result it easily follows that closure \wrt\ relaxation is just a
special case of the local closure of relation pairs.
\begin{corollary}\label{cor:relaxation-subs-LOC}
For \m{Q\subs\Relps} we have \m{\enc{Q}\subs\LOC{Q}}.
\end{corollary}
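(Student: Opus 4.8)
The plan is to unfold both definitions and to notice that a single relaxation witness already fulfils, simultaneously for every admissible subset, the interpolation demand of the local closure. I would start from an arbitrary \m{\apply{\sigma,\sigma'}\in\enc{Q}}, say of arity~\m{m}. By the definition of \m{\enc{Q}} there is some \m{\apply{\rho,\rho'}\in Q} with \m{\rho'\subs\sigma'\subs\sigma\subs\rho}. Since a non\dash{}empty \m{\sigma} with \m{\sigma\subs\rho} forces \m{\rho} to have arity~\m{m} as well, and the remaining (empty) case is kept in the correct arity by the disjoint\dash{}union grading of~\m{\Relps}, this witness lies in the \nbdd{m}ary part \m{\Fn[m]{Q}}.

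Next I would verify \m{\apply{\sigma,\sigma'}\in\sLOC{Q}} for each fixed \m{s\in\N}. Let \m{B\subs\sigma} with \m{\abs{B}\leq s} be arbitrary. The one pair \m{\apply{\rho,\rho'}} found above witnesses the defining condition: from \m{B\subs\sigma\subs\rho} we get \m{B\subs\rho}, while \m{\rho'\subs\sigma'} holds by hypothesis. Since \m{B} was arbitrary and the witness did not depend on it, this gives \m{\apply{\sigma,\sigma'}\in\sLOC{Q}}; and as \m{s\in\N} was arbitrary, \m{\apply{\sigma,\sigma'}\in\bigcap_{s\in\N}\sLOC{Q}=\LOC{Q}}.

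I expect no real obstacle: local closure only requires each \nbdd{s}bounded slice \m{B} of~\m{\sigma} to be captured by the first component of some pair in~\m{Q}, whereas relaxation already places the whole of~\m{\sigma} inside one~\m{\rho}; the same uniform witness therefore settles all~\m{B} at once, so one need not even appeal to the finite\dash{}interpolation form of Lemma~\ref{lem:char-loc-by-finite-interpolation}. The only point deserving attention is the arity matching of the witness, which---as indicated above---is exactly what the disjoint\dash{}union convention for~\m{\Relps} is there to guarantee; without it, an \nbdd{m}ary pair \m{\apply{\emptyset,\emptyset}} could be relaxed from a witness of a different arity and the inclusion would fail.
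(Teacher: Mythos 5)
Your proof is correct and takes essentially the same route as the paper: a single relaxation witness \m{\apply{\rho,\rho'}\in\Fn[m]{Q}} serves uniformly as the interpolating pair for every admissible subset \m{B\subs\sigma}, since \m{B\subs\sigma\subs\rho} and \m{\rho'\subs\sigma'}. The only cosmetic difference is that you verify membership in \m{\sLOC{Q}} for each \m{s\in\N} and intersect, whereas the paper invokes Lemma~\ref{lem:char-loc-by-finite-interpolation} (which is precisely a reformulation of that intersection); your explicit attention to the arity of the witness is a point the paper leaves implicit.
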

\begin{straightforwardproof}
Let \m{\apply{\sigma,\sigma'}} be a relaxation of some pair
\m{\apply{\rho,\rho'}\in\Fn[m]{Q}} for some \m{m\in\N}, \ie\
\m{\rho'\subs\sigma'\subs\sigma\subs\rho}. For any finite subset
\m{B\subs\sigma} we evidently have \m{B\subs\sigma\subs\rho} and
\m{\rho'\subs\sigma'} for the relation pair \m{\apply{\rho,\rho'}\in
\Fn[m]{Q}}. Thus, according to
Lemma~\ref{lem:char-loc-by-finite-interpolation}, it is the case that
\m{\apply{\sigma,\sigma'}\in\LOC{Q}}.
\end{straightforwardproof}
\par

The following consequence is now evident.
\begin{corollary}\label{cor:locally-closed-implies-relaxation-closed}
Let \m{Q\subs\Relps} be locally closed (or even \nbdd{s}locally closed for
some \m{s\in\N}), then it is closed \wrt\ relaxation.
\end{corollary}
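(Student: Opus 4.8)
The plan is to derive this corollary directly from Corollary~\ref{cor:relaxation-subs-LOC} together with the monotone inclusion chain for the local closure operators recorded just above in the excerpt; no new combinatorics is required. First I would recall that for every \m{Q\subs\Relps} one has \m{\enc{Q}\subs\LOC{Q}} by Corollary~\ref{cor:relaxation-subs-LOC}. Next I would invoke the fact that \m{\LOC{Q}=\bigcap_{t\in\N}\sLOC[t]{Q}}, so that \m{\LOC{Q}\subs\sLOC{Q}} for every \m{s\in\N}; chaining these two observations gives \m{\enc{Q}\subs\LOC{Q}\subs\sLOC{Q}} for each \m{s}.

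From here I would argue by the two hypotheses in turn. If \m{Q} is locally closed, that is \m{\LOC{Q}=Q}, then the first inclusion yields \m{\enc{Q}\subs Q}. If instead \m{Q} is only \nbdd{s}locally closed for some \m{s\in\N}, that is \m{\sLOC{Q}=Q}, then the extended chain \m{\enc{Q}\subs\sLOC{Q}=Q} gives the same conclusion. In either case, combining with the always-valid inclusion \m{Q\subs\enc{Q}} produces \m{\enc{Q}=Q}, which is by definition the assertion that \m{Q} is closed \wrt\ relaxation.

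I do not anticipate any genuine obstacle here: the substantive content lies entirely in Corollary~\ref{cor:relaxation-subs-LOC} (that relaxations are a degenerate instance of local interpolation, where the single witnessing pair \m{\apply{\rho,\rho'}} works for all finite \m{B\subs\sigma}), and the present statement merely reads off the consequence for fixed points of the closure operators. The only point worth stating explicitly is that \nbdd{s}local closure is the weaker (larger) operator, so that \nbdd{s}local closedness is the stronger hypothesis and still suffices, which is exactly why the inclusion \m{\LOC{Q}\subs\sLOC{Q}} must be cited in the second case.
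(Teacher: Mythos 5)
Your proposal is correct and is precisely the argument the paper intends: the paper states the corollary as ``evident'' after Corollary~\ref{cor:relaxation-subs-LOC}, relying implicitly on exactly the chain \m{\enc{Q}\subs\LOC{Q}\subs\sLOC{Q}} from the displayed inclusions, so that a fixed point of either closure operator contains \m{\enc{Q}}. Your write-up merely makes this explicit, including the correct observation that \nbdd{s}local closedness is the stronger hypothesis since \m{\sLOC{}} is the larger operator.
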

\par

For relations of fixed arity and finite base sets, there is even a much
stronger connection between relaxation and \nbdd{s}local closure:
\begin{lemma}\label{lem:enc-Qm=km-LOCQ}
For all finite carrier sets~\m{\CarrierSet} of cardinality
\m{k\defeq\abs{A}<\aleph_{0}} and any \m{m\in\N}, we have
\m{\enc{Q}=\LOC{Q} = \sLOC[{k^m}]{Q}} for all \m{Q\subs\Relps[m]}.
\end{lemma}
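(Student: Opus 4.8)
The plan is to prove the three sets coincide by closing a cycle of inclusions \m{\enc{Q}\subs\LOC{Q}\subs\sLOC[{k^m}]{Q}\subs\enc{Q}}. The first inclusion is exactly Corollary~\ref{cor:relaxation-subs-LOC}. The second follows because \m{\LOC{Q}=\bigcap_{s\in\N}\sLOC[s]{Q}} is contained in each of its members and \m{k^m\in\N}. Hence everything reduces to the single remaining inclusion \m{\sLOC[{k^m}]{Q}\subs\enc{Q}}.

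Before proving it I would record a small bookkeeping fact. Since \m{Q\subs\Relps[m]}, its component of arity \m{\ell}, namely \m{\Fn[\ell]{Q}}, is empty for \m{\ell\neq m} and equals~\m{Q} for \m{\ell=m}. Because the defining condition of \m{\sLOC[s]{Q}} requires a witnessing pair from \m{\Fn[\ell]{Q}} even for the admissible choice \m{B=\emptyset}, no pair of arity \m{\ell\neq m} can belong to \m{\sLOC[{k^m}]{Q}}; thus all three sets lie in \m{\Relps[m]}, and it suffices to treat an arbitrary \m{\apply{\sigma,\sigma'}\in\sLOC[{k^m}]{Q}}.

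The key observation is that \m{\abs{\CarrierSet[m]}=k^m}, so every \m{B\subs\sigma\subs\CarrierSet[m]} automatically satisfies \m{\abs{B}\leq k^m}. In particular the choice \m{B\defeq\sigma} is admissible in the definition of \m{\sLOC[{k^m}]{}}, and it produces a pair \m{\apply{\rho,\rho'}\in\Fn[m]{Q}=Q} with \m{\sigma=B\subs\rho} and \m{\rho'\subs\sigma'}. Because \m{\apply{\sigma,\sigma'}} is a relation pair we moreover have \m{\sigma'\subs\sigma}, so altogether \m{\rho'\subs\sigma'\subs\sigma\subs\rho}. This says precisely that \m{\apply{\sigma,\sigma'}} is a relaxation of \m{\apply{\rho,\rho'}\in Q}, whence \m{\apply{\sigma,\sigma'}\in\enc{Q}} and the cycle is closed.

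There is no serious obstacle here; the whole argument hinges on the cardinality bound \m{\abs{\sigma}\leq\abs{\CarrierSet[m]}=k^m}, which is what lets the threshold \m{s=k^m} collapse the local condition to the single test \m{B=\sigma}. The only remaining points needing attention are purely definitional: that \m{\Fn[m]{Q}=Q} for \m{Q\subs\Relps[m]}, and that the defining inclusion \m{\sigma'\subs\sigma} of the relation pair \m{\apply{\sigma,\sigma'}} supplies the inner link of the relaxation chain.
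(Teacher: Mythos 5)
Your proof is correct and takes essentially the same route as the paper's: both reduce everything to the single inclusion \m{\sLOC[{k^m}]{Q}\subs\enc{Q}} (the other two holding in general by Corollary~\ref{cor:relaxation-subs-LOC} and the definition of \m{\LOC{}}), and both establish it by choosing \m{B\defeq\sigma}, admissible since \m{\abs{\sigma}\leq\abs{\CarrierSet[m]}=k^m}, to obtain a witness \m{\apply{\rho,\rho'}\in\Fn[m]{Q}} yielding the relaxation chain \m{\rho'\subs\sigma'\subs\sigma\subs\rho}. Your explicit bookkeeping that \m{\sLOC[{k^m}]{Q}} contains only \nbdd{m}ary pairs is the same fact the paper invokes via the identity \m{\sLOC[{k^m}]{Q}=\sLOC[{k^m}]{\apply{\Fn[m]{Q}}}=\sLOC[{k^m}][m]{Q}}.
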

\begin{proof}
The inclusions \m{\enc{Q}\subs\LOC{Q}\subs\sLOC[{k^m}]{Q}} hold in
general (cf.~Corollary~\ref{cor:relaxation-subs-LOC}). Conversely, for
a set \m{Q\subs\Relps[m]} of \nbdd{m}ary pairs, let us consider any
\m{\apply{\sigma,\sigma'}\in\sLOC[{k^m}]{Q}
    =\sLOC[{k^m}]{\apply{\Fn[m]{Q}}} = \sLOC[{k^m}][m]{Q}}.
As \m{\sigma\in\Rels[m]}, we have
\m{\abs{\sigma}\leq\abs{\CarrierSet[m]}=k^m}. Hence, taking
\m{B\defeq\sigma} as a subset of~\m{\sigma} having at most~\m{k^m}
elements, by definition of \m{\sLOC[{k^m}]{}}, we get a pair
\m{\apply{\rho,\rho'}\in\Fn[m]{Q}} such that \m{\sigma = B \subs\rho} and
\m{\rho'\subs\sigma'}. Therefore, \m{\apply{\sigma,\sigma'}\in\enc{Q}}.
\end{proof}
\par

In particular, in case of finite carrier sets, the inclusion in
Corollary~\ref{cor:relaxation-subs-LOC} is always an equality.
\begin{corollary}\label{cor:finite=>relaxation=LOC}
For finite~\m{\CarrierSet} we have \m{\enc{Q}=\LOC{Q}} for all
\m{Q\subs\Relps}; in particular, a subset \m{Q\subs\Relps} is locally
closed if and only if it is closed \wrt\ relaxation.
\end{corollary}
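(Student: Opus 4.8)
The plan is to reduce the general statement to the fixed\dash{}arity version already established in Lemma~\ref{lem:enc-Qm=km-LOCQ} by decomposing an arbitrary \m{Q\subs\Relps} according to the arities of its members. The guiding observation is that both closure operators in play respect arities. On the one hand, any relaxation \m{\apply{\sigma,\sigma'}} of a pair \m{\apply{\rho,\rho'}\in\Relps[m]} satisfies \m{\sigma\subs\rho\in\Rels[m]} and \m{\sigma'\subs\sigma}, whence it is itself \nbdd{m}ary; consequently \m{\Fn[m]{\apply{\enc{Q}}}=\enc{\apply{\Fn[m]{Q}}}} holds for every \m{m\in\N}, directly from the definition of \m{\enc{}}. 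On the other hand, the corresponding identity \m{\Fn[m]{\apply{\LOC{Q}}}=\LOC{\apply{\Fn[m]{Q}}}} for the local closure was already noted above (it is immediate from the definition of \m{\sLOC{}} as a disjoint union over arities, the witnessing pairs of an \nbdd{m}ary member being drawn from \m{\Fn[m]{Q}}).

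Granting this, I would fix \m{m\in\N} and apply Lemma~\ref{lem:enc-Qm=km-LOCQ} to the set \m{\Fn[m]{Q}\subs\Relps[m]}, which for finite~\m{\CarrierSet} yields \m{\enc{\apply{\Fn[m]{Q}}}=\LOC{\apply{\Fn[m]{Q}}}}. Combining this with the two arity\dash{}decomposition identities gives
\[
\Fn[m]{\apply{\enc{Q}}}
 =\enc{\apply{\Fn[m]{Q}}}
 =\LOC{\apply{\Fn[m]{Q}}}
 =\Fn[m]{\apply{\LOC{Q}}}
\]
for each \m{m\in\N}. Since both \m{\enc{Q}} and \m{\LOC{Q}} are subsets of \m{\Relps=\biguplus_{m\in\N}\Relps[m]} and are therefore recovered as the disjoint unions of their respective \nbdd{m}ary parts, equality of all arity parts forces \m{\enc{Q}=\LOC{Q}}, as desired.

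For the supplementary ``in particular'' claim I would argue purely formally: one always has \m{Q\subs\enc{Q}}, so in view of \m{\enc{Q}=\LOC{Q}} the condition \m{\LOC{Q}=Q} (local closedness) is equivalent to \m{\LOC{Q}\subs Q}, which is the same as \m{\enc{Q}\subs Q}, \ie\ \m{\enc{Q}=Q} (closedness \wrt\ relaxation). There is no genuine obstacle in this corollary: the entire mathematical content has been offloaded onto the fixed\dash{}arity Lemma~\ref{lem:enc-Qm=km-LOCQ}, and the only point demanding a moment's care is the arity\dash{}preservation of \m{\enc{}}, which is precisely what legitimises passing to and reassembling from the \nbdd{m}ary parts.
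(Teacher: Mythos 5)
Your proposal is correct and follows essentially the same route as the paper's own proof: decompose \m{\enc{Q}} and \m{\LOC{Q}} into their \nbdd{m}ary parts, apply the fixed\dash{}arity Lemma~\ref{lem:enc-Qm=km-LOCQ} to each \m{\Fn[m]{Q}}, and reassemble via the disjoint union over arities. You merely spell out two points the paper leaves implicit, namely the arity\dash{}preservation of \m{\enc{}} and the derivation of the ``in particular'' clause, both of which are handled exactly as one would expect.
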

\begin{proof}
The set
\m{\LOC{Q}=\biguplus_{m\in\N}\LOC[m]{Q} =
\biguplus_{m\in\N}\LOC{\apply{\Fn[m]{Q}}}}
is equal to
\m{\biguplus_{m\in\N}\enc{\Fn[m]{Q}}=\enc{Q}}
upon application of Lemma~\ref{lem:enc-Qm=km-LOCQ}.
\end{proof}
\par

The following closure property will become important regarding the
characterisation of the closure operator \m{\Invp{\Polp[\leq s]{}}} in
Section~\ref{sect:char-closures}. For \m{s\in \N} a
collection \m{\mathcal{T}\subs \powerset{S}} of subsets of a
set~\m{S} is called \emph{\nbdd{s}directed} if and only if
for all \m{t\leq s}, all \m{\apply{X_{i}}_{i\in t}\in \mathcal{T}^{t}}
and every \m{\mathbf{r} = \apply{r_{i}}_{i\in t}\in \prod_{i\in t}X_i}
there is a set \m{Z\in\mathcal{T}}
such that \m{\im{\mathbf{r}}=\lset{r_{i}}{i\in t}\subs Z}.
Clearly, this condition is equivalent to~\m{\mathcal{T}} being
non\dash{}empty and that for all
\m{\apply{X_{i}}_{i\in s}\in\mathcal{T}^{s}} and
\m{\mathbf{r}\in\prod_{i\in s}X_i} there exists \m{Z\in\mathcal{T}}
fulfilling \m{\im{\mathbf{r}}\subs Z}.
We say that a set
\m{Q\subs\Relps[m]} of \nbdd{m}ary relation pairs is
\emph{\nbdd{s}directed} if and only if
\m{\lset{\rho}{\apply{\rho,\rho'}\in Q}\subs\powerset{\CarrierSet[m]}}
is \nbdd{s}directed in the sense above.
We prove now that sets of the form \m{\sLOC{Q}}, where \m{Q\subs\Relps},
are closed \wrt\ unions of \nbdd{s}directed systems of relation pairs of
the same arity.
\begin{lemma}\label{lem:s-directed-unions}
If \m{s,m\in\N}, \m{Q\subs\Relps}, and
\m{\mathcal{T}\subs \sLOC[s][m]{Q}}
is \nbdd{s}directed, then we have
\m{\bigcup \mathcal{T}\defeq
\apply{\bigcup_{\apply{\mu,\mu'}\in \mathcal{T}}\mu,
       \bigcup_{\apply{\mu,\mu'}\in \mathcal{T}}\mu'}
\in\sLOC[s][m]{Q}}.
\end{lemma}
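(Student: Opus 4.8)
The plan is to verify directly that the pair \m{\bigcup\mathcal{T}=\apply{\sigma,\sigma'}}, where I abbreviate \m{\sigma\defeq\bigcup_{\apply{\mu,\mu'}\in\mathcal{T}}\mu} and \m{\sigma'\defeq\bigcup_{\apply{\mu,\mu'}\in\mathcal{T}}\mu'}, satisfies the defining condition of \m{\sLOC[s][m]{Q}}. First I would check that \m{\apply{\sigma,\sigma'}} is a genuine \nbdd{m}ary relation pair: since \m{\mu'\subs\mu} for every \m{\apply{\mu,\mu'}\in\mathcal{T}}, passing to unions yields \m{\sigma'\subs\sigma}, and both components are \nbdd{m}ary by construction. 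Observe that being \nbdd{s}directed forces \m{\mathcal{T}\neq\emptyset} (apply the definition with \m{t=0}), so no degenerate empty union requires separate treatment. It then remains to produce, for an arbitrary \m{B\subs\sigma} with \m{\abs{B}\leq s}, a pair \m{\apply{\rho,\rho'}\in\Fn[m]{Q}} such that \m{B\subs\rho} and \m{\rho'\subs\sigma'}.

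The crucial step is to replace the possibly many members of \m{\mathcal{T}} witnessing membership of the individual elements of \m{B} by a single one. Enumerating \m{B=\lset{r_i}{i\in t}} with \m{t=\abs{B}\leq s}, each \m{r_i\in\sigma} lies in \m{\mu_i} for some \m{\apply{\mu_i,\mu_i'}\in\mathcal{T}}; hence \m{\mathbf{r}=\apply{r_i}_{i\in t}\in\prod_{i\in t}\mu_i}, where \m{\apply{\mu_i}_{i\in t}} is a tuple taken from the collection \m{\lset{\mu}{\apply{\mu,\mu'}\in\mathcal{T}}} of first components. Using the fact that this collection is \nbdd{s}directed---the one place where the hypothesis is genuinely needed---I obtain a pair \m{\apply{\nu,\nu'}\in\mathcal{T}} with \m{B=\im{\mathbf{r}}\subs\nu}.

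Finally I would exploit that \m{\apply{\nu,\nu'}} itself lies in \m{\sLOC[s][m]{Q}}. Since \m{B\subs\nu} and \m{\abs{B}\leq s}, the defining condition of \m{\sLOC[s][m]{Q}} applied to \m{\apply{\nu,\nu'}} hands me a pair \m{\apply{\rho,\rho'}\in\Fn[m]{Q}} with \m{B\subs\rho} and \m{\rho'\subs\nu'}. As \m{\apply{\nu,\nu'}\in\mathcal{T}}, we have \m{\nu'\subs\sigma'}, and consequently \m{\rho'\subs\nu'\subs\sigma'}. Together with \m{B\subs\rho} this is exactly the witness required, so \m{\apply{\sigma,\sigma'}\in\sLOC[s][m]{Q}}. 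The only genuine obstacle is the merging step: without the hypothesis that \m{\mathcal{T}} is \nbdd{s}directed, the at most \m{s} elements of \m{B} could be scattered over incomparable members of \m{\mathcal{T}} admitting no common upper bound inside \m{\mathcal{T}}, so that---even after the subsequent \nbdd{s}local step---no single pair of \m{\Fn[m]{Q}} would cover all of \m{B} while keeping its second component within \m{\sigma'}.
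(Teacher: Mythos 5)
Your proof is correct and follows essentially the same route as the paper's: verify that the union is a well\dash{}defined relation pair, use \nbdd{s}directedness of the first components to find a single pair \m{\apply{\nu,\nu'}\in\mathcal{T}} covering~\m{B}, then apply the defining condition of \m{\sLOC[s][m]{Q}} to that pair and chain the inclusions \m{\rho'\subs\nu'\subs\sigma'}. The only addition beyond the paper's argument is your explicit remark that \nbdd{s}directedness forces \m{\mathcal{T}\neq\emptyset}, which is a harmless (and correct) observation already implicit in the paper's definition.
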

\begin{straightforwardproof}
Clearly, we have
\m{\sigma'\defeq\bigcup_{\apply{\mu,\mu'}\in\mathcal{T}}\mu'
          \subs\bigcup_{\apply{\mu,\mu'}\in\mathcal{T}}\mu
          \eqdef\sigma},
so the union \m{\apply{\sigma,\sigma'}} is a well\dash{}defined relation
pair in \m{\Relps[m]}. In order to prove that it belongs to \m{\sLOC{Q}},
we consider any subset \m{B= \lset{b_{i}}{i\in t}\subs\sigma} such that
\m{t\defeq \abs{B}\leq s}.
By definition of~\m{\sigma}, for each \m{i\in t} there exists a pair
\m{\apply{\mu_i,\mu'_i}\in\mathcal{T}} such that \m{b_i\in\mu_i}.
By \nbdd{s}directedness of~\m{\mathcal{T}} there exists some
\m{\apply{\mu,\mu'}\in \mathcal{T}} such that
\m{B=\lset{b_{i}}{i\in t}\subs\mu}.
For \m{\apply{\mu,\mu'}\in \mathcal{T}\subs\sLOC[s][m]{Q}} and
\m{B\subs\mu} has at most~\m{s} elements, there must exist some
pair \m{\apply{\rho,\rho'}\in\Fn[m]{Q}} such that
\m{B\subs\rho} and \m{\rho'\subs\mu'\subs\sigma'}. This shows
that \m{\apply{\sigma,\sigma'}\in\sLOC[s][m]{Q}}.
\end{straightforwardproof}
\par

For \m{m\in\N} and we say that a set \m{\mathcal{T}\subs\Relps[m]} is
\emph{\nbdd{\aleph_{0}}directed} if it is \nbdd{s}directed for all \m{s\in\N}.
This means we require the condition presented before
Lemma~\ref{lem:s-directed-unions} to hold for any finite sequence of
relations and tuples.
\par
We call a set \m{\mathcal{T}\subs\Relps[m]}
\emph{directed} if \m{\mathcal{T}\neq\emptyset} and for all
\m{\apply{\rho_1,\rho'_1},\apply{\rho_2,\rho'_2}\in\mathcal{T}} there
exists some \m{\apply{\rho,\rho'}\in\mathcal{T}} such that \m{\rho_1\cup\rho_2\subs\rho}. This is
equivalent to saying that for any finite subset
\m{\mathcal{F}\subs \mathcal{T}} there is an
pair \m{\apply{\rho,\rho'}\in \mathcal{T}} such that
\m{\bigcup_{\apply{\mu,\mu'}\in \mathcal{F}}\mu\subs\rho},
wherefore directedness clearly implies
\nbdd{\aleph_{0}}directedness.\label{page:infty-directedness}
\par

As a consequence of this implication we get that locally closed sets of relation
pairs are closed under directed unions of sets of pairs of identical
arity.
\begin{corollary}\label{cor:inf-directed-union}
For all \m{m\in\N}, \m{Q\subs\Relps} and every
\nbdd{\aleph_{0}}directed collection \m{\mathcal{T}\subs\LOC[m]{Q}},
we have
\m{\bigcup \mathcal{T}\defeq
\apply{\bigcup_{\apply{\mu,\mu'}\in \mathcal{T}}\mu,
       \bigcup_{\apply{\mu,\mu'}\in \mathcal{T}}\mu'}
\in\LOC[m]{Q}}.
In particular this is true whenever \m{\mathcal{T}\subs\LOC[m]{Q}} is
directed.
\end{corollary}
\par

Under additional assumptions on the set of relation pairs~\m{Q} we shall
extend Lemma~\ref{lem:s-directed-unions} and
Corollary~\ref{cor:inf-directed-union} to characterisations of local and
\nbdd{s}local closedness. We conclude this subsection with remarks on the
relationship of our local closure operators to others defined in the
more general setting treated
in~\cite{CouceiroGaloisConnectionsExternalOperationsRelationalConstraints}.
\par

\begin{remark}\label{rem:local-closures}
The local closure operators (and \nbdd{s}local closure operators for
\m{s\in\Np}) defined here cannot directly be derived as special cases of
the corresponding closure operators from~\cite{%
CouceiroGaloisConnectionsExternalOperationsRelationalConstraints}. As the
case of local closures is similar, we shall only argue for \nbdd{s}local
closures. Specialising the framework in the mentioned article for a pair
of carrier sets \m{\apply{\CarrierSet, B}} where \m{B=A}, we may apply
the \nbdd{s}local closure \m{\LOOP_{s}} described there to any set
\m{Q\subs\Relps\setminus\Relps[0]}, then yielding the collection
\begin{multline*}
\LO{Q} = Q\cup{}\\
\bigcup_{m\in\Np}\!\!
      \lset{\apply{R,S}\in\apply{\powerset{\CarrierSet[m]}}^{2}}{%
                 \forall C\subs R, \abs{C}\leq s\,
                 \forall \CarrierSet[m]\sups T\sups S\colon
                 \apply{C,T}\in Q}.
\end{multline*}
As this set contains pairs \m{\apply{R,S}} that are not relation pairs,
\ie\ failing the condition \m{R\sups S}, the
canonical modification would be to simply intersect the result with
\m{\Relps}, leading to
\begin{multline*}
\LO{Q}\cap\Relps =Q \cup{}\\
\bigcup_{m\in\Np}\!\!
      \lset{\apply{R,S}\in\Relps[m]}{%
                 \forall C\subs R, \abs{C}\leq s\,
                 \forall \CarrierSet[m]\sups T\sups S\colon
                 \apply{C,T}\in Q}.
\end{multline*}
This set equals~\m{Q} on any set~\m{\CarrierSet} (in fact, the second part of
the union is empty, whenever \m{\CarrierSet\neq\emptyset}, as for
\m{C=\emptyset} and \m{T=\CarrierSet[m]\neq\emptyset} the condition
\m{\apply{C,T}\in Q} is never satisfied).
So the original definition of \m{\LOOP_{s}} (or its canonical
modification) is not helpful at all in our setting.
\par

Suppose, in the union over \m{m\in\Np}, we change the condition
describing when a relation pair \m{\apply{R,S}} is added to the
\nbdd{s}local closure of~\m{Q} as follows: among all relational
constraints \m{\apply{C,T}} relaxing \m{\apply{R,S}} and verifying
\m{\abs{C}\leq s} only those are required to be in~\m{Q} that
\emph{are indeed relation pairs}.
Then we get
\begin{multline*}
Q \cup
{\bigcup_{m\in\Np}\!\!
      \lset{\apply{R,S}\in\Relps[m]}{%
                 \forall C\subs R, \abs{C}\leq s\,
                 \forall C \sups T\sups S\colon
                 \apply{C,T}\in Q}}.
\end{multline*}
This set still differs from \m{\sLOC{Q}} as defined above. For instance
for any \m{s\in\Np} and \m{Q=\emptyset} we have \m{\sLOC{Q} = \emptyset},
while the previously displayed collection contains all relation pairs
\m{\apply{R,S}\in\Relps} where \m{\abs{S}>s}.
\par

We do not see an obvious way how to translate \m{\LOOP_{s}} into
\m{\sLOC{}} or vice versa.
\end{remark}

\section{Semiclones and the full iterative \name{Post} algebra}%
\label{sect:semiclones}
The following definition is very similar to that of a clone of operations.
The only difference is that a clone \m{F\subs\Ops} is additionally required
to contain the set \m{\TrivOps} of projections as a subset.

\begin{definition}\label{def:semiclone}
A \emph{(concrete) semiclone (of operations)} on a set~\m{\CarrierSet} is a
subset \m{F \subseteq \Ops} of all finitary operations such that
for all \m{m,n\in\N}
we have \m{\composition{f}{\listen{g}{n}}\in F} for each
\m{f\in\Fn{F}} and
\m{\apply{\listen{g}{n}}\in\apply{\Fn[m]{\apply{F\cup\TrivOps}}}^n}.
\end{definition}

The closure property stated in Definition~\ref{def:semiclone} is formulated
in terms of partial composition operations on \m{\Ops} as the functions
making up the tupling all have to be of identical arity. However, it is
possible to extend these operations in a conservative way to totally
defined operations on \m{\Ops} such that semiclones are exactly the
subuniverses of a certain universal algebra on the carrier set~\m{\Ops}:
for each \m{n,m\in\N} and each subset \m{I\subs n} and any tuple
\m{\apply{g_i}_{i\in I}\in\TrivOps[m]} of \nbdd{m}ary projections we define
an \nbdd{\apply{\abs{n\setminus I}+1}}ary operation on \m{\Ops}, which maps
\m{\apply{f,\apply{g_i}_{i\in n\setminus I}}} to
\m{f\circ \apply{g_i}_{i\in n}} provided that \m{f\in\Ops[n]} and
\m{g_i\in\Ops[m]} for all \m{i\in n\setminus I}, and to~\m{f} otherwise.
If we collect all the finitary operations obtained in this way in a set
\m{\Phi\subs\Op{\Ops}}, then it becomes clear that \m{F\subs \Ops} is a
semiclone if and only if it is a subuniverse of the algebra
\m{\algwops{\Ops}{\Phi}}.
\par

Hence, the set \m{\Sclones\defeq \Sub\apply{\algwops{\Ops}{\Phi}}} of all
semiclones on~\m{\CarrierSet} bears the structure of a complete algebraic
lattice \wrt\ set\dash{}inclusion, and is, in particular, a closure system.
The corresponding closure operator will be denoted by \m{\genSclone{\,}}.
\par

Evident, trivial examples of semiclones are the empty set of operations and
any clone \m{F\subs \Ops}. Moreover, we have the following class of
examples:
\begin{lemma}\label{lem:semiclone-gen-by-transf}
For a set \m{G\subs\Ops[1]} of unary transformations, abbreviate its
generated transformation semigroup by
\m{S\defeq\gapply{G}_{\algwops{\Ops[1]}{\circ}}}.
Then we have
\[\genSclone{G} = \lset{f\circ\eni{i}}{i\in n\land n\in \Np\land f\in S}.\]
\end{lemma}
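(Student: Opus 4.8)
The plan is to prove the two inclusions separately. Write $R$ for the right\dash{}hand side $\lset{f\circ\eni{i}}{i\in n\land n\in\Np\land f\in S}$, and recall that, since $S=\gapply{G}_{\algwops{\Ops[1]}{\circ}}$ is the transformation semigroup generated by $G$, every $f\in S$ is a finite composite $f=g_1\circ\dotsm\circ g_k$ with $k\geq 1$ and $g_1,\dotsc,g_k\in G$; in particular $S$ is closed under~$\circ$ and $G\subs S$. To obtain $\genSclone{G}\subs R$ I would show that $R$ is \emph{itself} a semiclone containing~$G$, so that minimality of the semiclone generated by~$G$ forces the inclusion. For the reverse inclusion $R\subs\genSclone{G}$ I would show that each polymer $f\circ\eni{i}$ lies in $\genSclone{G}$ by induction on the product length~$k$ of~$f$.

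First I would verify that $R$ satisfies Definition~\ref{def:semiclone}. Since $R$ contains no nullary operations and $\TrivOps$ has no nullary members, the closure condition is vacuous unless $m,n\geq 1$, so only that case needs treatment. Let $f\in R\cap\Ops[n]$, say $f=h\circ\eni{i}$ with $h\in S$ and $i\in n$, and let $\apply{g_j}_{j\in n}\in\apply{\Fn[m]{\apply{R\cup\TrivOps}}}^n$. The crucial observation is that $f$ only reads the $i$\dash{}th coordinate: by the defining property of tuplings (a special case of Lemma~\ref{lem:superassociativity}) one has $\eni{i}\circ\apply{g_j}_{j\in n}=g_i$, whence $f\circ\apply{g_j}_{j\in n}=h\circ\apply{\eni{i}\circ\apply{g_j}_{j\in n}}=h\circ g_i$. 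Now $h\in S$ is unary and $g_i\in R\cup\TrivOps$ is $m$\dash{}ary. If $g_i=\eni[m]{k}$ is a projection, then $h\circ g_i=h\circ\eni[m]{k}\in R$; if $g_i=h'\circ\eni[m]{k}\in R$ with $h'\in S$, then associativity gives $h\circ g_i=\apply{h\circ h'}\circ\eni[m]{k}$, and $h\circ h'\in S$ because $S$ is closed under composition, so again $h\circ g_i\in R$. Hence $R$ is a semiclone.

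Since $g=g\circ\eni[1]{0}$ with $g\in S$ exhibits every $g\in G$ as an element of~$R$, we get $G\subs R$, and minimality yields $\genSclone{G}\subs R$. For the opposite inclusion I would fix $n\in\Np$ and $i\in n$ and prove $f\circ\eni{i}\in\genSclone{G}$ by induction on~$k$ for $f=g_1\circ\dotsm\circ g_k\in S$. In the base case $k=1$, applying the semiclone closure of $\genSclone{G}$ to the unary $g_1\in G\subs\genSclone{G}$ together with the length\dash{}one tuple $\apply{\eni{i}}$ of the $n$\dash{}ary projection $\eni{i}\in\TrivOps$ gives $g_1\circ\eni{i}\in\genSclone{G}$. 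For the inductive step, write $f=g_1\circ f'$ with $f'=g_2\circ\dotsm\circ g_k$ a product of $k-1$ generators; by the induction hypothesis $f'\circ\eni{i}\in\genSclone{G}$, and this operation is $n$\dash{}ary, so composing the unary $g_1\in\genSclone{G}$ with the length\dash{}one tuple $\apply{f'\circ\eni{i}}$ and using associativity yields $f\circ\eni{i}=g_1\circ\apply{f'\circ\eni{i}}\in\genSclone{G}$. This finishes the induction, hence $R\subs\genSclone{G}$.

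The only genuinely delicate point is the arity bookkeeping in the semiclone condition, namely recognising that composing $f=h\circ\eni{i}$ with an arbitrary tuple collapses to the unary composite $h\circ g_i$; once this reduction is in hand, closure of $S$ under~$\circ$ does all the remaining work. The boundary cases are immediate: $R$ has no nullary operations, and for the degenerate $G=\emptyset$ one has $S=\emptyset$, so both sides equal the empty semiclone.
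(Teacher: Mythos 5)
Your proposal is correct and takes essentially the same approach as the paper: one inclusion by verifying that the right-hand side \(R\) is a semiclone containing \(G\) and invoking minimality of \(\genSclone{G}\), the other by producing each \(f\circ\eni{i}\) inside \(\genSclone{G}\) via the semiclone closure property. Your induction on the word length of \(f\in S\) and your detailed check that \(R\) is closed under composition merely make explicit what the paper's proof leaves as ``easy to check''.
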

\begin{straightforwardproof}
\m{S} is obtained from~\m{G} by closure \wrt\ composition of
unary operations, which is part of the requirement in
Definition~\ref{def:semiclone}. Thus, we have \m{S\subs\genSclone{G}}, but
now the closure property clearly yields that \m{\genSclone{G}} must contain
the whole set on the right\dash{}hand side as a subset.
\par
Conversely, it is easy to check that the latter collection, first of
all, contains~\m{S} and therefore~\m{G}, and second, actually forms a
semiclone. Thus, it must be a superset of the least semiclone
containing~\m{G}, which is \m{\genSclone{G}}.
\end{straightforwardproof}

\begin{corollary}\label{cor:unary-parts=transf-sg}
The unary parts of semiclones \m{\lset{\Fn[1]{F}}{F\in \Sclones}} are
precisely all (carrier sets of) transformation semigroups
on~\m{\CarrierSet}.
\end{corollary}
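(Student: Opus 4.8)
The plan is to prove the two inclusions separately, the reverse one resting entirely on Lemma~\ref{lem:semiclone-gen-by-transf}, which already packages all the substantive work.

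First I would show that the unary part of every semiclone is closed under composition. Given \m{F\in\Sclones} and \m{f,g\in\Fn[1]{F}}, I specialise the closure condition of Definition~\ref{def:semiclone} to the case \m{n=m=1}: with \m{f\in\Fn[1]{F}} and the length\dash{}one tuple \m{\apply{g}\in\apply{\Fn[1]{\apply{F\cup\TrivOps}}}^{1}} it produces the composite \m{f\circ g\in F}. Since the composite of two unary maps is again unary, this yields \m{f\circ g\in\Fn[1]{F}}, so that \m{\Fn[1]{F}} is a subsemigroup of \m{\algwops{\Ops[1]}{\circ}}, \ie\ a transformation semigroup on~\m{\CarrierSet}.

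For the converse I would start from an arbitrary transformation semigroup \m{S\subs\Ops[1]} and exhibit a semiclone whose unary part is exactly~\m{S}. Since \m{S} is already closed under composition, it coincides with the transformation semigroup it generates, so Lemma~\ref{lem:semiclone-gen-by-transf}, applied with \m{G\defeq S}, gives the explicit description \m{\genSclone{S}=\lset{f\circ\eni{i}}{i\in n\land n\in\Np\land f\in S}}. It then remains to read off the unary part of this set: the operation \m{f\circ\eni{i}} has arity~\m{n}, hence is unary precisely when \m{n=1}, which forces \m{i=0} and \m{\eni[1]{0}=\id_{\CarrierSet}}, giving \m{f\circ\eni[1]{0}=f}. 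Therefore \m{\Fn[1]{\genSclone{S}}=\lset{f}{f\in S}=S}, and \m{S} is indeed the unary part of the semiclone \m{\genSclone{S}}.

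Combining both inclusions establishes the asserted equality. I expect no real obstacle, as the heavy lifting is done in Lemma~\ref{lem:semiclone-gen-by-transf}; the only point requiring care is the final extraction of the unary part, where one must observe that \m{f\circ\eni{i}} carries arity~\m{n} rather than~\m{1}, so all summands with \m{n>1} are discarded and only the identity projection in arity~\m{1} survives. The degenerate case \m{S=\emptyset} is covered as well, since then \m{\genSclone{S}=\emptyset} and its unary part is the empty semigroup.
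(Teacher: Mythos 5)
Your proof is correct and takes essentially the same route as the paper's: the forward inclusion is the (definitional) observation that \m{\Fn[1]{F}} is closed under composition, and the converse applies Lemma~\ref{lem:semiclone-gen-by-transf} with \m{G\defeq S} and reads off that \m{\genSclone[1]{S}=\lset{f\circ\eni[1]{0}}{f\in S}=S}. Your version merely spells out the arity bookkeeping (discarding the summands with \m{n>1}) that the paper leaves implicit.
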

\begin{easyproof}
By definition, the restriction \m{\Fn[1]{F}} of any semiclone
\m{F\in\Sclones} forms a transformation semigroup. The converse inclusion
follows from Lemma~\ref{lem:semiclone-gen-by-transf} as we have
\m{\genSclone[1]{S} = \lset{f\circ \eni[1]{0}}{f\in S} = S} for any
transformation semigroup \m{S\subs\Ops}.
\end{easyproof}

As mentioned in the introduction, semiclones are not a new invention. They
are just the subuniverses (``closed classes of functions'') of the full
iterative \name{Post} algebra. In order to see this we need a few
definitions.
\par

For \m{n\in\Np} define \m{\functionhead{\alpha^{\zeta}_n}{n}{n}} by
\m{\alpha^{\zeta}_n\apply{i}\defeq i+1 \pmod n} and
\m{\alpha^{\zeta}_0 \defeq \id_0}. Moreover, let
\m{\functionhead{\alpha^{\tau}_n}{n}{n}} be the transposition
\m{\apply{0,1}} for \m{n\in\N}, \m{n\geq 2}, and put
\m{\alpha^{\tau}_n\defeq \id_n} for \m{n\in\set{0,1}}.
We continue by defining \m{\functionhead{\alpha^{\Delta}_n}{n}{n-1}} via
\m{\alpha^{\Delta}_n\apply{i}\defeq \max\apply{0,i-1}} for \m{n\in\N},
\m{n\geq 2}, letting \m{\alpha^{\Delta}_n\defeq \id_n} for
\m{n\in\set{0,1}} and declaring the map
\m{\functionhead{\alpha^{\nabla}_n}{n}{n+1}} by
\m{\alpha^{\nabla}_n\apply{i}\defeq i+1} for any \m{n\in\N}.
\par
On this basis we define for \m{\omega\in\set{\zeta,\tau,\Delta,\nabla}} a
unary map \m{\functionhead{\omega}{\Ops}{\Ops}} by
\m{\omega\apply{f}\defeq \polymer{\alpha^{\omega}_{\arity\apply{f}}}{f}}
for \m{f\in\Ops}.
Moreover, for \m{f,g\in\Ops}, \m{n\defeq\arity(f)},
\m{m\defeq\arity(g)} we construct \m{f * g\in\Ops[k]} where
\m{k\defeq\max\apply{0,n+m-1}} as follows: if \m{n\geq 2} we put
\m{f*g\defeq\composition{f}{g\circ\apply{\eni[k]{i}}_{i\in m},
                                  \apply{\eni[k]{m+j}}_{j\in n-1}}};
for \m{n=1}, we define the product
\m{f*g\defeq f\circ \composition{g}{\eni[k]{i}}_{i\in m}} whenever
\m{m>0}, and \m{f*g\defeq f\circ g} if \m{m=0};
for \m{n=0}, we define
\m{f*g\defeq f} in case that \m{k=0}, and
\m{f*g\defeq f\circ\apply{\eni[k]{i}}_{i\in 0}} otherwise (where
\m{\apply{\eni[k]{i}}_{i\in 0}} by definition is the unique map
from~\m{\CarrierSet[k]} to~\m{\CarrierSet[0]}).
\par
In this way, we obtain an algebra
\m{\alg{\Ops}\defeq \algwops{\Ops}{\zeta,\tau,\Delta,\nabla,*}} of arity
type \m{\apply{1,1,1,1,2}} that we call \emph{full iterative \name{Post}
algebra}. It is easy to see that \m{\Ops\setminus\Ops[0]} is a subuniverse,
and the corresponding subalgebra is the one that has been introduced under
precisely the same name
in~\cite{MalcevIterativeAlgebrasAndPostVarieties1966}. The difference in
terminology is just of technical nature and shows up because we wish to
accommodate all nullary constants in our framework.
\par

The algebra \m{\alg{\Ops}} obviously is less prodigal of its fundamental
operations than \m{\algwops{\Ops}{\Phi}} introduced above. The
following lemma proves that both actually do the same job.
\begin{lemma}\label{char:semiclones=subs-of-iterative-alg}
The semiclones on~\m{\CarrierSet} are exactly the subuniverses of the full
iterative \name{Post} algebra: \m{\Sclones = \Sub\apply{\alg{\Ops}}}.
\end{lemma}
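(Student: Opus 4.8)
The plan is to prove the two set inclusions $\Sclones \subs \Sub\apply{\alg{\Ops}}$ and $\Sub\apply{\alg{\Ops}} \subs \Sclones$ separately, in each case reducing the abstract closure conditions to the single composition closure property of Definition~\ref{def:semiclone}. The whole argument rests on expressing the five fundamental operations $\zeta,\tau,\Delta,\nabla,*$ of $\alg{\Ops}$ in terms of the composition $\composition{f}{\listen{g}{n}}$ with projections, and conversely recovering an arbitrary composition with tuplings of functions and projections from repeated applications of these five operations. Throughout, the identity $\polymer{\alpha}{f} = f\circ\apply{\eni[m]{\alpha\apply{i}}}_{i\in n}$ recalled in the preliminaries will be the main computational tool, since $\zeta,\tau,\Delta,\nabla$ are by definition polymer operations and polymers are compositions with tuplings of projections.

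First I would show $\Sclones \subs \Sub\apply{\alg{\Ops}}$. Let $F$ be a semiclone. For each $f\in F$ of arity~$n$, I must check that $\zeta\apply{f},\tau\apply{f},\Delta\apply{f},\nabla\apply{f}$ and any product $f*g$ (for $g\in F$) again lie in~$F$. Since each of the unary operations sends $f$ to a polymer $f\circ\apply{\eni[m]{\alpha^\omega_n\apply{i}}}_{i\in n}$, where the $\eni[m]{\alpha^\omega_n\apply{i}}$ are projections in $\TrivOps[m]\subs\Fn[m]{\apply{F\cup\TrivOps}}$, the closure property of Definition~\ref{def:semiclone} applied with these projections as the $g_i$ immediately yields membership in~$F$; one only needs to treat the degenerate small arities ($n\in\set{0,1}$, where the maps are identities) separately, which is routine. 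For the binary product $f*g$ I would unwind its case definition: in each case $f*g$ is exhibited as $\composition{f}{\listen{h}{n}}$ where every $h_j$ is either a polymer of~$g$ (hence, by the closure property, obtainable from $g\in F$ and projections) or a projection itself, so $f*g\in F$ follows from Definition~\ref{def:semiclone}, using that $F$ is already closed under forming the requisite polymers of~$g$.

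Conversely, for $\Sub\apply{\alg{\Ops}} \subs \Sclones$, let $F$ be a subuniverse of $\alg{\Ops}$ and let $f\in\Fn{F}$ and $\apply{\listen{g}{n}}\in\apply{\Fn[m]{\apply{F\cup\TrivOps}}}^n$; I must show $\composition{f}{\listen{g}{n}}\in F$. The idea is to build this composition up from $*$, the variable manipulations $\zeta,\tau,\Delta,\nabla$, and the fact that $F$ is automatically closed under all polymers (again by the unary operations, so that projection-arguments $g_i\in\TrivOps$ cause no difficulty, since composing with a projection is itself a polymer operation applied to the remaining genuine arguments). Concretely, one substitutes the functions $g_i\in F$ one coordinate at a time using the single-place product $*$, reindexing variables with $\zeta,\tau$ to bring the correct argument into the substitution position and padding or merging variables with $\nabla,\Delta$; the superassociativity law of Lemma~\ref{lem:superassociativity}\eqref{item:superassociativity} guarantees that the iterated single substitutions assemble to exactly $\composition{f}{\listen{g}{n}}$.

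The main obstacle I expect is precisely this last synthesis step: verifying that the $*$-operation, which substitutes only into a distinguished place and shifts the remaining variables, can be combined with the permutation, identification and introduction operators to realise an \emph{arbitrary} tupling $\listen{g}{n}$ of $m$-ary functions, including the many bookkeeping cases where some $g_i$ are projections, where variables must be identified (the role of $\Delta$), and where arities collapse to $0$. This is exactly the content of Ma\Palatalization{l}cev's classical equivalence between iterative algebras and clones; I would carry it out by an induction on the number~$n$ of functions being substituted, tracking the arities carefully through the $\max\apply{0,\cdot}$ conventions, and invoking Lemma~\ref{lem:superassociativity} to certify each reassociation. The edge cases $n=0$, $m=0$ and the empty carrier need individual attention but present no conceptual difficulty once the generic case is handled.
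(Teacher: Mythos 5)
Your proposal is correct and takes essentially the same approach as the paper: semiclones are closed under $\zeta,\tau,\Delta,\nabla$ because these produce polymers, \ie\ compositions with tuplings of projections, and under $*$ by its very construction; conversely, an arbitrary composition $\composition{f}{\listen{g}{n}}$ is recovered as a term operation of $\alg{\Ops}$ applied to $f$ and the non-projection arguments, exactly the classical Ma\Palatalization{l}cev-type argument. The paper dispatches this second direction as a cited, tedious but well-known exercise, and your sketch (iterated $*$-substitution, polymers to absorb the projection arguments, superassociativity to certify the reassembly) is precisely that exercise.
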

\begin{proof}
We saw earlier that any polymer can be expressed as a composition with a
tupling of projections under which semiclones are closed by definition.
Thus any semiclone is closed \wrt\ the unary operations~\m{\zeta},
\m{\tau}, \m{\Delta} and~\m{\nabla}. By construction of~\m{*},
it is also closed \wrt~\m{*}.
\par
It is a tedious, but well\dash{}known exercise (known from the proof that
clones are exactly the subuniverses of function algebras, which differ from
iterative algebras by just adding an additional constant representing a
projection) that the converse also holds:
for arities \m{m,n\in\N} and operations \m{f\in\Fn{F}} and
\m{\listen{g}{n}\in \apply{\Fn[m]{\apply{F\cup\TrivOps}}}^{n}} any
composition \m{\composition{f}{\listen{g}{n}}} can be expressed as the
result of a term operation of \m{\alg{\Ops}} applied to
\m{\apply{f,\listen{g}{n}}}. For \m{F\in\Sub\apply{\alg{\Ops}}} this means
that any such composition also has to belong to~\m{F}.
\end{proof}

The following facts on the relationship of semiclones and clones are
well\dash{}known
(see~\cite[p.~5 et seq.]{Harnau8Habil}
  or~\cite[p.~8 et seq.]{Harnau7AVerallgRelationenbegriffI},
  Lemmata~3 and~4, and Satz~1).
In this context, we recollect that
\m{\genClone{F}} denotes the least clone containing some set
\m{F\subs\Ops}, \ie\ the \emph{clone generated by~\m{F}}.
The symbol \m{\Clones} stands for the set of all clones on~\m{\CarrierSet}.

\begin{lemma}\label{lem:semiclones+proj=clones}
For any set \m{F\subs\Ops} and any \m{0\leq i<n}, \m{n\in\N},
the following assertions are true:
\begin{enumerate}[(a)]
\item\label{item:semiclone-gen-by-proj}
      \m{\genSclone{\set{\eni{i}}} = \TrivOps}.
\item\label{item:char-clone-cl=sclone-cl+proj}
      \m{\genSclone{F \cup\set{\eni{i}}} = \genSclone{F}\cup \TrivOps =
         \genClone{F}}.
\item\label{item:semiclones-all-or-nothing}
      If \m{F\in\Sclones}, then
      \m{F\cap\TrivOps\in\set{\emptyset,\TrivOps}}.
\item\label{item:char-clones=semiclones-with-proj}
      \m{\Clones = \lset{G\in\Sclones}{G\cap \TrivOps\neq\emptyset}}.
\end{enumerate}
\end{lemma}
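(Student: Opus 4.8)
The plan is to establish the four parts in the order \eqref{item:semiclone-gen-by-proj}, \eqref{item:semiclones-all-or-nothing}, \eqref{item:char-clones=semiclones-with-proj}, \eqref{item:char-clone-cl=sclone-cl+proj}, since each relies on the previous ones and the whole argument hinges on one observation: in the closure condition of Definition~\ref{def:semiclone} the inner functions of a composition may be drawn from \m{F\cup\TrivOps}, hence in particular may be \emph{arbitrary} projections. For part~\eqref{item:semiclone-gen-by-proj} I would first note that \m{\TrivOps} is itself a semiclone containing \m{\eni{i}} (composing a projection with projections returns a projection), so that \m{\genSclone{\set{\eni{i}}}\subs\TrivOps}. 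For the reverse inclusion I would use that \m{\genSclone{\set{\eni{i}}}} contains \m{\eni{i}} and is closed, whence it must contain every composite \m{\eni{i}\circ\apply{\eni[m]{k_{0}},\dotsc,\eni[m]{k_{n-1}}}=\eni[m]{k_{i}}}; letting \m{m\in\Np} and \m{k_{i}<m} range freely produces every projection, so that \m{\TrivOps\subs\genSclone{\set{\eni{i}}}}.

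Part~\eqref{item:semiclones-all-or-nothing} is then immediate: if a semiclone \m{F} meets \m{\TrivOps}, I would pick a projection \m{\eni{i}\in F} and invoke~\eqref{item:semiclone-gen-by-proj} to get \m{\TrivOps=\genSclone{\set{\eni{i}}}\subs F}, so that \m{F\cap\TrivOps=\TrivOps}; otherwise this intersection is empty. For part~\eqref{item:char-clones=semiclones-with-proj} I would record the bridging fact that a semiclone \m{G} with \m{\TrivOps\subs G} is precisely a clone: indeed \m{G\cup\TrivOps=G}, so the condition of Definition~\ref{def:semiclone} becomes closure under composition with all inner arguments taken from~\m{G}, \ie\ ordinary clone composition, and \m{G} moreover contains every projection; conversely any clone \m{C} contains \m{\TrivOps} and, since \m{C\cup\TrivOps=C}, satisfies the semiclone condition. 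Combining this equivalence with part~\eqref{item:semiclones-all-or-nothing}, which upgrades ``\m{G} meets \m{\TrivOps}'' to ``\m{\TrivOps\subs G}'', delivers \m{\Clones=\lset{G\in\Sclones}{G\cap\TrivOps\neq\emptyset}}.

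For part~\eqref{item:char-clone-cl=sclone-cl+proj} the only real work is to check that \m{M\defeq\genSclone{F}\cup\TrivOps} is a semiclone; it then contains all projections and hence is a clone by part~\eqref{item:char-clones=semiclones-with-proj}. The verification is short: given a composition \m{f\circ\apply{\listen{g}{n}}} with \m{f\in M} of arity~\m{n} and the \m{g_{j}} lying in the \nbdd{m}ary part of \m{M=M\cup\TrivOps}, either \m{f} is a projection, so that the composite equals some \m{g_{j}\in M}, or \m{f\in\genSclone{F}}, so that the composite lies in \m{\genSclone{F}\subs M} precisely because the semiclone \m{\genSclone{F}} already admits projections among its inner arguments. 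Being a clone containing~\m{F}, \m{M} contains \m{\genClone{F}}; conversely \m{\genClone{F}} is a clone, hence a semiclone, containing \m{F} together with all projections, so that \m{\genSclone{F}\cup\TrivOps=M\subs\genClone{F}}, giving \m{M=\genClone{F}}. The same two ingredients settle \m{L\defeq\genSclone{F\cup\set{\eni{i}}}}: by parts~\eqref{item:semiclones-all-or-nothing} and~\eqref{item:char-clones=semiclones-with-proj} the semiclone \m{L} contains \m{\eni{i}}, hence all of \m{\TrivOps}, hence is a clone containing~\m{F}, so \m{\genClone{F}\subs L}, while \m{\genClone{F}} is a semiclone containing \m{F\cup\set{\eni{i}}}, so \m{L\subs\genClone{F}}. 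Hence \m{L=\genClone{F}=M}, which is exactly the chain of equalities asserted in~\eqref{item:char-clone-cl=sclone-cl+proj}.

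The conceptually decisive steps are the projection observation behind part~\eqref{item:semiclone-gen-by-proj} and the bridging fact in part~\eqref{item:char-clones=semiclones-with-proj}; I do not anticipate a genuine obstacle here, the only point requiring a little care being the arity bookkeeping when verifying that \m{M} is a semiclone.
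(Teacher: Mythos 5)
Your proposal is correct and rests on essentially the same ingredients as the paper's proof: a single projection generates all projections, the set \m{\genSclone{F}\cup\TrivOps} is a clone containing~\m{F}, and the clones are exactly the semiclones meeting \m{\TrivOps}. The differences are purely organisational --- you prove the parts in the order (a), (c), (d), (b), deriving (c) directly from (a) by monotonicity of \m{\genSclone{\,}} instead of via (b) as the paper does, and you spell out the case analysis showing that \m{\genSclone{F}\cup\TrivOps} is a semiclone, a step the paper dismisses as ``easy to check''.
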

\begin{easyproof}
Fix any set \m{F\subs\Ops} and any projection \m{\eni{i}},
where \m{0<i\leq n}, \m{n\in\Np}.
\begin{enumerate}[(a)]
\item The set of projections is a clone, and hence a semiclone. We only
      need to check that \m{\eni{i}} generates any other projection. First,
      we note that \m{\id_A = \composition{\eni{i}}{\id_A,\dotsc,\id_A}},
      so \m{\id_A\in \genSclone{\set{\eni{i}}}}. Besides, for any
      \m{m\in\Np} and \m{0\leq j<m} we have
      \m{\eni[m]{j} = \id_A\circ\eni[m]{j}}, whence we obtain the inclusion
      \m{\TrivOps\subs\genSclone{\set{\id_A}}
                 \subs\genSclone{\set{\eni{i}}}}.
\item The relation
      \m{\genSclone{F}\cup\TrivOps\subs\genSclone{F\cup\set{\eni{i}}}}
      follows from~\eqref{item:semiclone-gen-by-proj}, and the inclusion
      \m{\genSclone{F\cup\set{\eni{i}}}\subs\genClone{F}} holds as each
      clone is a semiclone. Finally, \m{\genSclone{F}\cup\TrivOps}
      contains~\m{F}, and it is easy to check that it is indeed a clone.
      Therefore, it has to contain \m{\genClone{F}} as a subset.
\item If~\m{F} is a semiclone and \m{F\cap\TrivOps\neq\emptyset}, then for
      some arity \m{n\in\N} and some \m{0\leq i<n} we have
      \m{F=\genSclone{F} = \genSclone{F\cup\set{\eni{i}}}\sups\TrivOps}
      by~\eqref{item:char-clone-cl=sclone-cl+proj}. Therefore,
      \m{F\cap\TrivOps=\TrivOps}.
\item The inclusion ``\m{\subs}'' is trivial. Conversely, any semiclone
      \m{G\in\Sclones} such that \m{G\cap\TrivOps\neq\emptyset} fulfils
      \m{\TrivOps\subs G} by~\eqref{item:semiclones-all-or-nothing}. Hence,
      by~\eqref{item:char-clone-cl=sclone-cl+proj}, one obtains that
      \m{\genClone{G} = \genSclone{G}\cup\TrivOps = G\cup\TrivOps = G},
      \ie\ that~\m{G} is a clone.\qedhere
\end{enumerate}
\end{easyproof}
\par

As a consequence of the previous lemma, we can describe those semiclones
whose unary parts yield proper transformation semigroups, \ie\ those which
are no monoids.
\begin{corollary}\label{cor:char-proper-sg}
On any set~\m{\CarrierSet} we have
\begin{equation*}
\lset{\Fn[1]{F}}{F\in\Sclones\setminus\Clones}
=\lset{S\subs\Ops[1]\setminus\set{\id_{\CarrierSet}}}{\apply{S,\circ}
\text{is a semigroup}}.
\end{equation*}
\end{corollary}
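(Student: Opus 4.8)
The plan is to reduce the whole statement to the single observation that, among semiclones, being a clone is detected by the unary part alone --- namely by whether it contains \m{\id_{\CarrierSet}} --- and then to obtain both inclusions directly from Corollary~\ref{cor:unary-parts=transf-sg}.

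First I would isolate the key equivalence: for \m{F\in\Sclones} one has \m{F\in\Clones} if and only if \m{\id_{\CarrierSet}\in\Fn[1]{F}}. Since \m{\eni[1]{0}=\id_{\CarrierSet}} is the (only) unary projection, the membership \m{\id_{\CarrierSet}\in F} gives \m{F\cap\TrivOps\neq\emptyset}, so \m{F} is a clone by Lemma~\ref{lem:semiclones+proj=clones}\eqref{item:char-clones=semiclones-with-proj}. Conversely, if \m{F} is a clone, then \m{F\cap\TrivOps\neq\emptyset}, and the all\dash{}or\dash{}nothing behaviour in Lemma~\ref{lem:semiclones+proj=clones}\eqref{item:semiclones-all-or-nothing} upgrades this to \m{F\cap\TrivOps=\TrivOps}, whence \m{\id_{\CarrierSet}=\eni[1]{0}\in F}. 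Hence \m{F\in\Sclones\setminus\Clones} precisely when \m{\id_{\CarrierSet}\notin\Fn[1]{F}}.

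With this in hand the inclusion ``\m{\subs}'' is immediate: for \m{F\in\Sclones\setminus\Clones}, Corollary~\ref{cor:unary-parts=transf-sg} tells us that \m{\Fn[1]{F}} is (the carrier of) a transformation semigroup, and the equivalence gives \m{\id_{\CarrierSet}\notin\Fn[1]{F}}, so \m{\Fn[1]{F}\subs\Ops[1]\setminus\set{\id_{\CarrierSet}}} lies in the right\dash{}hand collection. For ``\m{\sups}'', I would start from a semigroup \m{\apply{S,\circ}} with \m{S\subs\Ops[1]\setminus\set{\id_{\CarrierSet}}} and set \m{F\defeq\genSclone{S}}; by Lemma~\ref{lem:semiclone-gen-by-transf} (exactly as exploited in the proof of Corollary~\ref{cor:unary-parts=transf-sg}) its unary part is \m{\Fn[1]{F}=\genSclone[1]{S}=S}, so \m{\id_{\CarrierSet}\notin\Fn[1]{F}} and the equivalence yields \m{F\in\Sclones\setminus\Clones}. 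Thus \m{S=\Fn[1]{F}} is realised on the left.

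I do not anticipate a real obstacle: the statement is essentially a bookkeeping corollary of the unary\dash{}part description together with the projection dichotomy of Lemma~\ref{lem:semiclones+proj=clones}. The only point that needs a moment's care is the reduction of the clone property to the presence of the single operation \m{\id_{\CarrierSet}} rather than of an arbitrary projection; this is precisely what parts~\eqref{item:semiclones-all-or-nothing} and~\eqref{item:char-clones=semiclones-with-proj} of that lemma deliver, and one should also keep in mind that the degenerate case \m{F=\emptyset} (giving \m{S=\emptyset}) is covered uniformly by the same argument.
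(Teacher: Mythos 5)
Your proof is correct and takes essentially the same route as the paper: both directions rest on Corollary~\ref{cor:unary-parts=transf-sg} (equivalently, Lemma~\ref{lem:semiclone-gen-by-transf}) for the semigroup structure of unary parts, and on Lemma~\ref{lem:semiclones+proj=clones} to detect clone\dash{}hood of a semiclone by the presence of \m{\id_{\CarrierSet}}. The only cosmetic differences are that you isolate the equivalence ``\m{F\in\Clones} iff \m{\id_{\CarrierSet}\in\Fn[1]{F}}'' as a separate step and unfold the corollary via the generated semiclone in the reverse inclusion, where the paper cites Corollary~\ref{cor:unary-parts=transf-sg} as a black box.
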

\begin{easyproof}
If \m{F\in\Sclones\setminus\Clones}, then \m{\Fn[1]{F}} is a carrier set
of a transformation semigroup by Corollary~\ref{cor:unary-parts=transf-sg}.
Since \m{F\in\Sclones\setminus\Clones}, we have
\m{F\cap\TrivOps=\emptyset} by Lemma~\ref{lem:semiclones+proj=clones}%
                        \eqref{item:char-clones=semiclones-with-proj},
and so \m{\Fn[1]{F}\subs\Ops[1]\setminus\set{\id_{\CarrierSet}}}.
Conversely, if \m{S\subs\Ops[1]\setminus\set{\id_{\CarrierSet}}} is a
carrier set of a proper transformation semigroup, then, by
Corollary~\ref{cor:unary-parts=transf-sg}, there exists some
\m{F\in\Sclones} such that \m{S=\Fn[1]{F}}. If \m{F\in\Clones}, then we
would have \m{\id_{\CarrierSet}\in\Fn[1]{F}=S}, violating our assumption.
Hence, \m{F\in\Sclones\setminus\Clones}.
\end{easyproof}
\par

The \name{Galois} correspondence \m{\PolpOp\text{-}\InvpOp} gives us plenty
of examples of semiclones
(cf.~\cite[{Lemma~2, p.~12}]{Harnau7BVerallgRelationenbegriffII} for the
situation without nullary operations).
\begin{lemma}\label{lem:Polp=semiclone}
Any polymorphism set \m{\Polp{Q}} with \m{Q\subs\Relps}
is a semiclone.
\end{lemma}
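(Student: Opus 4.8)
The plan is to verify the defining closure property of a semiclone from Definition~\ref{def:semiclone} directly. So I fix arities \m{m,n\in\N}, an \nbdd{n}ary operation \m{f\in\Polp{Q}} and a tuple \m{\apply{\listen{g}{n}}\in\apply{\Fn[m]{\apply{\Polp{Q}\cup\TrivOps}}}^{n}}, and I aim to show that the composite \m{h\defeq\composition{f}{\listen{g}{n}}\in\Ops[m]} again lies in \m{\Polp{Q}}. Since membership in \m{\Polp{Q}} is decided pair by pair, it is enough to fix an arbitrary relation pair \m{\apply{\rho,\rho'}\in Q}, of arity \m{k\in\N} say, and to check that \m{h\preserves\apply{\rho,\rho'}}. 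For this I use the matrix reformulation of preservation, Definition~\ref{def:preservation}\eqref{cond:preservation}: I must show that whenever \m{X\in\CarrierSet[{k\times m}]} is a matrix all of whose columns \m{X_{-,j}} (\m{j\in m}) belong to~\m{\rho}, the tuple \m{\apply{h\apply{X_{i,-}}}_{i\in k}} produced by row\dash{}wise application of~\m{h} lies in~\m{\rho'}.

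The core of the argument is an intermediate matrix. Given such an~\m{X}, I form the \m{\apply{k\times n}} matrix~\m{Y} whose \nbdd{\ell}th column is the result of applying~\m{g_{\ell}} row\dash{}wise to~\m{X}, that is, \m{Y_{i,\ell}\defeq g_{\ell}\apply{X_{i,-}}}. The key step is to show that each column \m{Y_{-,\ell}} again lies in~\m{\rho}, and here the two possibilities for~\m{g_{\ell}} are treated separately. If \m{g_{\ell}=\eni[m]{j}} is a projection, then \m{Y_{-,\ell}=X_{-,j}\in\rho} by the assumption on~\m{X}. If instead \m{g_{\ell}\in\Polp{Q}}, then \m{g_{\ell}\preserves\apply{\rho,\rho'}}, so the row\dash{}wise application of~\m{g_{\ell}} to~\m{X} lands in the \emph{smaller} relation, whence \m{Y_{-,\ell}\in\rho'\subs\rho}. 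In either case the column belongs to~\m{\rho}.

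Once all columns of~\m{Y} are known to lie in~\m{\rho}, I invoke the preservation property of~\m{f}: since \m{f\in\Polp{Q}} satisfies \m{f\preserves\apply{\rho,\rho'}} and the columns of~\m{Y} are in~\m{\rho}, the row\dash{}wise application of~\m{f} to~\m{Y} yields a tuple of~\m{\rho'}. By construction \m{f\apply{Y_{i,-}}=f\apply{g_{1}\apply{X_{i,-}},\dotsc,g_{n}\apply{X_{i,-}}}=h\apply{X_{i,-}}}, so this tuple is exactly \m{\apply{h\apply{X_{i,-}}}_{i\in k}}, which therefore lies in~\m{\rho'}, as required. (Alternatively, one may phrase the same computation through the tupling formulation together with the superassociativity law of Lemma~\ref{lem:superassociativity}, factoring \m{h\apply{X_{i,-}}} as \m{f} applied to the images of the columns under the~\m{g_{\ell}}.)

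The main point to watch — more a subtlety than a genuine obstacle — is the asymmetric role of the two relations. Projections taken from~\m{\TrivOps} generally do \emph{not} preserve \m{\apply{\rho,\rho'}} when \m{\rho'\subsetneq\rho}, so they need not belong to~\m{\Polp{Q}}; nonetheless they map \m{\rho}\dash{}columns back into~\m{\rho}, which is all that is needed at the intermediate stage. The inclusion \m{\rho'\subs\rho} is exactly what makes the projection case and the \m{\Polp{Q}}\dash{}case uniform here, and it is only in the final application of~\m{f} that the passage from~\m{\rho} to the strictly smaller~\m{\rho'} is exploited.
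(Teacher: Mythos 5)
Your proof is correct and takes essentially the same route as the paper's: the decisive observation in both is that the inner \nbdd{m}ary functions need only keep the columns inside the \emph{larger} relation~\m{\rho}, and the descent to~\m{\rho'} is exploited solely in the final application of~\m{f}. The paper merely packages your two cases (projection versus member of \m{\Polp{Q}}) uniformly, by noting that \m{\TrivOps\cup\Polp{Q}\subs\Pol{Q_1}} for \m{Q_1} the set of first components of pairs in~\m{Q}, and it phrases the computation via tuplings and the superassociativity law of Lemma~\ref{lem:superassociativity} instead of your matrix formulation.
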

\begin{easykeepproof}%
Consider any \m{Q\subs\Relps} and put
\m{Q_1\defeq \lset{\rho\in\Rels}{\apply{\rho,\rho'}\in Q}}. Let
\m{m,n\in\N}, \m{f\in\Polp[n]{Q}} and
\m{\apply{\listen{g}{n}}\in\apply{\Pol[m]{Q_1}}^{n}}. We prove that the
composition \m{h\defeq \composition{f}{\listen{g}{n}}} belongs to
\m{\Polp{Q}}, which demonstrates our lemma as obviously
\m{\TrivOps\cup\Polp{Q}\subs\Pol{Q_1}}. Indeed, \m{h} preserves any
relation pair \m{\apply{\rho,\rho'}\in Q}: whenever
\m{\mathbf{r}\in\rho^m}, then superassociativity yields
\begin{equation*}
\composition{h}{\mathbf{r}} =
\composition{\apply{\composition{f}{\listen{g}{n}}}}{\mathbf{r}}
\stackrel{\text{\ref{lem:superassociativity}%
                \eqref{item:superassociativity}}}{=}
\composition{f}{\composition{g_{0}}{\mathbf{r}},\dotsc,
                \composition{g_{n-1}}{\mathbf{r}}}.
\end{equation*}
The latter tuple is a member of \m{\rho'} as
\m{f\preserves\apply{\rho,\rho'}} and
\m{\apply{\composition{g_{0}}{\mathbf{r}},\dotsc,
          \composition{g_{n-1}}{\mathbf{r}}}}
belongs to \m{\rho^{n}} due to
\m{\listen{g}{n}\in\Pol{Q_1}\subs\Pol{\set{\rho}}}.
\end{easykeepproof}
\par

The following facts can be routinely proven using
Lemma~\ref{lem:Polp=semiclone}.
\begin{corollary}\label{cor:genSclone-subs-PolpInvp}
For any set \m{F\subs\Ops} we have
\begin{align*}
\genSclone{F}\subs\Polp{\Invp{F}}&&
\text{and}&&
\Invp{F} = \Invp{\genSclone{F}}.
\end{align*}
\end{corollary}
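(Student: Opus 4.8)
The plan is to reduce everything to Lemma~\ref{lem:Polp=semiclone} together with the purely formal properties of the \name{Galois} correspondence $\Polp$--$\Invp$. Since $\Polp$ and $\Invp$ arise from the preservation relation $\preserves$, they constitute an antitone \name{Galois} connection between $\powerset{\Ops}$ and $\powerset{\Relps}$; consequently $\Polp\Invp$ is a closure operator on sets of operations (so $F\subs\Polp{\Invp{F}}$ for every $F\subs\Ops$), and the triangle identity $\Invp{\Polp{\Invp{F}}}=\Invp{F}$ holds. I would take these facts for granted, as they follow at once from the definition of $\preserves$, any binary relation between two sets inducing such a connection irrespective of arities.

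For the first inclusion I would argue as follows. By extensivity of the closure operator $\Polp\Invp$ we have $F\subs\Polp{\Invp{F}}$. Applying Lemma~\ref{lem:Polp=semiclone} to the set $Q\defeq\Invp{F}\subs\Relps$ shows that $\Polp{\Invp{F}}$ is itself a semiclone. Recalling that $\genSclone{\,}$ is the closure operator of the closure system $\Sclones$, so that $\genSclone{F}$ is the least semiclone containing $F$, it is contained in \emph{every} semiclone containing $F$; in particular $\genSclone{F}\subs\Polp{\Invp{F}}$.

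For the equality $\Invp{F}=\Invp{\genSclone{F}}$ I would prove both inclusions. The inclusion $\Invp{\genSclone{F}}\subs\Invp{F}$ is immediate from $F\subs\genSclone{F}$ and the antitonicity of $\Invp$. For the reverse inclusion I would apply $\Invp$ to the inclusion $\genSclone{F}\subs\Polp{\Invp{F}}$ just established; antitonicity gives $\Invp{\Polp{\Invp{F}}}\subs\Invp{\genSclone{F}}$, and the triangle identity rewrites the left-hand side as $\Invp{F}$, yielding $\Invp{F}\subs\Invp{\genSclone{F}}$. Combining the two inclusions gives the desired equality.

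There is essentially no obstacle here: the sole mathematical input is Lemma~\ref{lem:Polp=semiclone}, and everything else is standard \name{Galois}-theoretic bookkeeping. The one point deserving a moment's care is to ensure the formal \name{Galois}-connection identities remain valid in the present nullary-inclusive setting, which is why I would record them explicitly at the outset. Should one wish to avoid invoking the abstract triangle identity, the equality $\Invp{F}=\Invp{\genSclone{F}}$ can alternatively be read off directly: a relation pair is preserved by every operation generated from $F$ precisely when it is preserved by every member of $F$, which is the content of Lemma~\ref{lem:Polp=semiclone} applied to $Q=\Invp{F}$. I would mention this as an alternative but keep the formal argument as the main line.
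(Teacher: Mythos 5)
Your proof is correct and follows exactly the route the paper intends: the paper gives no explicit proof, stating only that the corollary ``can be routinely proven using Lemma~\ref{lem:Polp=semiclone}'', and your argument --- extensivity of $\Polp{\Invp{\cdot}}$, the lemma showing $\Polp{\Invp{F}}$ is a semiclone, minimality of $\genSclone{F}$, and the standard antitone \name{Galois}-connection identities for the equality --- is precisely that routine verification.
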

\par

The next lemma
(cf.~\cite[{Lemma~3, p.~13}]{Harnau7BVerallgRelationenbegriffII}) clarifies
which sets of relation pairs yield proper clones.
\begin{lemma}\label{lem:char-Polp-clone}
For \m{Q\subs\Relps} a semiclone \m{\Polp{Q}} is a clone if and only if
\m{\rho=\rho'} holds for all \m{\apply{\rho,\rho'}\in Q}.
\end{lemma}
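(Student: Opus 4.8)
The plan is to reduce the equivalence to the characterisation of clones among semiclones established in Lemma~\ref{lem:semiclones+proj=clones}. Since \m{\Polp{Q}} is a semiclone by Lemma~\ref{lem:Polp=semiclone}, part~\eqref{item:char-clones=semiclones-with-proj} of that lemma shows that \m{\Polp{Q}} is a clone if and only if \m{\Polp{Q}\cap\TrivOps\neq\emptyset}, and in view of part~\eqref{item:semiclones-all-or-nothing} this happens if and only if \m{\TrivOps\subs\Polp{Q}}. Hence the whole statement comes down to understanding precisely when a projection preserves a relation pair.

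The key computation I would perform first settles exactly this. Fix an arbitrary pair \m{\apply{\rho,\rho'}\in\Relps[m]} and a projection \m{\eni[n]{k}\in\Ops[n]} with \m{0\leq k<n}. Given any \m{\apply{m\times n}}-matrix \m{X\in\CarrierSet[{m\times n}]} whose columns lie in~\m{\rho}, row-wise application of \m{\eni[n]{k}} simply returns the \nbdd{k}th column, that is, \m{\apply{\eni[n]{k}\apply{X_{i,-}}}_{i\in m}=X_{-,k}}. By Definition~\ref{def:preservation}\eqref{cond:preservation}, therefore, \m{\eni[n]{k}\preserves\apply{\rho,\rho'}} holds exactly when \m{X_{-,k}\in\rho'} for every such~\m{X}; as \m{X_{-,k}} ranges over all of~\m{\rho}, this is equivalent to \m{\rho\subs\rho'}, and since \m{\rho'\subs\rho} is built into the notion of a relation pair, it is equivalent to \m{\rho=\rho'}. (The degenerate case \m{\rho=\emptyset} forces \m{\rho'=\emptyset}, so the equivalence persists there as well.)

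With this observation both implications follow at once. If \m{\rho=\rho'} for every \m{\apply{\rho,\rho'}\in Q}, then every projection preserves every member of~\m{Q}, whence \m{\TrivOps\subs\Polp{Q}} and \m{\Polp{Q}} is a clone. Conversely, if \m{\Polp{Q}} is a clone, it contains in particular the identity \m{\eni[1]{0}}; applying the computation above with \m{n=1} and \m{k=0} to an arbitrary \m{\apply{\rho,\rho'}\in Q} yields \m{\rho=\rho'}.

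The genuinely substantive point is the preservation computation, and even that is little more than unfolding the definition of row-wise application: a projection applied to the rows of a matrix reproduces one of its columns. The only thing requiring a little care is the book-keeping around empty relations and an empty carrier set, but these collapse harmlessly because \m{\rho=\emptyset} immediately entails \m{\rho'=\emptyset}.
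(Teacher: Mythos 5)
Your proof is correct, and its converse direction is exactly the paper's: both conclude from clonehood that \m{\id_{\CarrierSet}\in\Polp{Q}}, and that the identity preserves \m{\apply{\rho,\rho'}} only if \m{\rho\subs\rho'}, whence \m{\rho=\rho'}. The forward direction is routed differently, though. The paper recalls from Subsection~\ref{subsect:Polp-Invp} that for a set of identical pairs one has \m{\Polp{Q}=\Pol{Q'}} with \m{Q'=\lset{\rho\in\Rels}{\apply{\rho,\rho}\in Q}}, and then simply cites the classical fact that a polymorphism set of relations is a clone. You instead stay inside the paper's own machinery: you make explicit the key computation that a projection \m{\eni[n]{k}} preserves \m{\apply{\rho,\rho'}} precisely when \m{\rho\subs\rho'}, \ie\ \m{\rho=\rho'} (with the empty and nullary cases correctly dispatched), and then combine Lemma~\ref{lem:Polp=semiclone} with Lemma~\ref{lem:semiclones+proj=clones}\eqref{item:semiclones-all-or-nothing},\eqref{item:char-clones=semiclones-with-proj} to convert ``\m{\Polp{Q}} contains a projection'' into ``\m{\Polp{Q}} is a clone''. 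Your route is more self-contained and isolates the exact preservation criterion for projections, which the paper leaves implicit in its appeal to the \m{\PolOp\text{-}\InvOp} theory; the paper's route is shorter because it reuses the bridge to the classical correspondence already established earlier. Both arguments are sound.
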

\begin{easykeepproof}
If \m{Q\subs\Relps} only consists of identical pairs, then we saw already
in Subsection~\ref{subsect:Polp-Invp} that \m{\Polp{Q}=\Pol{Q'}} where
\m{Q'=\lset{\rho\in\Rels}{\apply{\rho,\rho}\in Q}}. This set always is a
clone. On the other hand, if \m{\Polp{Q}} is a clone, then we have
\m{\id_{\CarrierSet}\in\Polp{Q}}, which implies \m{\rho\subs\rho'} and
thus \m{\rho=\rho'} for every \m{\apply{\rho,\rho'}\in Q}.
\end{easykeepproof}
\par

Note that (along with an appropriate generalisation of preservation)
the three previous statements remain true if one considers
relation pairs of arbitrary, possibly infinite arity. That is to say,
pairs \m{\apply{R,S}}, where \m{S\subs R\subs \CarrierSet[K]} for some
fixed set~\m{K}.
\par

The following two results are in close analogy to Proposition~1.11(a),(b)
from~\cite[p.~17]{PoeGeneralGaloisTheoryForOperationsAndRelations}.
\begin{lemma}\label{lem:Polp-sLoc}
For \m{s\in\N} and any set
\m{Q\subs\Relps[{\leq s}]\defeq\biguplus_{0\leq m\leq s}\Relps[m]} of at
most \nbdd{s}ary relation pairs, we have
\m{\sLoc{\Polp{Q}}=\Polp{Q}}.
\end{lemma}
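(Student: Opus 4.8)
The plan is to establish both inclusions of \m{\sLoc{\Polp{Q}} = \Polp{Q}}; the inclusion ``\m{\sups}'' is immediate because \m{\sLoc{}} is a closure operator and hence extensive. For the reverse inclusion ``\m{\subs}'' I would fix an operation \m{g\in\sLoc{\Polp{Q}}}, say of arity \m{n\in\N}, and show \m{g\in\Polp{Q}}, \ie\ that \m{g\preserves\apply{\rho,\rho'}} for every pair \m{\apply{\rho,\rho'}\in Q}. Unfolding the definition of \m{\sLoc{}}, the hypothesis on~\m{g} means that for every subset \m{B\subs\CarrierSet[n]} with \m{\abs{B}\leq s} there is an \nbd{\m{n}}ary polymorphism \m{f\in\Fn[n]{\Polp{Q}}} with \m{g\Restriction_B = f\Restriction_B}. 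Since \m{Q\subs\Relps[{\leq s}]}, every pair in~\m{Q} has arity \m{m\leq s}, and this bound is what makes the hypothesis applicable.

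To check preservation of a fixed pair \m{\apply{\rho,\rho'}\in Q} of arity \m{m\leq s}, I would use the matrix reformulation in Definition~\ref{def:preservation}\eqref{cond:preservation}: let \m{X\in\CarrierSet[{m\times n}]} be any matrix whose columns \m{X_{-,j}} (\m{j\in n}) all lie in~\m{\rho}. Its rows form the set \m{B\defeq\lset{X_{i,-}}{i\in m}\subs\CarrierSet[n]}, which has at most \m{m\leq s} elements. Applying the hypothesis to this~\m{B} yields some \m{f\in\Fn[n]{\Polp{Q}}} with \m{g\apply{X_{i,-}} = f\apply{X_{i,-}}} for all \m{i\in m}. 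As \m{f} preserves \m{\apply{\rho,\rho'}}, the row\dash{}wise image \m{\apply{f\apply{X_{i,-}}}_{i\in m}} lies in~\m{\rho'}; by the agreement on~\m{B} it coincides with \m{\apply{g\apply{X_{i,-}}}_{i\in m}}. Hence applying~\m{g} row\dash{}wise to~\m{X} also produces a tuple of~\m{\rho'}, which is exactly the preservation condition for~\m{g}. As \m{\apply{\rho,\rho'}\in Q} was arbitrary, \m{g\in\Polp{Q}} follows.

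The decisive---and only slightly delicate---step is the cardinality estimate \m{\abs{B}\leq m\leq s}: it is precisely the point where the restriction \m{Q\subs\Relps[{\leq s}]} enters, ensuring that the number of distinct rows of~\m{X} never exceeds the interpolation parameter~\m{s}. The boundary situations require no extra care: choosing \m{B=\emptyset} (which is the case forced when \m{m=0}, but is in any event admissible) the defining condition of \m{\sLoc{}} already guarantees that \m{\Fn[n]{\Polp{Q}}\neq\emptyset}, so an interpolating~\m{f} always exists, and the row\dash{}wise argument then runs uniformly for every arity \m{m\in\set{0,\dotsc,s}}.
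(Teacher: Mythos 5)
Your proof is correct and follows essentially the same route as the paper's own argument: the paper also fixes a pair \m{\apply{\rho,\rho'}\in\Fn[m]{Q}} with \m{m\leq s}, forms exactly the same set~\m{B} (there written as \m{\lset{\apply{r_j(i)}_{0\leq j<n}}{0\leq i<m}}, the rows of the matrix whose columns lie in~\m{\rho}), interpolates \m{g} by some \m{f\in\Polp[n]{Q}} on~\m{B}, and concludes \m{\composition{g}{\mathbf{r}}=\composition{f}{\mathbf{r}}\in\rho'}. The only cosmetic difference is that you invoke the matrix formulation (condition~(ii) of Definition~\ref{def:preservation}) while the paper uses the tupling formulation (condition~(i)), which the paper itself notes are the same computation; your explicit treatment of the case \m{B=\emptyset} is a correct (and welcome) extra detail that the paper leaves implicit.
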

\begin{easykeepproof}
Consider \m{n\in\N} and \m{g\in\sLoc[s][n]{\Polp{Q}}}. To prove that
\m{g\in\Polp{Q}} take \m{\apply{\rho,\rho'}\in\Fn[m]{Q}} for some
\m{0\leq m\leq s}; we have to check that
\m{g\preserves\apply{\rho,\rho'}}. For this consider any \nbdd{n}tuple
\m{\mathbf{r}=\apply{r_j}_{0\leq j<n}\in\rho^n} of tuples from~\m{\rho}
and define \m{B\defeq \lset{\apply{r_j(i)}_{0\leq j<n}}{0\leq i<m}}.
Evidently, \m{B} is a subset of the domain~\m{\CarrierSet[n]} of~\m{g}
and satisfies \m{\abs{B}\leq m\leq s}. Hence, as
\m{g\in\sLoc{\Polp{Q}}}, there exists some \m{f\in\Polp[n]{Q}} such that
\m{g\Restriction_B = f\Restriction_B}. This implies that
\m{\composition{g}{\mathbf{r}} = \composition{f}{\mathbf{r}}}, and the
latter tuple belongs to \m{\rho'} as
\m{f\preserves\apply{\rho,\rho'}\in Q}.
\end{easykeepproof}

\begin{corollary}\label{cor:Polp-Loc}
The equality \m{\Loc{\Polp{Q}}=\Polp{Q}} is satisfied for any
\m{Q\subs\Relps}.
\end{corollary}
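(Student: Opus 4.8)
The plan is to bootstrap the result from the arity-bounded case already settled in Lemma~\ref{lem:Polp-sLoc}. The only genuine difficulty in passing from $\sLoc{}$ to $\Loc{}$ is that a general $Q\subs\Relps$ contains pairs of unbounded arity, so no single bound~$s$ handles all of them at once; the remedy is to treat the arities separately. First I would split $Q=\biguplus_{m\in\N}\Fn[m]{Q}$ according to arity and observe that an operation preserves all pairs of~$Q$ exactly when it preserves those in each arity stratum, which gives $\Polp{Q}=\bigcap_{m\in\N}\Polp{\Fn[m]{Q}}$. For each fixed~$m$ the set $\Fn[m]{Q}\subs\Relps[m]$ consists of at most \nbdd{m}ary relation pairs, so Lemma~\ref{lem:Polp-sLoc} applies with $s=m$ and yields the stratumwise identity $\sLoc[m]{\Polp{\Fn[m]{Q}}}=\Polp{\Fn[m]{Q}}$.

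Next I would combine this identity with the elementary monotonicity properties of the operators involved. Since $\Loc{\Polp{Q}}=\bigcap_{s\in\N}\sLoc{\Polp{Q}}$, for every $m\in\N$ we have $\Loc{\Polp{Q}}\subs\sLoc[m]{\Polp{Q}}$. From $\Fn[m]{Q}\subs Q$ and the antitonicity of $\Polp{}$ we get $\Polp{Q}\subs\Polp{\Fn[m]{Q}}$, whence monotonicity of $\sLoc[m]{}$ together with the previous identity gives
\[
\sLoc[m]{\Polp{Q}}\subs\sLoc[m]{\Polp{\Fn[m]{Q}}}=\Polp{\Fn[m]{Q}}.
\]
Chaining these inclusions shows $\Loc{\Polp{Q}}\subs\Polp{\Fn[m]{Q}}$ for every~$m$, and intersecting over all $m\in\N$ produces $\Loc{\Polp{Q}}\subs\bigcap_{m\in\N}\Polp{\Fn[m]{Q}}=\Polp{Q}$.

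Finally, the reverse inclusion $\Polp{Q}\subs\Loc{\Polp{Q}}$ is immediate from extensivity of the closure operator~$\Loc{}$ (the inclusion $\Loc{\Polp{Q}}\sups\Polp{Q}$ displayed after Definition~\ref{def:s-local-closure}), and the two inclusions together yield the asserted equality. I do not anticipate a real obstacle here: once Lemma~\ref{lem:Polp-sLoc} is available the argument is entirely formal, and the single point requiring care is matching the locality parameter to the arity stratum by stratum, so that the unbounded arities present in~$Q$ are dealt with one bound at a time rather than through a uniform~$s$.
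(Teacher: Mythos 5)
Your proposal is correct and takes essentially the same approach as the paper's proof: both reduce to Lemma~\ref{lem:Polp-sLoc} by choosing the locality parameter equal to the arity of the pairs under consideration, combining \m{\Loc{\Polp{Q}}\subs\sLoc[m]{\Polp{Q}}} with antitonicity of \m{\Polp{}} and monotonicity of \m{\sLoc[m]{}}. The only difference is cosmetic: the paper applies the lemma to a single pair \m{\set{\apply{\rho,\rho'}}} at a time, whereas you apply it to the whole arity stratum \m{\Fn[m]{Q}}.
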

\begin{easykeepproof}
Consider \m{g\in\Loc{\Polp{Q}}} and \m{\apply{\rho,\rho'}\in\Fn[m]{Q}},
\m{m\in\N}. By definition of \m{\Loc{}}, we have
\m{g\in\sLoc[m]{\Polp{Q}}\subs\sLoc[m]{\Polp{\set{\apply{\rho,\rho'}}}}},
which equals \m{\Polp{\set{\apply{\rho,\rho'}}}} by
Lemma~\ref{lem:Polp-sLoc}. As the pair \m{\apply{\rho,\rho'}\in Q} was
arbitrarily chosen, we obtain \m{g\in\Polp{Q}}.
\end{easykeepproof}

\section{Relation pair clones}\label{subsect:relationpair-clones}
In this section we first recollect the so\dash{}called \emph{general
superposition} of relations
(\cite[Definition~3.4(R4), p.~27]{%
PoeGeneralGaloisTheoryForOperationsAndRelations}, see
also~\cite[Definition~2.2(ii), p.~258]{%
PoeConcreteRepresentationOfAlgebraicStructuresAndGeneralGaloisTheory}
and~\cite{BehClonesWithNullaryOperations}),
which comes into play when generalising the
notion of relational clone from finite carrier sets to arbitrary ones. It
is not surprising that it will be important for the generalisation of
relation pair algebras as introduced in~\cite[p.~21]{Harnau8Habil} (see
also~\cite[p.~16]{Harnau7BVerallgRelationenbegriffII}) to carrier sets of
arbitrary cardinality, as well.
\par
\begin{definition}\label{def:general-superposition}
Let~\m{\CarrierSet} be any carrier set, moreover let index sets~\m{I}
and~\m{\mu} (one could in principle restrict to ordinal numbers, but this only
makes working with the definition more technical), natural numbers
\m{m, m_i\in \N} (\m{i\in I}),
mappings \m{\apply{\functionhead{\alpha_i}{m_i}{\mu}}_{i \in I}} and
\m{\functionhead{\beta}{m}{\mu}}, and relations \m{\rho_i
\in\Rels[{m_i}]}, \m{i\in I}, be given. The \emph{general superposition}
of these relations \wrt\ the given data is defined to be the \nbdd{m}ary
relation
\begin{equation*}
\begin{aligned}[t]
\genComp{\beta}{\apply{\alpha_{i}}_{i\in I}}{\apply{\rho_i}_{i \in I}}
&\defeq\rset{y\in \CarrierSet[m]}{\exists\,a\in \CarrierSet[\mu]\colon
                       y = a\circ\beta \land
                       \forall\,i\in I\colon a \circ \alpha_i \in \rho_i}\\
&\mathrel{\mathopen{\hphantom{:}}=}\rset{a \circ \beta}{%
                       a \in \CarrierSet[\mu] \land
                       \forall\,i\in I\colon a \circ \alpha_i \in \rho_i}.
\end{aligned}
\end{equation*}
\end{definition}
\par
We mention in passing that, in general, a \emph{relational clone} can be
defined as any set \m{Q\subs\Rels} that is closed \wrt\ general
superposition.
That is, whenever data as in Definition~\ref{def:general-superposition} is
given and all relations~\m{\rho_i}, \m{i\in I}, belong to~\m{Q}, then also
\m{\genComp{\beta}{\apply{\alpha_{i}}_{i\in I}}{\apply{\rho_{i}}_{i\in I}}}
has to be an element of~\m{Q} (if nullary relations are disregarded, then
one restricts the integers~\m{m} and \m{\apply{m_i}_{i\in I}} to positive
ones only). Depending on the carrier set~\m{\CarrierSet},
one can work out cardinality bounds on the sets~\m{I} and~\m{\mu}
involved in this closure property, but this is not our concern
here.
\par

Different specialisations of the general superposition yield operations
known from the closure property corresponding to relational clones on
finite carrier sets: variable permutation, projection onto arbitrary
subsets of coordinates, variable identification, addition of fictitious
coordinates, all diagonal relations as nullary constants, and (even
arbitrary) intersection of relations of the same arity.
\par

We now straightforwardly extend the general superposition from relations
to relation pairs.
\begin{definition}\label{def:gen-comp}
Let~\m{\CarrierSet} be any carrier set, \m{I}, \m{\mu}, \m{m}, \m{m_i},
\m{\functionhead{\alpha_i}{m_i}{\mu}}, \m{i\in I}, and
\m{\functionhead{\beta}{m}{\mu}} as in
Definition~\ref{def:general-superposition}. For relation pairs
\m{\apply{\rho_i,\rho'_i}\in\Relps[m_i]}, \m{i\in I}, we define their
\emph{general superposition} to be
\begin{equation*}
\genComp{\beta}{\apply{\alpha_{i}}_{i\in I}}{%
                                   \apply{\rho_i,\rho'_i}_{i \in I}}
\defeq \apply{%
\genComp{\beta}{\apply{\alpha_{i}}_{i\in I}}{\apply{\rho_i}_{i \in I}},
\genComp{\beta}{\apply{\alpha_{i}}_{i\in I}}{\apply{\rho'_i}_{i \in I}}}.
\end{equation*}
\end{definition}
\par

It is easy to see that this definition is well\dash{}defined, \ie\ that
we really have
\m{\genComp{\beta}{\apply{\alpha_{i}}_{i\in I}}{%
         \apply{\rho_i,\rho'_i}_{i \in I}
         \in\Relps[m]}}
in the situation described in Definition~\ref{def:gen-comp}. This allows
us to define relation pair clones as such sets of relation pairs that are
closed under general superposition.%
\par

\begin{definition}\label{def:rel-pair-clone}
We say that for some carrier~\m{\CarrierSet} a set \m{Q\subs\Relps} is a
\emph{relation pair clone} if and only if %
the following condition is satisfied: whenever~\m{I},
\m{\mu}, \m{m}, \m{m_i}, \m{\functionhead{\alpha_i}{m_i}{\mu}}, \m{i\in I}
and \m{\functionhead{\beta}{m}{\mu}} are as in
Definition~\ref{def:general-superposition}, and
\m{\apply{\rho_i,\rho'_i}\in\Fn[m_i]{Q}} are given for \m{i\in I}, then
also
\m{\genComp{\beta}{\apply{\alpha_{i}}_{i\in I}}{%
                           \apply{\rho_i,\rho'_i}_{i \in I}}\in\Fn[m]{Q}}.
\end{definition}
\par

One can routinely check that for a given carrier set~\m{\CarrierSet} the
collection of all relation pair clones on~\m{\CarrierSet} is a closure
system. We denote the corresponding closure operator by
\m{Q\mapsto\genRpclone{Q}} for \m{Q\subs\Relps} and refer to
\m{\genRpclone{Q}} as the \emph{relation pair clone generated by~\m{Q}}.
\par

Note that for finite carrier sets \m{\CarrierSet\neq\emptyset}, and
provided that
\m{\apply{\emptyset,\emptyset}\in\Fn[m]{Q}} for all \m{m\in\N}, our
concept of locally closed relation pair clone, by taking
\m{Q\setminus\Relps[0]}, subsumes that of subuniverses of the full
relation pair algebra defined in~\cite[p.~21]{Harnau8Habil} (see
also~\cite[p.~16]{Harnau7BVerallgRelationenbegriffII}).
\par

There are two issues here: the necessity to add local closure and the
requirement that pairs of empty relations have to belong to relation pair
algebras in Harnau's sense. We noted above in
Corollary~\ref{cor:finite=>relaxation=LOC} that for finite carrier sets
closure under relaxation coincides with our local closure of relation
pairs. Moreover, we shall prove in Corollary~\ref{cor:InvpPolp=LOC[]} that
the closed sets \wrt\ \m{\Invp{\Polp{}}} are precisely the locally closed
relation pair clones, which implies for finite carrier sets that they
are exactly those relation pair clones that are closed
\wrt\ relaxations. In~\cite{Harnau8Habil}
and~\cite{Harnau7BVerallgRelationenbegriffII} this additional closure
property (with the goal of characterising the \name{Galois} closures) has
been incorporated into the definition of the full relation pair algebra
via multioperations~\m{d_{\mathrm{v}}} and~\m{d_{\mathrm{h}}}; however, it
has been noted that these operators are of a different nature than the
other fundamental operations of relation pair algebras. Comparing to the
situation known from clones and relational clones on arbitrary domains
(see~\cite{PoeGeneralGaloisTheoryForOperationsAndRelations,%
    PoeConcreteRepresentationOfAlgebraicStructuresAndGeneralGaloisTheory,%
    SzaboConcreteRepresentationOfRelatedStructuresOfUniversalAlgebras})
and looking from the perspective of infinite carrier sets, which requires
local closures anyway, it is justified to modify Harnau's definition by
separating closure properties related to concrete constructions
involving relations from local interpolation properties.
We mention that for finite~\m{\CarrierSet} the constructive part can be
expressed via interpretations of primitive positive formul\ae{} in both
components.
In fact, it was noted by \'{A}gnes Szendrei that given a set
\m{Q\subs\Relps}, one may consider the relational structures
\m{\RelAlgebra[\apply{\rho}_{\apply{\rho,\rho'}\in Q}]{\CarrierSet}}
and
\m{\RelAlgebra{A'}=\algwops{\CarrierSet}{%
                            \apply{\rho'}_{\apply{\rho,\rho'}\in Q}}}
and primitive positively definable relations on the product
\m{\RelAlgebra{A}\times\RelAlgebra{A'}}: if~\m{\phi} is a primitive
positive formula in the language of~\m{Q} (including equality) with at
most~\m{m} free variables, then it defines the following \nbdd{m}ary
relation on the product
\begin{align*}
\hat{\sigma}&\defeq
\lset{\apply{\apply{x_i,y_i}}_{i\in m}\in \apply{\CarrierSet[2]}^m}{%
\apply{\RelAlgebra{A}\times\RelAlgebra{A'},
          \apply{\apply{x_i,y_i}}_{i\in m}}\models \phi}\\
&\,=\lset{\apply{\apply{x_i,y_i}}_{i\in m}\in \apply{\CarrierSet[2]}^m}{%
\apply{\RelAlgebra{A},\apply{x_i}_{i\in m}}\models \phi
\land
\apply{\RelAlgebra{A'},\apply{y_i}_{i\in m}}\models \phi
}\\
&\,=\lset{\apply{\apply{x_i,y_i}}_{i\in m}\in \apply{\CarrierSet[2]}^m}{%
\apply{\apply{x_i}_{i\in m},\apply{y_i}_{i\in m}}\in\sigma\times\sigma'},
\end{align*}
where
\m{\sigma\defeq\lset{\mathbf{x}\in\CarrierSet[m]}{%
                          \apply{\RelAlgebra{A},\mathbf{x}}\models \phi}}
and
\m{\sigma'\defeq\lset{\mathbf{x}\in\CarrierSet[m]}{%
                          \apply{\RelAlgebra{A'},\mathbf{x}}\models \phi}}.
If~\m{\sigma} and~\m{\sigma'} are both non\dash{}empty, then one may
obtain the relation pair \m{\apply{\sigma,\sigma'}} defined by~\m{\phi} as
projections of~\m{\hat{\sigma}}. If one of them is the empty set, then
\m{\hat{\sigma}=\emptyset} and therefore both projections will be empty.
Thus only taking projections of~\m{\hat{\sigma}} (\ie\ of
pp\dash{}definable relations in the product
\m{\RelAlgebra{A}\times\RelAlgebra{A'}}) will never produce
relation pairs \m{\apply{\sigma,\sigma'}} where
\m{\sigma'=\emptyset\subsetneq \sigma}, which is certainly needed, \eg,
to model intersection in both components.
However, collecting all pairs \m{\apply{\sigma,\sigma'}} arising from
primitive positive formul\ae~\m{\phi} correctly describes the closure
\m{\genRpclone{Q}} in the case of finite carrier sets.
\par

The second issue pointed out above is related to nullary operations. In
the literature these are often neglected, which makes it necessary for
relation pair algebras (\cite{Harnau7BVerallgRelationenbegriffII}) and for
relational clones (relation algebras,
\cite{PoeGeneralGaloisTheoryForOperationsAndRelations}) to contain the
empty pair \m{\apply{\emptyset,\emptyset}} and the empty relation,
respectively, in order to be in accordance with the corresponding
\name{Galois} theory.
\par

If nullary operations are given their proper place, this absurdity
vanishes (see~\cite{BehClonesWithNullaryOperations} for clones and
relational clones); then empty relations (pairs) get a true function,
indicating by their presence the absence of nullary operations on the dual
side (see Lemma~\ref{lem:rel-pair-clones-nullary-ops} below).
This is also the reason why we cannot and do not add the empty pairs
of all arities as nullary constants to the closure condition of relation
pair clones.
\par

Relational clones (as given
in~\cite[Definition~2.2, p.~8]{BehClonesWithNullaryOperations})
relate to relation pair clones in the following way:
\begin{lemma}\label{lem:rel-clone-rel-pair-clone}
For any carrier set~\m{\CarrierSet} a subset \m{Q\subs\Rels} is a
relational clone if and only if
\m{P\defeq\biguplus_{m\in\N}\lset{\apply{\rho,\rho}}{\rho\in\Fn[m]{Q}}} is
a relation pair clone.
\end{lemma}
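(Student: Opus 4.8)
The plan is to exploit the fact that, by Definition~\ref{def:gen-comp}, the general superposition of relation pairs is computed componentwise. First I would record the crucial observation that for any admissible data \m{I}, \m{\mu}, \m{m}, \m{m_i}, \m{\apply{\functionhead{\alpha_i}{m_i}{\mu}}_{i\in I}} and \m{\functionhead{\beta}{m}{\mu}} as in Definition~\ref{def:general-superposition}, feeding in diagonal pairs \m{\apply{\rho_i,\rho_i}} produces again a diagonal pair:
\[
\genComp{\beta}{\apply{\alpha_{i}}_{i\in I}}{\apply{\rho_i,\rho_i}_{i\in I}}
= \apply{\sigma,\sigma},\qquad\text{where } \sigma\defeq\genComp{\beta}{\apply{\alpha_{i}}_{i\in I}}{\apply{\rho_i}_{i\in I}}.
\]
In other words, the bijection \m{\rho\mapsto\apply{\rho,\rho}} between \m{\Fn[m]{Q}} and \m{\Fn[m]{P}} intertwines the general superposition of relations with that of relation pairs and carries diagonal pairs to diagonal pairs. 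Since \m{P} consists, by construction, of exactly the diagonal pairs over the members of~\m{Q} (so that \m{\apply{\sigma,\sigma}\in\Fn[m]{P}} if and only if \m{\sigma\in\Fn[m]{Q}}), the two closure conditions become literally the same statement read through this bijection.

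For the forward implication, assuming \m{Q} is a relational clone in the sense recalled after Definition~\ref{def:general-superposition}, I would take arbitrary diagonal inputs \m{\apply{\rho_i,\rho_i}\in\Fn[m_i]{P}}, that is \m{\rho_i\in\Fn[m_i]{Q}}; their general superposition equals \m{\apply{\sigma,\sigma}} with \m{\sigma} the corresponding superposition of the \m{\rho_i}, which lies in \m{\Fn[m]{Q}} because \m{Q} is closed under general superposition, whence \m{\apply{\sigma,\sigma}\in\Fn[m]{P}}. Thus \m{P} satisfies the closure condition of Definition~\ref{def:rel-pair-clone}. Conversely, assuming \m{P} is a relation pair clone, I would start from relations \m{\rho_i\in\Fn[m_i]{Q}}, pass to the diagonal pairs \m{\apply{\rho_i,\rho_i}\in\Fn[m_i]{P}}, apply the closure of \m{P} to obtain \m{\apply{\sigma,\sigma}\in\Fn[m]{P}}, and read off \m{\sigma\in\Fn[m]{Q}}; hence \m{Q} is a relational clone.

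I do not expect any genuine obstacle here: once the componentwise nature of Definition~\ref{def:gen-comp} is made explicit, the statement is an immediate transfer along the bijection \m{\rho\mapsto\apply{\rho,\rho}}. The only points deserving a moment's care are that the admissible index sets and arities occurring in the definition of relational clone and in Definition~\ref{def:rel-pair-clone} coincide (so that no instance of the closure condition is lost or gained in either direction), and that nullary relations are treated consistently on both sides; with the conventions fixed in Section~\ref{sect:semiclones} and in the paragraph following Definition~\ref{def:general-superposition}, both are automatic.
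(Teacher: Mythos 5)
Your proposal is correct and matches the paper's argument: the paper's proof consists of the single observation that \m{P} is closed under arbitrary general superpositions if and only if \m{Q} is, which is exactly the componentwise-superposition fact (Definition~\ref{def:gen-comp} applied to diagonal pairs) that you spell out in detail. You have simply made explicit the ``evidently'' that the paper leaves to the reader.
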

\begin{proof}
Evidently, \m{P} is closed under arbitrary general superpositions if and
only if~\m{Q} is.
\end{proof}
\par

The following result is comparably easy.
\begin{lemma}\label{lem:rel-pair-clone-rel-clone}
Whenever \m{Q\subs\Relps} is a relation pair clone on some
set~\m{\CarrierSet}, then
\begin{align*}
Q'&\defeq
   \lset{\rho}{\apply{\rho,\rho'}\in\Fn[m]{Q} \text{ for some } m\in\N},\\
Q''&\defeq
   \lset{\rho'}{\apply{\rho,\rho'}\in\Fn[m]{Q} \text{ for some } m\in\N}
   \text{ and}\\
Q'''&\defeq
   \lset{\rho}{\apply{\rho,\rho}\in\Fn[m]{Q} \text{ for some } m\in\N}
\end{align*}
are relational clones on~\m{\CarrierSet}.
\end{lemma}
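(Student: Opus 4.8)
The plan is to prove all three assertions simultaneously by exploiting the single structural fact that, according to Definition~\ref{def:gen-comp}, the general superposition of relation pairs is formed \emph{componentwise}. This reduces the statement to the observation that closure of~\m{Q} under general superposition of pairs forces each of its component collections to be closed under general superposition of relations, which is exactly the refinement of the reasoning already used for Lemma~\ref{lem:rel-clone-rel-pair-clone}.

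First I would fix arbitrary superposition data as in Definition~\ref{def:general-superposition}: index sets \m{I} and~\m{\mu}, arities \m{m} and \m{m_i} (\m{i\in I}), and maps \m{\functionhead{\alpha_i}{m_i}{\mu}} (\m{i\in I}) together with \m{\functionhead{\beta}{m}{\mu}}. To verify that \m{Q'} is a relational clone, I would take relations \m{\rho_i\in\Fn[m_i]{Q'}} for each \m{i\in I} and, by the very definition of~\m{Q'}, select for each~\m{i} some witness \m{\rho'_i} with \m{\apply{\rho_i,\rho'_i}\in\Fn[m_i]{Q}}. Since~\m{Q} is a relation pair clone, the superposition pair \m{\genComp{\beta}{\apply{\alpha_{i}}_{i\in I}}{\apply{\rho_i,\rho'_i}_{i\in I}}} lies in \m{\Fn[m]{Q}}; by Definition~\ref{def:gen-comp} its first component is precisely \m{\genComp{\beta}{\apply{\alpha_{i}}_{i\in I}}{\apply{\rho_i}_{i\in I}}}, whence this relation belongs to~\m{Q'}. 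This establishes closure of~\m{Q'} under general superposition.

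For~\m{Q''} I would argue symmetrically: starting from relations \m{\rho'_i\in\Fn[m_i]{Q''}}, I would choose witnesses \m{\rho_i\sups\rho'_i} with \m{\apply{\rho_i,\rho'_i}\in\Fn[m_i]{Q}} and read off the \emph{second} component of the resulting superposition pair, which by Definition~\ref{def:gen-comp} is \m{\genComp{\beta}{\apply{\alpha_{i}}_{i\in I}}{\apply{\rho'_i}_{i\in I}}}. For~\m{Q'''} I would begin with \m{\rho_i\in\Fn[m_i]{Q'''}}, so that the \emph{identical} pairs \m{\apply{\rho_i,\rho_i}} lie in~\m{Q}; their superposition pair then has the shape \m{\apply{\sigma,\sigma}} with \m{\sigma\defeq\genComp{\beta}{\apply{\alpha_{i}}_{i\in I}}{\apply{\rho_i}_{i\in I}}}, and being of this diagonal form it witnesses \m{\sigma\in Q'''}.

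I do not expect any genuine obstacle here; the only point meriting a moment's care is that in the \m{Q'} and \m{Q''} cases the auxiliary second (respectively first) components \m{\rho'_i} (respectively \m{\rho_i}) may depend on~\m{i} and need not be canonical. This causes no trouble, because an arbitrary choice of witnesses is enough: we ever only read back a single component of the superposition pair, and the componentwise Definition~\ref{def:gen-comp} guarantees that this component is exactly the general superposition of the corresponding components of the inputs.
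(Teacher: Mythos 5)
Your proposal is correct and follows essentially the same route as the paper, which disposes of the lemma with the single observation that closedness of \m{Q} under general superpositions carries over to \m{Q'}, \m{Q''} and \m{Q'''}; your write-up merely makes explicit the witness selection and the componentwise reading of Definition~\ref{def:gen-comp} that this observation relies on. No gaps: the diagonal argument for \m{Q'''} and the one-component read-offs for \m{Q'} and \m{Q''} are exactly what is needed.
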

\begin{proof}
Closedness of~\m{Q} \wrt\ general superpositions carries over to~\m{Q'},
\m{Q''} and~\m{Q'''}.
\end{proof}
\par

\begin{sloppypar}
Similarly as for semiclones, the \name{Galois} correspondence
\m{\PolpOp\text{-}\InvpOp} provides many examples of relation pair clones
(see~\cite[Lemma~9, p.~21]{Harnau8Habil}
or~\cite[Lemma~9, p.~16]{Harnau7BVerallgRelationenbegriffII} for the case
of finite carrier sets; cf.~\cite[Lemma~3.1, p.~154]{%
CouceiroFoldesClosedSetsRelConstrFuncsClosedWrtVarSubst} for the
general framework of relational constraints and conjunctive minors).
\end{sloppypar}
\begin{lemma}\label{lem:Invp=rel-pair-clone}
For each \m{F\subs\Ops} the set \m{\Invp{F}} is a relation pair clone.
\end{lemma}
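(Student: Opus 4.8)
The plan is to verify the closure condition of Definition~\ref{def:rel-pair-clone} directly from the definition of preservation. So I would fix superposition data \m{I,\mu,m,m_i,\functionhead{\alpha_i}{m_i}{\mu}} (\m{i\in I}) and \m{\functionhead{\beta}{m}{\mu}} as in Definition~\ref{def:general-superposition}, together with pairs \m{\apply{\rho_i,\rho'_i}\in\Fn[m_i]{\Invp{F}}} for \m{i\in I}, and abbreviate \m{\apply{\sigma,\sigma'}\defeq\genComp{\beta}{\apply{\alpha_{i}}_{i\in I}}{\apply{\rho_i,\rho'_i}_{i\in I}}}, so that \m{\sigma} is the general superposition of the larger relations \m{\rho_i} and \m{\sigma'} the analogous superposition of the \m{\rho'_i}. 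Since \m{\apply{\sigma,\sigma'}\in\Invp{F}} means that every operation in~\m{F} preserves it, the whole claim reduces to showing \m{f\preserves\apply{\sigma,\sigma'}} for an arbitrary fixed \m{f\in\Fn[n]{F}}.

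To that end I would take any \m{\mathbf{y}=\apply{y_j}_{j\in n}\in\sigma^n} and unwind membership in~\m{\sigma}: by the definition of general superposition each \m{y_j} carries a witness \m{a_j\in\CarrierSet[\mu]} with \m{y_j=a_j\circ\beta} and \m{a_j\circ\alpha_i\in\rho_i} for all \m{i\in I}. The key idea is then to produce a single witness realising \m{f\circ\apply{\mathbf{y}}} as an element of~\m{\sigma'}, namely the element \m{b\defeq f\circ\apply{a_j}_{j\in n}\in\CarrierSet[\mu]} obtained by applying~\m{f} row-wise to the \m{a_j}. It remains to check the two conditions that make \m{b} a valid witness for \m{\sigma'}.

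For the first condition, associativity of composition together with Lemma~\ref{lem:superassociativity}\eqref{item:pre-superassociativity} gives \m{b\circ\beta=\apply{f\circ\apply{a_j}_{j\in n}}\circ\beta=f\circ\apply{a_j\circ\beta}_{j\in n}=f\circ\apply{y_j}_{j\in n}=f\circ\apply{\mathbf{y}}}, so the \m{\beta}-component of the witness is exactly the tuple we want to land in \m{\sigma'}. For the second condition, the very same two identities yield \m{b\circ\alpha_i=f\circ\apply{a_j\circ\alpha_i}_{j\in n}} for each \m{i\in I}; here \m{\apply{a_j\circ\alpha_i}_{j\in n}\in\rho_i^{n}} because every \m{a_j\circ\alpha_i} lies in~\m{\rho_i}, and since \m{f\in F} preserves the pair \m{\apply{\rho_i,\rho'_i}\in\Invp{F}}, we conclude \m{b\circ\alpha_i\in\rho'_i}. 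Together these show \m{f\circ\apply{\mathbf{y}}=b\circ\beta\in\sigma'}, so \m{f\preserves\apply{\sigma,\sigma'}}; as \m{f\in F} was arbitrary, \m{\apply{\sigma,\sigma'}\in\Invp{F}}.

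The substance of the argument is the single clever choice of the witness \m{b=f\circ\apply{a_j}_{j\in n}}, after which everything is routine tupling bookkeeping; I expect no real obstacle. The only points worth verifying carefully are that the two ``coordinates'' \m{b\circ\beta} and \m{b\circ\alpha_i} are controlled, respectively, by the definition of~\m{\sigma'} and by the preservation hypothesis for the individual pairs, and that the manipulation never needs anything special about the (possibly infinite) index sets \m{I} and~\m{\mu}. Reading \m{\rho_i^{0}=\set{\emptyset}}, the reasoning also covers the nullary case \m{n=0} with no modification, so the lemma follows uniformly in all arities.
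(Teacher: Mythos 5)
Your proposal is correct and follows essentially the same route as the paper: unwind membership of each tuple in the superposition via witnesses \m{a_j\in\CarrierSet[\mu]}, form the combined witness \m{f\circ\apply{a_j}_{j\in n}}, and use Lemma~\ref{lem:superassociativity}\eqref{item:pre-superassociativity} together with preservation of each \m{\apply{\rho_i,\rho'_i}} to check both coordinates. The paper's proof is exactly this computation, with your witness \m{b} appearing there as \m{\composition{f}{\mathbf{a}}}.
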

\begin{proof}
To check that \m{\Invp{F}} for \m{F\subs\Ops}
is closed \wrt\ to general superpositions
let~\m{I} and~\m{\mu} be sets, \m{m,m_i\in\N} for \m{i\in I}, and
\m{\functionhead{\beta}{m}{\mu}} and
\m{\functionhead{\alpha_i}{m_i}{\mu}} for \m{i\in I} be mappings.
For given relation pairs \m{\apply{\rho_i,\rho'_i}\in\Invp[m_i]{F}} we are
going to show that
\m{\apply{\rho,\rho'}\defeq
\genComp{\beta}{\apply{\alpha_i}_{i\in I}}{\apply{\rho_i,\rho'_i}_{i\in I}}
\in\Invp[m]{F}}.
For this let \m{f\in F} and put \m{n\defeq \arity\apply{f}}. To verify
that \m{f\preserves\apply{\rho,\rho'}}, let us take any
\m{\mathbf{r}\in\rho^n}. By definition of
\m{\rho=
   \genComp{\beta}{\apply{\alpha_i}_{i\in I}}{\apply{\rho_i}_{i\in I}}},
for each \m{0\leq j<n} there exists \m{a_j\in\CarrierSet[\mu]}
such that \m{\mathbf{r}\apply{j} = a_j\circ \beta} and
\m{a_j\circ\alpha_i\in\rho_i} for all \m{i\in I}. By putting
\m{\mathbf{a}\defeq\apply{a_j}_{j\in n}\in \apply{\CarrierSet[\mu]}^n}, we
hence obtain
\m{\mathbf{r}=\apply{a_j \circ\beta}_{j\in n}=\mathbf{a}\circ\beta}
(cp.~Lemma~\ref{lem:superassociativity}%
           \eqref{item:pre-superassociativity}).
Therefore, by associativity, we get
\m{\composition{f}{\mathbf{r}} = \composition{f}{\mathbf{a}\circ \beta}
                      = \apply{\composition{f}{\mathbf{a}}}\circ \beta},
which belongs to~\m{\rho'} as
\m{\apply{\composition{f}{\mathbf{a}}}\circ \alpha_i =
\composition{f}{\mathbf{a}\circ \alpha_i}\in \rho'_i} for all \m{i\in I}
(due to \m{\mathbf{a}\circ\alpha_i = \apply{a_j\circ\alpha_i}_{j\in n}\in
\rho_i^n} and~\m{f} preserving \m{\apply{\rho_i,\rho'_i}\in\Invp[m_i]{F}}).
\end{proof}
\par

As a direct consequence we get the following compulsory corollary.
\begin{corollary}\label{cor:genRpclone-subs-InvpPolp}
For any set \m{Q\subs\Relps} we have
\begin{align*}
\genRpclone{Q}\subs\Invp{\Polp{Q}}&&
\text{and}&&
\Polp{Q} = \Polp{\genRpclone{Q}}.
\end{align*}
\end{corollary}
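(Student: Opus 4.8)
The plan is to treat this as a purely formal argument about \name{Galois} closures, dual to the way Corollary~\ref{cor:genSclone-subs-PolpInvp} follows from Lemma~\ref{lem:Polp=semiclone}. The only non-formal ingredient is the closure property recorded in Lemma~\ref{lem:Invp=rel-pair-clone}; everything else follows from the abstract features of the \name{Galois} correspondence \m{\PolpOp\text{-}\InvpOp} (antitonicity of \m{\Polp{}} and \m{\Invp{}}, extensiveness of the composite \m{\Invp{\Polp{}}}, and the identity \m{\Polp{\Invp{\Polp{Q}}}=\Polp{Q}}) together with the fact that the relation pair clones form a closure system whose associated closure operator is \m{\genRpclone{\,}}.

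For the inclusion \m{\genRpclone{Q}\subs\Invp{\Polp{Q}}}, I would first note that \m{\Polp{Q}\subs\Ops}, so Lemma~\ref{lem:Invp=rel-pair-clone} applies with \m{F\defeq\Polp{Q}} and shows that \m{\Invp{\Polp{Q}}} is a relation pair clone. Since \m{\Invp{\Polp{}}} is the closure operator of the \name{Galois} connection, it is extensive, whence \m{Q\subs\Invp{\Polp{Q}}}. Thus \m{\Invp{\Polp{Q}}} is a relation pair clone containing~\m{Q}. As \m{\genRpclone{Q}} is by definition the least relation pair clone containing~\m{Q}, the inclusion \m{\genRpclone{Q}\subs\Invp{\Polp{Q}}} follows at once.

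The equality \m{\Polp{Q}=\Polp{\genRpclone{Q}}} I would then derive from this inclusion by soft reasoning. Since \m{Q\subs\genRpclone{Q}} and \m{\Polp{}} is antitone, one direction \m{\Polp{\genRpclone{Q}}\subs\Polp{Q}} is immediate. For the converse, apply the antitone map \m{\Polp{}} to the inclusion \m{\genRpclone{Q}\subs\Invp{\Polp{Q}}} just established, obtaining \m{\Polp{\Invp{\Polp{Q}}}\subs\Polp{\genRpclone{Q}}}; since \m{\Polp{\Invp{\Polp{Q}}}=\Polp{Q}} holds in every \name{Galois} connection, this yields \m{\Polp{Q}\subs\Polp{\genRpclone{Q}}}, and the two inclusions combine to the claimed equality. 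I do not expect any genuine obstacle here: the entire corollary is a direct consequence of Lemma~\ref{lem:Invp=rel-pair-clone}, and the real work (verifying that \m{\Invp{F}} is closed under general superposition) has already been carried out there.
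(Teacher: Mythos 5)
Your proposal is correct and matches the paper's intent exactly: the paper gives no explicit proof, stating only that the corollary is a ``direct consequence'' of Lemma~\ref{lem:Invp=rel-pair-clone}, and the routine argument it has in mind is precisely yours --- \m{\Invp{\Polp{Q}}} is a relation pair clone containing~\m{Q}, hence contains \m{\genRpclone{Q}}, and the equality of polymorphism sets follows by antitonicity together with \m{\Polp{\Invp{\Polp{Q}}}=\Polp{Q}}. No gaps; nothing further needed.
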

\par

Next, we quickly address how nullary operations affect the
associated relation pair algebras.
\begin{lemma}\label{lem:rel-pair-clones-nullary-ops}
For \m{F\ovflhbx{0.91pt}\subs\ovflhbx{0.91pt}\Ops} we have \m{\apply{\emptyset,\emptyset}\ovflhbx{0.91pt}\in\ovflhbx{0.91pt}\Invp{F}} if and
only if \m{F\ovflhbx{0.91pt}\subs\ovflhbx{0.91pt}\Ops\ovflhbx{1.88pt}\setminus\Ops[0]}.
\end{lemma}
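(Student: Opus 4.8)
The plan is to unfold the definition of $\Invp{F}$ and reduce the biconditional to a condition on the arities of the operations in~$F$. By definition, $\apply{\emptyset,\emptyset}\in\Invp{F}$ holds precisely when every $f\in F$ satisfies $f\preserves\apply{\emptyset,\emptyset}$, so I would prove the pointwise statement that an operation $f\in\Ops[n]$ preserves the empty pair if and only if $n\geq 1$. The lemma then follows at once: quantified over all $f\in F$, the condition ``$\arity\apply{f}\geq 1$'' says exactly that $F$ contains no nullary operation, i.e.\ that $F\subs\Ops\setminus\Ops[0]$. The argument is uniform in the arity~$m$ of the empty pair, so it is insensitive to which tagged copy of $\apply{\emptyset,\emptyset}$ is meant.

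For the pointwise characterisation I would invoke the tupling formulation in Definition~\ref{def:preservation}\eqref{cond:preservation-tupling}: $f\preserves\apply{\emptyset,\emptyset}$ means that for every $\mathbf{r}\in\emptyset^n$ one has $\composition{f}{\mathbf{r}}\in\emptyset$. If $\arity\apply{f}=n\geq 1$, then $\emptyset^n=\emptyset$, since there are no $n$\dash{}tuples of elements of the empty relation; the condition is therefore vacuously satisfied, and every such $f$ preserves the empty pair. The converse direction is where the nullary case must be handled carefully: for $n=0$ the index set is empty, so $\emptyset^0$ consists of the single empty tuple~$\emptyset$, which is a genuine element of $\emptyset^0$ notwithstanding that the first component is empty. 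Feeding this $\mathbf{r}=\emptyset$ into the row\dash{}wise application, a nullary~$f$ yields the constant tuple $\apply{f\apply{\emptyset}}_{i\in m}\in\CarrierSet[m]$ (the empty tuple when $m=0$), which is a bona fide element of~$\CarrierSet[m]$ and hence can never lie in the empty relation. Thus no nullary operation preserves $\apply{\emptyset,\emptyset}$.

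The main obstacle is exactly this bookkeeping around arity~$0$: one must observe that, in contrast with the case $n\geq 1$, the set $\emptyset^n$ is \emph{non}\dash{}empty when $n=0$, so that the preservation condition is not vacuous but instead demands that a legitimate output tuple belong to the empty relation, which is impossible. Once this asymmetry between $n=0$ and $n\geq 1$ is in place, combining the two halves shows that $\apply{\emptyset,\emptyset}\in\Invp{F}$ is equivalent to every member of~$F$ having positive arity, which is the claimed equality $F\subs\Ops\setminus\Ops[0]$.
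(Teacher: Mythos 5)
Your proof is correct and follows essentially the same route as the paper's: positive-arity operations preserve \m{\apply{\emptyset,\emptyset}} vacuously since \m{\emptyset^n=\emptyset} for \m{n\geq 1}, while a nullary operation fed the unique element of \m{\emptyset^0} produces a genuine tuple of \m{\CarrierSet[m]} that cannot lie in the empty relation (the paper phrases this converse as: a nullary constant \m{c} with value \m{a} preserving \m{\apply{\emptyset,\rho}} forces \m{\apply{a,\dotsc,a}\in\rho}, hence \m{\rho\neq\emptyset}). Your explicit observation that \m{\emptyset^0} is a singleton rather than empty is exactly the asymmetry the paper's argument exploits, just spelled out more carefully.
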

\begin{proof}
It is clear that every operation of positive arity preserves
\m{\apply{\emptyset,\emptyset}}, \ie\ that
\m{\apply{\emptyset,\emptyset}\in\Invp{\apply{\Ops\setminus\Ops[0]}}}.
Conversely, assume that \m{\Fn[0]{F}\neq \emptyset}, say~\m{F} contains a
nullary constant operation~\m{c} with value \m{a\in\CarrierSet}. If
\m{c\preserves \apply{\emptyset,\rho}} then it follows that
\m{\apply{a,\dotsc,a}\in\rho}, \ie\ \m{\rho\neq\emptyset}. Thus,
\m{\apply{\emptyset,\emptyset}\notin\Invp{\set{c}}}, and so
\m{\apply{\emptyset,\emptyset}\notin\Invp{F}}.
\end{proof}
\par

The following two results have their analogues in Proposition~1.11(a'),(b')
from~\cite[p.~17]{PoeGeneralGaloisTheoryForOperationsAndRelations}.
\begin{lemma}\label{lem:Invp-sLOC}
For \m{s\in\N} and any set
\m{F\subs\Ops[{\leq s}]\defeq\biguplus_{0\leq n\leq s}\Ops[n]} of at
most \nbdd{s}ary operations, we have
\m{\sLOC{\Invp{F}}=\Invp{F}}.
\end{lemma}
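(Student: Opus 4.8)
The plan is to use that \m{\sLOC{}} is a closure operator, so that \m{\Invp{F}\subs\sLOC{\Invp{F}}} holds for free; the entire content of the lemma is therefore the reverse inclusion \m{\sLOC{\Invp{F}}\subs\Invp{F}}. I would obtain this by an interpolation argument dual to the one carried out for the operational side in Lemma~\ref{lem:Polp-sLoc}, essentially swapping the roles of rows and columns of the matrix governing preservation: there the small subset was formed from the \m{m} rows and bounded by the relation arity \m{m\leq s}, whereas here it will be formed from the columns and bounded by the operation arity.

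First I would fix an arbitrary \m{m\in\N} together with a pair \m{\apply{\sigma,\sigma'}\in\sLOC[s][m]{\Invp{F}}}, the goal being to verify that every \m{f\in F} preserves it. Fixing such an~\m{f} and writing \m{n\defeq\arity\apply{f}}, the hypothesis \m{F\subs\Ops[{\leq s}]} guarantees \m{n\leq s}. To test preservation I would take an arbitrary tuple \m{\mathbf{r}=\apply{r_j}_{0\leq j<n}\in\sigma^n}, viewing it as the \nbd{\m{\apply{m\times n}}}matrix whose columns \m{r_0,\dotsc,r_{n-1}} all lie in~\m{\sigma}.

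The decisive step is the choice \m{B\defeq\lset{r_j}{0\leq j<n}\subs\sigma}: since \m{\abs{B}\leq n\leq s}, the defining property of \m{\sLOC[s]{}} applies to this~\m{B} and yields a pair \m{\apply{\rho,\rho'}\in\Fn[m]{\Invp{F}}} with \m{B\subs\rho} and \m{\rho'\subs\sigma'}. From \m{B\subs\rho} I would conclude \m{\mathbf{r}\in\rho^n}, and because \m{f} preserves every member of \m{\Invp{F}}, in particular \m{\apply{\rho,\rho'}}, the tuple \m{\composition{f}{\mathbf{r}}} belongs to \m{\rho'\subs\sigma'}. As \m{\mathbf{r}\in\sigma^n} was arbitrary, this establishes \m{f\preserves\apply{\sigma,\sigma'}}, and hence \m{\apply{\sigma,\sigma'}\in\Invp{F}}.

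The one genuine subtlety --- and the point I would flag as the main obstacle --- is reconciling the two parameters named~\m{s}: the number of columns of~\m{\mathbf{r}} equals exactly \m{\arity\apply{f}}, so it is precisely the at-most-\nbdd{s}ary hypothesis on~\m{F} that keeps \m{\abs{B}} within the interpolation threshold of \m{\sLOC[s]{}}. Without this arity bound the selection of a single pair \m{\apply{\rho,\rho'}} covering all of~\m{\mathbf{r}} would fail, which is also why the unrestricted version of the statement must instead be formulated with the full local closure \m{\LOC{}} rather than \m{\sLOC[s]{}}. Beyond this, the argument is a routine unwinding of the definitions of preservation and of \nbdd{s}local closure, requiring no computation.
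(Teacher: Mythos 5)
Your proof is correct and follows essentially the same route as the paper's: fix \m{f\in F} of arity \m{n\leq s}, take \m{\mathbf{r}\in\sigma^n}, form \m{B\defeq\lset{r_j}{0\leq j<n}} of cardinality at most \m{n\leq s}, invoke the defining property of \m{\sLOC{}} to obtain \m{\apply{\rho,\rho'}\in\Invp[m]{F}} with \m{B\subs\rho} and \m{\rho'\subs\sigma'}, and conclude \m{\composition{f}{\mathbf{r}}\in\rho'\subs\sigma'}. Your closing remark about the role of the arity bound (and the row/column duality with Lemma~\ref{lem:Polp-sLoc}) is also accurate and matches why the unrestricted statement appears separately as Corollary~\ref{cor:Invp-LOC} via \m{\LOC{}}.
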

\begin{proof}
Let \m{m\in\N} and \m{\apply{\sigma,\sigma'}\in\sLOC[s][m]{\Invp{F}}}.
Consider any \m{f\in F}, then \m{n\defeq\arity\apply{f}} necessarily
fulfils \m{n\leq s}. Therefore, if we consider any
\m{\mathbf{r}=\apply{r_j}_{0\leq j<n}} in \m{\sigma^{n}} and put
\m{B\defeq\lset{r_j}{0\leq j<n}\subs\sigma\subs\CarrierSet[m]}, we clearly
have a finite subset \m{B\subs\sigma} of cardinality at most \m{n\leq s}.
As \m{\apply{\sigma,\sigma'}\in\sLOC{\Invp{F}}}, there is some
\m{\apply{\rho,\rho'}\in\Invp[m]{F}} such that \m{B\subs\rho} and
\m{\rho'\subs\sigma'}. We know that \m{f\preserves\apply{\rho,\rho'}}, so
since \m{B\subs\rho}, we get \m{\mathbf{r}\in\rho^{n}} and thus
\m{\composition{f}{\mathbf{r}}\in \rho'\subs\sigma'}. Consequently, we
have shown \m{f\preserves\apply{\sigma,\sigma'}}, and as \m{f\in F} was
arbitrary, we obtain \m{\apply{\sigma,\sigma'}\in \Invp{F}} as desired.
\end{proof}

\begin{corollary}\label{cor:Invp-LOC}
The equality \m{\LOC{\Invp{F}}=\Invp{F}} is satisfied for any
\m{F\subs\Ops}.
\end{corollary}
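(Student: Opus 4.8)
The plan is to dualise the proof of Corollary~\ref{cor:Polp-Loc}, letting Lemma~\ref{lem:Invp-sLOC} play exactly the rôle that Lemma~\ref{lem:Polp-sLoc} played there. Since \m{\LOC{}} is a closure operator, the inclusion \m{\Invp{F}\subs\LOC{\Invp{F}}} holds for free by extensivity, so the entire work lies in proving \m{\LOC{\Invp{F}}\subs\Invp{F}}.

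To establish this, I would fix an arbitrary relation pair \m{\apply{\sigma,\sigma'}\in\LOC[m]{\Invp{F}}} of some arity \m{m\in\N} and verify that every \m{f\in F} preserves it. For a single such~\m{f}, set \m{n\defeq\arity\apply{f}}. By definition of \m{\LOC{}} as the intersection \m{\bigcap_{s\in\N}\sLOC[s]{}}, the full local closure sits below each \m{s}-local one, so in particular \m{\apply{\sigma,\sigma'}\in\sLOC[n]{\Invp{F}}}. Because \m{\set{f}\subs F} entails \m{\Invp{F}\subs\Invp{\set{f}}}, monotonicity of \m{\sLOC[n]{}} then gives \m{\apply{\sigma,\sigma'}\in\sLOC[n]{\Invp{\set{f}}}}.

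Now \m{\set{f}} is a set of at most \m{n}-ary operations, i.e.\ \m{\set{f}\subs\Ops[{\leq n}]}, so Lemma~\ref{lem:Invp-sLOC} applies with \m{s=n} and yields \m{\sLOC[n]{\Invp{\set{f}}}=\Invp{\set{f}}}. Hence \m{\apply{\sigma,\sigma'}\in\Invp{\set{f}}}, that is, \m{f\preserves\apply{\sigma,\sigma'}}; as \m{f\in F} was arbitrary, this forces \m{\apply{\sigma,\sigma'}\in\Invp{F}}, as desired. The only point I would watch carefully — and hence the sole (mild) obstacle — is that the bound \m{s=n} must be selected \emph{per function} rather than uniformly over~\m{F}: this is precisely what is permitted, and needed, because \m{\LOC{}} lies inside every \m{\sLOC[n]{}}, and it is exactly this unboundedness that separates the full local closure from any fixed \m{s}-local closure, for which the statement would fail once \m{F} contains operations of arbitrarily large arity.
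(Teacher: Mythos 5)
Your proposal is correct and follows essentially the same route as the paper's own proof: for a pair \m{\apply{\sigma,\sigma'}\in\LOC{\Invp{F}}} and each \m{f\in\Fn{F}}, pass via \m{\sLOC[n]{\Invp{F}}\subs\sLOC[n]{\Invp{\set{f}}}} and apply Lemma~\ref{lem:Invp-sLOC} to the at most \nbdd{n}ary set \m{\set{f}}, concluding since \m{f} was arbitrary. Your closing remark about choosing the bound per function rather than uniformly is exactly the point implicit in the paper's argument.
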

\begin{easykeepproof}
Consider \m{\apply{\sigma,\sigma'}\in\LOC{\Invp{F}}} and \m{f\in\Fn{F}},
\m{n\in\N}. By definition of \m{\LOC{}}, we have
\m{\apply{\sigma,\sigma'}\in  \sLOC[n]{\Invp{F}}
                         \subs\sLOC[n]{\Invp{\set{f}}}},
which by Lemma~\ref{lem:Invp-sLOC} is equal to \m{\Invp{\set{f}}}.
As the function \m{f\in F} was
arbitrarily chosen, we obtain \m{\apply{\sigma,\sigma'}\in\Invp{F}}.
\end{easykeepproof}

\section{Characterisation of closures related to
         \texorpdfstring{\m{\PolpOp\text{-}\InvpOp{}}}{Polp-Invp}}%
\label{sect:char-closures}
In this section we characterise, for any parameter \m{s\in\N}, the
operators \m{\Polp{\Invp[\leq s]{}}} and \m{\Invp{\Polp[\leq s]{}}} as
\nbdd{s}local closures of the generated semiclone and relation pair clone,
respectively. Subsequently, we present a few consequences of these
theorems.
\par

\subsection{The operational side}\label{subsect:PolpInvp}
For our task it is helpful to gather some knowledge about the least
(\wrt~\m{\mathord{\leq}} and thus a least among several equivalent ones
\wrt~\m{\mathord{\qleq}}) pair \m{\apply{\rho,\rho'}\in\Relps[m]} being
invariant for some set \m{F\subs\Ops} and satisfying \m{B\subs \rho} for a
given finite set \m{B\subs\Rels[m]}, \m{m\in\N}. Addressing this issue,
the following lemma generalises Proposition~2.4
of~\cite[p.~21]{PoeGeneralGaloisTheoryForOperationsAndRelations} from
relations to relation pairs.
\par

\begin{lemma}\label{lem:Gamma-F}
Let \m{F\subs\Ops} be a set of operations
and \m{\mathbf{b}\in\apply{\CarrierSet[m]}^{n}} for some \m{m,n\in\N};
set \m{B\defeq\lset{\mathbf{b}(j)}{0\leq j<n}\subs\CarrierSet[m]}.
Then the pair \m{\genInvPair{B}\defeq\apply{\rho,\rho'}}, where
\m{\rho\defeq\lset{\composition{f}{\mathbf{b}}}{f\in\genClone[n]{F}}} and
\m{\rho'\defeq\lset{\composition{f}{\mathbf{b}}}{f\in\genSclone[n]{F}}},
is the least pair (\wrt~\m{\mathord{\leq}}) in \m{\Invp[m]{F}} satisfying
\m{B\subs\rho}.
\end{lemma}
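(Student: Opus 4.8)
The plan is to establish the three assertions bundled in the statement---that \m{\genInvPair{B}=\apply{\rho,\rho'}} is a genuine relation pair lying in \m{\Invp[m]{F}}, that it satisfies \m{B\subs\rho}, and that it is the \m{\mathord{\leq}}-least such pair---by unwinding the definitions of~\m{\rho} and~\m{\rho'} and exploiting that polymorphism sets of relation pairs are semiclones whereas sets of the form \m{\Pol{Q}} are clones.

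First I would verify well-definedness together with the containment \m{B\subs\rho}. As \m{\genSclone[n]{F}\subs\genClone[n]{F}} by Lemma~\ref{lem:semiclones+proj=clones}\eqref{item:char-clone-cl=sclone-cl+proj}, we immediately obtain \m{\rho'\subs\rho}, so that \m{\apply{\rho,\rho'}} is indeed a relation pair in \m{\Relps[m]}. For \m{B\subs\rho} it suffices to note that for every \m{j<n} the projection \m{\eni[n]{j}} belongs to \m{\genClone[n]{F}} and satisfies \m{\composition{\eni[n]{j}}{\mathbf{b}}=\mathbf{b}\apply{j}}, so that each element of~\m{B} has the form required for membership in~\m{\rho}.

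Next I would prove \m{\apply{\rho,\rho'}\in\Invp[m]{F}}. Fix \m{g\in\Fn[k]{F}} and a tuple \m{\mathbf{s}\in\rho^{k}}; writing each entry as \m{s_\ell=\composition{f_\ell}{\mathbf{b}}} with \m{f_\ell\in\genClone[n]{F}} for \m{\ell<k}, superassociativity (Lemma~\ref{lem:superassociativity}\eqref{item:pre-superassociativity} combined with associativity of composition) yields \m{\composition{g}{\mathbf{s}}=\composition{\apply{g\circ\apply{f_0,\dotsc,f_{k-1}}}}{\mathbf{b}}}. Since \m{g\in F\subs\genSclone{F}}, and, using \m{\genClone[n]{F}=\genSclone[n]{F}\cup\TrivOps[n]}, the tuple \m{\apply{f_0,\dotsc,f_{k-1}}} lies in \m{\apply{\Fn[n]{\apply{\genSclone{F}\cup\TrivOps}}}^{k}}, the defining closure property of the semiclone \m{\genSclone{F}} gives \m{g\circ\apply{f_0,\dotsc,f_{k-1}}\in\genSclone[n]{F}}; hence \m{\composition{g}{\mathbf{s}}\in\rho'}. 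As \m{g\in F} and \m{\mathbf{s}\in\rho^{k}} were arbitrary, every operation in~\m{F} preserves \m{\apply{\rho,\rho'}}.

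Finally, for minimality I would take an arbitrary \m{\apply{\sigma,\sigma'}\in\Invp[m]{F}} with \m{B\subs\sigma} and show \m{\rho\subs\sigma} and \m{\rho'\subs\sigma'}. From \m{B\subs\sigma} we get \m{\mathbf{b}\in\sigma^{n}}. Now \m{F\subs\Polp{\set{\apply{\sigma,\sigma'}}}}, and by Lemma~\ref{lem:Polp=semiclone} this set is a semiclone, so it contains \m{\genSclone{F}}; thus every \m{h\in\genSclone[n]{F}} preserves \m{\apply{\sigma,\sigma'}}, whence \m{\composition{h}{\mathbf{b}}\in\sigma'}, and therefore \m{\rho'\subs\sigma'}. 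Likewise, since \m{\sigma'\subs\sigma}, every \m{g\in F} also preserves \m{\apply{\sigma,\sigma}}, so that \m{F\subs\Pol{\set{\sigma}}}; the latter is a clone and hence contains \m{\genClone[n]{F}}, giving \m{\composition{f}{\mathbf{b}}\in\sigma} for all \m{f\in\genClone[n]{F}}, that is, \m{\rho\subs\sigma}. I expect the only genuinely delicate point to be the superassociativity bookkeeping in the invariance step---keeping track that the inner composition \m{g\circ\apply{f_0,\dotsc,f_{k-1}}} is formed within the semiclone, while the subsequent composition with~\m{\mathbf{b}} is what produces the element of~\m{\rho'}; the remaining inclusions follow directly from the closure facts already recorded.
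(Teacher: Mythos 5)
Your proof is correct and takes essentially the same route as the paper's: \m{B\subs\rho} via the projections contained in \m{\genClone[n]{F}}, invariance via superassociativity combined with the defining closure property of the semiclone \m{\genSclone{F}}, and \m{\rho'\subs\sigma'} via the fact that any pair invariant for~\m{F} is invariant for the whole of \m{\genSclone{F}}. The only local deviation is the final inclusion \m{\rho\subs\sigma}: you route it through \m{F\subs\Pol{\set{\sigma}}} (legitimate, since \m{\sigma'\subs\sigma} lets preservation of \m{\apply{\sigma,\sigma'}} relax to preservation of \m{\apply{\sigma,\sigma}}), which is a clone and hence contains \m{\genClone{F}}, whereas the paper simply observes that \m{\genClone[n]{F}=\genSclone[n]{F}\cup\TrivOps[n]} yields \m{\rho=\rho'\cup B}, so that \m{\rho\subs\sigma} follows at once from \m{\rho'\subs\sigma} and \m{B\subs\sigma}; both arguments are sound.
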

Note that the lemma also shows that the relations
\m{\rho,\rho'\in\Rels[m]} do not depend on the order of the entries of the
tuple~\m{\mathbf{b}}.
Furthermore, instead of the finite cardinal~\m{m}, any cardinal or, in
fact, any indexing set~\m{K} can be used, provided the notion of
preservation is straightforwardly extended to relation pairs of arbitrary
arity, \ie\ pairs \m{\apply{R,S}} such that
\m{S\subs R\subs\CarrierSet[K]}.
\begin{proof}
First of all, it is clear that \m{B\subs\rho} as
\m{\TrivOps[n]\subs\genClone[n]{F}}. Next, we prove that
\m{\apply{\rho,\rho'}\in\Invp{F}}. For this let \m{\ell\in\N},
\m{g\in\Fn[\ell]{F}} and
\m{\mathbf{r}=\apply{r_{j}}_{0\leq j<\ell}\in\rho^{\ell}}.
By construction of~\m{\rho}, for each \m{0\leq j<\ell} there exists some
\m{f_j\in\genClone[n]{F}=\genSclone[n]{F}\cup\TrivOps[n]} (see
Lemma~\ref{lem:semiclones+proj=clones}%
      \eqref{item:char-clone-cl=sclone-cl+proj})
such that \m{r_j = \composition{f_j}{\mathbf{b}}}. Using
Lemma~\ref{lem:superassociativity}\eqref{item:superassociativity}, we have
\begin{equation*}
  \composition{g}{\mathbf{r}}
  =\composition{g}{\composition{f_0}{\mathbf{b}},\dotsc,
                   \composition{f_{\ell-1}}{\mathbf{b}}}
  =\composition{\apply{\composition{g}{\listen{f}{\ell}}}}{\mathbf{b}}
  \in\rho',
\end{equation*}
since \m{\composition{g}{\listen{f}{\ell}}\in\genSclone[n]{F}} by the
closure property of semiclones.
\par
Finally, we prove that any pair \m{\apply{\sigma,\sigma'}\in\Invp[m]{F}}
satisfying \m{B\subs\sigma} fulfils
\m{\apply{\rho,\rho'}\leq\apply{\sigma,\sigma'}}.
By Corollary~\ref{cor:genSclone-subs-PolpInvp} we know
\m{\apply{\sigma,\sigma'}\in\Invp{F} = \Invp{\genSclone{F}}}, so since
\m{B\subs\sigma} we have \m{\composition{f}{\mathbf{b}}\in\sigma'} for any
\m{f\in\genSclone[n]{F}}. Therefore, \m{\rho'\subs\sigma'\subs\sigma}. As,
by Lemma~\ref{lem:semiclones+proj=clones}%
      \eqref{item:char-clone-cl=sclone-cl+proj},
\m{\genClone[n]{F}=\genSclone[n]{F}\cup\TrivOps[n]}, it follows that
\m{\rho=\rho'\cup B}. We have \m{B\subs\sigma} by assumption and
\m{\rho'\subs\sigma} as demonstrated before. Thus \m{\rho\subs\sigma},
whence \m{\apply{\rho,\rho'}\leq\apply{\sigma,\sigma'}}.
\end{proof}
\par

\begin{corollary}\label{cor:rho-X-n}
Let \m{F\subs\Ops}, \m{n\in\N}, and \m{X\subs\CarrierSet[n]} be any subset
of finite cardinality \m{\abs{X}\eqdef k < \aleph_{0}}; moreover, consider
an arbitrary bijection \m{\functionhead{\beta}{k}{X}} as fixed. Defining
\m{B\defeq\lset{\eni{i}\Restriction_{X}\circ\beta}{0\leq i<n}
    \subs\CarrierSet[k]},
as well as \nbdd{k}ary relations
\m{\rho_{X,n}\defeq
                \lset{f\Restriction_{X}\circ \beta}{f\in\genClone[n]{F}}}
and \m{\rho'_{X,n}\defeq
                \lset{f\Restriction_{X}\circ \beta}{f\in\genSclone[n]{F}}},
we have \m{\apply{\rho_{X,n},\rho'_{X,n}}=\genInvPair{B}\in\Invp[k]{F}}.
\end{corollary}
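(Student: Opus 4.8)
The plan is to read the corollary off directly from Lemma~\ref{lem:Gamma-F}, applied with ambient arity \m{m\defeq k} and with the specific enumerating tuple \m{\mathbf{b}\defeq\apply{\eni{i}\Restriction_X\circ\beta}_{0\le i<n}\in\apply{\CarrierSet[k]}^{n}}. By construction the image set \m{\lset{\mathbf{b}(i)}{0\le i<n}} of this tuple is exactly the set \m{B} introduced in the statement, so the hypotheses of Lemma~\ref{lem:Gamma-F} are met for this data, and the lemma produces the pair \m{\genInvPair{B}=\apply{\rho,\rho'}} with \m{\rho=\lset{\composition{f}{\mathbf{b}}}{f\in\genClone[n]{F}}} and \m{\rho'=\lset{\composition{f}{\mathbf{b}}}{f\in\genSclone[n]{F}}}.

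The single point that needs verification is that these relations coincide with \m{\rho_{X,n}} and \m{\rho'_{X,n}}, that is, that \m{\composition{f}{\mathbf{b}}=f\Restriction_X\circ\beta} holds for every \m{f\in\Ops[n]}. This is a short index computation. Writing \m{i} (\m{0\le i<n}) for the column index and \m{l} (\m{0\le l<k}) for the row index, the \m{i}-th coordinate of the \m{l}-th entry of the tupling \m{\apply{\mathbf{b}}} is \m{\apply{\eni{i}\Restriction_X\circ\beta}\apply{l}=\eni{i}\apply{\beta\apply{l}}}, so the \m{l}-th entry (row) of \m{\apply{\mathbf{b}}} is precisely \m{\beta\apply{l}\in X\subs\CarrierSet[n]}. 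Applying \m{f} row\dash{}wise, as recorded in the note following Definition~\ref{def:preservation}, therefore gives \m{\apply{\composition{f}{\mathbf{b}}}\apply{l}=f\apply{\beta\apply{l}}=\apply{f\Restriction_X\circ\beta}\apply{l}}, where the middle equality uses \m{\beta\apply{l}\in X}. Hence \m{\composition{f}{\mathbf{b}}=f\Restriction_X\circ\beta}, as wanted.

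With this identity established, \m{\rho=\rho_{X,n}} and \m{\rho'=\rho'_{X,n}}, and Lemma~\ref{lem:Gamma-F} yields at once that \m{\apply{\rho_{X,n},\rho'_{X,n}}=\genInvPair{B}} is the least pair in \m{\Invp[k]{F}} whose first component contains \m{B}; in particular it belongs to \m{\Invp[k]{F}}, which is the claim. There is no genuine obstacle here: the statement is just a change of bookkeeping for Lemma~\ref{lem:Gamma-F}, and the only place demanding care is keeping the column index \m{i<n} and the row index \m{l<k} apart in the tupling identity \m{\composition{f}{\mathbf{b}}=f\Restriction_X\circ\beta}.
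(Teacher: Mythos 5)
Your proposal is correct and follows essentially the same route as the paper's own proof: both instantiate Lemma~\ref{lem:Gamma-F} with the tuple of restricted projections composed with \m{\beta} and then verify the identity \m{\composition{f}{\mathbf{b}}=f\Restriction_{X}\circ\beta} for all \m{f\in\Ops[n]}. The only cosmetic difference is that the paper states this identity via the composition law \m{\composition{f}{\eni{i}\Restriction_{X}}_{0\leq i<n}=f\Restriction_{X}} while you check it coordinate\dash{}wise, which is the same computation.
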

\begin{easyproof}
The claim follows from Lemma~\ref{lem:Gamma-F} by observing that the
equality
\m{\composition{f}{\eni{i}\Restriction_{X}}_{0\leq i<n}
 = \composition{f}{\eni{i}}_{0\leq i<n}\Restriction_{X}
 = f\Restriction_{X}}
holds for all \m{f\in\Ops[n]}.
\end{easyproof}
\par

We are now prepared to prove our first theorem, characterising the closure
\m{\Polp{\Invp[\leq s]{}}} for \m{s\in\N}.
\begin{theorem}\label{thm:char-Polp-Invp-leq-s}
For \m{s\in\N} and any set of operations \m{F\subs\Ops} we have the
equality \m{\Polp{\Invp[\leq s]{F}}=\sLoc{\genSclone{F}}}.
\end{theorem}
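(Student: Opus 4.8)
The plan is to prove the two inclusions separately; the forward inclusion is formal, while the reverse one is where the construction from Corollary~\ref{cor:rho-X-n} carries the load.

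For \m{\sLoc{\genSclone{F}}\subs\Polp{\Invp[\leq s]{F}}} I would begin with Corollary~\ref{cor:genSclone-subs-PolpInvp}, which gives \m{\genSclone{F}\subs\Polp{\Invp{F}}}. As \m{\Invp[\leq s]{F}\subs\Invp{F}} and \m{\Polp{}} is antitone, this yields \m{\genSclone{F}\subs\Polp{\Invp[\leq s]{F}}}. Applying the monotone operator \m{\sLoc{}} and observing that \m{\Invp[\leq s]{F}} consists of at most \nbdd{s}ary relation pairs, Lemma~\ref{lem:Polp-sLoc} collapses the right\dash{}hand side: \m{\sLoc{\genSclone{F}}\subs\sLoc{\Polp{\Invp[\leq s]{F}}}=\Polp{\Invp[\leq s]{F}}}.

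For the reverse, substantial inclusion I would fix \m{n\in\N}, take any \m{g\in\Polp[n]{\Invp[\leq s]{F}}}, and verify the interpolation condition defining membership in \m{\sLoc{\genSclone{F}}}. So let \m{X\subs\CarrierSet[n]} with \m{k\defeq\abs{X}\leq s}, fix a bijection \m{\functionhead{\beta}{k}{X}}, and invoke Corollary~\ref{cor:rho-X-n}: the \nbdd{k}ary pair \m{\apply{\rho_{X,n},\rho'_{X,n}}=\genInvPair{B}} lies in \m{\Invp[k]{F}}, hence (since \m{k\leq s}) in \m{\Invp[\leq s]{F}}, so \m{g} preserves it. I would then feed the right tuple into Definition~\ref{def:preservation}\eqref{cond:preservation-tupling}: for \m{\mathbf{r}\defeq\apply{\eni[n]{i}\Restriction_X\circ\beta}_{0\leq i<n}\in\rho_{X,n}^{n}} (each entry a projection restriction, hence a member of \m{\rho_{X,n}}), the tupling \m{\apply{\mathbf{r}}} equals \m{\beta} as a map from~\m{k} into~\m{X}. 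Preservation therefore yields \m{\composition{g}{\mathbf{r}}=g\circ\beta\in\rho'_{X,n}}, so by the definition of \m{\rho'_{X,n}} there is some \m{f\in\genSclone[n]{F}} with \m{g\circ\beta=f\Restriction_X\circ\beta}; cancelling the bijection~\m{\beta} gives \m{g\Restriction_X=f\Restriction_X}, precisely the \nbdd{s}local interpolation of~\m{g} by a member of \m{\genSclone{F}}.

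The main obstacle is the pivotal computation in the last paragraph: recognising that the tupling of the projection restrictions \m{\eni[n]{i}\Restriction_X\circ\beta} collapses to~\m{\beta} itself. This identification is exactly what degenerates the abstract preservation condition for \m{\apply{\rho_{X,n},\rho'_{X,n}}} into the concrete statement \m{g\circ\beta\in\rho'_{X,n}}; thereafter the containment \m{\rho'_{X,n}\subs\rho_{X,n}} built into Corollary~\ref{cor:rho-X-n} together with the bijectivity of~\m{\beta} do the remaining work, leaving only routine bookkeeping about arities.
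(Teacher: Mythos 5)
Your proposal is correct and follows essentially the same route as the paper: the formal inclusion via Corollary~\ref{cor:genSclone-subs-PolpInvp} and Lemma~\ref{lem:Polp-sLoc}, and the substantial inclusion via Corollary~\ref{cor:rho-X-n}, preservation of \m{\apply{\rho_{X,n},\rho'_{X,n}}}, and cancellation of the bijection~\m{\beta}. Your explicit identification of the tupling \m{\apply{\mathbf{r}}} with~\m{\beta} is exactly the computation the paper carries out as \m{g\Restriction_{X}\circ\beta=\composition{g}{\eni{i}\Restriction_{X}\circ\beta}_{0\leq i<n}\in\rho'_{X,n}}.
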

\begin{proof}
We have
\m{\genSclone{F}\subs\Polp{\Invp[\leq s]{\genSclone{F}}}
                    =\Polp{\Invp[\leq s]{F}}}
by Corollary~\ref{cor:genSclone-subs-PolpInvp},
whence
\m{\sLoc{\genSclone{F}}\subs
   \sLoc{\Polp{\Invp[\leq s]{F}}}=\Polp{\Invp[\leq s]{F}}},
using Lemma~\ref{lem:Polp-sLoc}.
\par
For the converse inclusion take \m{g\in\Polp[n]{\Inv[\leq s]{F}}} for any
\m{n\in\N}; we want to prove that \m{g\in\sLoc[s][n]{\genSclone{F}}}. To
do so, we consider any finite \m{X\subs\CarrierSet[n]} where
\m{k\defeq\abs{X}\leq s} and an arbitrary bijection
\m{\functionhead{\beta}{k}{X}}. Now Corollary~\ref{cor:rho-X-n} yields
that \m{\apply{\rho_{X,n},\rho'_{X,n}}\in\Invp[k]{F}\subs\Invp[\leq s]{F}},
wherefore \m{g\preserves \apply{\rho_{X,n},\rho'_{X,n}}}. Moreover, we
have
\m{B=\lset{\eni{i}\Restriction_{X}\circ \beta}{0\leq i<n}\subs\rho_{X,n}},
whence we obtain
\m{g\Restriction_{X}\circ\beta
=\composition{g}{\eni{i}}_{0\leq i<n}\Restriction_{X}\circ\beta
=\composition{g}{\eni{i}\Restriction_{X}\circ\beta}_{0\leq i<n}
\in\rho'_{X,n}}.
Thus by definition of~\m{\rho'_{X,n}} there has to exist some
\m{f\in\genSclone[n]{F}} such that
\m{g\Restriction_{X}\circ\beta=f\Restriction_{X}\circ\beta}, which implies
\m{g\Restriction_{X} = f\Restriction_{X}} by bijectivity of~\m{\beta}.
Yet, this finally proves that \m{g\in\sLoc[s][n]{\genSclone{F}}}.
\end{proof}
\par

The following simple observation is not unexpected.
\begin{lemma}\label{lem:rel-pair-clone-sub-arity-closure}
Any relation pair clone \m{Q\subs\Relps} on a non\dash{}empty carrier
set~\m{\CarrierSet} satisfies \m{\Fn[m]{Q}\subs\genRpclone{\Fn[s]{Q}}} for all
\m{m,s\in\N} where \m{m\leq s}.
\par
For \m{\CarrierSet=\emptyset}, we have in fact
\m{\Fn[s]{Q}\subs\genRpclone{\Fn[0]{Q}}} for all \m{s\in\N} and any
relation pair clone \m{Q\subs\Relps}.
\end{lemma}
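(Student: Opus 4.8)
The plan is to realise each relation pair of~\m{Q} of the relevant arity as a two-step general superposition of pairs of the \emph{target} arity: first lift it, \emph{inside}~\m{Q}, to the target arity by adjoining fictitious coordinates, and then recover the original pair, \emph{inside} the generated clone, by projecting onto a subset of coordinates. Since the general superposition of relation pairs acts componentwise (Definition~\ref{def:gen-comp}), it suffices to follow a single relation throughout and to note that one and the same index datum (as in Definition~\ref{def:general-superposition}) applies simultaneously to both components, automatically respecting the inclusion between them.

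For the first assertion, assume \m{\CarrierSet\neq\emptyset}, fix \m{m\leq s} and \m{\apply{\rho,\rho'}\in\Fn[m]{Q}}. To adjoin \m{s-m} fictitious coordinates I would take \m{I\defeq\set{0}}, \m{\mu\defeq s}, the inclusion \m{\functionhead{\alpha_0}{m}{s}} given by \m{i\mapsto i}, and \m{\beta\defeq\id_s}; then \m{\genComp{\id_s}{\apply{\alpha_0}}{\apply{\rho,\rho'}}} is the \nbdd{s}ary pair \m{\apply{\sigma,\sigma'}} with \m{\sigma=\lset{a\in\CarrierSet[s]}{\apply{a_0,\dotsc,a_{m-1}}\in\rho}} and \m{\sigma'} defined analogously from~\m{\rho'}. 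As~\m{Q} is a relation pair clone (Definition~\ref{def:rel-pair-clone}), \m{\apply{\sigma,\sigma'}\in\Fn[s]{Q}}. To project back I would take \m{I\defeq\set{0}}, \m{\mu\defeq s}, \m{\alpha_0\defeq\id_s} and the inclusion \m{\functionhead{\beta}{m}{s}}, so that \m{\genComp{\beta}{\apply{\id_s}}{\apply{\sigma,\sigma'}}} has first component \m{\lset{\apply{a_0,\dotsc,a_{m-1}}}{a\in\sigma}}. This set equals~\m{\rho}: the inclusion ``\m{\subs}'' is immediate, and for ``\m{\sups}'' any \m{\apply{b_0,\dotsc,b_{m-1}}\in\rho} extends to a tuple of~\m{\sigma} by filling the positions \m{m,\dotsc,s-1} with some fixed element of~\m{\CarrierSet}, which exists because \m{\CarrierSet\neq\emptyset}. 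The same computation yields~\m{\rho'} in the second component, so \m{\apply{\rho,\rho'}=\genComp{\beta}{\apply{\id_s}}{\apply{\sigma,\sigma'}}}. Since \m{\apply{\sigma,\sigma'}\in\Fn[s]{Q}\subs\genRpclone{\Fn[s]{Q}}} and the latter is closed under general superposition, this gives \m{\apply{\rho,\rho'}\in\genRpclone{\Fn[s]{Q}}}.

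For the carrier set \m{\CarrierSet=\emptyset} the two superpositions are applied in the reverse order. Fix \m{s}; if \m{s=0} or \m{\Fn[s]{Q}=\emptyset} nothing is to prove, and for \m{s\geq 1} the equality \m{\CarrierSet[s]=\emptyset} forces \m{\Fn[s]{Q}\subs\set{\apply{\emptyset,\emptyset}}}. Assuming \m{\apply{\emptyset,\emptyset}\in\Fn[s]{Q}}, I would first project onto zero coordinates, using \m{I\defeq\set{0}}, \m{\mu\defeq s}, \m{\alpha_0\defeq\id_s} and the empty map \m{\functionhead{\beta}{0}{s}}; as~\m{Q} is a relation pair clone this produces the \nbdd{0}ary pair \m{\apply{\emptyset,\emptyset}\in\Fn[0]{Q}}. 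Re-adjoining \m{s} fictitious coordinates, via \m{I\defeq\set{0}}, \m{\mu\defeq s}, the empty map \m{\functionhead{\alpha_0}{0}{s}} and \m{\beta\defeq\id_s}, sends this pair back to the \nbdd{s}ary \m{\apply{\emptyset,\emptyset}}; here \m{\CarrierSet[s]=\emptyset} makes both superpositions collapse automatically to the empty relation. Hence \m{\apply{\emptyset,\emptyset}\in\genRpclone{\Fn[0]{Q}}}.

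I expect the only genuine subtlety to lie in the recovery step for non-empty~\m{\CarrierSet}: the projection returns \emph{exactly}~\m{\rho} rather than a proper subrelation precisely because the discarded coordinates can be refilled, and this is where \m{\CarrierSet\neq\emptyset} is indispensable. This is also the reason the empty carrier must be handled separately; there the collapse \m{\CarrierSet[s]=\emptyset} for \m{s\geq 1} trivialises every relation of positive arity, so that the analogous round trip closes without needing to refill anything.
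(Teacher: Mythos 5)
Your proof is correct and takes essentially the same route as the paper's own argument: for non\dash{}empty \m{\CarrierSet} it lifts \m{\apply{\rho,\rho'}} to arity~\m{s} by cylindrification inside~\m{Q} (the superposition \m{\genComp{\id_{s}}{\iota}{\cdot}} with the identical embedding \m{\functionhead{\iota}{m}{s}}) and then projects back onto the first \m{m} coordinates inside \m{\genRpclone{\Fn[s]{Q}}}, using \m{\CarrierSet\neq\emptyset} exactly where the paper implicitly does, namely to refill the discarded coordinates so that the projection returns all of~\m{\rho}; for \m{\CarrierSet=\emptyset} both proofs reverse the two steps, projecting onto zero coordinates inside~\m{Q} and cylindrifying back inside \m{\genRpclone{\Fn[0]{Q}}}. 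The only difference is that you spell out the verification of the equality \m{\rho=\pr_{0,\dotsc,m-1}\apply{\rho\times\CarrierSet[{s-m}]}} which the paper treats as clear.
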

\begin{easykeepproof}
For \m{\CarrierSet\neq\emptyset}, it is clear for \nbdd{m}ary relations
\m{\rho\subs\CarrierSet[m]} that one can write
\m{\rho=\pr_{0,\dotsc,m-1}\apply{\rho\times\CarrierSet[{s-m}]}}.
Designating by \m{\functionhead{\iota}{m}{s}} the identical embedding, one
may rewrite this relationship as
\m{\rho=\genComp{\iota}{\id_{s}}{\genComp{\id_{s}}{\iota}{\rho}}}. Since
the definition of the operators only depends on the arity of the
relation~\m{\rho}, the same works for \nbdd{m}ary relation pairs.
So, if \m{\apply{\rho,\rho'}\in\Fn[m]{Q}}, then
\m{\genComp{\id_{s}}{\iota}{\apply{\rho,\rho'}}\in\Fn[s]{Q}}, and thus
\m{\apply{\rho,\rho'}
=\genComp{\iota}{\id_{s}}{\genComp{\id_{s}}{\iota}{\apply{\rho,\rho'}}}
\in\genRpclone{\Fn[s]{Q}}}.
\par
For the empty carrier set, in point of fact, the opposite holds: for
\m{s=0} the claim is trivial. For \m{s\in\Np} and any
\m{\rho\in\Rels[s]}, we have \m{\rho=\CarrierSet[s] =\emptyset}, and
\m{\rho=\emptyset\times\emptyset =
\apply{\pr_{\emptyset}\!\rho}\times \CarrierSet[s] =
\genComp{\id_{s}}{\iota}{\!\!\genComp{\iota}{\id_{s}}{\rho}}} where \m{\iota}
is the map from above for \m{m=0}. Since
\m{\genComp{\iota}{\id_{s}}{\apply{\rho,\rho'}}\in\Fn[0]{Q}} for
\m{\apply{\rho,\rho'}\in\Fn[s]{Q}}, we have
\m{\Fn[s]{Q}\subs\genRpclone{\Fn[0]{Q}}}.
\end{easykeepproof}
\par

Hence, we can prove the first corollary to our theorem.
\begin{corollary}\label{cor:char-Polp-Invp-s}
For \m{s\in\N} and any set of operations \m{F\subs\Ops}
on~\m{\CarrierSet\neq\emptyset} we have the
equality \m{\Polp{\Invp[s]{F}}=\sLoc{\genSclone{F}}}.
\par
If \m{\CarrierSet=\emptyset}, we have
\m{\sLoc{\genSclone{F}} = \Polp{\Invp[0]{F}} = \sLoc[0]{\genSclone{F}}}
for any \m{F\subs\Ops} and \m{s\in\N}.
\end{corollary}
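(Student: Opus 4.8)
The plan is to deduce this corollary from the immediately preceding Theorem~\ref{thm:char-Polp-Invp-leq-s} by showing that, for non\dash{}empty~\m{\CarrierSet}, the exactly \nbdd{s}ary invariant pairs already yield the same polymorphism set as all at most \nbdd{s}ary ones, that is \m{\Polp{\Invp[s]{F}}=\Polp{\Invp[\leq s]{F}}}. One inclusion comes for free: from \m{\Invp[s]{F}\subs\Invp[\leq s]{F}} and antitonicity of~\m{\Polp{}} we obtain \m{\Polp{\Invp[\leq s]{F}}\subs\Polp{\Invp[s]{F}}}.

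For the reverse inclusion I would use that \m{\Invp{F}} is a relation pair clone by Lemma~\ref{lem:Invp=rel-pair-clone}, so that the sub\dash{}arity closure property of Lemma~\ref{lem:rel-pair-clone-sub-arity-closure} applies with \m{Q=\Invp{F}}; here the hypothesis \m{\CarrierSet\neq\emptyset} is indispensable. It delivers \m{\Invp[m]{F}=\Fn[m]{\Invp{F}}\subs\genRpclone{\Fn[s]{\Invp{F}}}=\genRpclone{\Invp[s]{F}}} for every \m{m\leq s}, and hence \m{\Invp[\leq s]{F}\subs\genRpclone{\Invp[s]{F}}}. Applying~\m{\Polp{}} and invoking the identity \m{\Polp{\genRpclone{Q}}=\Polp{Q}} from Corollary~\ref{cor:genRpclone-subs-InvpPolp}, I get \m{\Polp{\Invp[\leq s]{F}}\sups\Polp{\genRpclone{\Invp[s]{F}}}=\Polp{\Invp[s]{F}}}. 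Combining both inclusions gives \m{\Polp{\Invp[s]{F}}=\Polp{\Invp[\leq s]{F}}}, which by Theorem~\ref{thm:char-Polp-Invp-leq-s} equals \m{\sLoc{\genSclone{F}}}, settling the non\dash{}empty case.

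For \m{\CarrierSet=\emptyset} the sub\dash{}arity lemma supplies the opposite inclusion, so instead I would argue directly that all \nbdd{s}local closures collapse onto the \nbdd{0}local one. For \m{n\geq 1} the domain~\m{\CarrierSet[n]} is empty and its only subset is~\m{\emptyset}, of cardinality~\m{0}, so the defining condition of~\m{\sLoc{}} coincides verbatim with that of~\m{\sLoc[0]{}}; and for \m{n=0} there is nothing to check since \m{\Ops[0]=\emptyset}. Thus \m{\sLoc{G}=\sLoc[0]{G}} for every \m{G\subs\Ops}, in particular for \m{G=\genSclone{F}}. Since \m{\Invp[\leq 0]{F}=\Invp[0]{F}}, Theorem~\ref{thm:char-Polp-Invp-leq-s} with \m{s=0} yields \m{\sLoc[0]{\genSclone{F}}=\Polp{\Invp[0]{F}}}, and chaining the two equalities proves the claimed \m{\sLoc{\genSclone{F}}=\Polp{\Invp[0]{F}}=\sLoc[0]{\genSclone{F}}}.

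No estimate or construction is required; the whole argument is an assembly of previously established closure identities. The one point demanding care is the direction of the inclusion furnished by Lemma~\ref{lem:rel-pair-clone-sub-arity-closure}, which ascends arities only on a non\dash{}empty carrier and descends to arity~\m{0} on the empty one; this asymmetry is exactly what forces the two cases to be handled separately.
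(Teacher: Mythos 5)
Your argument is correct, and for the main case \m{\CarrierSet\neq\emptyset} it is essentially the paper's own proof: both rest on the same chain of results --- Lemma~\ref{lem:Invp=rel-pair-clone} to see that \m{\Invp{F}} is a relation pair clone, Lemma~\ref{lem:rel-pair-clone-sub-arity-closure} applied with \m{Q=\Invp{F}}, the identity \m{\Polp{\genRpclone{Q}}=\Polp{Q}} from Corollary~\ref{cor:genRpclone-subs-InvpPolp}, and finally Theorem~\ref{thm:char-Polp-Invp-leq-s}. The only cosmetic difference is that the paper packages the two inclusions as \m{\Polp{\Invp[s]{F}}=\bigcap_{0\leq m\leq s}\Polp{\Invp[m]{F}}=\Polp{\Invp[{\leq s}]{F}}}, whereas you separate the antitonicity step from the sub-arity inclusion. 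Where you genuinely depart from the paper is the case \m{\CarrierSet=\emptyset}: the paper dismisses it with ``follows by similar transformations'', implicitly meaning that the descending inclusion \m{\Fn[s]{Q}\subs\genRpclone{\Fn[0]{Q}}} from the second half of Lemma~\ref{lem:rel-pair-clone-sub-arity-closure} is pushed through the same computation. You instead observe that on the empty carrier every operator \m{\sLoc{}} collapses to \m{\sLoc[0]{}} (since \m{\Ops[0]=\emptyset} and \m{\CarrierSet[n]=\emptyset} for \m{n\geq 1}, so only \m{B=\emptyset} ever occurs), and then you need Theorem~\ref{thm:char-Polp-Invp-leq-s} only for the parameter~\m{0}. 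This is equally valid and somewhat more self-contained, as it bypasses the relation-pair-clone machinery entirely in the degenerate case; your closing remark about the arity-ascending versus arity-descending asymmetry of the sub-arity lemma is exactly the reason the two cases cannot be merged.
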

\begin{easykeepproof}
By Lemma~\ref{lem:Invp=rel-pair-clone}, the set \m{\Invp{F}} is a relation
pair clone, so Lemma~\ref{lem:rel-pair-clone-sub-arity-closure} yields
\m{\Invp[m]{F}\subs\genRpclone{\Invp[s]{F}}} for all \m{m\leq s}
and \m{\CarrierSet\neq\emptyset}.
From Corollary~\ref{cor:genRpclone-subs-InvpPolp} one obtains
\m{\Polp{\Invp[m]{F}}\sups\Polp{\genRpclone{\Invp[s]{F}}}
                         =\Polp{\Invp[s]{F}}}
for all \m{m\leq s}, and hence, we have
\begin{align*}
\Polp{\Invp[s]{F}}=\bigcap_{0\leq m\leq s}\Polp{\Invp[m]{F}}
                 &=\Polp{\biguplus_{0\leq m\leq s}\Invp[m]{F}}\\
                 &=\Polp{\Invp[\leq s]{F}}
                  =\sLoc{\genSclone{F}},
\end{align*}
where the last equality holds by Theorem~\ref{thm:char-Polp-Invp-leq-s}.
\par
The claim for \m{\CarrierSet=\emptyset} follows by similar transformations.
\end{easykeepproof}
\par

The second corollary characterises the closure \m{\Polp{\Invp{}}}.
\begin{corollary}\label{cor:PolpInvp=Loc[]}
We have \m{\Polp{\Invp{F}} = \Loc{\genSclone{F}}} for all \m{F\subs\Ops}.
\end{corollary}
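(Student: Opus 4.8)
The plan is to let the parameter~\m{s} in Theorem~\ref{thm:char-Polp-Invp-leq-s} range over all of~\m{\N} and then to assemble the pieces by purely formal bookkeeping with the \name{Galois} connection. First I would invoke Definition~\ref{def:s-local-closure}, which expresses the operational local closure as the intersection \m{\Loc{\genSclone{F}}=\bigcap_{s\in\N}\sLoc{\genSclone{F}}}. Into each term of this intersection I would substitute Theorem~\ref{thm:char-Polp-Invp-leq-s}, using \m{\sLoc{\genSclone{F}}=\Polp{\Invp[\leq s]{F}}}, which immediately yields \m{\Loc{\genSclone{F}}=\bigcap_{s\in\N}\Polp{\Invp[\leq s]{F}}}.

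The next step would be to collapse this intersection of polymorphism sets into a single one. For this I would rely on the standard behaviour of the lower polarity of any \name{Galois} connection, namely that \m{\Polp{}} converts unions of sets of relation pairs into intersections of the associated polymorphism sets: an operation preserves every pair in \m{\bigcup_{s\in\N}\Invp[\leq s]{F}} if and only if it preserves every pair in each individual \m{\Invp[\leq s]{F}}. This identity gives \m{\bigcap_{s\in\N}\Polp{\Invp[\leq s]{F}}=\Polp{\bigcup_{s\in\N}\Invp[\leq s]{F}}}.

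Finally I would identify the union that has appeared. Since \m{\Invp[\leq s]{F}=\biguplus_{m=0}^{s}\Invp[m]{F}}, the truncations form an increasing chain exhausting all arities, so that \m{\bigcup_{s\in\N}\Invp[\leq s]{F}=\biguplus_{m\in\N}\Invp[m]{F}=\Invp{F}}, the last equality holding because \m{\Relps} is, by definition, the disjoint union of its arity parts and the \nbdd{m}ary part of \m{\Invp{F}} is exactly \m{\Invp[m]{F}}. Chaining the three identities then produces the claimed equality \m{\Loc{\genSclone{F}}=\Polp{\Invp{F}}}.

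I do not expect any genuine obstacle, since all the mathematical substance already resides in Theorem~\ref{thm:char-Polp-Invp-leq-s}; everything that remains consists of formal facts about intersections, unions and the arity stratification of \m{\Relps}. The single point deserving a word of care is that the theorem is phrased for the bounded truncations \m{\Invp[\leq s]{F}} rather than for the full invariant set, which is precisely the reason the union\dash{}to\dash{}intersection identity is needed in order to recover \m{\Polp{\Invp{F}}} at the end.
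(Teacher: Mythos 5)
Your proposal is correct and follows exactly the paper's own argument: expand \m{\Loc{}} as the intersection \m{\bigcap_{s\in\N}\sLoc{}}, substitute Theorem~\ref{thm:char-Polp-Invp-leq-s} termwise, convert the intersection of polymorphism sets into \m{\Polp{}} of the union via the \name{Galois} connection, and identify that union with \m{\Invp{F}}. Nothing to add beyond noting that your bookkeeping of the arity stratification is the same justification the paper leaves implicit.
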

\begin{straightforwardproof}
Using the definition of the operator \m{\Loc{}} and
Theorem~\ref{thm:char-Polp-Invp-leq-s}, we can write
\begin{align*}
\Loc{\genSclone{F}} = \bigcap_{s\in\N}\sLoc{\genSclone{F}}
&=\bigcap_{s\in\N}\Polp{\Invp[{\leq s}]{F}}\\
&=\Polp{\bigcup_{s\in\N}\Invp[{\leq s}]{F}}
=\Polp{\Invp{F}}.\qedhere
\end{align*}
\end{straightforwardproof}
\par

The third corollary proves that a set \m{F\subs\Ops} is closed \wrt\
\m{\genSclone{\,}} and \m{\sLoc{}} if (and clearly only if) it is closed
\wrt\ to the operator \m{\sLoc{\genSclone{\,}}}. An analogous result holds,
of course, for the operators \m{\genSclone{\,}}, \m{\Loc{}} and
\m{\Loc{\genSclone{\,}}}. These two facts can be seen as generalisations
of Lemma~2.5(ii),(iii)
in~\cite[p.~22]{PoeGeneralGaloisTheoryForOperationsAndRelations}, where
similar results have been proven for clones.
\par
\begin{corollary}\label{cor:(s)-locally-closed-semiclones}
For \m{s\in\N} a set \m{F\subs\Ops} of operations is an \nbdd{s}locally
(locally) closed semiclone if and only if \m{\sLoc{\genSclone{F}}=F}
(\m{\Loc{\genSclone{F}}=F}).
\end{corollary}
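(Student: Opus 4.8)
The plan is to read the statement as the conjunction of two defining equalities and then invoke the characterisation theorem. Being an \nbdd{s}locally closed semiclone means precisely that \m{\genSclone{F}=F} (\ie\ \m{F} is a semiclone) and \m{\sLoc{F}=F} (\ie\ \m{F} is \nbdd{s}locally closed). The forward implication is then immediate: if both equalities hold, substituting the first into the second yields \m{\sLoc{\genSclone{F}}=\sLoc{F}=F}.

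For the converse the crucial ingredient is Theorem~\ref{thm:char-Polp-Invp-leq-s}, which identifies \m{\sLoc{\genSclone{F}}} with the \name{Galois} closure \m{\Polp{\Invp[\leq s]{F}}}. Assuming \m{\sLoc{\genSclone{F}}=F}, I would rewrite this as \m{F=\Polp{\Invp[\leq s]{F}}}, thereby exhibiting \m{F} as a polymorphism set. From this single representation both required equalities drop out. On the one hand, every set of the form \m{\Polp{Q}} is a semiclone by Lemma~\ref{lem:Polp=semiclone}, so \m{\genSclone{F}=F}. On the other hand, since \m{\Invp[\leq s]{F}\subs\Relps[{\leq s}]} consists of at most \nbdd{s}ary relation pairs, Lemma~\ref{lem:Polp-sLoc} applies to it and gives \m{\sLoc{F}=\sLoc{\Polp{\Invp[\leq s]{F}}}=\Polp{\Invp[\leq s]{F}}=F}; hence \m{F} is \nbdd{s}locally closed. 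Together these two facts make \m{F} an \nbdd{s}locally closed semiclone.

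The parenthetical ``local'' version is entirely parallel: one replaces \m{\sLoc{}} by \m{\Loc{}}, the theorem by Corollary~\ref{cor:PolpInvp=Loc[]} (which gives \m{\Loc{\genSclone{F}}=\Polp{\Invp{F}}}), and Lemma~\ref{lem:Polp-sLoc} by Corollary~\ref{cor:Polp-Loc} (which gives \m{\Loc{\Polp{Q}}=\Polp{Q}} for arbitrary \m{Q\subs\Relps}); the argument is otherwise identical and needs no arity restriction, since \m{\Loc{}} is insensitive to it.

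I do not expect a genuine obstacle here, as all of the work resides in the cited results. The only points requiring minor care are the bookkeeping equivalence between ``\nbdd{s}locally closed semiclone'' and the conjunction of the two closure equalities, and the verification that the hypothesis of Lemma~\ref{lem:Polp-sLoc} is met, namely that \m{\Invp[\leq s]{F}} indeed lies in \m{\Relps[{\leq s}]}, so that the \nbdd{s}local idempotency of \m{\Polp{}} is available.
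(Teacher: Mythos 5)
Your proposal is correct and follows essentially the same route as the paper: the forward direction by substitution, and the converse by invoking Theorem~\ref{thm:char-Polp-Invp-leq-s} (resp.\ Corollary~\ref{cor:PolpInvp=Loc[]}) to write \m{F=\Polp{\Invp[{\leq s}]{F}}} and then Lemma~\ref{lem:Polp=semiclone} to conclude \m{\genSclone{F}=F}. The only (harmless) deviation is that you obtain \m{\sLoc{F}=F} via Lemma~\ref{lem:Polp-sLoc} applied to \m{\Invp[{\leq s}]{F}\subs\Relps[{\leq s}]}, whereas the paper gets it more cheaply from the idempotence of the \nbdd{s}local (local) closure operator, without any appeal to the \name{Galois} machinery for that step.
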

\begin{easyproof}
If \m{\genSclone{F}=F} and \m{\sLoc{F}=F} (\m{\Loc{F}=F}), then it
clearly follows that \m{\sLoc{\genSclone{F}}=F}
(\m{\Loc{\genSclone{F}}=F}). Conversely, if the latter equality holds,
then we obviously have \m{\sLoc{F}=F} (\m{\Loc{F}=F}) by idempotence of
the \nbdd{s}local (local) closure. Besides, combining the condition
\m{F=\sLoc{\genSclone{F}}} (\m{F=\Loc{\genSclone{F}}}) with
Theorem~\ref{thm:char-Polp-Invp-leq-s}
(Corollary~\ref{cor:PolpInvp=Loc[]}), one obtains the equality
\m{F=\Polp{\Invp[{\leq s}]{F}}}
(\m{F=\Polp{\Invp{F}}}), and the latter set is a semiclone by
Lemma~\ref{lem:Polp=semiclone}. Therefore, \m{\genSclone{F}=F}.
\end{easyproof}
\par

The next corollary is in analogy to Lemma~2.6
in~\cite[p.~22]{PoeGeneralGaloisTheoryForOperationsAndRelations}.
\begin{corollary}\label{cor:Invp-sLoc}
For every \m{F\subs\Ops} we have
\begin{enumerate}[(a)]
\item\label{item:Invp-sLoc}
      \begin{sloppypar}
      \m{\Invp[m]{F}=\Invp[m]{\genSclone{F}}=\Invp[m]{\Loc{\genSclone{F}}}
        =\Invp[m]{\sLoc{\genSclone{F}}}} for all \m{m\leq s\in\N}
        whenever \m{\CarrierSet\neq\emptyset}.
      \end{sloppypar}
\item\label{item:Invp-Loc}
      \m{\Invp{F}=\Invp{\genSclone{F}}=\Invp{\Loc{\genSclone{F}}}}.
\end{enumerate}
\end{corollary}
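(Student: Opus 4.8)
The plan is to obtain both items purely formally from the characterisation results of Subsection~\ref{subsect:PolpInvp}, combined with antitonicity of \m{\Invp{}} and the triple identity \m{\Invp{\Polp{\Invp{F}}}=\Invp{F}} of the \name{Galois} correspondence \m{\PolpOp\text{-}\InvpOp} (already exploited in Corollary~\ref{cor:closure-encirclement}). Throughout I shall freely pass to \nbdd{m}ary parts, using that this operation preserves inclusions and that the \nbdd{m}ary part of \m{\Invp[{\leq s}]{F}} equals \m{\Invp[m]{F}} as soon as \m{m\leq s}.

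I would first settle part~\eqref{item:Invp-Loc}, which needs no arity restriction. The equality \m{\Invp{F}=\Invp{\genSclone{F}}} is the second assertion of Corollary~\ref{cor:genSclone-subs-PolpInvp}. For the remaining one, Corollary~\ref{cor:PolpInvp=Loc[]} provides \m{\Polp{\Invp{F}}=\Loc{\genSclone{F}}}; applying \m{\Invp{}} and using the triple identity gives \m{\Invp{\Loc{\genSclone{F}}}=\Invp{\Polp{\Invp{F}}}=\Invp{F}}, so all three sets coincide.

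For part~\eqref{item:Invp-sLoc} the strategy is to squeeze the four sets between \m{\Invp[m]{F}} and itself. From the inclusion chain \m{\genSclone{F}\subs\Loc{\genSclone{F}}\subs\sLoc{\genSclone{F}}} recorded after Definition~\ref{def:s-local-closure}, antitonicity yields
\[
  \Invp[m]{\sLoc{\genSclone{F}}}
    \subs\Invp[m]{\Loc{\genSclone{F}}}
    \subs\Invp[m]{\genSclone{F}}
    =\Invp[m]{F},
\]
the last equality again by Corollary~\ref{cor:genSclone-subs-PolpInvp}. It remains to close the loop via \m{\Invp[m]{F}\subs\Invp[m]{\sLoc{\genSclone{F}}}}. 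Here I would invoke Theorem~\ref{thm:char-Polp-Invp-leq-s}, giving \m{\sLoc{\genSclone{F}}=\Polp{\Invp[{\leq s}]{F}}}, and then the universal \name{Galois} inclusion \m{\Invp[{\leq s}]{F}\subs\Invp{\Polp{\Invp[{\leq s}]{F}}}=\Invp{\sLoc{\genSclone{F}}}}. Restricting to the \nbdd{m}ary part for \m{m\leq s} turns the left\dash{}hand side into \m{\Invp[m]{F}} and the right\dash{}hand side into \m{\Invp[m]{\sLoc{\genSclone{F}}}}, which closes the loop and forces equality throughout.

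I do not expect a genuine obstacle, as everything becomes formal once Theorem~\ref{thm:char-Polp-Invp-leq-s} and Corollary~\ref{cor:PolpInvp=Loc[]} are in hand. The one point meriting care is the hypothesis \m{\CarrierSet\neq\emptyset} in~\eqref{item:Invp-sLoc}: it accompanies the restriction \m{m\leq s} and guards against the degenerate behaviour of nullary operations and of the \nbdd{0}ary relation pairs over the empty carrier, precisely the phenomenon that already split Corollary~\ref{cor:char-Polp-Invp-s} into two cases. For \m{\CarrierSet\neq\emptyset} the passage to \nbdd{m}ary parts is unproblematic and the sandwich closes as above.
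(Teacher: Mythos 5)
Your proof is correct, and your part~\eqref{item:Invp-Loc} coincides with the paper's own argument (Corollary~\ref{cor:PolpInvp=Loc[]} followed by the triple identity of the \name{Galois} connection). For part~\eqref{item:Invp-sLoc}, however, you take a genuinely different route. The paper dualises both extremes: invoking Corollary~\ref{cor:char-Polp-Invp-s} (the exact-arity characterisation, which is precisely what forces the hypothesis \m{\CarrierSet\neq\emptyset}), Corollary~\ref{cor:(s)-locally-closed-semiclones} and the composition rule \m{\sLoc[m]{\sLoc{\genSclone{F}}}=\sLoc[m]{\genSclone{F}}} for \m{m\leq s}, it shows \m{\Polp{\Invp[m]{\sLoc{\genSclone{F}}}}=\Polp{\Invp[m]{F}}} and then applies \m{\Invp[m]{}} once more to recover the claimed equality. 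You instead run a direct sandwich: antitonicity together with \m{\Invp{F}=\Invp{\genSclone{F}}} (Corollary~\ref{cor:genSclone-subs-PolpInvp}) gives one inclusion, and the other follows from Theorem~\ref{thm:char-Polp-Invp-leq-s} combined with the unit \m{Q\subs\Invp{\Polp{Q}}} of the \name{Galois} correspondence applied to \m{Q=\Invp[{\leq s}]{F}}, restricted to \nbdd{m}ary parts. A pleasant by-product of your route is that it never touches the exact-arity corollary, so it does not actually use \m{\CarrierSet\neq\emptyset} anywhere: the hypothesis in~\eqref{item:Invp-sLoc} is an artifact of the paper's proof method rather than a necessity of the statement (over the empty carrier every operation of positive arity is a projection, so \m{\genSclone{F}\in\set{\emptyset,\Ops}}, both of which are \nbdd{s}locally closed, and the claim holds trivially). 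Your closing remark that the hypothesis ``guards against the degenerate behaviour'' of the empty carrier therefore sells your own argument short --- nothing in your sandwich needs it. What the paper's double-dual computation buys in exchange is the slightly stronger intermediate identity \m{\Polp{\Invp[m]{\sLoc{\genSclone{F}}}}=\sLoc[m]{\genSclone{F}}}, and it runs exactly parallel to the relational analogue in Corollary~\ref{cor:Polp-sLOC}.
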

\begin{proof}
\begin{enumerate}[(a)]
\item By Corollaries~\ref{cor:char-Polp-Invp-s}
      and~\ref{cor:(s)-locally-closed-semiclones}, and since \m{m\leq s},
      we have
      \begin{multline*}
      \Polp{\Invp[m]{\sLoc{\genSclone{F}}}}
        =\sLoc[m]{\genSclone{\sLoc{\genSclone{F}}}}\\
        =\sLoc[m]{\sLoc{\genSclone{F}}}
        =\sLoc[m]{\genSclone{F}}
        =\Polp{\Invp[m]{F}},
      \end{multline*}
      which implies \m{\Invp[m]{\sLoc{\genSclone{F}}} = \Invp[m]{F}} by
      applying \m{\Invp[m]{}} once more on both sides.
\item Similarly, we have
      \m{\Invp{\Loc{\genSclone{F}}}
        =\Invp{\Polp{\Invp{F}}}
        =\Invp{F}},
      using Corollary~\ref{cor:PolpInvp=Loc[]}.
      \qedhere
\end{enumerate}
\end{proof}
\par

Next, we turn to the characterisation of the other part of the
\name{Galois} connection.
\par

\subsection{The side of relation pairs}\label{subsect:InvpPolp}
We start by preparing the proof of our theorem with a lemma.
\begin{lemma}\label{lem:GammaF-in-genRpclone}
Let \m{Q\subs\Relps} be any set of relation pairs, \m{m\in\N} an arity and
\m{B\subs\CarrierSet[m]} be a finite subset of cardinality
\m{n\defeq\abs{B}}. Consider any enumeration
\m{\mathbf{b}=\apply{\listen{b}{s}}\in B^{s}} of
\m{B=\set{\listen{b}{s}}} (\m{s\geq n}) and define
\begin{align*}
\mu'_{B}&\defeq\lset{\composition{f}{\mathbf{b}}}{f\in\Polp[s]{Q}},&
\mu_{B}&\defeq\lset{\composition{f}{\mathbf{b}}}{f\in\Pol[s]{Q_1}},
\end{align*}
where \m{Q_1\defeq \lset{\rho\in\Rels}{\apply{\rho,\rho'}\in Q}}.
\begin{enumerate}[(a)]
\item\label{item:rel-pair-clone-contains-muB}
      The pair \m{\apply{\mu_{B},\mu'_{B}}} can be obtained from~\m{Q} by
      general superpositions, \ie\
      \m{\apply{\mu_{B},\mu'_{B}}\in\genRpclone{Q}}.
\item\label{item:rel-pair-clone-contains-GammaB}
      For \m{F\defeq\Polp{Q}} one may obtain \m{\genInvPair{B}} as a
      relaxation of \m{\apply{\mu_{B},\mu'_{B}}}, that is,
      \m{\genInvPair{B}\in\enc{\genRpclone{Q}}}.
\end{enumerate}
\end{lemma}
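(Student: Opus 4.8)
The plan is to establish part~\eqref{item:rel-pair-clone-contains-muB} by writing \m{\apply{\mu_B,\mu'_B}} as a \emph{single} general superposition of pairs taken from~\m{Q}, and then to derive part~\eqref{item:rel-pair-clone-contains-GammaB} by comparing \m{\genInvPair{B}} with this pair via the results of Section~\ref{sect:semiclones}.

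For part~\eqref{item:rel-pair-clone-contains-muB} I would choose as inner index set \m{\mu\defeq\CarrierSet[s]}, so that every \m{a\in\CarrierSet[\mu]=\Ops[s]} is literally an operation~\m{g} of arity~\m{s} on~\m{\CarrierSet}. As outer index set~\m{I} I take all pairs \m{\apply{\apply{\rho,\rho'},\mathbf{r}}} where \m{\apply{\rho,\rho'}\in\Fn[\ell]{Q}} for some arity~\m{\ell} and \m{\mathbf{r}=\apply{r_j}_{j<s}\in\rho^{s}}; for such an index~\m{i} I put \m{m_i\defeq\ell}, \m{\rho_i\defeq\rho}, \m{\rho'_i\defeq\rho'}, and define \m{\functionhead{\alpha_i}{\ell}{\mu}} by \m{\alpha_i\apply{k}\defeq\apply{r_j\apply{k}}_{j<s}\in\CarrierSet[s]}. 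Finally I set \m{\functionhead{\beta}{m}{\mu}}, \m{\beta\apply{k}\defeq\apply{b_j\apply{k}}_{j<s}}, where \m{\mathbf{b}=\apply{b_j}_{j<s}}. Under the identification \m{a\leftrightarrow g} one has \m{a\circ\alpha_i=\composition{g}{\mathbf{r}}} and \m{a\circ\beta=\composition{g}{\mathbf{b}}}, so that \m{\genComp{\beta}{\apply{\alpha_i}_{i\in I}}{\apply{\rho_i}_{i\in I}}} consists exactly of those \m{\composition{g}{\mathbf{b}}} for which \m{\composition{g}{\mathbf{r}}\in\rho} holds for all \m{\rho\in Q_1} and all \m{\mathbf{r}\in\rho^{s}}, \ie\ for \m{g\in\Pol[s]{Q_1}}; hence it equals~\m{\mu_B}.

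The crucial point is that re\dash{}using the very same \m{\beta} and \m{\apply{\alpha_i}_{i\in I}} for the \emph{second} components yields~\m{\mu'_B}: now the constraint \m{a\circ\alpha_i\in\rho'_i} reads \m{\composition{g}{\mathbf{r}}\in\rho'} while~\m{\mathbf{r}} still ranges over \m{\rho^{s}}, which is precisely the requirement that~\m{g} preserve the pair \m{\apply{\rho,\rho'}}, \ie\ \m{g\in\Polp[s]{Q}}. Therefore \m{\genComp{\beta}{\apply{\alpha_i}_{i\in I}}{\apply{\rho'_i}_{i\in I}}=\mu'_B}, and Definition~\ref{def:gen-comp} gives \m{\genComp{\beta}{\apply{\alpha_i}_{i\in I}}{\apply{\rho_i,\rho'_i}_{i\in I}}=\apply{\mu_B,\mu'_B}}. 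As each \m{\apply{\rho_i,\rho'_i}} belongs to \m{Q\subs\genRpclone{Q}} and the latter is closed under general superposition, part~\eqref{item:rel-pair-clone-contains-muB} follows. I expect the honest bookkeeping of the maps \m{\alpha_i} and~\m{\beta} — and in particular the verification that the second\dash{}component constraint encodes preservation of the \emph{pair} rather than mere preservation of~\m{\rho'} — to be the main obstacle; the degenerate cases \m{\CarrierSet=\emptyset} and \m{s=0} I would inspect separately, though they should cause no difficulty.

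For part~\eqref{item:rel-pair-clone-contains-GammaB} I would apply Lemma~\ref{lem:Gamma-F} to \m{F=\Polp{Q}} with the enumeration~\m{\mathbf{b}} (of length~\m{s}), obtaining \m{\genInvPair{B}=\apply{\rho,\rho'}} with \m{\rho=\lset{\composition{f}{\mathbf{b}}}{f\in\genClone[s]{\Polp{Q}}}} and \m{\rho'=\lset{\composition{f}{\mathbf{b}}}{f\in\genSclone[s]{\Polp{Q}}}}. Since \m{\Polp{Q}} is a semiclone by Lemma~\ref{lem:Polp=semiclone}, we have \m{\genSclone[s]{\Polp{Q}}=\Polp[s]{Q}}, whence \m{\rho'=\mu'_B}; and by Lemma~\ref{lem:semiclones+proj=clones}\eqref{item:char-clone-cl=sclone-cl+proj} we have \m{\genClone[s]{\Polp{Q}}=\Polp[s]{Q}\cup\TrivOps[s]}, so that \m{\rho=\mu'_B\cup B}, using \m{\composition{\eni[s]{j}}{\mathbf{b}}=b_j}. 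As \m{\TrivOps[s]\cup\Polp[s]{Q}\subs\Pol[s]{Q_1}} (noted in the proof of Lemma~\ref{lem:Polp=semiclone}), we obtain \m{\rho=\mu'_B\cup B\subs\mu_B}, so the chain \m{\mu'_B\subs\rho'\subs\rho\subs\mu_B} holds. Thus \m{\genInvPair{B}} is a relaxation of \m{\apply{\mu_B,\mu'_B}}, and since the latter lies in \m{\genRpclone{Q}} by part~\eqref{item:rel-pair-clone-contains-muB}, we conclude \m{\genInvPair{B}\in\enc{\genRpclone{Q}}}.
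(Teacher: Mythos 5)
Your proposal is correct and takes essentially the same approach as the paper's own proof: in part~(a) you use the identical construction, choosing \m{\mu=\CarrierSet[s]} so that \m{\CarrierSet[\mu]=\Ops[s]}, indexing by all pairs \m{\apply{\rho,\rho'}\in Q} together with tuples \m{\mathbf{r}\in\rho^{s}}, defining \m{\alpha_i} and \m{\beta} row\dash{}wise, and observing that the same data applied to the first components cuts out \m{\Pol[s]{Q_1}} while applied to the second components it encodes preservation of the pairs and cuts out \m{\Polp[s]{Q}}. Part~(b) likewise coincides with the paper's argument via Lemma~\ref{lem:Gamma-F}, Lemma~\ref{lem:Polp=semiclone} and Lemma~\ref{lem:semiclones+proj=clones}\eqref{item:char-clone-cl=sclone-cl+proj}, yielding the relaxation chain \m{\mu'_{B}\subs\rho'\subs\rho\subs\mu_{B}}.
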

\begin{proof}
\begin{enumerate}[(a)]
\item In order to prove that \m{\apply{\mu_{B},\mu'_{B}}\in\genRpclone{Q}},
      we shall exhibit a general composition producing this relation pair
      from the ones in~\m{Q}. Using the notation from
      Definition~\ref{def:gen-comp}, we choose
      \m{\mu\defeq \CarrierSet[s]} and define
      \m{\functionhead{\beta}{m}{\CarrierSet[s]}} by
      \m{\beta\apply{i}\defeq
                       \apply{b_{0}\apply{i},\dotsc,b_{s-1}\apply{i}}}
      for \m{0\leq i<m}. Moreover, for \m{n\in\N} and
      \m{\apply{\rho,\rho'}\in \Fn{Q}} we put
      \m{I_{n,\apply{\rho,\rho'}}\defeq
          \lset{\apply{n,\rho,\rho',\mathbf{r}}}{\mathbf{r}\in\rho^{s}}};
      further, we define
      \m{I\defeq\biguplus_{n\in\N}\bigcup_{\apply{\rho,\rho'}\in\Fn{Q}}
                I_{n,\apply{\rho,\rho'}}}.
      Finally, for \m{\apply{n,\rho,\rho',\mathbf{r}}\in I} let the
      function
      \m{\functionhead{\alpha_{n,\rho,\rho',\mathbf{r}}}{%
                                                      n}{\CarrierSet[s]}}
      be given by
      \m{\alpha_{n,\rho,\rho',\mathbf{r}}\apply{j}
                   \defeq\apply{r_{0}\apply{j},\dotsc,r_{s-1}\apply{j}}}
      for \m{0\leq j<n}, where
      \m{\mathbf{r}=\apply{\listen{r}{s}}\in\rho^{s}}.
      \par
      \begin{sloppypar}
      We claim now that
      \m{\apply{\mu_{B},\mu'_{B}}=
         \genComp{\beta}{%
                  \apply{\alpha_{n,\rho,\rho',\mathbf{r}}}_{%
                                \apply{n,\rho,\rho',\mathbf{r}}\in I}}{%
              \apply{\rho,\rho'}_{\apply{n,\rho,\rho',\mathbf{r}}\in I}}},
      which can be checked by the following straightforward calculation.
      Denoting for each \m{\apply{n,\rho,\rho',\mathbf{r}}\in I} by
      \m{\sigma_{n,\rho,\rho',\mathbf{r}}} some relation in
      \m{\Rels[n]}, we have
      \end{sloppypar}
      \begin{align*}
      &\genComp{\beta}{%
         \apply{\alpha_{n,\rho,\rho',\mathbf{r}}}_{%
           \apply{n,\rho,\rho',\mathbf{r}}\in I}}{%
         \apply{\sigma_{n,\rho,\rho',\mathbf{r}}}_{%
            \apply{n,\rho,\rho',\mathbf{r}}\in I}}=\\
      &=\lset{\apply{f\apply{\beta{(i)}}}_{0\leq i<m}}{\!\!%
      \begin{aligned}[c]
      &f\in\CarrierSet[{\CarrierSet[s]}]\land
      \forall \apply{n,\rho,\rho',\mathbf{r}}\in I\colon\\
      &\apply{f\apply{\alpha_{n,\rho,\rho',\mathbf{r}}{\scriptstyle(0)}},
             \dotsc,
             f\apply{\alpha_{n,\rho,\rho',\mathbf{r}}{\scriptstyle(n-1)}}}
             \in\sigma_{n,\rho,\rho',\mathbf{r}}
      \end{aligned}
      }\\
      &=\lset{\composition{f}{\listen{b}{s}}}{\!%
      \begin{aligned}[c]
      f\in\CarrierSet[{\CarrierSet[s]}]\land{}
      &\forall n\in\N\,\forall\apply{\rho,\rho'}\in\Fn{Q}\\%
      &\forall \mathbf{r}=\apply{\listen{r}{s}}\in\rho^{s}\colon\\%
      &\qquad\composition{f}{\listen{r}{s}}
             \in\sigma_{n,\rho,\rho',\mathbf{r}}
      \end{aligned}
      }\\
      &=\lset{\composition{f}{\listen{b}{s}}}{\!%
      f\in\CarrierSet[{\CarrierSet[s]}]\land{}
      \forall n\in\N\,\forall\apply{\rho,\rho'}\in\Fn{Q}\colon
      f\preserves\apply{\rho,\sigma_{n,\rho,\rho',\mathbf{r}}}
      }.
      \end{align*}
      Specialising this to
      \m{\sigma_{n,\rho,\rho',\mathbf{r}}\defeq\rho'}, we get
      \begin{align*}
      &\genComp{\beta}{%
         \apply{\alpha_{n,\rho,\rho',\mathbf{r}}}_{%
           \apply{n,\rho,\rho',\mathbf{r}}\in I}}{%
         \apply{\rho'}_{%
            \apply{n,\rho,\rho',\mathbf{r}}\in I}}=\\
      &=\lset{\composition{f}{\listen{b}{s}}}{\!%
      f\in\CarrierSet[{\CarrierSet[s]}]\land{}
      \forall n\in\N\,\forall\apply{\rho,\rho'}\in\Fn{Q}\colon
      f\preserves\apply{\rho,\rho'}
      }\\
      &=\lset{\composition{f}{\listen{b}{s}}}{f\in\Polp[s]{Q}}
       =\mu'_{B}.
      \end{align*}
      Specialising once more to
      \m{\sigma_{n,\rho,\rho',\mathbf{r}}\defeq\rho}, we obtain
      \begin{align*}
      &\genComp{\beta}{%
         \apply{\alpha_{n,\rho,\rho',\mathbf{r}}}_{%
           \apply{n,\rho,\rho',\mathbf{r}}\in I}}{%
         \apply{\rho}_{%
            \apply{n,\rho,\rho',\mathbf{r}}\in I}}=\\
      &=\lset{\composition{f}{\listen{b}{s}}}{\!%
      f\in\CarrierSet[{\CarrierSet[s]}]\land{}
      \forall n\in\N\,\forall\apply{\rho,\rho'}\in\Fn{Q}\colon
      f\preserves\apply{\rho,\rho}
      }\\
      &=\lset{\composition{f}{\listen{b}{s}}}{f\in\Pol[s]{Q_{1}}}
       =\mu_{B}.
      \end{align*}
\item By Lemma~\ref{lem:Polp=semiclone} we have \m{\genSclone{F}=F}, and
      therefore Lemma~\ref{lem:semiclones+proj=clones}%
                    \eqref{item:char-clone-cl=sclone-cl+proj} yields
      \m{\genClone{F}=\genSclone{F}\cup\TrivOps=F\cup\TrivOps}.
      Hence, according to Lemma~\ref{lem:Gamma-F}, we obtain that
      \m{\genInvPair{B}=\apply{\rho,\rho'}}, wherein
      \m{\rho=\lset{\composition{f}{\mathbf{b}}}{%
                    f\in\Fn[s]{F}\cup\TrivOps[s]}} and
      \m{\rho'=\lset{\composition{f}{\mathbf{b}}}{f\in\Fn[s]{F}}
              =\lset{\composition{f}{\mathbf{b}}}{f\in\Polp[s]{Q}}
              =\mu'_{B}}.
      Moreover, as obviously \m{F =\Polp{Q}\subs\Pol{Q_{1}}}, we have
      \m{\mu'_{B}=\rho'\subs\rho\subs \mu_{B}}. Since
      \m{\apply{\mu_{B},\mu'_{B}}\in \genRpclone{Q}}
      by~\eqref{item:rel-pair-clone-contains-muB},
      we finally see
      \m{\genInvPair{B}=\apply{\rho,\rho'}\in\enc{\genRpclone{Q}}}.
      \qedhere
\end{enumerate}
\end{proof}
\par

Now, we are ready to address the dual side of the \name{Galois}
correspondence, \ie\ the closure \m{\Invp{\Polp[{\leq s}]{}}}.
\begin{theorem}\label{thm:char-Invp-Polp-leq-s}
For \m{s\in\N} and any set \m{Q\subs\Relps} of relation pairs we have
\m{\Invp{\Polp[\leq s]{Q}}=\sLOC{\genRpclone{Q}}}.
\end{theorem}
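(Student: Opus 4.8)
The plan is to establish the two inclusions separately, in exact analogy with the dual Theorem~\ref{thm:char-Polp-Invp-leq-s}, and to carry out the harder direction \emph{locally}, \ie\ subset by subset, invoking Lemma~\ref{lem:GammaF-in-genRpclone} for each finite piece.

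For the inclusion \m{\sLOC{\genRpclone{Q}}\subs\Invp{\Polp[\leq s]{Q}}} I would first note that \m{Q\subs\Invp{\Polp[\leq s]{Q}}} holds straight from the definitions, since every \m{f\in\Polp[\leq s]{Q}\subs\Polp{Q}} preserves each pair of~\m{Q}. As \m{\Invp{\Polp[\leq s]{Q}}} is a relation pair clone by Lemma~\ref{lem:Invp=rel-pair-clone} (applied with \m{F=\Polp[\leq s]{Q}}), minimality of the generated relation pair clone gives \m{\genRpclone{Q}\subs\Invp{\Polp[\leq s]{Q}}}. Applying the monotone operator \m{\sLOC{}} and using that \m{\Polp[\leq s]{Q}} consists of operations of arity at most~\m{s}, Lemma~\ref{lem:Invp-sLOC} yields \m{\sLOC{\Invp{\Polp[\leq s]{Q}}}=\Invp{\Polp[\leq s]{Q}}}, which closes this direction.

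For the converse \m{\Invp{\Polp[\leq s]{Q}}\subs\sLOC{\genRpclone{Q}}}, I would fix \m{m\in\N} and \m{\apply{\sigma,\sigma'}\in\Invp[m]{\Polp[\leq s]{Q}}}, and show \m{\apply{\sigma,\sigma'}\in\sLOC[s][m]{\genRpclone{Q}}} by unfolding the definition: take an arbitrary \m{B\subs\sigma} with \m{n\defeq\abs{B}\leq s} and enumerate it without repetitions as \m{\mathbf{b}=\apply{\listen{b}{n}}}. Applying Lemma~\ref{lem:GammaF-in-genRpclone}\eqref{item:rel-pair-clone-contains-muB} with enumeration length~\m{n} produces \m{\apply{\mu_{B},\mu'_{B}}\in\Fn[m]{\genRpclone{Q}}}, where \m{\mu_{B}=\lset{\composition{f}{\mathbf{b}}}{f\in\Pol[n]{Q_1}}} and \m{\mu'_{B}=\lset{\composition{f}{\mathbf{b}}}{f\in\Polp[n]{Q}}}. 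Since \m{\TrivOps[n]\subs\Pol[n]{Q_1}} and \m{\eni[n]{j}\circ\mathbf{b}=b_{j}} for each~\m{j}, one gets \m{B\subs\mu_{B}}; and because \m{n\leq s}, every \m{f\in\Polp[n]{Q}} lies in \m{\Polp[\leq s]{Q}} and hence preserves \m{\apply{\sigma,\sigma'}}, so as \m{\mathbf{b}\in\sigma^{n}} (recall \m{B\subs\sigma}) preservation delivers \m{\composition{f}{\mathbf{b}}\in\sigma'}, whence \m{\mu'_{B}\subs\sigma'}. Thus \m{\apply{\mu_{B},\mu'_{B}}} witnesses the defining condition of \m{\sLOC[s][m]{\genRpclone{Q}}} for the subset~\m{B}; as \m{B} was arbitrary, \m{\apply{\sigma,\sigma'}\in\sLOC{\genRpclone{Q}}}.

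The main obstacle is concentrated entirely in Lemma~\ref{lem:GammaF-in-genRpclone}\eqref{item:rel-pair-clone-contains-muB}, that is, the explicit general superposition realising \m{\apply{\mu_{B},\mu'_{B}}} as an element of \m{\genRpclone{Q}}; once that membership is granted, both halves of the closure condition follow cleanly from preservation and from the fact that \m{\Polp[n]{Q}} is precisely the stock of functions building~\m{\mu'_{B}}. I would additionally check the degenerate case \m{B=\emptyset} (length \m{n=0}): here \m{\mathbf{b}} is the empty tuple, \m{\mu'_{\emptyset}=\lset{\composition{f}{\emptyset}}{f\in\Polp[0]{Q}}\subs\sigma'} because nullary polymorphisms of~\m{Q} preserve \m{\apply{\sigma,\sigma'}} (as \m{0\leq s}), and \m{\emptyset\subs\mu_{\emptyset}} holds trivially, so this case is subsumed and requires no separate treatment.
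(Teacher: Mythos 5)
Your proposal is correct and takes essentially the same approach as the paper: the inclusion \m{\sLOC{\genRpclone{Q}}\subs\Invp{\Polp[\leq s]{Q}}} is obtained from \m{\genRpclone{Q}\subs\Invp{\Polp[\leq s]{Q}}} (via Lemma~\ref{lem:Invp=rel-pair-clone}, which is exactly how Corollary~\ref{cor:genRpclone-subs-InvpPolp} is derived) together with Lemma~\ref{lem:Invp-sLOC}, and the converse uses the witness \m{\apply{\mu_{B},\mu'_{B}}} supplied by Lemma~\ref{lem:GammaF-in-genRpclone}\eqref{item:rel-pair-clone-contains-muB}, just as in the paper. Your only deviations---taking the repetition-free enumeration of length \m{n=\abs{B}} rather than length~\m{s}, and treating \m{B=\emptyset} explicitly---are legitimate instantiations of that lemma and, if anything, make the degenerate case cleaner than the paper's wording.
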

\begin{proof}
We have \m{\genRpclone{Q}\subs\Invp{\Polp[\leq s]{\genRpclone{Q}}}
                           =\Invp{\Polp[\leq s]{Q}}}
by Corollary~\ref{cor:genRpclone-subs-InvpPolp},
so \m{\sLOC{\genRpclone{Q}}\subs\sLOC{\Invp{\Polp[\leq s]{Q}}}
                               =\Invp{\Polp[\leq s]{Q}}}
by Lemma~\ref{lem:Invp-sLOC}.\par
For the converse inclusion let us consider \m{m\in\N} and an arbitrary
\nbdd{m}ary pair \m{\apply{\sigma,\sigma'}\in\Invp[m]{\Polp[\leq s]{Q}}}.
In order to prove that \m{\apply{\sigma,\sigma'}\in\sLOC{\genRpclone{Q}}},
we take any subset \m{B\subs\sigma} such that \m{\abs{B}\leq s}.
From Lemma~\ref{lem:GammaF-in-genRpclone}%
           \eqref{item:rel-pair-clone-contains-muB} we get
\m{\apply{\mu_{B},\mu'_{B}}\in\genRpclone{Q}}, and obviously we have
\m{B\subs\mu_{B}}. Moreover, since
\m{\apply{\sigma,\sigma'}} belongs to \m{\Inv{\Polp[\leq s]{Q}}}, we have
\m{f\preserves\apply{\sigma,\sigma'}} for all \m{f\in\Polp[\leq s]{Q}}.
So, as \m{\abs{B}\leq s} and \m{B\subs\sigma}, we get
\m{\mu'_{B}\subs\sigma'}. This proves
\m{\apply{\sigma,\sigma'}\in\sLOC{\genRpclone{Q}}}.
\end{proof}
\par

The following result is the analogue of
Lemma~\ref{lem:rel-pair-clone-sub-arity-closure} for semiclones.
\begin{lemma}\label{lem:semiclone-sub-arity-closure}
Any semiclone \m{F\subs\Ops} satisfies
\m{\Fn{F}\subs\genSclone{\Fn[s]{F}}} for all arities \m{n,s\in\N} where
\m{0<n\leq s}.
\end{lemma}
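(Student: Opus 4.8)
The plan is to mimic the strategy of Lemma~\ref{lem:rel-pair-clone-sub-arity-closure}: first pad an \nbdd{n}ary operation up to arity~\m{s} by adjoining fictitious variables, and then recover the original operation from the padded one by substituting projections into the superfluous coordinates. Both manipulations are instances of the closure condition of Definition~\ref{def:semiclone}, so the generated semiclone \m{\genSclone{\Fn[s]{F}}} is closed under them.

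Concretely, I would fix \m{f\in\Fn{F}} with \m{0<n\leq s}. First I would form the \nbdd{s}ary polymer \m{\tilde{f}\defeq f\circ\apply{\eni[s]{i}}_{i\in n}}, which arises from~\m{f} by adding \m{s-n} fictitious variables. As~\m{F} is a semiclone by hypothesis, \m{f\in\Fn[n]{F}}, and the projections \m{\eni[s]{i}} belong to \m{\TrivOps[s]}, the closure property yields \m{\tilde{f}\in F}; being \nbdd{s}ary, we obtain \m{\tilde{f}\in\Fn[s]{F}}.

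Next, using \m{n>0}, I would pick a fixed \nbdd{n}ary projection, say \m{\eni[n]{0}}, to occupy the padded coordinates, and recover~\m{f} as \m{f=\tilde{f}\circ\apply{g_i}_{i\in s}}, where \m{g_i\defeq\eni[n]{i}} for \m{i<n} and \m{g_i\defeq\eni[n]{0}} for \m{n\leq i<s}. Evaluating at an arbitrary \m{x=\apply{x_0,\dotsc,x_{n-1}}\in\CarrierSet[n]} confirms the identity, since \m{\tilde{f}} ignores its last \m{s-n} arguments: \m{\apply{\tilde{f}\circ\apply{g_i}_{i\in s}}\apply{x}=\tilde{f}\apply{x_0,\dotsc,x_{n-1},x_0,\dotsc,x_0}=f\apply{x}}. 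Because \m{\tilde{f}\in\Fn[s]{F}\subs\genSclone{\Fn[s]{F}}} and the tupling \m{\apply{g_i}_{i\in s}} consists solely of \nbdd{n}ary projections from \m{\TrivOps[n]}, this composition is again a semiclone operation and hence lies in \m{\genSclone{\Fn[s]{F}}}, which proves \m{\Fn{F}\subs\genSclone{\Fn[s]{F}}}.

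I expect no real obstacle here; the single point requiring attention is the hypothesis \m{n>0}, which is precisely what guarantees the availability of a projection \m{\eni[n]{0}} to fill the fictitious coordinates in the recovery step. For \m{n=0} no nullary projection exists, and a nullary constant can in general not be reconstructed from operations of higher arity by semiclone operations alone, so the restriction \m{0<n\leq s} is essential.
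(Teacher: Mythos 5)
Your proposal is correct and follows essentially the same route as the paper's own proof: pad \m{f} to the \nbdd{s}ary polymer \m{f\circ\apply{\eni[s]{i}}_{i\in n}\in\Fn[s]{F}} using the semiclone closure property, then recover \m{f} by composing with a tupling of \nbdd{n}ary projections that repeats one projection in the superfluous places (the paper repeats \m{\eni{n-1}} where you repeat \m{\eni{0}}, an immaterial difference). Your closing remark on why \m{n>0} is indispensable also matches the paper's observation following the lemma.
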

\begin{easyproof}
For \m{f\in\Fn{F}} we have \m{g\defeq
\composition{f}{\eni[s]{0},\dotsc,\eni[s]{n-1}}\in
\genSclone[s]{F}=\Fn[s]{F}}. It follows that
\m{f = \composition{g}{\eni{0},\dotsc,\eni{n-1},\eni{n-1},\dotsc,\eni{n-1}}
     \in \genSclone{g}\subs\genSclone{\Fn[s]{F}}}
due to superassociativity. Hence, we obtain
\m{\Fn{F}\subs\genSclone{\Fn[s]{F}}}.
\end{easyproof}
\par

For \m{n=0<s} the previous lemma (and its proof) fail. This is why in the
following corollary to Theorem~\ref{thm:char-Invp-Polp-leq-s} arities~\m{s}
and~\m{0} are required.
\begin{corollary}\label{cor:char-Invp-Polp-s}
For \m{s\in\N} and any set \m{Q\subs\Rels} of relation pairs we have the
equality
\m{\Invp{\Polp[{0,s}]{Q}}=\sLOC{\genRpclone{Q}}}.
\end{corollary}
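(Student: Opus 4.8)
The plan is to deduce this from Theorem~\ref{thm:char-Invp-Polp-leq-s} by showing that passing from all arities \m{\leq s} to the two arities \m{0} and \m{s} loses nothing, \ie\ that \m{\Invp{\Polp[{0,s}]{Q}}=\Invp{\Polp[{\leq s}]{Q}}}. One inclusion is immediate: since \m{\Polp[{0,s}]{Q}\subs\Polp[{\leq s}]{Q}} and \m{\Invp{}} is antitone, we get \m{\Invp{\Polp[{\leq s}]{Q}}\subs\Invp{\Polp[{0,s}]{Q}}}. The work lies in the reverse direction, and it mirrors the argument for Corollary~\ref{cor:char-Polp-Invp-s} with the roles of operations and relation pairs exchanged.

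The key step is to apply Lemma~\ref{lem:semiclone-sub-arity-closure} to the semiclone \m{\Polp{Q}} (it is a semiclone by Lemma~\ref{lem:Polp=semiclone}). This yields \m{\Polp[n]{Q}=\Fn[n]{\Polp{Q}}\subs\genSclone{\Fn[s]{\Polp{Q}}}=\genSclone{\Polp[s]{Q}}} for every arity \m{n} with \m{0<n\leq s}. Applying the antitone operator \m{\Invp{}} and invoking \m{\Invp{\genSclone{G}}=\Invp{G}} from Corollary~\ref{cor:genSclone-subs-PolpInvp}, I obtain \m{\Invp{\Polp[s]{Q}}=\Invp{\genSclone{\Polp[s]{Q}}}\subs\Invp{\Polp[n]{Q}}} for all \m{0<n\leq s}.

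Next I would use that \m{\Invp{}} turns (disjoint) unions of operation sets into intersections. Since \m{\Polp[{\leq s}]{Q}=\biguplus_{0\leq m\leq s}\Polp[m]{Q}} and \m{\Polp[{0,s}]{Q}=\Polp[0]{Q}\uplus\Polp[s]{Q}}, this gives \m{\Invp{\Polp[{\leq s}]{Q}}=\bigcap_{0\leq m\leq s}\Invp{\Polp[m]{Q}}} and \m{\Invp{\Polp[{0,s}]{Q}}=\Invp{\Polp[0]{Q}}\cap\Invp{\Polp[s]{Q}}}. By the inclusion from the previous paragraph, \m{\Invp{\Polp[s]{Q}}} is contained in each \m{\Invp{\Polp[m]{Q}}} with \m{0<m\leq s}, so
\[
\Invp{\Polp[0]{Q}}\cap\Invp{\Polp[s]{Q}}
\subs\Invp{\Polp[0]{Q}}\cap\bigcap_{0<m\leq s}\Invp{\Polp[m]{Q}}
=\bigcap_{0\leq m\leq s}\Invp{\Polp[m]{Q}}.
\]
Together with the trivial reverse inclusion (intersecting over more sets only shrinks) this establishes \m{\Invp{\Polp[{0,s}]{Q}}=\Invp{\Polp[{\leq s}]{Q}}}, whence the claim follows from Theorem~\ref{thm:char-Invp-Polp-leq-s}. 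Note that the same chain also covers \m{\CarrierSet=\emptyset}, where \m{\Polp[0]{Q}=\emptyset} forces \m{\Invp{\Polp[0]{Q}}=\Relps}, so the nullary factor simply drops out of the intersection.

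The main obstacle — and the reason arity \m{0} must be retained alongside \m{s} — is that Lemma~\ref{lem:semiclone-sub-arity-closure} only absorbs arities \m{n} with \m{0<n\leq s} into \m{\genSclone{\Polp[s]{Q}}}; as noted in the remark preceding the corollary, nullary polymorphisms cannot be generated from \m{s}-ary ones. Hence the nullary part \m{\Invp{\Polp[0]{Q}}} has to be carried along explicitly in the intersection, exactly as it is recorded in \m{\Polp[{0,s}]{Q}}. Everything else is the relation-pair dual of Corollary~\ref{cor:char-Polp-Invp-s} and needs no further computation.
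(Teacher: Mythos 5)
Your proposal is correct and follows essentially the same route as the paper's proof: both rest on Lemma~\ref{lem:Polp=semiclone} and Lemma~\ref{lem:semiclone-sub-arity-closure} to get \m{\Polp[n]{Q}\subs\genSclone{\Polp[s]{Q}}} for \m{0<n\leq s}, apply Corollary~\ref{cor:genSclone-subs-PolpInvp} to deduce \m{\Invp{\Polp[s]{Q}}\subs\Invp{\Polp[n]{Q}}}, convert the disjoint unions into intersections, and finish with Theorem~\ref{thm:char-Invp-Polp-leq-s}. The only cosmetic difference is that you isolate the intermediate equality \m{\Invp{\Polp[{0,s}]{Q}}=\Invp{\Polp[{\leq s}]{Q}}} before citing the theorem, whereas the paper chains all equalities in one display; your remarks on the nullary part and on \m{\CarrierSet=\emptyset} are correct but not needed, as the argument works uniformly.
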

\begin{easykeepproof}
As, by Lemma~\ref{lem:Polp=semiclone}, the set \m{\Polp{Q}} is a
semiclone, Lemma~\ref{lem:semiclone-sub-arity-closure} is applicable and
yields \m{\Polp[n]{Q}\subs\genSclone{\Polp[s]{Q}}} for all \m{0<n\leq s}.
Via Corollary~\ref{cor:genSclone-subs-PolpInvp} this implies
\m{\Invp{\Polp[n]{Q}}\sups\Invp{\genSclone{\Polp[s]{Q}}}
                         =\Invp{\Polp[s]{Q}}}
for all \m{0<n\leq s}, whence we obtain
\begin{align*}
\Invp{\apply{\Polp[0]{Q}\uplus\Polp[s]{Q}}}
&=\Invp{\Polp[0]{Q}}\cap \Invp{\Polp[s]{Q}}\\
&=\bigcap_{0\leq n\leq s} \Invp{\Polp[n]{Q}}\\
&=\Invp{\biguplus_{0\leq n\leq s} \Polp[n]{Q}}
=\Invp{\Polp[\leq s]{Q}}\\
&=\sLOC{\genRpclone{Q}},
\end{align*}
where the last equality is true by
Theorem~\ref{thm:char-Invp-Polp-leq-s}.
\end{easykeepproof}
\par

In case that the relation pairs contain an empty pair, the nullary
polymorphisms in Corollary~\ref{cor:char-Invp-Polp-s} vanish.
\begin{corollary}\label{cor:char-Invp-Polp-s-wo-null}
For \m{s,m\in\N} and any set \m{Q\subs\Relps} such that
\m{\apply{\emptyset,\emptyset}\in\Fn[m]{Q}}, we have
\m{\Invp{\Polp[s]{Q}} = \sLOC{\genRpclone{Q}}}.
\end{corollary}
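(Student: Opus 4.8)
The plan is to reduce the assertion to Corollary~\ref{cor:char-Invp-Polp-s}, which already yields $\Invp{\Polp[{0,s}]{Q}}=\sLOC{\genRpclone{Q}}$. Since $\Polp[{0,s}]{Q}=\Polp[0]{Q}\uplus\Polp[s]{Q}$, it suffices to show that the hypothesis $\apply{\emptyset,\emptyset}\in\Fn[m]{Q}$ forces the nullary part $\Polp[0]{Q}$ to be empty; then $\Polp[{0,s}]{Q}=\Polp[s]{Q}$ and the two operators coincide.

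First I would observe that $\apply{\emptyset,\emptyset}\in\Fn[m]{Q}\subs Q$ entails $\apply{\emptyset,\emptyset}\in\Invp{\Polp{Q}}$, because by definition every polymorphism in $\Polp{Q}$ preserves each relation pair of~$Q$. Applying Lemma~\ref{lem:rel-pair-clones-nullary-ops} with $F\defeq\Polp{Q}$ then gives $\Polp{Q}\subs\Ops\setminus\Ops[0]$, so that $\Polp[0]{Q}=\emptyset$. The underlying reason is exactly the one exhibited in the proof of that lemma and is insensitive to the arity~$m$: a hypothetical nullary constant $c\in\Polp[0]{Q}$ would, on preserving $\apply{\emptyset,\emptyset}$, be forced to place the resulting tuple into the empty relation, which is absurd; and for $\CarrierSet=\emptyset$ there are no nullary operations at all, so $\Polp[0]{Q}=\emptyset$ holds trivially.

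Finally, from $\Polp[0]{Q}=\emptyset$ I obtain $\Polp[{0,s}]{Q}=\emptyset\uplus\Polp[s]{Q}=\Polp[s]{Q}$, whence Corollary~\ref{cor:char-Invp-Polp-s} delivers $\Invp{\Polp[s]{Q}}=\Invp{\Polp[{0,s}]{Q}}=\sLOC{\genRpclone{Q}}$, as claimed. I do not anticipate any genuine obstacle; the single delicate point is the observation that an empty relation pair is preserved by no nullary operation irrespective of its arity, and this is already the content of Lemma~\ref{lem:rel-pair-clones-nullary-ops}.
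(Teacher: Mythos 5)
Your proposal is correct and follows essentially the same route as the paper's own proof: observe that \m{\apply{\emptyset,\emptyset}\in Q\subs\Invp{\Polp{Q}}}, instantiate Lemma~\ref{lem:rel-pair-clones-nullary-ops} with \m{F=\Polp{Q}} to conclude \m{\Polp[0]{Q}=\emptyset}, and then invoke Corollary~\ref{cor:char-Invp-Polp-s}. The extra remarks you add (insensitivity to the arity~\m{m} and the trivial case \m{\CarrierSet=\emptyset}) are sound but not needed beyond what the cited lemma already provides.
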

\begin{easykeepproof}
Since a pair of empty relations belongs to~\m{Q}, and thus to
\m{\Invp{\Polp{Q}}}, Lemma~\ref{lem:rel-pair-clones-nullary-ops}
instantiated for \m{F=\Polp{Q}} implies that
\m{\Polp{Q}\subs\Ops\setminus\Ops[0]}, \ie\ \m{\Polp[0]{Q}=\emptyset}.
Therefore, the claim follows from Corollary~\ref{cor:char-Invp-Polp-s}.
\end{easykeepproof}
\par

Next, we characterise the closure \m{\Invp{\Polp{}}}.
\begin{corollary}\label{cor:InvpPolp=LOC[]}
We have \m{\Invp{\Polp{Q}} = \LOC{\genRpclone{Q}}} for all
\m{Q\subs\Relps}.
\end{corollary}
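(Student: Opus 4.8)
The plan is to dualize the proof of Corollary~\ref{cor:PolpInvp=Loc[]} almost verbatim, using Theorem~\ref{thm:char-Invp-Polp-leq-s} in place of Theorem~\ref{thm:char-Polp-Invp-leq-s}. First I would unfold the definition of the local closure operator on relation pairs as the intersection of all \nbdd{s}local closures, namely \m{\LOC{\genRpclone{Q}}=\bigcap_{s\in\N}\sLOC{\genRpclone{Q}}}, and then invoke Theorem~\ref{thm:char-Invp-Polp-leq-s} to rewrite each term of this intersection as \m{\sLOC{\genRpclone{Q}}=\Invp{\Polp[{\leq s}]{Q}}}.

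The remaining task is purely formal, namely to pull the intersection out of \m{\Invp{}}. Since \m{\apply{\Polp{},\Invp{}}} is a \name{Galois} correspondence, the operator \m{\Invp{}} is antitone and therefore carries unions of sets of operations to intersections, so that \m{\bigcap_{s\in\N}\Invp{\Polp[{\leq s}]{Q}}=\Invp{\bigcup_{s\in\N}\Polp[{\leq s}]{Q}}}. Finally, because every operation is finitary and hence of some arity \m{n\in\N}, one has \m{\bigcup_{s\in\N}\Polp[{\leq s}]{Q}=\Polp{Q}}: the inclusion from left to right is clear as each \m{\Polp[{\leq s}]{Q}\subs\Polp{Q}}, and conversely any \m{f\in\Polp{Q}} with \m{\arity\apply{f}=n} already lies in \m{\Polp[{\leq n}]{Q}}. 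Stringing the steps together, I would simply display the chain
\begin{align*}
\LOC{\genRpclone{Q}} = \bigcap_{s\in\N}\sLOC{\genRpclone{Q}}
&=\bigcap_{s\in\N}\Invp{\Polp[{\leq s}]{Q}}\\
&=\Invp{\bigcup_{s\in\N}\Polp[{\leq s}]{Q}}
=\Invp{\Polp{Q}}.
\end{align*}

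I expect no genuine obstacle here: the statement is a formal consequence of the per-parameter Theorem~\ref{thm:char-Invp-Polp-leq-s} together with the definition of \m{\LOC{}} as an intersection over all \m{s\in\N}, mirroring exactly how Corollary~\ref{cor:PolpInvp=Loc[]} followed from Theorem~\ref{thm:char-Polp-Invp-leq-s}. The only point requiring (trivial) care is the identity \m{\bigcup_{s\in\N}\Polp[{\leq s}]{Q}=\Polp{Q}}, which rests solely on the finitariness of the operations involved.
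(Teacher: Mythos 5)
Your proposal is correct and is essentially identical to the paper's own proof: the paper likewise unfolds \m{\LOC{}} as \m{\bigcap_{s\in\N}\sLOC{}}, applies Theorem~\ref{thm:char-Invp-Polp-leq-s} termwise, and pulls the intersection inside \m{\Invp{}} via \m{\bigcap_{s\in\N}\Invp{\Polp[{\leq s}]{Q}}=\Invp{\bigcup_{s\in\N}\Polp[{\leq s}]{Q}}=\Invp{\Polp{Q}}}. Your extra remarks (antitonicity of \m{\Invp{}} and finitariness of operations) merely make explicit the justifications the paper leaves implicit.
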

\begin{straightforwardproof}
From the definition of the operator \m{\LOC{}} and
Theorem~\ref{thm:char-Invp-Polp-leq-s}, we obtain
\begin{align*}
\LOC{\genRpclone{Q}}
&= \bigcap_{s\in\N}\sLOC{\genRpclone{Q}}
=\bigcap_{s\in\N}\Invp{\Polp[\leq s]{Q}}\\
&=\Invp{\biguplus_{s\in\N} \Polp[\leq s]{Q}}
= \Invp{\Polp{Q}}.
\qedhere
\end{align*}
\end{straightforwardproof}
\par

Moreover, we can infer that a set \m{Q\subs\Relps} is closed \wrt\
\m{\genRpclone{\,}} and \m{\sLOC{}} if (and clearly only if) it is closed
\wrt\ to the operator \m{\sLOC{\genRpclone{\,}}}. An analogous result
holds, of course, for the operators \m{\genRpclone{\,}}, \m{\LOC{}} and
\m{\LOC{\genRpclone{\,}}}. These two facts can be seen to generalise
Proposition~3.8(ii),(iii)
in~\cite[p.~30]{PoeGeneralGaloisTheoryForOperationsAndRelations}, where
similar statements have been proven for relational clones.
\par
\begin{corollary}\label{cor:(s)-locally-closed-rel-pair-clones}
For \m{s\in\N} a set \m{Q\subs\Relps} of relation pairs is an
\nbdd{s}locally (locally) closed relation pair clone if and only if
\m{\sLOC{\genRpclone{Q}}=Q}
(\m{\LOC{\genRpclone{Q}}=Q}).
\end{corollary}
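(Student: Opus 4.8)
The plan is to imitate the proof of Corollary~\ref{cor:(s)-locally-closed-semiclones}, interchanging the two sides of the \name{Galois} connection. Where that argument relied on the fact that every set of the form \m{\Polp{Q}} is a semiclone (Lemma~\ref{lem:Polp=semiclone}) together with the characterisation \m{\Polp{\Invp[{\leq s}]{F}}=\sLoc{\genSclone{F}}} (Theorem~\ref{thm:char-Polp-Invp-leq-s}), I would dually invoke that every set of the form \m{\Invp{F}} is a relation pair clone (Lemma~\ref{lem:Invp=rel-pair-clone}) together with the characterisation \m{\Invp{\Polp[{\leq s}]{Q}}=\sLOC{\genRpclone{Q}}} from Theorem~\ref{thm:char-Invp-Polp-leq-s} (respectively \m{\Invp{\Polp{Q}}=\LOC{\genRpclone{Q}}} from Corollary~\ref{cor:InvpPolp=LOC[]} for the local version).

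For the easy implication I would note that if \m{Q\subs\Relps} is an \nbdd{s}locally closed relation pair clone, so that \m{\genRpclone{Q}=Q} and \m{\sLOC{Q}=Q}, then \m{\sLOC{\genRpclone{Q}}=\sLOC{Q}=Q}; the local variant is verbatim the same, with \m{\sLOC{}} replaced throughout by \m{\LOC{}}.

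For the converse I would assume \m{\sLOC{\genRpclone{Q}}=Q} and argue in two steps. First, applying \m{\sLOC{}} to both sides and using its idempotence gives \m{\sLOC{Q}=\sLOC{\sLOC{\genRpclone{Q}}}=\sLOC{\genRpclone{Q}}=Q}, so \m{Q} is \nbdd{s}locally closed. Second, combining the hypothesis with Theorem~\ref{thm:char-Invp-Polp-leq-s} yields \m{Q=\sLOC{\genRpclone{Q}}=\Invp{\Polp[{\leq s}]{Q}}}; since \m{\Polp[{\leq s}]{Q}} is a set of operations, this is of the form \m{\Invp{F}} and is therefore a relation pair clone by Lemma~\ref{lem:Invp=rel-pair-clone}, whence \m{\genRpclone{Q}=Q}. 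For the local case I would substitute Corollary~\ref{cor:InvpPolp=LOC[]} for the theorem, obtaining \m{Q=\LOC{\genRpclone{Q}}=\Invp{\Polp{Q}}}, which is again a relation pair clone by Lemma~\ref{lem:Invp=rel-pair-clone}.

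I do not expect any genuine obstacle: the corollary is a purely formal consequence of the two characterisation results and the closure\dash{}theoretic fact that \m{\Invp{}} always lands in the closure system of relation pair clones. The only minor point to confirm is that \m{\Polp[{\leq s}]{Q}} is genuinely a subset of \m{\Ops}, so that Lemma~\ref{lem:Invp=rel-pair-clone} applies with \m{F=\Polp[{\leq s}]{Q}}; this is immediate from the definitions.
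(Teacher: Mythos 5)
Your proposal is correct and follows essentially the same route as the paper's own proof: the easy direction is immediate, $s$-local (local) closedness of $Q$ follows from idempotence of $\sLOC{}$ ($\LOC{}$), and closedness under $\genRpclone{\,}$ follows by rewriting $Q$ as $\Invp{\Polp[{\leq s}]{Q}}$ (resp.\ $\Invp{\Polp{Q}}$) via Theorem~\ref{thm:char-Invp-Polp-leq-s} (resp.\ Corollary~\ref{cor:InvpPolp=LOC[]}) and invoking Lemma~\ref{lem:Invp=rel-pair-clone}. No gaps.
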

\begin{easyproof}
Suppose that \m{\genRpclone{Q}=Q} and \m{\sLOC{Q}=Q} (\m{\LOC{Q}=Q}), then
it follows evidently that \m{\sLOC{\genRpclone{Q}}=Q}
(\m{\LOC{\genRpclone{Q}=Q}}). Conversely, assume the latter equality to
hold. By idempotence of \m{\sLOC{}} (\m{\LOC{}}), we clearly get
\m{\sLOC{Q}=Q} (\m{\LOC{Q}=Q}). Moreover, if we combine our assumption
\m{Q=\sLOC{\genRpclone{Q}}}
(\m{Q=\LOC{\genRpclone{Q}}}) with Theorem~\ref{thm:char-Invp-Polp-leq-s}
(Corollary~\ref{cor:InvpPolp=LOC[]}) we obtain
\m{Q=\Invp{\Polp[\leq s]{Q}}} (\m{Q=\Invp{\Polp{Q}}}), and the latter set
always is a relation pair clone by Lemma~\ref{lem:Invp=rel-pair-clone}.
Hence, \m{\genRpclone{Q}=Q}.
\end{easyproof}
\par

The next corollary is the analogue of Proposition~3.9
in~\cite[{p.~30 et seq.}]{PoeGeneralGaloisTheoryForOperationsAndRelations}.
\begin{corollary}\label{cor:Polp-sLOC}
For every \m{Q\subs\Relps} we have
\begin{enumerate}[(a)]
\item\label{item:Polp-sLOC}
      that the set
      \m{\Polp[{0,n}]{Q}=\Polp[{0,n}]{\genRpclone{Q}}
        =\Polp[{0,n}]{\LOC{\genRpclone{Q}}}}
      equals \m{\Polp[{0,n}]{\sLOC{\genRpclone{Q}}}}
      for all \m{0\leq n\leq s\in\N}.
\item\label{item:Polp-sLOC-wo-null}
      that
      \m{\Polp[n]{Q}=\Polp[n]{\sLOC{\genRpclone{Q}}}}
      for all \m{0\leq n\leq s\in\N},
      whenever \m{\apply{\emptyset,\emptyset}\in \Fn[m]{Q}} for some arity
      \m{m\in\N}.
\item\label{item:Polp-LOC}
      \m{\Polp{Q}=\Polp{\genRpclone{Q}}=\Polp{\LOC{\genRpclone{Q}}}}.
\end{enumerate}
\end{corollary}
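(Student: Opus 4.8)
The plan is to derive all three parts from the characterisation results already established, namely Theorem~\ref{thm:char-Invp-Polp-leq-s} and its corollaries, together with the extensivity and antitonicity inherent in the \name{Galois} correspondence. I would begin with part~\eqref{item:Polp-LOC}, which is the exact dual of Corollary~\ref{cor:Invp-sLoc}\eqref{item:Invp-Loc}: the equality \m{\Polp{Q}=\Polp{\genRpclone{Q}}} is nothing but Corollary~\ref{cor:genRpclone-subs-InvpPolp}, and for \m{\Polp{\LOC{\genRpclone{Q}}}=\Polp{Q}} I would substitute \m{\LOC{\genRpclone{Q}}=\Invp{\Polp{Q}}} from Corollary~\ref{cor:InvpPolp=LOC[]} and cancel by means of the standard identity \m{\Polp{\Invp{\Polp{Q}}}=\Polp{Q}}.

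For part~\eqref{item:Polp-sLOC} I would first dispose of the ``easy'' inclusions. Since \m{\genRpclone{\,}}, \m{\LOC{}} and each \m{\sLOC{}} are closure operators and \m{\LOC{\genRpclone{Q}}=\bigcap_{t\in\N}\sLOC[t]{\genRpclone{Q}}\subs\sLOC{\genRpclone{Q}}}, one has the chain \m{Q\subs\genRpclone{Q}\subs\LOC{\genRpclone{Q}}\subs\sLOC{\genRpclone{Q}}}. Applying the antitone operator \m{\Polp[{0,n}]{}} reverses it, so that \m{\Polp[{0,n}]{Q}\sups\Polp[{0,n}]{\genRpclone{Q}}\sups\Polp[{0,n}]{\LOC{\genRpclone{Q}}}\sups\Polp[{0,n}]{\sLOC{\genRpclone{Q}}}}, and it then remains only to prove the single reverse inclusion \m{\Polp[{0,n}]{Q}\subs\Polp[{0,n}]{\sLOC{\genRpclone{Q}}}}, which collapses the whole chain into equalities.

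This reverse inclusion is the heart of the matter, and the one step I expect to carry the real content. The key observation is that every \m{g\in\Polp[{0,n}]{Q}} has arity \m{0} or \m{n}, hence at most \m{s} because \m{n\leq s}, so that \m{\set{g}\subs\Ops[{\leq s}]} and Lemma~\ref{lem:Invp-sLOC} applies to it. From \m{g\in\Polp{Q}=\Polp{\genRpclone{Q}}} I obtain \m{\genRpclone{Q}\subs\Invp{\set{g}}}; applying the monotone \m{\sLOC{}} and then invoking Lemma~\ref{lem:Invp-sLOC} yields \m{\sLOC{\genRpclone{Q}}\subs\sLOC{\Invp{\set{g}}}=\Invp{\set{g}}}. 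Thus \m{g} preserves every pair of \m{\sLOC{\genRpclone{Q}}}, that is, \m{g\in\Polp[{0,n}]{\sLOC{\genRpclone{Q}}}}, as needed.

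Finally, part~\eqref{item:Polp-sLOC-wo-null} I would obtain from part~\eqref{item:Polp-sLOC} by discarding the nullary component. Under the hypothesis the pair \m{\apply{\emptyset,\emptyset}} lies in \m{Q\subs\Invp{\Polp{Q}}}, so Lemma~\ref{lem:rel-pair-clones-nullary-ops} applied to \m{F=\Polp{Q}} forces \m{\Polp[0]{Q}=\emptyset}; and since \m{\apply{\emptyset,\emptyset}} persists into \m{\sLOC{\genRpclone{Q}}\subs\Invp{\Polp{\sLOC{\genRpclone{Q}}}}}, the same lemma applied to \m{F=\Polp{\sLOC{\genRpclone{Q}}}} gives \m{\Polp[0]{\sLOC{\genRpclone{Q}}}=\emptyset} as well. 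Hence on both sides of~\eqref{item:Polp-sLOC} the operator \m{\Polp[{0,n}]{}} coincides with \m{\Polp[n]{}}, and the desired equality follows. The only subtlety here is verifying that the empty pair survives into the closure, so that the two nullary parts vanish in tandem.
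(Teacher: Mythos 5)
Your proposal is correct, but for parts (a) and (b) it follows a genuinely different route from the paper. For (a), after the same reduction via antitonicity to the single inclusion \m{\Polp[{0,n}]{Q}\subs\Polp[{0,n}]{\sLOC{\genRpclone{Q}}}}, the paper stays entirely inside the characterisation machinery: it computes \m{\Invp{\Polp[{0,n}]{\sLOC{\genRpclone{Q}}}}=\sLOC[n]{\genRpclone{\sLOC{\genRpclone{Q}}}}} by Corollary~\ref{cor:char-Invp-Polp-s}, simplifies this to \m{\sLOC[n]{\genRpclone{Q}}=\Invp{\Polp[{0,n}]{Q}}} using Corollary~\ref{cor:(s)-locally-closed-rel-pair-clones} and \m{n\leq s}, and then applies \m{\Polp[{0,n}]{}} once more to both sides. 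You instead argue pointwise: for \m{g\in\Polp[{0,n}]{Q}} the adjunction gives \m{\genRpclone{Q}\subs\Invp{\set{g}}}, and since \m{\arity\apply{g}\in\set{0,n}} with \m{n\leq s}, Lemma~\ref{lem:Invp-sLOC} applied to the singleton \m{\set{g}} turns monotonicity of \m{\sLOC{}} into \m{\sLOC{\genRpclone{Q}}\subs\sLOC{\Invp{\set{g}}}=\Invp{\set{g}}}. This bypasses Theorem~\ref{thm:char-Invp-Polp-leq-s} and Corollaries~\ref{cor:char-Invp-Polp-s} and~\ref{cor:(s)-locally-closed-rel-pair-clones} altogether, needing only Lemma~\ref{lem:Invp-sLOC} and Corollary~\ref{cor:genRpclone-subs-InvpPolp}, and it makes the role of the arity bound \m{n\leq s} transparent. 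For (b), the paper re-runs its proof of (a) with \m{\Polp[n]{}} in place of \m{\Polp[{0,n}]{}} and Corollary~\ref{cor:char-Invp-Polp-s-wo-null} in place of Corollary~\ref{cor:char-Invp-Polp-s}; you instead deduce (b) directly from (a) by showing that both nullary parts vanish, applying Lemma~\ref{lem:rel-pair-clones-nullary-ops} to \m{F=\Polp{Q}} and to \m{F=\Polp{\sLOC{\genRpclone{Q}}}} after correctly observing that \m{\apply{\emptyset,\emptyset}} persists into \m{\sLOC{\genRpclone{Q}}}; that is a clean shortcut the paper does not take. Part (c) coincides with the paper's proof. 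What the paper's route buys is that the corollary remains a formal consequence of the main characterisation theorems and records along the way the equality of the corresponding \m{\Invp{}}-sets; what yours buys is economy of means and a more elementary, self-contained argument.
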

\begin{proof}
\begin{enumerate}[(a)]
\item It suffices to prove that
      \m{\Polp[{0,n}]{Q}\subs\Polp[{0,n}]{\sLOC{\genRpclone{Q}}}} is true
      for all \m{n\leq s}. Upon application of
      Corollary~\ref{cor:char-Invp-Polp-s}, we can infer that
      \m{\Invp{\Polp[{0,n}]{\sLOC{\genRpclone{Q}}}}
      =\sLOC[n]{\genRpclone{\sLOC{\genRpclone{Q}}}}\!\!}, which by
      Corollary~\ref{cor:(s)-locally-closed-rel-pair-clones} equals
      \m{\sLOC[n]{\sLOC{\genRpclone{Q}}}}. Due to \m{n\leq s} the latter
      set coincides with \m{\sLOC[n]{\genRpclone{Q}}}, which is
      \m{\Invp{\Polp[{0,n}]{Q}}}, using again
      Corollary~\ref{cor:char-Invp-Polp-s}. Now the claim follows by once
      more applying \m{\Polp[{0,n}]{}} to both sides of the equation.
\item If \m{\apply{\emptyset,\emptyset}\in \Fn[m]{Q}} for some \m{m\in\N},
      then we may substitute in the proof of~\eqref{item:Polp-sLOC} the
      operator \m{\Polp[{0,n}]{}} by \m{\Polp[n]{}} and the use of
      Corollary~\ref{cor:char-Invp-Polp-s} by
      Corollary~\ref{cor:char-Invp-Polp-s-wo-null}. Everything else works
      as just seen.
\item We have
      \m{\Polp{\LOC{\genRpclone{Q}}}=\Polp{\Invp{\Polp{Q}}} = \Polp{Q}}
      by Corollary~\ref{cor:InvpPolp=LOC[]};
      the other inclusions are trivial.\qedhere
\end{enumerate}
\end{proof}
\par

\subsection{Characterisation of local closures for relation
            pairs}\label{subsect:char-LOC}
Finally, we shall consider another characterisation of the \nbdd{s}local
closure operators, involving \nbdd{s}directed unions. The statement can be
improved for sets of relation pairs fulfilling an additional closure
property, which is in particular satisfied by relation pair clones.
Hence, our characterisation is especially useful in connection with the
operator \m{\LOC{\genRpclone{\,}}}.
\par

Our first result is a generalisation of Proposition~1.13(ii)
in~\cite[p.~18]{PoeGeneralGaloisTheoryForOperationsAndRelations} to
relation pairs (see also~\cite[Proposition~1.6(ii), p.~256]{%
PoeConcreteRepresentationOfAlgebraicStructuresAndGeneralGaloisTheory}).
It works provided one accepts the axiom of choice.
\begin{proposition}\label{prop:char-sLOC-s-directed-unions}
For any set \m{Q\subs\Relps} of relation pairs and all \m{s\in\Np},
the following holds:
\begin{equation*}
\sLOC{Q} = \biguplus_{m\in\N}\lset{\ovflhbx{3.346225pt}
           \apply{\sigma,\sigma'}\in\Relps[m]%
           \ovflhbx{3.346225pt}}{\ovflhbx{3.346225pt}%
\begin{aligned}
\exists\,\sigma''\in\Rels[m]\colon
 &\sigma''\subs \sigma'\land{}\\
 &\exists\,\mathcal{T}\subs\Relps[m]
         \text{ \nbdd{s}directed}\colon\\
 &\qquad\quad\apply{\sigma,\sigma''}=\bigcup \mathcal{T}\land{}\\
 &\forall\apply{\rho,\rho'}\in\mathcal{T}\,
  \exists\tilde{\rho}\in\Rels[m]\colon\\
 &\rho\subs\tilde{\rho} \land \apply{\tilde{\rho},\rho'}\in\Fn[m]{Q}\\
\end{aligned}\ovflhbx{3.346225pt}}.
\end{equation*}
\end{proposition}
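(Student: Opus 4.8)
The plan is to prove the two inclusions separately; write $R$ for the set on the right\dash{}hand side. The inclusion $R\subs\sLOC{Q}$ is the routine one and uses only \nbdd{s}directedness together with the enlargement clause on~$\mathcal{T}$, while the reverse inclusion $\sLOC{Q}\subs R$ requires an explicit construction of the system~$\mathcal{T}$ and the auxiliary relation~$\sigma''$, and it is here that the axiom of choice enters.

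For $R\subs\sLOC{Q}$, I would start from a pair $\apply{\sigma,\sigma'}\in R\cap\Relps[m]$ together with its witnessing data $\sigma''\subs\sigma'$ and \nbdd{s}directed $\mathcal{T}\subs\Relps[m]$ with $\bigcup\mathcal{T}=\apply{\sigma,\sigma''}$. To verify the defining condition of $\sLOC{Q}$ I take any $B\subs\sigma$ with $\abs{B}\leq s$ and enumerate it as $B=\lset{b_i}{i\in t}$, $t\leq s$. Since the first component of $\bigcup\mathcal{T}$ is~$\sigma$, each $b_i$ lies in the first component $\mu_i$ of some pair of~$\mathcal{T}$; feeding the tuple $\apply{b_i}_{i\in t}\in\prod_{i\in t}\mu_i$ into the \nbdd{s}directedness of~$\mathcal{T}$ yields a single pair $\apply{\rho,\rho'}\in\mathcal{T}$ with $B\subs\rho$ (for $t=0$ I only use non\dash{}emptiness of~$\mathcal{T}$, which the case $t=0$ of the definition already forces). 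The accompanying $\tilde\rho\sups\rho$ with $\apply{\tilde\rho,\rho'}\in\Fn[m]{Q}$ then satisfies $B\subs\tilde\rho$, while $\rho'\subs\sigma''\subs\sigma'$ because $\rho'$ is a second component of a member of~$\mathcal{T}$. Thus $\apply{\tilde\rho,\rho'}$ is the required witness and $\apply{\sigma,\sigma'}\in\sLOC{Q}$.

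For the converse $\sLOC{Q}\subs R$, I fix $\apply{\sigma,\sigma'}\in\sLOC[s][m]{Q}$ and, by the axiom of choice, pick for every $B\subs\sigma$ with $\abs{B}\leq s$ a pair $\apply{\rho_B,\rho_B'}\in\Fn[m]{Q}$ with $B\subs\rho_B$ and $\rho_B'\subs\sigma'$, which exists by the definition of $\sLOC{Q}$. The tempting choice $\apply{B,\rho_B'}$ need not be a relation pair, since $\rho_B'$ may not sit inside~$B$; the fix is to enlarge the first component and set $\mathcal{T}\defeq\lset{\apply{B\cup\rho_B',\rho_B'}}{B\subs\sigma,\abs{B}\leq s}$ and $\sigma''\defeq\bigcup_{B}\rho_B'$. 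Then each member is a genuine pair in $\Relps[m]$ (as $\rho_B'\subs B\cup\rho_B'\subs\sigma$), one has $\sigma''\subs\sigma'$, and $\bigcup\mathcal{T}=\apply{\sigma,\sigma''}$ because the singletons $\set{x}$ (admissible as $s\geq 1$) already exhaust~$\sigma$ in the first component, while $\sigma''\subs\sigma$ contributes nothing new there. For each member I take $\tilde\rho\defeq\rho_B\sups B\cup\rho_B'$, so that $\apply{\tilde\rho,\rho_B'}=\apply{\rho_B,\rho_B'}\in\Fn[m]{Q}$, giving the enlargement clause.

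The one genuinely delicate point is verifying that this~$\mathcal{T}$ is \nbdd{s}directed, and this is exactly where enlarging $B$ to $B\cup\rho_B'$ pays off. Given $t\leq s$ members $\apply{B_i\cup\rho_{B_i}',\rho_{B_i}'}$ and elements $r_i\in B_i\cup\rho_{B_i}'\subs\sigma$, I set $C\defeq\lset{r_i}{i\in t}$; then $C\subs\sigma$ and $\abs{C}\leq t\leq s$, so $C$ is itself an admissible index and the member $\apply{C\cup\rho_C',\rho_C'}\in\mathcal{T}$ has first component containing~$C$, which supplies the dominating set and completes the argument. I expect the bookkeeping of these \nbdd{s}directedness and union computations to be the main, though elementary, obstacle; the restriction $s\in\Np$ is used precisely once, to guarantee that singletons are admissible indices so that the first component of $\bigcup\mathcal{T}$ is all of~$\sigma$.
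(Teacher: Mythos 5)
Your proof is correct and takes essentially the same approach as the paper's: the same axiom-of-choice construction indexed by the finite subsets \m{B\subs\sigma} with \m{\abs{B}\leq s}, with the chosen pairs trimmed so that their first components lie inside~\m{\sigma}, and \nbdd{s}directedness witnessed by \m{C=\lset{r_{i}}{i\in t}}. The only cosmetic differences are that you trim to \m{\apply{B\cup\rho'_{B},\rho'_{B}}} where the paper uses \m{\apply{\tilde{\rho}_{B}\cap\sigma,\rho'_{B}}}, and that in the inclusion ``\m{\sups}'' you verify the defining condition of \m{\sLOC{Q}} directly, whereas the paper cites Lemma~\ref{lem:s-directed-unions} and Corollary~\ref{cor:locally-closed-implies-relaxation-closed} (closure under relaxation) for the same computation.
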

\begin{proof}
To prove the inclusion ``\m{\sups}'', let us consider any \m{m\in\N} and a
pair \m{\apply{\sigma,\sigma'}\in\Relps[m]} satisfying the lengthy
condition in the proposition. Its first part says that there is an
\nbdd{s}directed system \m{\mathcal{T}\subs\Relps[m]} whose union equals
\m{\apply{\sigma,\sigma''}} for some \nbdd{m}ary relation
\m{\sigma''\subs\sigma'}. The remaining part states that for every pair
\m{\apply{\rho,\rho'}\in\mathcal{T}} there is an \nbdd{m}ary relation
\m{\tilde{\rho}\sups\rho} such that
\m{\apply{\tilde{\rho},\rho'}\in\Fn[m]{Q}}. This implies that
\m{\apply{\rho,\rho'}\in  \enc{\set{\apply{\tilde{\rho},\rho'}}}
                     \subs\enc{Q}\subs\enc{\sLOC{Q}}}.
Since the set \m{\sLOC{Q}} is \nbdd{s}locally closed,
Corollary~\ref{cor:locally-closed-implies-relaxation-closed} implies that
it is also closed under relaxation. Hence, we have
\m{\mathcal{T}\subs\enc{\sLOC{Q}}=\sLOC{Q}}.
Now as~\m{\mathcal{T}} is an \nbdd{s}directed system,
Lemma~\ref{lem:s-directed-unions} yields that
\m{\apply{\sigma,\sigma''}\in\sLOC{Q}}. Thus, from
\m{\sigma''\subs\sigma'\subs\sigma=\sigma} we get that
\m{\apply{\sigma,\sigma'}\in  \enc{\set{\apply{\sigma,\sigma''}}}
                         \subs\enc{\sLOC{Q}}},
which we already showed to coincide with \m{\sLOC{Q}}.
\par
For the converse inclusion, take any
\m{\apply{\sigma,\sigma'}\in\sLOC[s][m]{Q}}, \m{m\in\N}. Then for any
\m{B\subs\sigma} such that \m{\abs{B}\leq s}, the set
\m{\Sigma_{B}\defeq\lset{\apply{\rho,\rho'}\in\Fn[m]{Q}}{%
                       B\subs\rho\land \rho'\subs\sigma'}}
is non\dash{}empty; using the axiom of choice, one can
fix some pair \m{\apply{\tilde{\rho}_{B},\rho'_{B}}\in \Sigma_{B}}.
It satisfies
\m{\rho'_{B}\subs\sigma'\subs\sigma}, thus,
\m{\rho'_{B}\subs\tilde{\rho}_{B}\cap \sigma \eqdef \rho_{B}}.
By construction, we have \m{B\subs\rho_{B}\subs\tilde{\rho}_{B}}, thus putting
\m{\mathcal{T}\defeq \lset{\apply{\rho_{B},\rho'_{B}}}{%
                           B\subs\sigma\land\abs{B}\leq s}}, the
collection~\m{\mathcal{T}} satisfies the second part of the condition we
need to verify. We shall check that~\m{\mathcal{T}} is \nbdd{s}directed
farther below; first we deal with the union
\m{\apply{\mu,\mu'}\defeq\bigcup\mathcal{T}} (meaning union in both
components). Since for every subset \m{B\subs\sigma}, \m{\abs{B}\leq s}, we
have \m{\rho'_{B}\subs\sigma'} and \m{\rho_{B}\subs\sigma}, it follows
that also \m{\mu'\subs\sigma'} and \m{\mu\subs\sigma}. Due to \m{s>0}, we
have that \m{\sigma=    \bigcup_{B\subs\sigma,\abs{B}\leq s} B
                   \subs\bigcup_{B\subs\sigma,\abs{B}\leq s}\rho_{B}
                   = \mu},
wherefore \m{\mu=\sigma}. This shows that \m{\apply{\sigma,\sigma'}} has
the right form to fit into the set on the right\dash{}hand side, provided
we establish that the non\dash{}empty set~\m{\mathcal{T}} is
\nbdd{s}directed.
\par
For this goal, we consider \m{t\leq s} subsets
\m{\listen{B}{t}\subs \sigma} subject
to the condition \m{\abs{B_i}\leq s} for each \m{0\leq i<t} and
tuples \m{r_i\in \rho_{B_i}\subs\sigma}. Let us define
\m{C\defeq \lset{r_{i}}{0\leq i<t}\subs\sigma}.
As \m{\abs{C}\leq s}, the
pair \m{\apply{\rho_{C},\rho'_{C}}} belongs to~\m{\mathcal{T}} by
definition. Thus \m{C\subs\rho_C} demonstrates \nbdd{s}directedness,
concluding the proof.
\end{proof}

\begin{remark}\label{rem:char-false-for-s=0}
The inclusion ``\m{\subs}'' in
Proposition~\ref{prop:char-sLOC-s-directed-unions} fails to hold for
\m{s=0}. Consider, for example, any pair of relations
\m{\rho'\subs\rho\subsetneq \CarrierSet[m]} for some fixed \m{m\in\N}.
Define \m{Q\defeq \enc{\set{\apply{\rho,\rho'}}} =
          \lset{\apply{\sigma,\sigma'}\in\Relps[m]}{%
                \rho'\subs\sigma'\subs\sigma\subs\rho}},
then~\m{Q} is certainly closed \wrt\ relaxation, and, moreover, it is not
hard to see that it is also closed under arbitrary non\dash{}empty
unions, \ie\ \nbdd{0}directed unions. Therefore, the set appearing on the
right\dash{}hand side in Proposition~\ref{prop:char-sLOC-s-directed-unions}
is contained in~\m{Q}. Now, the set
\m{\sLOC[0]{Q}
=\lset{\apply{\sigma,\sigma'}\in\Relps[m]}{\ovflhbx{1.01495pt}\exists
\apply{\mu,\mu'}\in\Fn[m]{Q}\colon \sigma'\sups \mu'}}
clearly contains \m{\apply{\CarrierSet[m],\rho'}}, but this pair does not
belong to the set on the right for it fails to belong to~\m{Q} due
to~\m{\CarrierSet[m]\not\subs\rho}.
\end{remark}
\par

In generalisation of Proposition~1.13(i)
of~\cite[p.~18]{PoeGeneralGaloisTheoryForOperationsAndRelations} (see
also~\cite[Proposition~1.6(i), p.~256]{%
PoeConcreteRepresentationOfAlgebraicStructuresAndGeneralGaloisTheory}), a
similar characterisation as above can be achieved for the local closure
operator.
\begin{corollary}\label{cor:char-LOC}
For any set \m{Q\subs\Relps} of relation pairs, we have
\begin{equation*}
\LOC{Q} = \biguplus_{m\in\N}\lset{\ovflhbx{2.66915pt}%
           \apply{\sigma,\sigma'}\in\Relps[m]%
           \ovflhbx{2.66915pt}}{\ovflhbx{2.66915pt}%
\begin{aligned}
\exists\,\sigma''\in\Rels[m]\colon
 &\sigma''\subs \sigma'\land{}\\
 &\exists\,\mathcal{T}\subs\Relps[m]
         \text{ \nbdd{\aleph_{0}}directed}\colon\\
 &\quad\quad\apply{\sigma,\sigma''}=\bigcup \mathcal{T}\land{}\\
 &\forall\apply{\rho,\rho'}\in\mathcal{T}\,
  \exists\tilde{\rho}\in\Rels[m]\colon\\
 &\rho\subs\tilde{\rho} \land \apply{\tilde{\rho},\rho'}\in\Fn[m]{Q}\\
\end{aligned}\ovflhbx{2.66915pt}}.
\end{equation*}
\end{corollary}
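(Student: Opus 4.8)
The plan is to deduce the statement from Proposition~\ref{prop:char-sLOC-s-directed-unions} together with the observation that \m{\LOC{Q}=\bigcap_{s\in\N}\sLOC[s]{Q}=\bigcap_{s\in\Np}\sLOC[s]{Q}}, the last equality holding because the chain \m{\sLOC[0]{Q}\sups\sLOC[1]{Q}\sups\dotsm} is decreasing, so the term \m{s=0} is redundant. I would write \m{R_{\aleph_{0}}} for the collection on the right\dash{}hand side of the corollary and prove the two inclusions separately, the second one being where the real work lies.

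For ``\m{\sups}'' I would start from a pair \m{\apply{\sigma,\sigma'}\in R_{\aleph_{0}}}, witnessed by some \m{\sigma''\subs\sigma'} and some \m{\aleph_{0}}\dash{}directed system \m{\mathcal{T}\subs\Relps[m]} with \m{\apply{\sigma,\sigma''}=\bigcup\mathcal{T}} and the stated relaxation property. By definition an \m{\aleph_{0}}\dash{}directed system is \m{s}\dash{}directed for every \m{s\in\Np}, so the very same \m{\sigma''} and \m{\mathcal{T}} witness that \m{\apply{\sigma,\sigma'}} satisfies the condition of Proposition~\ref{prop:char-sLOC-s-directed-unions} for each \m{s\in\Np}. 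Hence \m{\apply{\sigma,\sigma'}\in\sLOC[s]{Q}} for all \m{s\in\Np}, and therefore \m{\apply{\sigma,\sigma'}\in\bigcap_{s\in\Np}\sLOC[s]{Q}=\LOC{Q}}.

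For ``\m{\subs}'' the key point, and the main obstacle, is that the various \m{s}\dash{}directed systems produced by the Proposition cannot simply be amalgamated over \m{s} into one \m{\aleph_{0}}\dash{}directed system; instead I would reproduce the construction from the proof of Proposition~\ref{prop:char-sLOC-s-directed-unions}, but now ranging over \emph{all} finite subsets of~\m{\sigma} at once. Concretely, for \m{\apply{\sigma,\sigma'}\in\LOC[m]{Q}} and each finite \m{B\subs\sigma}, Lemma~\ref{lem:char-loc-by-finite-interpolation} guarantees that \m{\Sigma_{B}\defeq\lset{\apply{\rho,\rho'}\in\Fn[m]{Q}}{B\subs\rho\land\rho'\subs\sigma'}} is non\dash{}empty; using the axiom of choice I fix \m{\apply{\tilde{\rho}_{B},\rho'_{B}}\in\Sigma_{B}}, set \m{\rho_{B}\defeq\tilde{\rho}_{B}\cap\sigma}, and put \m{\mathcal{T}\defeq\lset{\apply{\rho_{B},\rho'_{B}}}{B\subs\sigma,\ \abs{B}<\aleph_{0}}} and \m{\sigma''\defeq\bigcup_{B}\rho'_{B}}. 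Then \m{\sigma''\subs\sigma'}, and \m{\rho_{B}\subs\tilde{\rho}_{B}} with \m{\apply{\tilde{\rho}_{B},\rho'_{B}}\in\Fn[m]{Q}} supply the required relaxation witnesses, while \m{\bigcup_{B}\rho_{B}=\sigma} follows by taking singletons \m{B=\set{x}} for \m{x\in\sigma}. This is precisely where the \m{s=0} pathology of Remark~\ref{rem:char-false-for-s=0} disappears: finite subsets already include singletons, so no positivity of~\m{s} is needed to recover all of~\m{\sigma}.

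It then remains to verify that \m{\mathcal{T}} is \m{\aleph_{0}}\dash{}directed, and the hard part is organisational rather than computational. Given any \m{t\in\N}, pairs \m{\apply{\rho_{B_0},\rho'_{B_0}},\dotsc,\apply{\rho_{B_{t-1}},\rho'_{B_{t-1}}}\in\mathcal{T}} and a selection \m{r_{i}\in\rho_{B_i}\subs\sigma} for \m{i<t}, I would set \m{C\defeq\lset{r_{i}}{i<t}}, a finite subset of~\m{\sigma}; then \m{\apply{\rho_{C},\rho'_{C}}\in\mathcal{T}} and \m{C\subs\rho_{C}}. As \m{t} was arbitrary, this establishes \m{s}\dash{}directedness for every \m{s\in\N}, hence \m{\aleph_{0}}\dash{}directedness: the family of finite subsets of~\m{\sigma} indexing \m{\mathcal{T}} is closed under forming the finite image set of any tuple drawn from members of~\m{\mathcal{T}}, which is exactly the closure needed for strong directedness. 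This shows \m{\apply{\sigma,\sigma'}\in R_{\aleph_{0}}} and completes the inclusion.
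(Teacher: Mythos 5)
Your proof is correct, and its difficult half coincides with the paper's. The paper proves the corollary by instructing the reader to copy the proof of Proposition~\ref{prop:char-sLOC-s-directed-unions} verbatim, replacing every bound \m{\abs{B}\leq s} by finiteness of~\m{B}, ``\nbdd{s}directed'' by ``\nbdd{\aleph_{0}}directed'', Lemma~\ref{lem:s-directed-unions} by Corollary~\ref{cor:inf-directed-union}, and deleting the phrase ``Due to \m{s>0}''; your inclusion ``\m{\subs}'' is exactly this modified construction, including your observation that singletons make the positivity of~\m{s} superfluous, which corresponds precisely to the paper's deletion of that phrase. (One small point you leave implicit: after setting \m{\rho_{B}\defeq\tilde{\rho}_{B}\cap\sigma} you should note \m{\rho'_{B}\subs\sigma'\subs\sigma} and \m{\rho'_{B}\subs\tilde{\rho}_{B}}, hence \m{\rho'_{B}\subs\rho_{B}}, so that the members of~\m{\mathcal{T}} really are relation pairs.) Where you genuinely deviate is the easy inclusion ``\m{\sups}'': the paper re-runs that half of the Proposition's proof, substituting Corollary~\ref{cor:inf-directed-union} for Lemma~\ref{lem:s-directed-unions} and again invoking closure of locally closed sets under relaxation, whereas you use the Proposition as a black box --- an \nbdd{\aleph_{0}}directed system is \nbdd{s}directed for every \m{s\in\Np}, so the same witnesses place the pair in \m{\sLOC{Q}} for all \m{s\in\Np}, and \m{\LOC{Q}=\bigcap_{s\in\N}\sLOC{Q}=\bigcap_{s\in\Np}\sLOC{Q}} by antitonicity of the \nbdd{s}local closures in~\m{s}. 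Both routes are valid; yours buys a shorter argument for this half, with no renewed appeal to the directed-union lemma or to relaxation-closedness, at the cost of leaning on the full equality of the Proposition rather than keeping the proof self-contained as a single modified text.
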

\begin{proof}
The proof of
Proposition~\ref{prop:char-sLOC-s-directed-unions} can be literally copied
employing the following modifications: the use of
Lemma~\ref{lem:s-directed-unions} has to be substituted by
Corollary~\ref{cor:inf-directed-union}; every occurrence of
``\nbdd{s}directed'', ``\nbdd{s}locally closed'' and the operator
\m{\sLOC{}} has to be replaced by ``\nbdd{\aleph_{0}}directed'', ``locally
closed'' and the operator \m{\LOC{}}, respectively; every restriction of
the form \m{\abs{B}\leq s}, \m{\abs{B_{i}}\leq s} and \m{\abs{C}\leq s}
should be changed to \m{\abs{B}<\aleph_{0}},
\m{\abs{B_{i}}\leq \aleph_{0}} and \m{\abs{C}<\aleph_{0}},
respectively; and, finally, the phrase ``Due to \m{s>0},'' is to be
removed completely.
\end{proof}
\par

Placing an additional closure requirement on the sets \m{Q\subs\Relps} in
Corollary~\ref{cor:char-LOC}, we can sharpen the statement by replacing
\nbdd{\aleph_{0}}directed unions by directed unions.
\par
\begin{corollary}\label{cor:char-LOC-directed-unions}
For any set \m{Q\subs\Relps} of relation pairs that is closed under
arbitrary intersections of pairs of identical arity,
the following equality holds:
\begin{equation*}
\LOC{Q} = \biguplus_{m\in\N}\lset{%
           \apply{\sigma,\sigma'}\in\Relps[m]%
           \ovflhbx{1.15654pt}}{\ovflhbx{1.15654pt}%
\begin{aligned}
\exists\,\sigma''\in\Rels[m]\colon
 &\sigma''\subs \sigma'\land{}\\
 &\exists\,\mathcal{T}\subs\Relps[m]
         \text{ directed}\colon\\
 &\quad\quad\apply{\sigma,\sigma''}=\bigcup \mathcal{T}\land{}\\
 &\forall\apply{\rho,\rho'}\in\mathcal{T}\,
  \exists\tilde{\rho}\in\Rels[m]\colon\\
 &\rho\subs\tilde{\rho} \land \apply{\tilde{\rho},\rho'}\in\Fn[m]{Q}\\
\end{aligned}}.
\end{equation*}
\end{corollary}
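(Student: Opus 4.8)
The plan is to deduce this corollary directly from Corollary~\ref{cor:char-LOC} by proving that, under the additional hypothesis that $Q$ is closed under arbitrary intersections of pairs of identical arity, the two displayed right-hand sets coincide. Since a directed system is in particular $\aleph_{0}$-directed (as observed on page~\pageref{page:infty-directedness}), the set built from directed systems is trivially contained in the set built from $\aleph_{0}$-directed systems, which equals $\LOC{Q}$ by Corollary~\ref{cor:char-LOC}. Hence the inclusion ``$\sups$'' is immediate, and the whole task reduces to establishing ``$\subs$'': given $\apply{\sigma,\sigma'}\in\LOC{Q}$, I must produce a \emph{directed} witnessing system $\mathcal{T}$, not merely an $\aleph_{0}$-directed one.

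First I would unwind the proof of Proposition~\ref{prop:char-sLOC-s-directed-unions} (as transported to $\LOC{}$ in Corollary~\ref{cor:char-LOC}) to see what system it constructs: for each finite $B\subs\sigma$ one selects via the axiom of choice a pair $\apply{\tilde{\rho}_{B},\rho'_{B}}\in\Fn[m]{Q}$ with $B\subs\tilde{\rho}_{B}$ and $\rho'_{B}\subs\sigma'$, and forms $\mathcal{T}=\lset{\apply{\rho_{B},\rho'_{B}}}{B\subs\sigma,\abs{B}<\aleph_{0}}$ with $\rho_{B}\defeq\tilde{\rho}_{B}\cap\sigma$. This system is $\aleph_{0}$-directed but need not be directed, because the first components $\rho_{B}$ are not guaranteed to merge: the union $\rho_{B_{1}}\cup\rho_{B_{2}}$ of two members need not sit inside a single $\rho_{B}$. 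The key idea is that closure of $Q$ under intersections lets me instead attach to each finite $B$ the intersection of \emph{all} large relations witnessing $B$, thereby obtaining a canonical, monotone assignment $B\mapsto\apply{\hat{\rho}_{B},\hat{\rho}'_{B}}$ that is inclusion-preserving in $B$ and hence directed.

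Concretely, for each finite $B\subs\sigma$ I would set $\hat{\rho}_{B}\defeq\bigcap\lset{\rho\in\Rels[m]}{\exists\rho'\colon\apply{\rho,\rho'}\in\Fn[m]{Q},\,B\subs\rho,\,\rho'\subs\sigma'}$, together with the corresponding intersection $\hat{\rho}'_{B}$ of the second components; since $Q$ is closed under intersections of pairs of equal arity, $\apply{\hat{\rho}_{B},\hat{\rho}'_{B}}$ lies in $\Fn[m]{Q}$ (after intersecting first with $\sigma$ in the first component, exactly as $\rho_{B}=\tilde{\rho}_{B}\cap\sigma$ is formed in the original proof). The crucial monotonicity is that $B_{1}\subs B_{2}$ forces the indexing family of the intersection for $B_{2}$ to be contained in that for $B_{1}$, whence $\hat{\rho}_{B_{1}}\subs\hat{\rho}_{B_{2}}$; consequently for any two finite subsets $B_{1},B_{2}\subs\sigma$ the member indexed by $B_{1}\cup B_{2}$ dominates both, so $\mathcal{T}\defeq\lset{\apply{\hat{\rho}_{B}\cap\sigma,\hat{\rho}'_{B}}}{B\subs\sigma,\,\abs{B}<\aleph_{0}}$ is directed in the sense defined before Corollary~\ref{cor:inf-directed-union}. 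The union-equals-$\apply{\sigma,\sigma''}$ computation and the verification $\hat{\rho}'_{B}\subs\sigma'$ go through exactly as in the earlier proof, giving the required witness.

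The main obstacle I expect is verifying that $\apply{\hat{\rho}_{B},\hat{\rho}'_{B}}$ is genuinely a member of $\Fn[m]{Q}$ satisfying the relaxation clause $\hat{\rho}_{B}\subs\tilde{\rho}$ with $\apply{\tilde{\rho},\hat{\rho}'_{B}}\in\Fn[m]{Q}$, and in particular that the second components behave correctly under intersection: one must ensure that taking the intersection of the pairs $\apply{\rho,\rho'}$ over the witnessing family yields a pair whose \emph{second} component is still $\subs\sigma'$ and whose first component still contains $B$. Because the hypothesis guarantees closure only under intersections of pairs of \emph{identical arity}, I must be careful that all pairs entering the intersection are $m$-ary (which they are, being drawn from $\Fn[m]{Q}$) and that the intersection is taken \emph{componentwise and simultaneously}, so that the inclusion $\hat{\rho}'_{B}\subs\hat{\rho}_{B}$ defining a valid relation pair is preserved; once this bookkeeping is settled, monotonicity and directedness follow formally, and the remaining steps are a verbatim transcription of the argument for Corollary~\ref{cor:char-LOC}.
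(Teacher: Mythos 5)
Your proposal is correct and matches the paper's own proof essentially step for step: the paper also obtains ``\m{\sups}'' by reading ``directed'' in place of ``\nbdd{\aleph_{0}}directed'' in Corollary~\ref{cor:char-LOC}, and for ``\m{\subs}'' it likewise replaces the choice\dash{}based witnesses by the componentwise intersection \m{\apply{\tilde{\rho}_{B},\rho'_{B}}\defeq\bigcap\Sigma_{B}} of all pairs in \m{\Sigma_{B}=\lset{\apply{\rho,\rho'}\in\Fn[m]{Q}}{B\subs\rho\land \rho'\subs\sigma'}}, which belongs to \m{\Fn[m]{Q}} by the intersection\dash{}closure hypothesis, and then sets \m{\rho_{B}\defeq\tilde{\rho}_{B}\cap\sigma}. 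Directedness is derived in the paper exactly as you describe, from the monotonicity \m{B_{1}\subs B_{2}\Rightarrow\Sigma_{B_{2}}\subs\Sigma_{B_{1}}\Rightarrow\tilde{\rho}_{B_{1}}\subs\tilde{\rho}_{B_{2}}} applied to unions of the finite index sets.
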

\begin{easyproof}
The proof of the inclusion ``\m{\sups}'' stays the same as in
Corollary~\ref{cor:char-LOC}, one just reads ``directed'' in place of
``\nbdd{\aleph_{0}}directed''.
\par
The dual inclusion requires a few more changes. Consider any relation pair
\m{\apply{\sigma,\sigma'}\in\LOC[m]{Q}}, \m{m\in\N}. Then for any
finite subset \m{B\subs\sigma}, the set
\m{\Sigma_{B}\defeq\lset{\apply{\rho,\rho'}\in\Fn[m]{Q}}{%
                       B\subs\rho\land \rho'\subs\sigma'}}
is non\dash{}empty. Let us define the pair
\m{\apply{\tilde{\rho}_{B},\rho'_{B}}\defeq\bigcap\Sigma_{B}},
that is, we have
\m{\tilde{\rho}_{B}=\bigcap_{\apply{\rho,\rho'}\in\Sigma_{B}}\rho}
and
\m{\rho'_{B}=\bigcap_{\apply{\rho,\rho'}\in\Sigma_{B}}\rho'}.
Since \m{\Sigma_{B}\neq\emptyset}, we have \m{\rho'_{B}\subs\rho'} for
some \m{\apply{\rho,\rho'}\in\Fn[m]{Q}}, \ie\
\m{\rho'_{B}\subs\rho'\subs\sigma'\subs\sigma}. Thus,
\m{\rho'_{B}\subs\tilde{\rho}_{B}\cap \sigma \eqdef \rho_{B}}.
Moreover, we know that \m{B\subs\rho_{B}\subs\tilde{\rho}_{B}} since
\m{B\subs\rho} for any \m{\apply{\rho,\rho'}\in\Sigma_{B}}. By closure
\wrt\ intersection, we obtain
\m{\apply{\tilde{\rho}_{B},\rho'_{B}}\in\Fn[m]{Q}}, and so
\m{\apply{\tilde{\rho}_{B},\rho'_{B}}\in\Sigma_{B}\subs\Fn[m]{Q}}.
Defining
\m{\mathcal{T}\defeq \lset{\apply{\rho_{B},\rho'_{B}}}{%
                           B\subs\sigma\land\abs{B}<\aleph_{0}}}, we can
continue as in the proof of Corollary~\ref{cor:char-LOC}; only the final
paragraph needs further modifications to demonstrate that~\m{\mathcal{T}}
is directed and not only \nbdd{\aleph_{0}}directed.
\par

For this we consider any finite subset
\m{\mathcal{B}\subs\lset{B\subs\sigma}{\abs{B}<\aleph_{0}}}. Clearly, 
the union \m{C\defeq \bigcup\mathcal{B}} is again a finite subset
of~\m{\sigma}. Moreover, for all \m{B\in\mathcal{B}}, we have
\m{B\subs C} and hence \m{\Sigma_{C}\subs\Sigma_{B}}, which implies
\m{\tilde{\rho}_{B}\subs\tilde{\rho}_{C}} and, consequently,
\m{\rho_{B}=\sigma\cap \tilde{\rho}_{B}
   \subs \sigma\cap\tilde{\rho}_{C} = \rho_{C}}.
Since this holds for all \m{B\in\mathcal{B}}, we get
\m{\bigcup_{B\in\mathcal{B}} \rho_{B}\subs\rho_{C}},
proving directedness of~\m{\mathcal{T}}.
\end{easyproof}

\section{Special cases}%
\label{sect:variants}

\subsection{Proper semiclones}%
\label{subsect:closed-proper-semiclones}

Based on the results of the previous section, we may also characterise all
\nbdd{s}locally closed semiclones that fail to be clones.
\begin{proposition}\label{prop:char-sLoc-proper-semiclones}
For any parameter \m{s\in\N} and any carrier set~\m{\CarrierSet}, the
collection
\m{\lset{F\in\Sclones\setminus\Clones}{F \text{ \nbdd{s}locally closed}}}
can be relationally described in the form
\m{\lset{\Polp{Q}}{Q\subs{\Relps[{\leq s}]}\land\exists
\apply{\rho,\rho'}\in Q\colon \rho'\subsetneq \rho}}.
\end{proposition}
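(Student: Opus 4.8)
The plan is to establish the claimed set equality by proving both inclusions, each as a short combination of the results developed in Section~\ref{sect:char-closures} together with Lemma~\ref{lem:char-Polp-clone}. The pivotal observation is that for any relation pair $\apply{\rho,\rho'}$ one has $\rho'\subs\rho$ by definition, so $\rho'\subsetneq\rho$ is equivalent to $\rho\neq\rho'$; consequently the existence of a pair with $\rho'\subsetneq\rho$ in a set $Q\subs\Relps$ is precisely the negation of the hypothesis appearing in Lemma~\ref{lem:char-Polp-clone}.

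For the inclusion ``$\supseteq$'', I would begin with a set of the form $F=\Polp{Q}$ where $Q\subs\Relps[{\leq s}]$ contains some pair $\apply{\rho,\rho'}$ with $\rho'\subsetneq\rho$. Lemma~\ref{lem:Polp=semiclone} shows that $F$ is a semiclone, and, since $Q$ consists only of at most $s$-ary relation pairs, Lemma~\ref{lem:Polp-sLoc} gives $\sLoc{F}=F$, so $F$ is $s$-locally closed. As $Q$ contains a pair with $\rho\neq\rho'$, Lemma~\ref{lem:char-Polp-clone} tells us that $F$ fails to be a clone; hence $F\in\Sclones\setminus\Clones$ is an $s$-locally closed semiclone, as needed.

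For the reverse inclusion ``$\subseteq$'', I would take an arbitrary $s$-locally closed semiclone $F\in\Sclones\setminus\Clones$. Corollary~\ref{cor:(s)-locally-closed-semiclones} yields $F=\sLoc{\genSclone{F}}$, and Theorem~\ref{thm:char-Polp-Invp-leq-s} rewrites the right-hand side as $\Polp{\Invp[\leq s]{F}}$. Setting $Q\defeq\Invp[\leq s]{F}$, which is a subset of $\Relps[{\leq s}]$ by construction, we obtain $F=\Polp{Q}$ with $Q$ of the correct arity bound. Finally, since $F$ is not a clone, Lemma~\ref{lem:char-Polp-clone} forces some pair $\apply{\rho,\rho'}\in Q$ to satisfy $\rho\neq\rho'$, that is $\rho'\subsetneq\rho$; thus $F$ belongs to the relationally described collection on the right.

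I do not expect a genuine obstacle here, as the argument is essentially bookkeeping over previously established facts. The only point requiring care is the twofold use of Lemma~\ref{lem:char-Polp-clone}: in one direction it must rule out clonehood from a witnessing strict pair, and in the other it must produce such a witness from the failure of clonehood. Everything hinges on recognising that the failure condition of that lemma coincides exactly with the defining condition of the right-hand set. One should also note that all cited results hold with no restriction on $\CarrierSet$, so the argument is uniform in the carrier set.
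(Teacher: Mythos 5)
Your proposal is correct and takes essentially the same route as the paper's proof: the forward inclusion via Lemmas~\ref{lem:Polp=semiclone}, \ref{lem:Polp-sLoc} and~\ref{lem:char-Polp-clone}, and the converse via Corollary~\ref{cor:(s)-locally-closed-semiclones} and Theorem~\ref{thm:char-Polp-Invp-leq-s} with \m{Q=\Invp[{\leq s}]{F}}. The only cosmetic difference is in producing the witness pair with \m{\rho'\subsetneq\rho}: the paper argues inline by contraposition (if all pairs in \m{Q} were identical then \m{\id_{\CarrierSet}\in\Polp{Q}=F}, making \m{F} a clone by Lemma~\ref{lem:semiclones+proj=clones}), whereas you invoke the equivalence of Lemma~\ref{lem:char-Polp-clone} directly, whose proof is exactly that argument.
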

\begin{proof}
Consider a set \m{Q\subs\Relps[{\leq s}]} such that
\m{\rho'\subsetneq\rho} holds for at least one relation pair
\m{\apply{\rho,\rho'}\in Q}.
By Lemma~\ref{lem:Polp=semiclone} we have \m{\Polp{Q}\in\Sclones}, and
moreover, this set is \nbdd{s}locally closed by Lemma~\ref{lem:Polp-sLoc}.
Further, Lemma~\ref{lem:char-Polp-clone} ensures that
\m{\Polp{Q}\notin\Clones}.
\par
Conversely, if \m{F\in\Sclones\setminus\Clones} is \nbdd{s}locally closed,
then by Corollary~\ref{cor:(s)-locally-closed-semiclones}, we have
\m{F=\sLoc{\genSclone{F}}=\Polp{Q}} %
where \m{Q=\Invp[{\leq s}]{F}}
(cf.\ Theorem~\ref{thm:char-Polp-Invp-leq-s}). If we had \m{\rho=\rho'}
for all \m{\apply{\rho,\rho'}\in Q}, then it would follow
\m{\id_{\CarrierSet}\in \Polp{Q}=F}, implying \m{F\in\Clones} according to
Lemma~\ref{lem:semiclones+proj=clones}%
\eqref{item:char-clones=semiclones-with-proj}. Hence, there is at least one
\m{\apply{\rho,\rho'}\in Q} where \m{\rho'\subsetneq\rho}.
\end{proof}

In a very analogous fashion we may prove the following result.
\begin{proposition}\label{prop:char-Loc-closed-proper-semiclones}
For all carriers~\m{\CarrierSet} we have the equality
\begin{multline*}
\lset{F\in\Sclones\setminus\Clones}{F \text{ locally closed}}\\
=\lset{\Polp{Q}}{Q\subs{\Relps}\land\exists
\apply{\rho,\rho'}\in Q\colon \rho'\subsetneq \rho}.
\end{multline*}
\end{proposition}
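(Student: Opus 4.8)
The plan is to follow the proof of Proposition~\ref{prop:char-sLoc-proper-semiclones} almost line for line, everywhere substituting the \m{s}-local apparatus by its unbounded-arity (local) counterpart. Concretely, I would prove the two inclusions separately, using Corollary~\ref{cor:Polp-Loc} where the bounded version invoked Lemma~\ref{lem:Polp-sLoc}, and Corollary~\ref{cor:PolpInvp=Loc[]} together with the local half of Corollary~\ref{cor:(s)-locally-closed-semiclones} where it invoked Theorem~\ref{thm:char-Polp-Invp-leq-s}.

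For the inclusion ``\m{\sups}'' I would fix an arbitrary \m{Q\subs\Relps} containing at least one pair \m{\apply{\rho,\rho'}} with \m{\rho'\subsetneq\rho}, and check that \m{\Polp{Q}} lies in the left\dash{}hand collection. Lemma~\ref{lem:Polp=semiclone} gives \m{\Polp{Q}\in\Sclones} directly; local closedness is exactly the statement \m{\Loc{\Polp{Q}}=\Polp{Q}} furnished by Corollary~\ref{cor:Polp-Loc}; and the presence of a strict pair means that not all pairs in~\m{Q} are identical, so Lemma~\ref{lem:char-Polp-clone} yields \m{\Polp{Q}\notin\Clones}.

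For the converse inclusion ``\m{\subs}'' I would take any locally closed \m{F\in\Sclones\setminus\Clones} and exhibit a witness set. The local part of Corollary~\ref{cor:(s)-locally-closed-semiclones} gives \m{F=\Loc{\genSclone{F}}}, which Corollary~\ref{cor:PolpInvp=Loc[]} rewrites as \m{F=\Polp{\Invp{F}}}; thus \m{Q\defeq\Invp{F}\subs\Relps} satisfies \m{F=\Polp{Q}}. To secure a strict pair I would argue by contraposition: if \m{\rho=\rho'} held for every \m{\apply{\rho,\rho'}\in Q}, then Lemma~\ref{lem:char-Polp-clone} would place \m{\id_{\CarrierSet}} in \m{\Polp{Q}=F}, so \m{F\cap\TrivOps\neq\emptyset} and hence \m{F\in\Clones} by Lemma~\ref{lem:semiclones+proj=clones}\eqref{item:char-clones=semiclones-with-proj}, contradicting \m{F\notin\Clones}. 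Therefore some \m{\apply{\rho,\rho'}\in Q} has \m{\rho'\subsetneq\rho}.

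Since each step merely cites an already\dash{}established fact for the unbounded operators, I do not expect a genuine obstacle. The only point requiring attention is the bookkeeping of which local\dash{}closure results replace their \m{s}-local analogues, so that no bounded\dash{}arity restriction on~\m{Q} is silently reintroduced; in particular, the witness set \m{Q=\Invp{F}} now ranges over all of~\m{\Relps} rather than \m{\Relps[{\leq s}]}, which is precisely why the right\dash{}hand side of the proposition quantifies over \m{Q\subs\Relps}.
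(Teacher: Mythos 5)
Your proposal is correct and is essentially the paper's own proof: the paper proves this proposition precisely by taking the argument for Proposition~\ref{prop:char-sLoc-proper-semiclones} and substituting Lemma~\ref{lem:Polp-sLoc} by Corollary~\ref{cor:Polp-Loc}, Theorem~\ref{thm:char-Polp-Invp-leq-s} by Corollary~\ref{cor:PolpInvp=Loc[]}, and the operators \m{\sLoc{}}, \m{\Invp[{\leq s}]{}}, \m{\Relps[{\leq s}]} by \m{\Loc{}}, \m{\Invp{}}, \m{\Relps}, exactly as you do. Your explicit write-out of both inclusions (including the contraposition argument producing a strict pair via Lemma~\ref{lem:char-Polp-clone} and Lemma~\ref{lem:semiclones+proj=clones}\eqref{item:char-clones=semiclones-with-proj}) matches the paper's intended argument step for step.
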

\begin{easyproof}
In the proof of Proposition~\ref{prop:char-sLoc-proper-semiclones} replace
the use of Lemma~\ref{lem:Polp-sLoc} by Corollary~\ref{cor:Polp-Loc},
Theorem~\ref{thm:char-Polp-Invp-leq-s} by
Corollary~\ref{cor:PolpInvp=Loc[]}, the operators \m{\sLoc{}} by
\m{\Loc{}}, \m{\Invp[{\leq s}]{}} by \m{\Invp{}} and \m{\Relps[{\leq s}]}
by \m{\Relps}, respectively, and ``\nbdd{s}locally'' by ``locally''.
\end{easyproof}

Using the theory of the previous sections, we can also prove a decidability
result regarding the question if a clone with projections removed yields a
semiclone, or if the non\dash{}trivial functions generate the projections.
\par

For this we need a more detailed analysis of the process generating
\m{\genInvPair{B}} occurring in Lemma~\ref{lem:Gamma-F}.
\begin{lemma}\label{lem:Gamma-F-algorithmically}
Let~\m{K} be any set, \m{B\subs\CarrierSet[K]} and \m{F\subs\Ops}.
Define \m{R_0\defeq B} and
\begin{align*}
S_j&\defeq \biguplus_{n\in\N}\lset{\composition{f}{\listen{g}{n}}}{%
 f\in\Fn{F}\land \apply{\listen{g}{n}}\in {R_j}^n}\\
R_{j+1}&\defeq R_j\cup S_j
\end{align*}
for \m{j\in\N}; set \m{R\defeq \bigcup_{j\in\N}R_j} and
\m{S\defeq\bigcup_{j\in\N}S_j}.
Employing the straightforward generalisation of the preservation concept to
infinite arities, the pair \m{\apply{R,S}} is the least
(\wrt~\m{\mathord{\leq}}) member of the set
\begin{equation*}
Q_{B}\defeq
\lset{\apply{\rho,\rho'}}{B\cup \rho'\subs\rho\subs \CarrierSet[K],
\apply{\rho,\rho'}\text{ is preserved by all } f\in F}.
\end{equation*}
\par
If \m{R_n=R_{n+1}}, \ie\ \m{S_n\subs R_n}, holds for some \m{n\in\N}, then
it is \m{S_m=S_n} and \m{R_m=R_n} for all \m{m\geq n}. Therefore, for
finite~\m{\CarrierSet} and finite~\m{K}, the condition \m{R_n=R_{n+1}} is
satisfied for some \m{n\leq \abs{\CarrierSet[K]}}.
\end{lemma}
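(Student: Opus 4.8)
The statement splits into three essentially independent claims, and the plan is to dispatch them in turn: first that \m{\apply{R,S}} lies in \m{Q_{B}}, then that it is the \m{\leq}-least element of \m{Q_{B}}, and finally the stabilisation behaviour of the chain \m{\apply{R_{j}}_{j\in\N}} together with the cardinality bound for finite \m{\CarrierSet} and~\m{K}. Throughout I would rely on three elementary observations read off directly from the recursion: the sets \m{R_{j}} form an increasing chain \m{R_{0}\subs R_{1}\subs\dotsb} with union~\m{R}; unwinding \m{R_{j+1}=R_{j}\cup S_{j}} shows \m{R=B\cup S}; and each \m{S_{j}} is completely determined by \m{R_{j}}.

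For membership \m{\apply{R,S}\in Q_{B}} I would verify the defining conditions one by one. Clearly \m{B=R_{0}\subs R} and \m{S_{j}\subs R_{j+1}\subs R}, so \m{B\cup S\subs R}, while \m{R\subs\CarrierSet[K]} follows by an easy induction, since applying a finitary operation row-wise to tuples over \m{\CarrierSet[K]} again produces an element of \m{\CarrierSet[K]}. The only condition needing care is that every operation \m{f\in F} preserves \m{\apply{R,S}}: given \m{\mathbf{r}\in R^{n}} with \m{n\defeq\arity\apply{f}}, each of its finitely many entries lies in some member of the increasing chain \m{\apply{R_{j}}_{j\in\N}}, so all of them already lie in a single \m{R_{j}}; hence \m{\composition{f}{\mathbf{r}}\in S_{j}\subs S}. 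This step---combining the finiteness of the arity~\m{n} with the nestedness of the chain---is the real heart of the lemma and the one place I expect a (modest) obstacle, because it is exactly what makes the construction close up after applying a single operation, with no assumption whatsoever on the cardinalities of~\m{K} and~\m{\CarrierSet}.

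For leastness I would fix an arbitrary \m{\apply{\rho,\rho'}\in Q_{B}} and prove the two inclusions \m{R_{j}\subs\rho} and \m{S_{j}\subs\rho'} simultaneously by induction on~\m{j}. The base case uses \m{B=R_{0}\subs\rho} and the preservation of \m{\apply{\rho,\rho'}}: for \m{f\in F} of arity~\m{n} and \m{\apply{\listen{g}{n}}\in B^{n}\subs\rho^{n}} one gets \m{\composition{f}{\listen{g}{n}}\in\rho'}, so \m{S_{0}\subs\rho'}. In the inductive step \m{R_{j+1}=R_{j}\cup S_{j}\subs\rho\cup\rho'=\rho} (using \m{\rho'\subs\rho}), and preservation applied to tuples over \m{R_{j+1}\subs\rho} yields \m{S_{j+1}\subs\rho'}. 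Taking unions gives \m{R\subs\rho} and \m{S\subs\rho'}, i.e.\ \m{\apply{R,S}\leq\apply{\rho,\rho'}}. This simultaneously identifies \m{\apply{R,S}} with the pair \m{\genInvPair{B}} of Lemma~\ref{lem:Gamma-F}, now in the generalised-arity setting, since \m{Q_{B}} is precisely the collection of \m{F}-invariant pairs whose first component contains~\m{B}.

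For the stabilisation claim, if \m{R_{n}=R_{n+1}} (equivalently \m{S_{n}\subs R_{n}}), then because \m{S_{m}} depends only on \m{R_{m}}, a one-line induction gives \m{R_{m}=R_{n}} and \m{S_{m}=S_{n}} for all \m{m\geq n}: from \m{R_{m}=R_{n}} one has \m{S_{m}=S_{n}} and hence \m{R_{m+1}=R_{m}\cup S_{m}=R_{n}\cup S_{n}=R_{n+1}=R_{n}}. Finally, when \m{\CarrierSet} and~\m{K} are finite the set \m{\CarrierSet[K]} is finite, so the increasing chain \m{\apply{R_{j}}_{j\in\N}} in \m{\powerset{\CarrierSet[K]}} must stabilise; letting~\m{n} be the least index with \m{R_{n}=R_{n+1}}, every earlier inclusion is strict, whence \m{n\leq\abs{R_{n}}\leq\abs{\CarrierSet[K]}}, which is the asserted bound.
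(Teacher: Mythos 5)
Your proposal is correct and takes essentially the same route as the paper's own proof: membership of \m{\apply{R,S}} in \m{Q_{B}} via the increasing chain \m{\apply{R_{j}}_{j\in\N}} together with finiteness of the arity (so that any tuple of arguments lands in a single \m{R_{j}}), leastness by induction on~\m{j} using the preservation condition, and stabilisation plus the counting argument \m{n\leq\abs{R_{n}}\leq\abs{\CarrierSet[K]}} for the final bound. Your explicit verification that \m{R\subs\CarrierSet[K]} and the identification of \m{\apply{R,S}} with \m{\genInvPair{B}} are harmless additions the paper leaves implicit.
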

\begin{proof}
First, we note that, by definition, \m{R_j\subs R_{j+1}}, which implies
\m{S_j\subs S_{j+1}}, holds for all \m{j\in \N}. Hence, the unions
defining~\m{R} and~\m{S} are directed.
\par
It is not difficult to see that \m{\apply{R,S}} belongs to~\m{Q_B}. Namely,
we have \m{B=R_0\subs R} and \m{S_j\subs R_{j+1}\subs R} for every
\m{j\in \N}, whence \m{S\subs R}. To prove that \m{\apply{R,S}} is
preserved by every \nbdd{n}ary \m{f\in F}, one considers an \nbdd{n}tuple
of tuples \m{\apply{\listen{g}{n}}\in R^n}. Due to directedness of the
union producing~\m{R} and finiteness of~\m{n}, there exists one \m{j\in\N}
such that \m{\apply{\listen{g}{n}}\in {R_j}^{n}}, wherefore
\m{\composition{f}{\listen{g}{n}}\in S_j\subs S}. Consequently,
\m{\apply{R,S}} is preserved by every member of~\m{F} and thus belongs
to~\m{Q_B}.
\par
Second, take any pair \m{\apply{\rho,\rho'}\in Q_B}. By definition, we have
\m{R_0=B\subs\rho}. Moreover, supposing that \m{R_j\subs\rho}, by the
preservation condition, we get that \m{S_j\subs\rho'\subs\rho} and hence
\m{R_{j+1} = R_j \cup S_j \subs \rho}, as well as \m{S_j\subs\rho'}. Thus,
by induction we have shown \m{R=\bigcup_{j\in\N}R_j\subs\rho} and
\m{S=\bigcup_{j\in\N} S_j\subs \rho'}.
This proves that \m{\apply{R,S}\leq \apply{\rho,\rho'}}, whence
\m{\apply{R,S}} is the \nbdd{\mathord{\leq}}least member of~\m{Q_{B}}.
\par
It is easy to check by induction that \m{R_n=R_{n+1}} entails \m{S_m=S_n}
and \m{R_m=R_n} for all \m{m\geq n}. Moreover, if \m{R_0,\liste{R}{n}} are
pairwise distinct (\ie\ form a strictly increasing chain \m{R_0\subsetneq
R_1\subsetneq \dotsm\subsetneq R_n}), then
\m{n\leq \abs{R_n}\leq \abs{\CarrierSet[K]}}. Therefore, for
finite~\m{\CarrierSet} and~\m{K}, the condition \m{R_n=R_{n+1}} must be
satisfied for some \m{n\leq \abs{\CarrierSet[K]}}.
\end{proof}
\par

The following lemma goes back to an idea by Peter Mayr.
\begin{lemma}\label{lem:gen-F-minus-projs}
For \m{F\subs\Ops} we have
\m{\genSclone{F\setminus\TrivOps}
  =\genSclone{\genClone{F}\setminus\TrivOps}}.
\end{lemma}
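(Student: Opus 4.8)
The plan is first to translate the claim, which is phrased in terms of the generated clone, into an equivalent statement purely about generated semiclones, and then to settle it by a short closure-operator argument. By Lemma~\ref{lem:semiclones+proj=clones}\eqref{item:char-clone-cl=sclone-cl+proj} we have \m{\genClone{F}=\genSclone{F}\cup\TrivOps}, so the elementary set identity \m{(A\cup B)\setminus B=A\setminus B} yields \m{\genClone{F}\setminus\TrivOps=\genSclone{F}\setminus\TrivOps}. Hence it suffices to prove
\[
\genSclone{F\setminus\TrivOps}=\genSclone{\genSclone{F}\setminus\TrivOps}.
\]

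For the inclusion ``\m{\subs}'' I would simply note that \m{F\setminus\TrivOps\subs\genSclone{F}\setminus\TrivOps} and apply the monotone operator \m{\genSclone{\,}}. For the reverse inclusion I would abbreviate \m{G\defeq\genSclone{F\setminus\TrivOps}} and reduce, using monotonicity and idempotence of \m{\genSclone{\,}}, to establishing \m{\genSclone{F}\setminus\TrivOps\subs G}, equivalently \m{\genSclone{F}\subs G\cup\TrivOps}. The crucial point is that \m{G\cup\TrivOps} is itself a semiclone: since \m{G} is a semiclone, Lemma~\ref{lem:semiclones+proj=clones}\eqref{item:char-clone-cl=sclone-cl+proj} applied to \m{G} in place of~\m{F} gives \m{G\cup\TrivOps=\genClone{G}}, which is a clone and so a semiclone. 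Because \m{F\setminus\TrivOps\subs G} by construction while \m{F\cap\TrivOps\subs\TrivOps}, this semiclone contains~\m{F}, hence also the least semiclone \m{\genSclone{F}} over~\m{F}; this is precisely the inclusion needed.

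The argument involves no computational difficulty. Its only genuine content is the observation that re-adjoining the projections to \m{\genSclone{F\setminus\TrivOps}} produces a semiclone (in fact a clone) that still captures all of~\m{F}. Conceptually this encodes the fact that a projection occurring in~\m{F} can contribute nothing towards building a non-projection beyond what projections-as-inner-arguments already achieve inside the semiclone closure. The main obstacle, such as it is, lies in resisting the temptation to verify this by a direct induction on the structure of iterated compositions, and instead packaging it into the single closure identity \m{\genClone{G}=G\cup\TrivOps} supplied by Lemma~\ref{lem:semiclones+proj=clones}.
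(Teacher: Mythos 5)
Your proof is correct and is in essence the paper's own argument: both hinge on Lemma~\ref{lem:semiclones+proj=clones}\eqref{item:char-clone-cl=sclone-cl+proj} together with monotonicity and idempotence of \m{\genSclone{\,}}. The paper compresses this into the single chain \m{F\setminus\TrivOps\subs\genClone{F}\setminus\TrivOps=\genSclone{F\setminus\TrivOps}\setminus\TrivOps\subs\genSclone{F\setminus\TrivOps}} followed by one application of \m{\genSclone{\,}}, whereas you split off the easy inclusion and close the hard one by observing that \m{\genSclone{F\setminus\TrivOps}\cup\TrivOps} is a clone containing~\m{F} --- the same key observation in slightly different packaging.
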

\begin{proof}
Using Lemma~\ref{lem:semiclones+proj=clones}%
            \eqref{item:char-clone-cl=sclone-cl+proj} we have
\begin{align*}
F\setminus\TrivOps\subs \genClone{F}\setminus\TrivOps
&=\genClone{F\setminus\TrivOps}\setminus\TrivOps
= \apply{\genSclone{F\setminus\TrivOps}\cup\TrivOps}\setminus\TrivOps\\
&= \genSclone{F\setminus\TrivOps}\setminus\TrivOps
\subs\genSclone{F\setminus\TrivOps},
\end{align*}
whence we obtain
\m{\genSclone{F\setminus\TrivOps} =
\genSclone{\genClone{F}\setminus\TrivOps}} by another application of
the operator \m{\genSclone{\,}}.
\end{proof}
\par

With this result we can now prove the problem of whether a finitely
generated clone on a finite set generates projections from its
non\dash{}trivial members to be decidable.
\begin{proposition}\label{prop:decidability-gen-of-projs}
For both~\m{\CarrierSet} and \m{F\subs\Ops} finite, it is decidable whether
\m{\genClone{F}\setminus\TrivOps\in\Sclones}.
\end{proposition}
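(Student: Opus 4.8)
The plan is to convert the property ``\m{\genClone{F}\setminus\TrivOps\in\Sclones}'' into a single membership test \m{\id_{\CarrierSet}\in\genSclone[1]{F_{0}}}, where \m{F_{0}\defeq F\setminus\TrivOps} denotes the (finite) non\dash{}trivial part of~\m{F}, and then to settle that test by the terminating iteration supplied by Lemma~\ref{lem:Gamma-F-algorithmically}. Throughout I would use that \m{\genClone{F}=\genClone{F_{0}}}, so that the whole question depends only on the finite set~\m{F_{0}}.

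First I would establish the key equivalence \m{\genClone{F}\setminus\TrivOps\in\Sclones \Leftrightarrow \genSclone{F_{0}}\cap\TrivOps=\emptyset}. For the direction ``\m{\Leftarrow}'' I would verify that \m{\genSclone{F_{0}}\cup\TrivOps} is a clone containing~\m{F} --- using that a projection placed as the outermost function merely selects one of its inner arguments, which already lies in \m{\genSclone{F_{0}}\cup\TrivOps} --- so that \m{\genSclone{F}\subs\genSclone{F_{0}}\cup\TrivOps}; combined with \m{\genSclone{F_{0}}\subs\genSclone{F}} and Lemma~\ref{lem:semiclones+proj=clones}\eqref{item:char-clone-cl=sclone-cl+proj} this yields \m{\genClone{F}\setminus\TrivOps=\genSclone{F}\setminus\TrivOps=\genSclone{F_{0}}\setminus\TrivOps}, which equals the semiclone \m{\genSclone{F_{0}}} once the latter omits all projections. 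For ``\m{\Rightarrow}'' I would apply Lemma~\ref{lem:gen-F-minus-projs}: a semiclone \m{\genClone{F}\setminus\TrivOps} coincides with its generated semiclone \m{\genSclone{\genClone{F}\setminus\TrivOps}=\genSclone{F_{0}}}, forcing the latter to contain no projection. Finally Lemma~\ref{lem:semiclones+proj=clones}\eqref{item:semiclones-all-or-nothing} rephrases ``\m{\genSclone{F_{0}}} contains no projection'' as the single condition \m{\id_{\CarrierSet}\notin\genSclone[1]{F_{0}}}.

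Next I would make this condition computable. Invoking Lemma~\ref{lem:Gamma-F} in its version for an arbitrary indexing set, I would take \m{K\defeq\CarrierSet}, generating set~\m{F_{0}}, and the one\dash{}entry tuple \m{\mathbf{b}\defeq\apply{\id_{\CarrierSet}}}, so that \m{B=\set{\id_{\CarrierSet}}\subs\CarrierSet[K]}. Since \m{\composition{f}{\mathbf{b}}=f} for every unary~\m{f}, the lower component of \m{\genInvPair[F_{0}]{B}} is exactly \m{\rho'=\genSclone[1]{F_{0}}}, viewed as a set of tables in~\m{\CarrierSet[K]}; hence \m{\id_{\CarrierSet}\in\genSclone[1]{F_{0}}} if and only if \m{\id_{\CarrierSet}\in\rho'}. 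By Lemma~\ref{lem:Gamma-F-algorithmically} this pair is produced by the iteration \m{R_{0}=B}, \m{R_{j+1}=R_{j}\cup S_{j}}, where \m{S_{j}} gathers all \m{\composition{f}{\listen{g}{n}}} with \m{f\in F_{0}} of arity~\m{n} and \m{\apply{\listen{g}{n}}\in R_{j}^{n}}. As both~\m{\CarrierSet} and \m{K=\CarrierSet} are finite, each~\m{R_{j}} is a computable finite subset of the finite set~\m{\CarrierSet[K]}, only finitely many~\m{f} and finitely many tuples over~\m{R_{j}} are involved, and the chain stabilises after at most \m{\abs{\CarrierSet[K]}} steps. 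Reading off \m{\rho'} and testing \m{\id_{\CarrierSet}\in\rho'} therefore decides the property.

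I expect the only genuinely delicate point to be the first equivalence, in particular its right\dash{}to\dash{}left half: it rests on Lemma~\ref{lem:gen-F-minus-projs} together with the all\dash{}or\dash{}nothing behaviour of projections inside a semiclone, and on the observation that re\dash{}admitting the projections as inner and outer arguments does not enlarge the closure beyond \m{\genSclone{F_{0}}\cup\TrivOps}. By contrast, the reduction to a unary membership test and the termination of the generating procedure are largely bookkeeping on top of the already proven Lemmata~\ref{lem:Gamma-F} and~\ref{lem:Gamma-F-algorithmically}.
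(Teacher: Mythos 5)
Your proposal is correct and follows essentially the same route as the paper: both reduce the question, via Lemma~\ref{lem:semiclones+proj=clones} and Lemma~\ref{lem:gen-F-minus-projs}, to the single membership test \m{\id_{\CarrierSet}\in\genSclone[1]{F\setminus\TrivOps}}, and then decide that test with the stabilising iteration of Lemma~\ref{lem:Gamma-F-algorithmically}. The only cosmetic difference is that you index the invariant pair directly by \m{K=\CarrierSet}, whereas the paper fixes a bijection~\m{\beta} between \m{\CarrierSet} and its cardinality so as to work with finitary relations via Corollary~\ref{cor:rho-X-n}.
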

\begin{proof}
Since \m{\genClone{F}} is a clone, and thus, in particular, a semiclone,
containing \m{\genClone{F}\setminus\TrivOps}, we have
\m{\genClone{F}\setminus\TrivOps
         \subs\genSclone{\genClone{F}\setminus\TrivOps}\subs\genClone{F}}.
Therefore, by Lemma~\ref{lem:semiclones+proj=clones}%
                    \eqref{item:semiclones-all-or-nothing}, the conditions
\m{\genClone{F}\!\setminus\TrivOps\notin\Sclones},
\m{\genClone{F}\!\setminus\TrivOps\subsetneq
                              \genSclone{\genClone{F}\!\setminus\TrivOps}},
\m{\genSclone{\genClone{F}\!\setminus\TrivOps}\cap\TrivOps\neq\emptyset},
\m{\TrivOps\subs\genSclone{\genClone{F}\!\setminus\TrivOps}}
and \m{\id_{\CarrierSet}\in\genSclone[1]{\genClone{F}\!\setminus\TrivOps}}
are all equivalent. By Lemma~\ref{lem:gen-F-minus-projs} we get
\m{\genSclone[1]{\genClone{F}\setminus\TrivOps}
                                       =\genSclone[1]{F\setminus\TrivOps}},
and using Corollary~\ref{cor:rho-X-n} for \m{n=1}, \m{X=A} and some
bijection~\m{\beta} between~\m{A} and its cardinality, we have a
description of the invariant pair
\m{\apply{\rho_{A,1},\rho'_{A,1}}
  \ovflhbx{0.906pt}=\ovflhbx{0.906pt}
  \genInvPair[{F\setminus\TrivOps\!\!}]{\set{\id_{\CarrierSet}\circ\beta}}}.
Via Lemma~\ref{lem:Gamma-F-algorithmically} this invariant relation pair
generated by \m{\id_{\CarrierSet}\circ\beta} can be expressed as
\m{\apply{\bigcup_{j\in\N} R_j, \bigcup_{j\in\N} S_j}} and finiteness
of~\m{\CarrierSet} guarantees that \m{R_n=R_{n+1}} happens for some
\m{n\leq\abs{\CarrierSet[A]}}. This implies that
\m{\lset{f\circ\beta}{\!\!f\in\genSclone[1]{F\setminus\TrivOps}}
   =\rho'_{A,1}}
can be written as the finite union
\m{\bigcup_{0\leq j\leq \abs{\CarrierSet[A]}} S_j}, which due to
finiteness of~\m{F} can be straightforwardly calculated using the
definitions of Lemma~\ref{lem:Gamma-F-algorithmically}. Hence, one may
check if \m{\id_{\CarrierSet}} belongs to
\m{\genSclone[1]{F\setminus\TrivOps}} by checking if
\m{\beta = \id_{\CarrierSet}\circ \beta} belongs to this union.
\end{proof}
\par

\subsection{Closed transformation semigroups}\label{subsect:loc-transf-sg}
By considering just unary parts we can obtain characterisations of locally
and \nbdd{s}locally closed (proper) transformation semigroups,
respectively.
\par

\begin{proposition}\label{prop:char-s-loc-transf-sg}
For \m{s\in\N} and a set \m{H\subs\Ops[1]} of transformations the
following facts are equivalent
\begin{enumerate}[(a)]
\item\label{item:sLoc-sg}
      \m{H} is an \nbdd{s}locally closed transformation semigroup
      [and \m{\id_{\CarrierSet}\notin H}].
\item\label{item:Endp-Invp-s-closed}
      \m{H=\Polp[1]{\Invp[{\leq s}]{H}}}
      [and there is \m{\apply{\rho,\rho'}\in\Invp[{\leq s}]{H}} such that
      \m{\rho'\neq\rho}].
\item\label{item:Endp-s-ary-pairs}
      \m{H=\Polp[1]{Q}} for some set \m{Q\subs\Relps[{\leq s}]}
      [where \m{\rho\neq\rho'} for some \m{\apply{\rho,\rho'}\in Q}].
\end{enumerate}
For non\dash{}empty carrier sets~\m{\CarrierSet}, the arity restrictions
``\m{\leq s}'' can be replaced by simply ``\m{s}''.
\end{proposition}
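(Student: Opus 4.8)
The plan is to reduce every assertion to the unary part of an associated semiclone and then feed it into the characterisation results of Section~\ref{sect:char-closures}. Two bookkeeping facts carry the argument. First, passing to the unary part commutes with the $s$-local closure: by the arity-restriction identity recorded just after Definition~\ref{def:s-local-closure} we have $\sLoc[s][1]{F}=\sLoc{\Fn[1]{F}}$ for every $F\subs\Ops$. Second, for a transformation semigroup $H\subs\Ops[1]$ one has $\genSclone[1]{H}=H$, and conversely the unary part of any semiclone is a transformation semigroup (both from Corollary~\ref{cor:unary-parts=transf-sg}); since $\Polp[1]{Q}=\Fn[1]{\Polp{Q}}$ and $\Polp{Q}$ is always a semiclone by Lemma~\ref{lem:Polp=semiclone}, the set $\Polp[1]{Q}$ is automatically a transformation semigroup. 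Combining these observations with Theorem~\ref{thm:char-Polp-Invp-leq-s} gives the pivotal identity
\begin{align*}
  \Polp[1]{\Invp[{\leq s}]{H}}
  &=\Fn[1]{\sLoc{\genSclone{H}}}
   =\sLoc[s][1]{\genSclone{H}}\\
  &=\sLoc{\genSclone[1]{H}}
   =\sLoc{H},
\end{align*}
valid for every transformation semigroup~$H$, the last equality using $\genSclone[1]{H}=H$.

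With this identity I would establish the cycle (a)$\Rightarrow$(b)$\Rightarrow$(c)$\Rightarrow$(a). For (a)$\Rightarrow$(b): an $s$-locally closed transformation semigroup satisfies $\sLoc{H}=H$, so the pivotal identity immediately gives $\Polp[1]{\Invp[{\leq s}]{H}}=H$. The step (b)$\Rightarrow$(c) is trivial upon setting $Q\defeq\Invp[{\leq s}]{H}\subs\Relps[{\leq s}]$. For (c)$\Rightarrow$(a): if $H=\Polp[1]{Q}$ with $Q\subs\Relps[{\leq s}]$, then $H$ is the unary part of the semiclone $\Polp{Q}$, hence a transformation semigroup; moreover Lemma~\ref{lem:Polp-sLoc} yields $\sLoc{\Polp{Q}}=\Polp{Q}$, and passing to unary parts through the commutation identity gives $\sLoc{H}=\Fn[1]{\sLoc{\Polp{Q}}}=\Fn[1]{\Polp{Q}}=H$, i.e.\ $H$ is $s$-locally closed.

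For the bracketed \emph{proper} variants the key observation is that, for any $Q\subs\Relps$, the projection $\eni[1]{0}=\id_{\CarrierSet}$ lies in $\Polp[1]{Q}$ if and only if $\rho=\rho'$ for every $\apply{\rho,\rho'}\in Q$: the forward direction is the computation in the proof of Lemma~\ref{lem:char-Polp-clone} (applying the unary $\id_{\CarrierSet}$ to a tuple of $\rho$ returns it, forcing $\rho\subs\rho'$ and hence $\rho=\rho'$), while the backward direction holds because then $\Polp{Q}$ is a clone by Lemma~\ref{lem:char-Polp-clone} and every clone contains $\eni[1]{0}$. Thus $\id_{\CarrierSet}\notin H$ is equivalent to the presence of a pair with $\rho'\subsetneq\rho$. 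Taking $Q=\Invp[{\leq s}]{H}$ turns the bracket of~(a) into that of~(b), which then holds verbatim for this same $Q$ as the witness in~(c); and applying the criterion to the witness $Q$ supplied by~(c) returns $\id_{\CarrierSet}\notin H$, closing the proper cycle. Finally, for $\CarrierSet\neq\emptyset$ the arity restriction ``$\leq s$'' may be tightened to ``$s$'' by rerunning the entire argument with Corollary~\ref{cor:char-Polp-Invp-s} in place of Theorem~\ref{thm:char-Polp-Invp-leq-s} and $\Relps[s]$ in place of $\Relps[{\leq s}]$; Lemma~\ref{lem:Polp-sLoc} and the criterion for $\id_{\CarrierSet}\in\Polp[1]{Q}$ apply unchanged because $\Relps[s]\subs\Relps[{\leq s}]$.

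The argument is conceptually light, and its only genuine delicacy is disciplined bookkeeping: at each step one must confirm that forming the unary part commutes with $\genSclone{\,}$, $\sLoc{}$ and $\Polp{}$ --- concretely the identities $\sLoc[s][1]{F}=\sLoc{\Fn[1]{F}}$ and $\genSclone[1]{H}=H$ --- and one must couple the semigroup condition $\id_{\CarrierSet}\notin H$ to the relational condition $\rho'\subsetneq\rho$ in exactly the direction furnished by Lemma~\ref{lem:char-Polp-clone}. The degenerate values $s=0$ and $\CarrierSet=\emptyset$ require no special handling, since the invoked results are stated uniformly in~$s$, and the sharper exact-arity form is explicitly confined to $\CarrierSet\neq\emptyset$, precisely where Corollary~\ref{cor:char-Polp-Invp-s} is available.
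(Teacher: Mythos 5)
Your proof is correct and follows essentially the same route as the paper's: the same cycle (a)$\Rightarrow$(b)$\Rightarrow$(c)$\Rightarrow$(a) built on Theorem~\ref{thm:char-Polp-Invp-leq-s}, Lemma~\ref{lem:Polp-sLoc}, Lemma~\ref{lem:Polp=semiclone}, Corollary~\ref{cor:unary-parts=transf-sg} and the commutation $\sLoc[s][1]{F}=\sLoc{\apply{\Fn[1]{F}}}$, with the bracketed variants handled by the same observation that $\id_{\CarrierSet}$ preserves a relation pair precisely when its components coincide, and the exact-arity refinement obtained by swapping in Corollary~\ref{cor:char-Polp-Invp-s}. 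The only cosmetic difference is that you cite Corollary~\ref{cor:unary-parts=transf-sg} for $\genSclone[1]{H}=H$ where the paper invokes Lemma~\ref{lem:semiclone-gen-by-transf} directly, which is the same fact.
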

\begin{proof} We shall prove the implications ``\eqref{item:sLoc-sg}
$\Rightarrow$ \eqref{item:Endp-Invp-s-closed}
$\Rightarrow$ \eqref{item:Endp-s-ary-pairs}
$\Rightarrow$ \eqref{item:sLoc-sg}''.
\begin{prooflist}
\item[``\eqref{item:sLoc-sg} $\Rightarrow$
        \eqref{item:Endp-Invp-s-closed}'']
     Let \m{S\defeq \genSclone{H}} and \m{T\defeq\sLoc{S}}. Using
     Lemma~\ref{lem:semiclone-gen-by-transf} we obtain \m{H=\Fn[1]{S}}.
     We also have
     \m{T=\sLoc{S}=\sLoc{\genSclone{H}}=\Polp{\Invp[{\leq s}]{H}}}
     by Theorem~\ref{thm:char-Polp-Invp-leq-s}, whence
     \m{\Polp[1]{\Invp[{\leq s}]{H}} = \sLoc[s][1]{S}
                                     = \sLoc{\apply{\Fn[1]{S}}}}
     is equal to \m{\sLoc{H} = H} due to~\m{H} being \nbdd{s}locally
     closed.
     For the alternative reading note that \m{\id_{\CarrierSet}} would
     belong to \m{\Polp[1]{\Invp[{\leq s}]{H}}=H} if all pairs in
     \m{\Invp[{\leq s}]{H}} had equal components. Furthermore, if
     \m{\CarrierSet\neq \emptyset}, then one may use
     Corollary~\ref{cor:char-Polp-Invp-s} instead of
     Theorem~\ref{thm:char-Polp-Invp-leq-s} to change ``\m{\leq s}'' into
     ``\m{s}''.
\item[``\eqref{item:Endp-Invp-s-closed} $\Rightarrow$
        \eqref{item:Endp-s-ary-pairs}'']
     Simply choose \m{Q\defeq\Invp[{\leq s}]{H}}.
\item[``\eqref{item:Endp-s-ary-pairs} $\Rightarrow$
        \eqref{item:sLoc-sg}'']
     Clearly, \m{\Polp{Q}\in\Sclones} by Lemma~\ref{lem:Polp=semiclone},
     so Corollary~\ref{cor:unary-parts=transf-sg} yields that
     \m{H=\Polp[1]{Q}} is a transformation semigroup. Since
     \m{Q\subs\Relps[{\leq s}]}, we obtain
     \m{\Polp{Q}=\sLoc{\Polp{Q}}} by
     Lemma~\ref{lem:Polp-sLoc}, wherefore we may express~\m{H} as
     \m{H=\Polp[1]{Q} = \sLoc[s][1]{\Polp{Q}} = \sLoc{\Polp[1]{Q}} =
     \sLoc{H}}.
     Note for the second reading that
     \m{\id_{\CarrierSet}} does not preserve relation pairs
     \m{\rho'\subsetneq\rho}.\qedhere
\end{prooflist}
\end{proof}
\par

In an analogous way we may characterise the locally closed (proper)
transformation semigroups.
\par
\begin{proposition}\label{prop:char-loc-transf-sg}
For any set \m{H\subs\Ops[1]} of transformations the
following facts are equivalent:
\begin{enumerate}[(a)]
\item\label{item:Loc-sg}
      \m{H} is a locally closed transformation semigroup
      [and \m{\id_{\CarrierSet}\notin H}].
\item\label{item:Endp-Invp-closed}
      \m{H=\Polp[1]{\Invp{H}}}
      [and there is \m{\apply{\rho,\rho'}\in\Invp{H}} such that
      \m{\rho'\neq\rho}].
\item\label{item:Endp}
      \m{H=\Polp[1]{Q}} for some set \m{Q\subs\Relps}
      [where \m{\rho\neq\rho'} for some \m{\apply{\rho,\rho'}\in Q}].
\end{enumerate}
\end{proposition}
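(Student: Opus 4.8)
The plan is to prove the cyclic chain of implications ``\eqref{item:Loc-sg} $\Rightarrow$ \eqref{item:Endp-Invp-closed} $\Rightarrow$ \eqref{item:Endp} $\Rightarrow$ \eqref{item:Loc-sg}'', mirroring almost verbatim the argument used for the \nbdd{s}local case in Proposition~\ref{prop:char-s-loc-transf-sg}, only replacing the \nbdd{s}local operators and finitely\dash{}bounded arities by their global analogues. Concretely, every appearance of \m{\sLoc{}} is traded for \m{\Loc{}}, every \m{\Invp[{\leq s}]{}} for \m{\Invp{}}, the arity bound \m{\Relps[{\leq s}]} becomes \m{\Relps}, and the two pivotal results Theorem~\ref{thm:char-Polp-Invp-leq-s} and Lemma~\ref{lem:Polp-sLoc} are swapped for their local counterparts Corollary~\ref{cor:PolpInvp=Loc[]} and Corollary~\ref{cor:Polp-Loc}.

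For ``\eqref{item:Loc-sg} $\Rightarrow$ \eqref{item:Endp-Invp-closed}'' I would put \m{S\defeq\genSclone{H}} and invoke Lemma~\ref{lem:semiclone-gen-by-transf} to secure \m{H=\Fn[1]{S}}. Corollary~\ref{cor:PolpInvp=Loc[]} then gives \m{\Polp{\Invp{H}}=\Loc{\genSclone{H}}=\Loc{S}}; passing to unary parts and using the identity \m{\Loc[1]{S}=\Loc{\apply{\Fn[1]{S}}}=\Loc{H}}, local closedness of~\m{H} yields \m{\Polp[1]{\Invp{H}}=\Loc{H}=H}. For the bracketed (proper) reading, observe that \m{\id_{\CarrierSet}} preserves a pair \m{\apply{\rho,\rho'}} exactly when \m{\rho\subs\rho'}, hence only when \m{\rho=\rho'}; were all pairs in \m{\Invp{H}} diagonal, \m{\id_{\CarrierSet}} would lie in \m{\Polp[1]{\Invp{H}}=H}, contradicting \m{\id_{\CarrierSet}\notin H}. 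The implication ``\eqref{item:Endp-Invp-closed} $\Rightarrow$ \eqref{item:Endp}'' is immediate on taking \m{Q\defeq\Invp{H}}, carrying the witness pair along verbatim.

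For ``\eqref{item:Endp} $\Rightarrow$ \eqref{item:Loc-sg}'' I would first note that \m{\Polp{Q}} is a semiclone by Lemma~\ref{lem:Polp=semiclone}, so by Corollary~\ref{cor:unary-parts=transf-sg} its unary part \m{H=\Polp[1]{Q}} is a transformation semigroup. Corollary~\ref{cor:Polp-Loc} furnishes \m{\Loc{\Polp{Q}}=\Polp{Q}}, and taking unary parts via \m{\Loc[1]{\Polp{Q}}=\Loc{\apply{\Polp[1]{Q}}}} gives \m{H=\Loc{H}}, so \m{H} is locally closed. In the proper case, a pair \m{\apply{\rho,\rho'}\in Q} with \m{\rho'\subsetneq\rho} is not preserved by \m{\id_{\CarrierSet}} (the latter would force \m{\rho\subs\rho'}), whence \m{\id_{\CarrierSet}\notin\Polp[1]{Q}=H}.

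The only genuine bookkeeping lies in the unary\dash{}part manipulations \m{\Loc[1]{\,\cdot\,}=\Loc{\apply{\Fn[1]{\,\cdot\,}}}}, which are direct consequences of the arity\dash{}wise behaviour of \m{\Loc{}} recorded after Definition~\ref{def:s-local-closure}, and in the \m{\id_{\CarrierSet}}\dash{}preservation criterion governing the bracketed proper variant. Neither presents a real obstacle; the entire argument is a faithful transcription of the \nbdd{s}local proof with the substitutions listed above, so no new idea beyond those already deployed is required.
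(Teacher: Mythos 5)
Your proposal is correct and is essentially identical to the paper's own proof: the paper proves this proposition precisely by instructing the reader to perform in the proof of Proposition~\ref{prop:char-s-loc-transf-sg} the very substitutions you list (\m{\sLoc{}\mapsto\Loc{}}, \m{\Invp[{\leq s}]{}\mapsto\Invp{}}, \m{\Relps[{\leq s}]\mapsto\Relps}, Theorem~\ref{thm:char-Polp-Invp-leq-s}\,\m{\mapsto}\,Corollary~\ref{cor:PolpInvp=Loc[]}, Lemma~\ref{lem:Polp-sLoc}\,\m{\mapsto}\,Corollary~\ref{cor:Polp-Loc}). Your unpacking of the resulting cyclic implication chain, including the unary-part identity \m{\Loc[1]{S}=\Loc{\apply{\Fn[1]{S}}}} and the \m{\id_{\CarrierSet}}-preservation criterion for the bracketed proper variant, matches the paper's argument step for step.
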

\begin{easyproof}
In the proof of Proposition~\ref{prop:char-s-loc-transf-sg} substitute
``\nbdd{s}locally'' by ``locally'', the operators
\m{\Invp[{\leq s}]{}} by \m{\Invp{}},
\m{\sLoc{}} by \m{\Loc{}}, and \m{\Relps[{\leq s}]} by \m{\Relps},
respectively, and the applications of
Theorem~\ref{thm:char-Polp-Invp-leq-s} by
Corollary~\ref{cor:PolpInvp=Loc[]}
and of Lemma~\ref{lem:Polp-sLoc} by Corollary~\ref{cor:Polp-Loc}.
\end{easyproof}
\par

By intersecting (in a similar way as outlined in this subsection) with
other classes of functions, for example, the set of all permutations
instead of all unary operations, one can obtain further characterisations
of locally closed classes of functions in terms of relation pairs.
Continuing the example of permutations, one may get a characterisation of
all locally [\nbdd{s}locally] closed (proper) transformation semigroups
that consist of permutations only. As on finite carrier sets every
permutation has a finite order, such a result is necessarily more
appealing on infinite domains.

\subsection{Classical \texorpdfstring{\m{\PolOp\text{-}\InvOp}}{Pol-Inv}
           \name{Galois} correspondence}%
\label{subsect:classical-Pol-Inv}

Here we demonstrate that it is not difficult to derive the
characterisations of the closure operators of the \name{Galois} connection given by
polymorphisms and invariant relations from our theorems above.
In this respect, we first consider the framework including nullary
operations and relations as discussed
in~\cite{BehClonesWithNullaryOperations}; from there it will be a small
step to obtain the variants known
from~\cite{PoeGeneralGaloisTheoryForOperationsAndRelations,%
PoeConcreteRepresentationOfAlgebraicStructuresAndGeneralGaloisTheory}.
\par

First we recollect information concerning the relationship of the
operators \m{\Pol{}} and \m{\Inv{}} \wrt\ \m{\Polp{}} and \m{\Invp{}},
which we already briefly discussed before Lemma~\ref{lem:char-Polp-hom}.
\begin{lemma}\label{lem:rel-PolInv-PolpInvp}
For \m{Q\subs\Rels}, \m{F\subs\Ops} and any \m{n\in\N} we have
\begin{align*}
\Pol[n]{Q} &=
\Polp[n]{\biguplus_{m\in\N}\lset{\apply{\rho,\rho}}{\rho\in \Fn[m]{Q}}},\\
\Inv[n]{F} &=
\lset{\rho}{\apply{\rho,\rho}\in \Invp[n]{F}}.
\end{align*}
\end{lemma}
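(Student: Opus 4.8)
The plan is to reduce both equalities to the two elementary identities already recollected in Subsection~\ref{subsect:Polp-Invp}, together with the observation (noted right after Definition~\ref{def:preservation}) that for \m{\rho\in\Rels} and \m{f\in\Ops} one has \m{f\preserves\apply{\rho,\rho}} exactly when \m{f} preserves~\m{\rho} in the usual sense. Everything then comes down to unfolding definitions and tracking the arity index, so I do not expect a genuine obstacle; the only point requiring minimal care is that the disjoint union \m{\biguplus_{m\in\N}} keeps the \nbdd{n}ary restrictions \m{\Polp[n]{}} and \m{\Invp[n]{}} well\dash{}behaved.

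For the first equality I would abbreviate \m{\hat{Q}\defeq\biguplus_{m\in\N}\lset{\apply{\rho,\rho}}{\rho\in\Fn[m]{Q}}} and observe that \m{\hat{Q}} consists precisely of the identical pairs \m{\apply{\rho,\rho}} with \m{\rho\in Q}; hence, in the notation of Subsection~\ref{subsect:Polp-Invp}, the associated relation set \m{\lset{\rho\in\Rels}{\apply{\rho,\rho}\in\hat{Q}}} is exactly~\m{Q}. The identity recollected there then yields \m{\Polp{\hat{Q}}=\Pol{Q}} as sets of all finitary operations, and taking \nbdd{n}ary parts on both sides produces \m{\Polp[n]{\hat{Q}}=\Pol[n]{Q}}, which is the first claimed equality.

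For the second equality I would simply unfold the definition of the invariant set, namely
\begin{equation*}
\lset{\rho}{\apply{\rho,\rho}\in\Invp[n]{F}}
=\lset{\rho\in\Rels[n]}{\forall f\in F\colon f\preserves\apply{\rho,\rho}},
\end{equation*}
and then apply the preservation equivalence \m{f\preserves\apply{\rho,\rho}\iff f\preserves\rho} to each conjunct. This rewrites the right\dash{}hand side as \m{\lset{\rho\in\Rels[n]}{\forall f\in F\colon f\preserves\rho}}, which is precisely~\m{\Inv[n]{F}}. This completes the argument; as anticipated, no step is more involved than rewriting a definition, which is why the lemma is only recorded for later reference.
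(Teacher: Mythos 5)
Your proposal is correct and matches the paper's own argument: the paper proves the lemma in one line by invoking exactly the observation you use, namely that \m{f\preserves\apply{\rho,\rho}} holds if and only if \m{f} preserves \m{\rho} in the classical sense. Your additional routing of the first equality through the identity \m{\Polp{Q}=\Pol{Q'}} recollected in Subsection~\ref{subsect:Polp-Invp} is just a more explicit unfolding of the same fact, so there is no substantive difference.
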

\begin{easyproof}
The claim follows since a function \m{f\in\Ops} preserves a relation
\m{\rho\in\Rels} (\wrt~\m{\PolOp\text{-}\InvOp}) if and only if
\m{f\preserves\apply{\rho,\rho}} (in the sense of
\m{\PolpOp\text{-}\InvpOp}).
\end{easyproof}

We shall also need to express the set \m{\Invp{F}} in terms of \m{\Inv{F}}
for \m{F\subs\Ops} containing at least one projection.
\begin{lemma}\label{lem:Invp-via-Inv-for-F-with-proj}
Suppose, a set \m{F\subs\Ops} of operations satisfies
\m{F\cap\TrivOps\neq\emptyset}, then
\m{\Invp{F}=\biguplus_{m\in\N}\lset{\apply{\rho,\rho}}{\rho\in\Inv[m]{F}}}.
\end{lemma}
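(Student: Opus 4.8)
The plan is to prove the two inclusions separately, reducing everything to the effect of a single projection. The inclusion ``\m{\sups}'' is immediate: for \m{m\in\N} and \m{\rho\in\Inv[m]{F}}, the second identity of Lemma~\ref{lem:rel-PolInv-PolpInvp} gives \m{\apply{\rho,\rho}\in\Invp[m]{F}}, so the entire right\dash{}hand side sits inside \m{\Invp{F}}; this direction needs no hypothesis on~\m{F}.

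For the converse ``\m{\subs}'', the key point is that a projection in~\m{F} forces the two components of every invariant pair to agree. First I would replace~\m{F} by the semiclone it generates: Corollary~\ref{cor:genSclone-subs-PolpInvp} yields \m{\Invp{F}=\Invp{\genSclone{F}}}, and since \m{F\cap\TrivOps\neq\emptyset}, fixing a projection \m{\eni{i}\in F} and invoking Lemma~\ref{lem:semiclones+proj=clones}\eqref{item:semiclone-gen-by-proj} gives \m{\id_{\CarrierSet}\in\genSclone{\set{\eni{i}}}=\TrivOps}, hence \m{\id_{\CarrierSet}\in\genSclone{F}}.

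Next I would take an arbitrary pair \m{\apply{\rho,\rho'}\in\Invp[m]{F}=\Invp[m]{\genSclone{F}}} and use that \m{\id_{\CarrierSet}} preserves it. For any tuple \m{r\in\rho}, the one\dash{}entry tuple \m{\mathbf{r}\defeq\apply{r}} lies in \m{\rho^{1}}, and the computation of tuplings recorded after Definition~\ref{def:preservation} specialises (in the degenerate case \m{n=1}, \m{f=\id_{\CarrierSet}}) to \m{\id_{\CarrierSet}\circ\apply{\mathbf{r}}=r}, so that \m{r\in\rho'}. This establishes \m{\rho\subs\rho'}; combined with \m{\rho'\subs\rho}, which holds by the very definition of a relation pair, I conclude \m{\rho=\rho'}. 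The value \m{\rho=\emptyset} needs no separate treatment, since then \m{\rho\subs\rho'} holds vacuously. Finally, from \m{\apply{\rho,\rho'}=\apply{\rho,\rho}\in\Invp[m]{F}} and Lemma~\ref{lem:rel-PolInv-PolpInvp} I read off \m{\rho\in\Inv[m]{F}}, which places \m{\apply{\rho,\rho'}} in the set on the right.

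I expect no genuine obstacle here; the only point demanding care is the bookkeeping in the preservation step, namely confirming \m{\id_{\CarrierSet}\circ\apply{\mathbf{r}}=r} for a single tuple~\m{r}. This is exactly the \m{n=1} instance of the row\dash{}wise matrix description in Definition~\ref{def:preservation}\eqref{cond:preservation} and is routine. As an alternative to passing through \m{\genSclone{F}}, one could argue directly with the chosen projection \m{\eni{i}\in\Ops[n]}, but then the construction of a suitable \m{\mathbf{r}\in\rho^{n}} with prescribed \m{i}\dash{}th entry requires \m{\rho\neq\emptyset} and is marginally more cumbersome; the identity route avoids this case distinction.
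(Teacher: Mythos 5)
Your proof is correct, and it rests on the same core idea as the paper's own argument: a projection in \m{F} forces the identity operation into a closure whose invariant pairs coincide with \m{\Invp{F}}, and \m{\id_{\CarrierSet}} preserving a pair \m{\apply{\rho,\rho'}} forces \m{\rho\subs\rho'}, hence \m{\rho=\rho'}. The difference is only one of packaging: the paper sets \m{Q\defeq\Invp{F}}, observes that \m{\Polp{Q}\sups F} is a semiclone (Lemma~\ref{lem:Polp=semiclone}) containing a projection and hence a clone (Lemma~\ref{lem:semiclones+proj=clones}\eqref{item:char-clones=semiclones-with-proj}), and then cites Lemma~\ref{lem:char-Polp-clone}, whose ``only if'' direction is exactly your identity computation, whereas you pass to \m{\genSclone{F}} via Corollary~\ref{cor:genSclone-subs-PolpInvp} and Lemma~\ref{lem:semiclones+proj=clones}\eqref{item:semiclone-gen-by-proj} and re-derive that preservation step by hand. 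Both routes are sound; the paper's is slightly more economical because the identity argument is already recorded in Lemma~\ref{lem:char-Polp-clone}, while yours is more self-contained and avoids invoking the clone characterisation altogether.
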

\begin{proof}
Put \m{Q\defeq \Invp{F}}. By Lemma~\ref{lem:Polp=semiclone}, \m{\Polp{Q}}
is a semiclone, and, as \m{\Polp{Q}\sups F} contains projections, it is
even a clone (cp.\ Lemma~\ref{lem:semiclones+proj=clones}%
                         \eqref{item:char-clones=semiclones-with-proj}).
Hence, Lemma~\ref{lem:char-Polp-clone} yields that
\m{Q\subs\biguplus_{m\in\N}\lset{\apply{\rho,\rho}}{\rho\in\Rels[m]}},
which implies
\m{Q=\biguplus_{m\in\N}
      \lset{\apply{\rho,\rho}\!}{\!\apply{\rho,\rho}\in\Invp[m]{F}}
    =\biguplus_{m\in\N}
    \lset{\apply{\rho,\rho}}{\rho\in\Inv[m]{F}}}
(cf.\ Lemma~\ref{lem:rel-PolInv-PolpInvp} for the second equality).
\end{proof}

This enables us now to derive the characterisation of the closure
operators \m{\Pol{\Inv[{\leq s}]{}}} and \m{\Pol{\Inv[s]{}}}.
\begin{theorem}\label{thm:pol-inv}
We have \m{\sLoc{\genClone{F}}=\Pol{\Inv[{\leq s}]{F}}} for
\m{F\subs\Ops} and \m{s\in\N}; moreover,
the equality \m{\sLoc{\genClone{F}}=\Pol{\Inv[s]{F}}} holds.
\end{theorem}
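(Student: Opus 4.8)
The plan is to reduce the classical $\PolOp\text{-}\InvOp$ statement to the already-established relation-pair theorem, Theorem~\ref{thm:char-Polp-Invp-leq-s}, by exploiting the translations between the two Galois connections recorded in Lemma~\ref{lem:rel-PolInv-PolpInvp} and Lemma~\ref{lem:Invp-via-Inv-for-F-with-proj}. The key observation is that $\genClone{F}$ always contains the projections, so its invariant relation pairs must all be \emph{identical} pairs of the form $\apply{\rho,\rho}$; this is exactly the hypothesis under which Lemma~\ref{lem:Invp-via-Inv-for-F-with-proj} applies and lets us convert between $\Invp{}$ and $\Inv{}$.

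\begin{proof}
For the first equality, we start from Theorem~\ref{thm:char-Polp-Invp-leq-s} applied to~$F$, yielding $\Polp{\Invp[{\leq s}]{F}}=\sLoc{\genSclone{F}}$. Since $\genSclone{\genClone{F}}=\genClone{F}$ and, by Lemma~\ref{lem:semiclones+proj=clones}\eqref{item:char-clone-cl=sclone-cl+proj}, $\genSclone{F}\cup\TrivOps=\genClone{F}$, local interpolation against the projections gives $\sLoc{\genSclone{F}}=\sLoc{\genClone{F}}$ once we check that adding projections does not change the $s$\dash{}local closure; indeed projections already lie in any set containing them locally, so $\sLoc{\genClone{F}}=\sLoc{\genSclone{F}\cup\TrivOps}=\sLoc{\genSclone{F}}$. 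It therefore remains to identify $\Polp{\Invp[{\leq s}]{F}}$ with $\Pol{\Inv[{\leq s}]{F}}$. Because $\genClone{F}\sups\TrivOps$ and $\Polp{\Invp{F}}\sups\genClone{F}$ (via Corollary~\ref{cor:genSclone-subs-PolpInvp} together with Lemma~\ref{lem:semiclones+proj=clones}), the set $\Invp{F}$ consists solely of identical pairs by Lemma~\ref{lem:Invp-via-Inv-for-F-with-proj}, so $\Invp[{\leq s}]{F}=\biguplus_{m\leq s}\lset{\apply{\rho,\rho}}{\rho\in\Inv[m]{F}}$. Applying Lemma~\ref{lem:rel-PolInv-PolpInvp} to this set of identical pairs turns $\Polp{}$ into $\Pol{}$, giving $\Polp{\Invp[{\leq s}]{F}}=\Pol{\Inv[{\leq s}]{F}}$ and hence $\sLoc{\genClone{F}}=\Pol{\Inv[{\leq s}]{F}}$, as claimed.
\end{proof}

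For the second equality, $\sLoc{\genClone{F}}=\Pol{\Inv[s]{F}}$, I expect to mirror the derivation of Corollary~\ref{cor:char-Polp-Invp-s} from Theorem~\ref{thm:char-Polp-Invp-leq-s}: one shows $\Inv[m]{F}\subs\genRelClone{\Inv[s]{F}}$ for $m\leq s$ (using the relational analogue of Lemma~\ref{lem:rel-pair-clone-sub-arity-closure} that coordinate padding via general superposition realises lower-arity relations from higher-arity ones on a non\dash{}empty carrier), then intersect the resulting equalities $\Pol{\Inv[m]{F}}\sups\Pol{\Inv[s]{F}}$ over $0\leq m\leq s$ to collapse $\Pol{\Inv[{\leq s}]{F}}$ down to $\Pol{\Inv[s]{F}}$. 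The empty-carrier case would be handled separately, as in Corollary~\ref{cor:char-Polp-Invp-s}, where only arity~$0$ survives.

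\textbf{The main obstacle} I anticipate is bookkeeping rather than conceptual: one must carefully verify that the hypothesis $F\cap\TrivOps\neq\emptyset$ needed for Lemma~\ref{lem:Invp-via-Inv-for-F-with-proj} is met by $\genClone{F}$ (not by~$F$ itself), so the translation lemmas should be applied to $\genClone{F}$ and the equalities $\Invp{F}=\Invp{\genClone{F}}$, $\Inv{F}=\Inv{\genClone{F}}$ invoked to return to~$F$. The arity-by-arity matching between the disjoint-union indexing on the relation-pair side and the ordinary-union indexing on the relation side must also be tracked so that the restriction ``$\leq s$'' transfers correctly; this is precisely the technical point the paper flags around page~\pageref{page:importance-of-disj-union}, and it is where a careless identification could silently drop or duplicate the nullary component.
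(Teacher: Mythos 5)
Your overall strategy---reducing to Theorem~\ref{thm:char-Polp-Invp-leq-s} via the translation Lemmata~\ref{lem:rel-PolInv-PolpInvp} and~\ref{lem:Invp-via-Inv-for-F-with-proj}---is the right one, but the way you execute it contains a genuine error, not mere bookkeeping. You apply Theorem~\ref{thm:char-Polp-Invp-leq-s} to~\m{F} itself and then try to bridge the gap between \m{\genSclone{F}} and \m{\genClone{F}}; every bridge you propose is false. First, the asserted equality \m{\sLoc{\genSclone{F}\cup\TrivOps}=\sLoc{\genSclone{F}}} fails: the left side always contains \m{\TrivOps} (the \nbdd{s}local closure is extensive), whereas the right side need not contain any projection at all. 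Second, the claim \m{\Polp{\Invp{F}}\sups\genClone{F}} is unjustified: Corollary~\ref{cor:genSclone-subs-PolpInvp} only gives \m{\genSclone{F}\subs\Polp{\Invp{F}}}, and consequently \m{\Invp{F}} need \emph{not} consist of identical pairs; Lemma~\ref{lem:Invp-via-Inv-for-F-with-proj} requires a projection to lie in the set to which \m{\Invp{}} is applied, and the equality you propose as a remedy, \m{\Invp{F}=\Invp{\genClone{F}}}, is itself false---what holds is \m{\Invp{F}=\Invp{\genSclone{F}}}, and passing to the generated \emph{clone} genuinely shrinks the invariant pairs, because a projection preserves \m{\apply{\rho,\rho'}} only when \m{\rho=\rho'}. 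A single counterexample kills the whole chain: take \m{F=\emptyset}. Then \m{\genSclone{F}=\emptyset} and \m{\Invp[{\leq s}]{F}=\Relps[{\leq s}]\ni\apply{\CarrierSet[0],\emptyset}}, so \m{\Polp{\Invp[{\leq s}]{F}}=\sLoc{\genSclone{F}}=\emptyset} (cf.\ Lemma~\ref{lem:char-empty-Polp}); yet \m{\genClone{F}=\TrivOps} and \m{\Pol{\Inv[{\leq s}]{F}}\sups\TrivOps\neq\emptyset}. Hence \m{\sLoc{\genSclone{F}}\neq\sLoc{\genClone{F}}} and \m{\Polp{\Invp[{\leq s}]{F}}\neq\Pol{\Inv[{\leq s}]{F}}}: both of your claimed identifications are wrong for this~\m{F}.

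The correct repair is precisely where the paper starts: apply Theorem~\ref{thm:char-Polp-Invp-leq-s} not to~\m{F} but to \m{F\cup\set{\id_{\CarrierSet}}}. By Lemma~\ref{lem:semiclones+proj=clones}\eqref{item:char-clone-cl=sclone-cl+proj} one has \m{\genClone{F}=\genSclone{F\cup\set{\id_{\CarrierSet}}}}, hence \m{\sLoc{\genClone{F}}=\Polp{\Invp[{\leq s}]{\apply{F\cup\set{\id_{\CarrierSet}}}}}}; now Lemma~\ref{lem:Invp-via-Inv-for-F-with-proj} applies legitimately to this enlarged set (it contains a projection), the identity preserves every relation so that \m{\Inv{\apply{F\cup\set{\id_{\CarrierSet}}}}=\Inv{F}}, and Lemma~\ref{lem:rel-PolInv-PolpInvp} then converts \m{\Polp{}} of the resulting identical pairs into \m{\Pol{\Inv[{\leq s}]{F}}}. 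For the second equality, your plan of padding arities on the relational side is workable in spirit; the paper instead simply replaces Theorem~\ref{thm:char-Polp-Invp-leq-s} by Corollary~\ref{cor:char-Polp-Invp-s} in the same computation when \m{\CarrierSet\neq\emptyset}, and for \m{\CarrierSet=\emptyset} observes that both sides are clones on the empty set, of which there is only one, namely \m{\Op{\emptyset}}. As written, though, your second part inherits the defect of the first, since it reduces \m{\Pol{\Inv[s]{F}}} to the (not yet correctly established) equality for \m{\Pol{\Inv[{\leq s}]{F}}}.
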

\begin{proof}
Using Lemma~\ref{lem:semiclones+proj=clones}%
\eqref{item:char-clone-cl=sclone-cl+proj} we can write
\m{\genClone{F}=\genSclone{F\cup\set{\id_{\CarrierSet}}}}, hence
Theorem~\ref{thm:char-Polp-Invp-leq-s} entails that
\m{\sLoc{\genClone{F}} = \sLoc{\genSclone{F\cup\set{\id_{\CarrierSet}}}}}
coincides with
\m{\Polp{\Invp[{\leq s}]{\apply{F\cup\set{\id_{\CarrierSet}}}}}}, which
by Lemma~\ref{lem:Invp-via-Inv-for-F-with-proj} equals
\begin{align*}
&\Polp{\biguplus_{m=0}^{s}\lset{\apply{\rho,\rho}}{\rho\in\Inv[m]{\apply{F\cup\set{\id_{\CarrierSet}}}}}}\\
&=\Polp{\biguplus_{m=0}^{s} \lset{\apply{\rho,\rho}}{\rho\in\Inv[m]{F}}}
\tag{since \m{\Inv{\apply{F\cup\set{\id_{\CarrierSet}}}} = \Inv{F}}}\\
&=\bigcap_{m=0}^{s}\Polp{\lset{\apply{\rho,\rho}}{\rho\in\Inv[m]{F}}}\\
&=\bigcap_{m=0}^{s}\Pol{\Inv[m]{F}} = \Pol{\Inv[{\leq s}]{F}}.
\tag{cf.\ Lemma~\ref{lem:rel-PolInv-PolpInvp}}
\end{align*}
For \m{\CarrierSet\neq\emptyset} we may replace
Theorem~\ref{thm:char-Polp-Invp-leq-s} by
Corollary~\ref{cor:char-Polp-Invp-s} and therefore the operator
\m{\Polp{\Invp[{\leq s}]{}}} by \m{\Polp{\Invp[s]{}}}. The rest of the
argument is analogous to the above. For \m{\CarrierSet=\emptyset},
\m{\Pol{\Inv[s]{F}}} and \m{\Pol{\Inv[{\leq s}]} = \sLoc{\genClone{F}}}
are both clones on~\m{\emptyset}, but there exists only one such
structure, namely \m{\Op{\emptyset}}.
\end{proof}

As a consequence, we get Theorem~3.17
from~\cite[p.~29]{BehClonesWithNullaryOperations}:
\begin{corollary}\label{cor:char-Pol-Inv}
For \m{F\subs\Ops} we have \m{\Loc{\genClone{F}} = \Pol{\Inv{F}}}.
\end{corollary}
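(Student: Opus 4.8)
The plan is to proceed exactly as in the proof of Corollary~\ref{cor:PolpInvp=Loc[]}, which settled the semiclone analogue \m{\Polp{\Invp{F}}=\Loc{\genSclone{F}}}. First I would unfold the local closure by its very definition, writing \m{\Loc{\genClone{F}}=\bigcap_{s\in\N}\sLoc{\genClone{F}}}. Each term of this intersection can then be rewritten by Theorem~\ref{thm:pol-inv}, which supplies \m{\sLoc{\genClone{F}}=\Pol{\Inv[{\leq s}]{F}}} for every \m{s\in\N}.

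The next step is to pull the intersection through the operator \m{\Pol{}}. This rests only on the elementary feature of the \m{\PolOp\text{-}\InvOp} \name{Galois} connection that the polymorphism operator turns unions of relations into intersections of operation sets, that is, \m{\bigcap_{s\in\N}\Pol{\Inv[{\leq s}]{F}}=\Pol{\bigcup_{s\in\N}\Inv[{\leq s}]{F}}}. Finally I would observe that \m{\bigcup_{s\in\N}\Inv[{\leq s}]{F}=\Inv{F}}, since any relation in \m{\Inv{F}} has some finite arity \m{m} and hence already belongs to \m{\Inv[{\leq s}]{F}} for every \m{s\geq m}. Chaining these equalities produces \m{\Loc{\genClone{F}}=\Pol{\Inv{F}}}, as claimed.

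Given the machinery already assembled, I expect no genuine obstacle: the computation is a verbatim transcription of Corollary~\ref{cor:PolpInvp=Loc[]} with \m{\Polp{}}, \m{\Invp{}}, \m{\genSclone{\,}} replaced throughout by \m{\Pol{}}, \m{\Inv{}}, \m{\genClone{\,}}. The only point meriting a moment's attention is the empty carrier set, but since Theorem~\ref{thm:pol-inv} has been established for arbitrary~\m{\CarrierSet} (including \m{\CarrierSet=\emptyset}), the same chain of equalities goes through without any modification.
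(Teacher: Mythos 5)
Your proposal is correct and coincides with the paper's own proof: the paper likewise unfolds \m{\Loc{\genClone{F}}=\bigcap_{s\in\N}\sLoc{\genClone{F}}}, applies Theorem~\ref{thm:pol-inv} termwise, pulls the intersection through \m{\Pol{}} as \m{\Pol{\bigcup_{s\in\N}\Inv[{\leq s}]{F}}}, and identifies that union with \m{\Inv{F}}. Nothing further is needed.
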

\begin{straightforwardproof}
By definition of the operator \m{\Loc{}} we have
\begin{multline*}
\Loc{\genClone{F}} = \bigcap_{s\in\N}\sLoc{\genClone{F}}
\refeq{thm:pol-inv}\bigcap_{s\in\N} \Pol{\Inv[{\leq s}]{F}}\\
=\Pol{\bigcup_{s\in\N}\Inv[{\leq s}]{F}}
=\Pol{\Inv{F}}.\mbox{\qedhere}
\end{multline*}
\end{straightforwardproof}

In contrast to semiclones, nullary relations are never needed to discern
locally closed clones. Even more generally, invariants of small arity may
always be neglected.
\begin{corollary}\label{cor:nullary-rels-not-needed-for-clones}
For a set of operations \m{F\subs\Ops} and any arity \m{m\in\N}
we have the equality
\m{\Pol{\Inv[{\geq m}]{F}}=\Pol{\Inv{F}}=\Loc{\genClone{F}}}.
\end{corollary}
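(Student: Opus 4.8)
The plan is to avoid any fresh preservation argument and instead reduce the claim entirely to the per\dash{}arity characterisation in Theorem~\ref{thm:pol-inv} together with Corollary~\ref{cor:char-Pol-Inv}. The single new ingredient is the elementary observation that $\Pol{\cdot}$ is antitone and, more precisely, transforms unions of sets of relations into intersections of the corresponding polymorphism clones: the statement ``$f\preserves\rho$ for all $\rho\in\bigcup_{i}Q_i$'' is the same as ``for every $i$, $f\preserves\rho$ for all $\rho\in Q_i$''.

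First I would split the relational side by arity. By the definition of $\Inv[{\geq m}]{F}$ recorded in Subsection~\ref{subsect:notations} we have $\Inv[{\geq m}]{F}=\bigcup_{s\geq m}\Inv[s]{F}$, so the computation I intend to carry out is
\begin{align*}
\Pol{\Inv[{\geq m}]{F}}
  &= \Pol{\bigcup_{s\geq m}\Inv[s]{F}}
   = \bigcap_{s\geq m}\Pol{\Inv[s]{F}}\\
  &= \bigcap_{s\geq m}\sLoc{\genClone{F}}
   = \Loc{\genClone{F}},
\end{align*}
where the second equality is the union\dash{}to\dash{}intersection property just mentioned, and the third equality applies Theorem~\ref{thm:pol-inv}, which gives $\Pol{\Inv[s]{F}}=\sLoc{\genClone{F}}$ for \emph{every} $s\in\N$. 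Chaining the resulting identity $\Pol{\Inv[{\geq m}]{F}}=\Loc{\genClone{F}}$ with the equality $\Pol{\Inv{F}}=\Loc{\genClone{F}}$ of Corollary~\ref{cor:char-Pol-Inv} then yields all three members of the asserted equation at once.

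The only step needing a word of justification — and the closest thing to an obstacle — is the final equality $\bigcap_{s\geq m}\sLoc{\genClone{F}}=\Loc{\genClone{F}}$. Here I would invoke the descending chain $\sLoc[0]{\genClone{F}}\sups\sLoc[1]{\genClone{F}}\sups\dotsm$ already recorded in Subsection~\ref{subsect:local-closures}: since the finitely many initial terms with $s<m$ are supersets of all later ones, discarding them does not alter the intersection, so $\bigcap_{s\geq m}\sLoc{\genClone{F}}=\bigcap_{s\in\N}\sLoc{\genClone{F}}=\Loc{\genClone{F}}$ by the very definition of $\Loc{}$. I expect no difficulty with degenerate carriers, as Theorem~\ref{thm:pol-inv} is asserted for arbitrary~$\CarrierSet$ (the empty set included); hence the chain of equalities goes through verbatim for every carrier set and every $m\in\N$.
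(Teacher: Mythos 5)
Your proposal is correct and follows essentially the same route as the paper: both split the invariant relations by arity, apply Theorem~\ref{thm:pol-inv} termwise to identify each \m{\Pol{\Inv[s]{F}}} with \m{\sLoc{\genClone{F}}}, and use the descending chain of \nbdd{s}local closures to see that discarding the finitely many terms with \m{s<m} leaves the intersection, hence \m{\Loc{\genClone{F}}}, unchanged. The only cosmetic difference is that you start from \m{\Pol{\Inv[{\geq m}]{F}}} and cite Corollary~\ref{cor:char-Pol-Inv} for the equality \m{\Pol{\Inv{F}}=\Loc{\genClone{F}}}, whereas the paper runs the whole chain in one display starting from \m{\Pol{\Inv{F}}}; the substance is identical.
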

\begin{easyproof}
As a consequence of Theorem~\ref{thm:pol-inv} we have
\begin{multline*}
\Pol{\Inv{F}} = \bigcap_{s\in\N}\Pol{\Inv[s]{F}}
= \bigcap_{s\in\N}\sLoc{\genClone{F}}\\
=\bigcap_{\substack{s\in\N\\s\geq m}}\sLoc{\genClone{F}}
=\bigcap_{\substack{s\in\N\\s\geq m}}\Pol{\Inv[s]{F}} =
\Pol{\Inv[{\geq m}]{F}}
\end{multline*}
for any \m{F\subs\Ops}.
\end{easyproof}
\par

The following observation will be helpful in deriving the original
formulations (without nullary operations) of the previously presented
results.
\begin{lemma}\label{lem:nullary-part-of-non-nullary-ops}
For operations \m{F\subs\Ops\setminus\Ops[0]} of positive arity and every
\m{s\in\N} the equality
\m{\sLoc[s][0]{\genClone{F}} = \Loc[0]{\genClone{F}} = \genClone[0]{F}
=\emptyset} holds.
\end{lemma}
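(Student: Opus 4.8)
The plan is to show that all four displayed sets equal~$\emptyset$ by squeezing the chain $\genClone[0]{F}\subs\Loc[0]{\genClone{F}}\subs\sLoc[s][0]{\genClone{F}}$—obtained by taking nullary parts of the inclusions $\genClone{F}\subs\Loc{\genClone{F}}\subs\sLoc[s]{\genClone{F}}$ displayed in Subsection~\ref{subsect:local-closures}—between a left end and a right end that I shall prove, separately, to be empty.

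First I would verify that the clone generated by~$F$ has no nullary member, that is $\genClone[0]{F}=\emptyset$. Every element of $F\cup\TrivOps$ has positive arity: this holds for~$F$ by hypothesis and for the projections because no nullary projection exists. Moreover, a superposition $\composition{f}{\listen{g}{n}}$ built from operations of positive arity again has positive arity, since its arity is the common arity $m\geq 1$ of the inner operations, and no nullary~$f$ is ever available to start from as long as only positive-arity operations have been produced. A straightforward induction on the generation process (equivalently, the identity $\genClone{F}=\genSclone{F}\cup\TrivOps$ from Lemma~\ref{lem:semiclones+proj=clones}\eqref{item:char-clone-cl=sclone-cl+proj} combined with the same remark for $\genSclone{F}$) therefore gives $\genClone{F}\subs\Ops\setminus\Ops[0]$, whence $\genClone[0]{F}=\emptyset$.

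Next I would show $\sLoc[s][0]{\genClone{F}}=\emptyset$ for each $s\in\N$. By Definition~\ref{def:s-local-closure}, its nullary part consists of the $g\in\Ops[0]$ such that for every $B\subs\CarrierSet[0]$ with $\abs{B}\leq s$ there is some $f\in\Fn[0]{\genClone{F}}=\genClone[0]{F}$ with $g\Restriction_B=f\Restriction_B$. Choosing the empty set $B=\emptyset$, which is always admissible because $\abs{\emptyset}=0\leq s$, makes the interpolation requirement $g\Restriction_\emptyset=f\Restriction_\emptyset$ vacuous, but the clause still demands the existence of a witness $f\in\genClone[0]{F}$. Since $\genClone[0]{F}=\emptyset$ by the previous step, no such $f$ exists, so no $g$ meets the defining condition and the set is empty.

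Putting the two ends together, the inclusion chain forces $\Loc[0]{\genClone{F}}=\emptyset$ as well, and all four equalities follow. The one subtle point—and the step I expect to be most easily overlooked—is the empty-$B$ argument of the third paragraph: one must notice that the vacuous interpolation condition over $B=\emptyset$ nonetheless requires a nullary witness in $\genClone[0]{F}$, so that the emptiness of this set genuinely propagates into the $s$-local closure. (The case $\CarrierSet=\emptyset$ needs no separate treatment, as then $\Ops[0]=\emptyset$ and the whole statement is immediate; the argument above is uniform.)
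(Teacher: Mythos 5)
Your proposal is correct and follows essentially the same route as the paper's own proof: the paper likewise first establishes \m{\genClone{F}\subs\Ops\setminus\Ops[0]} (simply by remarking that \m{\Ops\setminus\Ops[0]} is a clone containing~\m{F}, where you carry out this closure check by induction on the generation process) and then concludes \m{\Loc[0]{\genClone{F}}\subs\sLoc[s][0]{\genClone{F}}=\sLoc{\genClone[0]{F}}=\emptyset}. Your empty-\m{B} observation --- that the vacuous interpolation condition still demands a nullary witness --- is precisely what makes the final equality \m{\sLoc{\emptyset}=\emptyset} hold, so the two arguments coincide in substance.
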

\begin{easyproof}
Since \m{F\subs\Ops\setminus\Ops[0]}, which is a clone, we obtain
\m{\genClone{F}\subs\Ops\setminus\Ops[0]}, \ie\
\m{\genClone[0]{F}=\emptyset} and thus
\m{\Loc[0]{\genClone{F}}\subs\sLoc[s][0]{\genClone{F}}
=\sLoc{\genClone[0]{F}}=\emptyset}.
\end{easyproof}
\par

\begin{corollary}\label{cor:classical-pol-inv}
Let \m{F\subs\Ops\setminus\Ops[0]} be without nullary operations and
\m{s\in\N}, then we have the equalities
\m{\Pol[{>0}]{\Inv{F}}=\Pol[{>0}]{\Inv[{>0}]{F}} = \Loc{\genClone{F}}}
and \m{\Pol[{>0}]{\Inv[s]{F}}=\sLoc{\genClone{F}}}.
\end{corollary}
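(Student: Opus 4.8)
The plan is to transfer the two ``with nullary operations'' characterisations, namely \m{\Loc{\genClone{F}}=\Pol{\Inv{F}}} from Corollary~\ref{cor:char-Pol-Inv} and \m{\sLoc{\genClone{F}}=\Pol{\Inv[s]{F}}} from Theorem~\ref{thm:pol-inv}, to their positive\dash{}arity restrictions, and then to show that for positive\dash{}arity polymorphisms the nullary invariant relations may be discarded without effect.

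First I would record that, since \m{F\subs\Ops\setminus\Ops[0]} and \m{\Ops\setminus\Ops[0]} is itself a clone, the generated clone satisfies \m{\genClone{F}\subs\Ops\setminus\Ops[0]}. Hence Lemma~\ref{lem:nullary-part-of-non-nullary-ops} gives \m{\Loc[0]{\genClone{F}}=\sLoc[s][0]{\genClone{F}}=\emptyset}, so that both \m{\Loc{\genClone{F}}} and \m{\sLoc{\genClone{F}}} contain no nullary operations and therefore coincide with their own positive\dash{}arity parts. Passing to positive\dash{}arity parts in the two equalities quoted above (the left\dash{}hand sides being unaffected by this passage) then yields \m{\Loc{\genClone{F}}=\Pol[{>0}]{\Inv{F}}} and \m{\sLoc{\genClone{F}}=\Pol[{>0}]{\Inv[s]{F}}}, which already settles the second claimed equality and one half of the first chain.

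It then remains to prove \m{\Pol[{>0}]{\Inv{F}}=\Pol[{>0}]{\Inv[{>0}]{F}}}. The inclusion ``\m{\subs}'' is immediate from \m{\Inv[{>0}]{F}\subs\Inv{F}}. For the converse the crucial---and really the only non\dash{}mechanical---observation is that every operation of positive arity preserves every nullary relation: the full nullary relation \m{\CarrierSet[0]} is preserved by all operations, while the empty nullary relation is preserved by any \m{f\in\Ops\setminus\Ops[0]} vacuously, because \m{\emptyset^{\arity\apply{f}}=\emptyset} whenever \m{\arity\apply{f}>0}. Consequently the nullary members of \m{\Inv{F}} impose no constraint on positive\dash{}arity functions, so any \m{f\in\Pol[{>0}]{\Inv[{>0}]{F}}}, which preserves all positive\dash{}arity invariants of~\m{F}, automatically preserves all of \m{\Inv{F}} and hence lies in \m{\Pol[{>0}]{\Inv{F}}}.

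The main obstacle is essentially this last verification about nullary relations; once it is in place, the rest of the argument is pure bookkeeping with positive\dash{}arity parts built on the already established Corollary~\ref{cor:char-Pol-Inv} and Theorem~\ref{thm:pol-inv}.
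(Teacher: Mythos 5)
Your proof is correct, and its first half follows the paper's skeleton: like the paper, you combine Lemma~\ref{lem:nullary-part-of-non-nullary-ops} with Theorem~\ref{thm:pol-inv} and Corollary~\ref{cor:char-Pol-Inv} to conclude that \m{\Loc{\genClone{F}}} and \m{\sLoc{\genClone{F}}} have empty nullary parts and therefore coincide with \m{\Pol[{>0}]{\Inv{F}}} and \m{\Pol[{>0}]{\Inv[s]{F}}}, respectively. You diverge in how the nullary invariants are discarded. The paper cites Corollary~\ref{cor:nullary-rels-not-needed-for-clones} (with \m{m=1}), whose own proof is a purely formal computation built on Theorem~\ref{thm:pol-inv}: \m{\Pol{\Inv{F}} = \bigcap_{s\in\N}\Pol{\Inv[s]{F}} = \bigcap_{s\in\N}\sLoc{\genClone{F}}}, and since the \nbdd{s}local closures form a decreasing chain this intersection is unchanged when restricted to \m{s\geq 1}, giving \m{\Pol{\Inv{F}}=\Pol{\Inv[{\geq 1}]{F}}}. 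You instead prove \m{\Pol[{>0}]{\Inv{F}}=\Pol[{>0}]{\Inv[{>0}]{F}}} directly from the definition of preservation: the only nullary relations are \m{\CarrierSet[0]} and \m{\emptyset}; the former is preserved by every operation, and the latter is preserved by every operation of positive arity~\m{n} vacuously, because \m{\emptyset^{n}=\emptyset} for \m{n>0} (note that this is exactly where nullary operations would fail, which is why the restriction to positive arity is essential). Both routes are sound and of comparable length; your semantic argument is more elementary and self-contained for this step, using nothing beyond Definition~\ref{def:preservation}, while the paper's appeal to Corollary~\ref{cor:nullary-rels-not-needed-for-clones} buys the stronger fact that invariants of any bounded arity below a given \m{m}, not only the nullary ones, may be discarded without affecting the polymorphism set.
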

\begin{easykeepproof}
Combining Lemma~\ref{lem:nullary-part-of-non-nullary-ops} with
Corollary~\ref{cor:nullary-rels-not-needed-for-clones} (for \m{m=1})
yields emptiness of the set
\m{\Pol[0]{\Inv[{>0}]{F}} = \Pol[0]{\Inv{F}} = \Loc[0]{\genClone{F}}}.
Therefore,
\begin{multline*}
\Pol[{>0}]{\Inv{F}} = \Pol{\Inv{F}} = \Loc{\genClone{F}}\\
  = \Pol{\Inv[{>0}]{F}} = \Pol[{>0}]{\Inv[{>0}]{F}}.
\end{multline*}
In a similar way, we may invoke Theorem~\ref{thm:pol-inv} together with
Lemma~\ref{lem:nullary-part-of-non-nullary-ops} to get
\m{\Pol[0]{\Inv[s]{F}} = \sLoc[s][0]{\genClone{F}} = \emptyset}.
Using again Theorem~\ref{thm:pol-inv} we can infer
\m{\Pol[{>0}]{\Inv[s]{F}} = \Pol{\Inv[s]{F}} = \sLoc{\genClone{F}}}.
\end{easykeepproof}

The equalities \m{\Loc{\genClone{F}}=\Pol[{>0}]{\Inv[{>0}]{F}}}
for \m{F\subs\Ops} without nullary operations, as well as
\m{\sLoc{\genClone{F}} = \Pol[{>0}]{\Inv[s]{F}}} whenever \m{s>0},
express two of the main results regarding \m{\PolOp\text{-}\InvOp} that
one finds in~\cite[Theorem~3.2, p.~260]{%
PoeConcreteRepresentationOfAlgebraicStructuresAndGeneralGaloisTheory},
\cite[Theorem~4.1, p.~31]{%
PoeGeneralGaloisTheoryForOperationsAndRelations}, where neither nullary
operations nor nullary relations were considered.
\par

In order to attack the relational side of the \m{\PolOp\text{-}\InvOp}
\name{Galois} correspondence, we need to express generated relational
clones, \ie\ the closure \m{\genRelClone{\,}} of a set of relations under
general superpositions, in terms of generated relation pair clones.
This is prepared in the following lemma.
\par

\begin{lemma}\label{lem:gen-Relclone--gen-Rpclone}
For any set \m{Q\subs\Rels} we have
\begin{equation*}
\biguplus_{m\in\N}\lset{\apply{\rho,\rho}}{\rho\in\genRelClone[m]{Q}}=
\genRpclone{\biguplus_{m\in\N}\lset{\apply{\rho,\rho}}{\rho\in \Fn[m]{Q}}}.
\end{equation*}
\end{lemma}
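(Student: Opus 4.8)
The plan is to establish the asserted equality by proving the two inclusions separately, in each case leaning on the translation lemmas between relational clones and relation pair clones, namely Lemma~\ref{lem:rel-clone-rel-pair-clone} and Lemma~\ref{lem:rel-pair-clone-rel-clone}. These already encode the essential observation that a general superposition of diagonal pairs \m{\apply{\rho_i,\rho_i}} is again a diagonal pair, so that all the componentwise bookkeeping of Definition~\ref{def:gen-comp} is hidden inside them. Throughout I would write \m{P\defeq\biguplus_{m\in\N}\lset{\apply{\rho,\rho}}{\rho\in\Fn[m]{Q}}} for the diagonalisation of~\m{Q}, so that the right\dash hand side of the claim is \m{\genRpclone{P}}, while the left\dash hand side is the diagonalisation of the relational clone \m{\genRelClone{Q}}.

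For the inclusion \m{\sups}, I would first note that \m{\genRelClone{Q}} is a relational clone by definition of the closure operator \m{\genRelClone{\,}}. Applying Lemma~\ref{lem:rel-clone-rel-pair-clone} with \m{\genRelClone{Q}} in place of~\m{Q} then shows that its diagonalisation \m{\biguplus_{m\in\N}\lset{\apply{\rho,\rho}}{\rho\in\genRelClone[m]{Q}}} — that is, the left\dash hand side — is a relation pair clone. Since \m{\Fn[m]{Q}\subs\genRelClone[m]{Q}} holds for every \m{m\in\N}, this relation pair clone contains~\m{P}, and hence it must contain the least relation pair clone above~\m{P}, which is \m{\genRpclone{P}}. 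This yields \m{\genRpclone{P}} contained in the left\dash hand side.

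For the converse inclusion \m{\subs}, I would invoke Lemma~\ref{lem:rel-pair-clone-rel-clone} for the relation pair clone \m{\genRpclone{P}}: the set \m{Q'''\defeq\lset{\rho}{\apply{\rho,\rho}\in\genRpclone{P}}} is then a relational clone. It contains~\m{Q}, because any \m{\rho\in\Fn[m]{Q}} gives \m{\apply{\rho,\rho}\in P\subs\genRpclone{P}}, whence \m{\rho\in Q'''}. Being a relational clone containing~\m{Q}, the set~\m{Q'''} must contain \m{\genRelClone{Q}}, and so every \m{\rho\in\genRelClone{Q}} satisfies \m{\apply{\rho,\rho}\in\genRpclone{P}}; this is precisely the statement that the left\dash hand side is contained in \m{\genRpclone{P}}. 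I do not expect any genuine obstacle: the whole argument is a clean application of the two auxiliary lemmas together with the minimality (universal property) of the operators \m{\genRpclone{\,}} and \m{\genRelClone{\,}}. The only point that needs a little care is keeping the \nbd{m}ary parts \m{\Fn[m]{Q}} and \m{\genRelClone[m]{Q}} properly matched with the arities of the associated diagonal pairs, but this is routine once one observes that both directions collapse to the respective least\dash closure properties.
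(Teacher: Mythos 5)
Your proposal is correct and follows essentially the same route as the paper's own proof: one inclusion via Lemma~\ref{lem:rel-clone-rel-pair-clone} applied to the relational clone \m{\genRelClone{Q}} together with minimality of \m{\genRpclone{\,}}, and the other via Lemma~\ref{lem:rel-pair-clone-rel-clone} (the set \m{Q'''}) together with minimality of \m{\genRelClone{\,}}. No gaps; the arity bookkeeping you flag is indeed routine.
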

\begin{proof}
Since \m{Q\subs\genRelClone{Q}}, the set
\m{P\defeq\biguplus_{m\in\N}\lset{\apply{\rho,\rho}}{\rho\in\Fn[m]{Q}}}
obviously is a subset of
\m{\biguplus_{m\in\N}\lset{\apply{\rho,\rho}}{\rho\in\genRelClone[m]{Q}}}.
By Lemma~\ref{lem:rel-clone-rel-pair-clone}, the latter set is a relation
pair clone, whence it includes \m{\genRpclone{P}}.
\par
Conversely, the set
\m{\lset{\sigma}{\apply{\sigma,\sigma}\in\genRpclone{P}}},
which contains~\m{Q}, is a relational clone by
Lemma~\ref{lem:rel-pair-clone-rel-clone}; hence
\m{\genRelClone{Q}\subs
                   \lset{\sigma}{\apply{\sigma,\sigma}\in\genRpclone{P}}}.
Thus, whenever \m{\rho\in\genRelClone[m]{Q}} for \m{m\in\N}, we find
\m{\apply{\rho,\rho}} in \m{\genRpclone{P}}, which proves ``\m{\subs}''.
\end{proof}
\par

Similarly, we have to relate the \nbdd{s}local closures of sets of
relations (cf.\ page~\pageref{page:sLOC-rel}) and that of sets of relation
pairs.
\par
\begin{lemma}\label{lem:sLOC-vs-sLOC}
For any set \m{Q\subs\Rels} and \m{s\in\Np} we have
\begin{equation*}
\sLOC{\biguplus_{m\in\N}\lset{\apply{\rho,\rho}}{\rho\in\Fn[m]{Q}}}=
\biguplus_{m\in\N}\lset{\apply{\sigma,\sigma}}{\sigma\in\sLOC[s][m]{Q}}.
\end{equation*}
\end{lemma}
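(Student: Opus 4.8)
The plan is to unfold both sides against Definition~\ref{def:s-local-closure} and to exploit that, for a fixed $m$-ary relation~$\sigma$, the inclusion $\rho\subs\sigma$ automatically forces $\rho\in\Fn[m]{Q}$. Write $P\defeq\biguplus_{m\in\N}\lset{\apply{\rho,\rho}}{\rho\in\Fn[m]{Q}}$ for the diagonal set of pairs appearing on the left, so that $\Fn[m]{P}=\lset{\apply{\rho,\rho}}{\rho\in\Fn[m]{Q}}$. The first thing I would record is the auxiliary equivalence that, for every $\sigma\in\Rels[m]$, one has $\sigma\in\sLOC[s][m]{Q}$ if and only if for all $B\subs\sigma$ with $\abs{B}\leq s$ there is some $\rho\in\Fn[m]{Q}$ with $B\subs\rho\subs\sigma$. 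Indeed, the defining quantifier for the relational closure (page~\pageref{page:sLOC-rel}) ranges over all $\rho\in Q$, but any witness satisfying $\rho\subs\sigma\subs\CarrierSet[m]$ already lies in $\powerset{\CarrierSet[m]}$, hence in $\Fn[m]{Q}=Q\cap\powerset{\CarrierSet[m]}$. This single observation reconciles the apparent mismatch between the relational closure, which carries the extra condition $\rho\subs\sigma$, and the relation-pair closure, which does not.

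Granting this auxiliary equivalence, the inclusion ``$\sups$'' is immediate. If $\sigma\in\sLOC[s][m]{Q}$, then for each $B\subs\sigma$ with $\abs{B}\leq s$ the equivalence yields $\rho\in\Fn[m]{Q}$, that is a pair $\apply{\rho,\rho}\in\Fn[m]{P}$, with $B\subs\rho$ and $\rho\subs\sigma$; reading $\sigma'=\sigma$, this is exactly the membership condition defining $\apply{\sigma,\sigma}\in\sLOC[s][m]{P}$.

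For the converse inclusion ``$\subs$'' the main point---and the reason the hypothesis $s\in\Np$ is indispensable---is to show that every pair $\apply{\sigma,\sigma'}\in\sLOC[s][m]{P}$ is in fact diagonal. From pairhood one always has $\sigma'\subs\sigma$; for the reverse I would feed each singleton $B=\set{t}$ with $t\in\sigma$ (admissible precisely because $s\geq 1$, so $\abs{B}=1\leq s$) into the defining condition, obtaining $\rho\in\Fn[m]{Q}$ with $t\in\rho$ and $\rho\subs\sigma'$, whence $t\in\sigma'$ and thus $\sigma\subs\sigma'$. Once $\sigma=\sigma'$ is established, the defining condition collapses to ``$\forall B\subs\sigma,\abs{B}\leq s\ \exists\rho\in\Fn[m]{Q}\colon B\subs\rho\subs\sigma$'', which by the auxiliary equivalence is $\sigma\in\sLOC[s][m]{Q}$, placing $\apply{\sigma,\sigma'}=\apply{\sigma,\sigma}$ in the right-hand side.

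The only remaining bookkeeping concerns arities and the empty relation. Because $\Relps$ is a disjoint union, an $m$-ary pair on the left is matched against the $m$-ary summand on the right, so the arities line up automatically; and the empty relation, which lies in $\powerset{\CarrierSet[m]}$ for every $m$, is absorbed uniformly by the auxiliary equivalence (in particular $\apply{\emptyset,\emptyset}\in\Fn[m]{P}$ iff $\emptyset\in\Fn[m]{Q}$), so no separate treatment of $\sigma=\emptyset$ is required. I expect no genuine obstacle beyond the singleton argument pinning down diagonality; everything else is a direct comparison of the two defining conditions.
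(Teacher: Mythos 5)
Your proposal is correct and takes essentially the same route as the paper's proof: both fix the arity \m{m}, unfold the defining condition of the relation\dash{}pair closure, use \m{s\geq 1} (small subsets/singletons of~\m{\sigma}) to force \m{\sigma=\sigma'}, and then identify the collapsed condition with the relational \nbdd{s}local closure. Your explicitly stated ``auxiliary equivalence'' --- that any witness \m{\rho\subs\sigma\subs\CarrierSet[m]} automatically lies in \m{\Fn[m]{Q}} --- is left implicit in the paper's final equality, but it is the same observation, so this is only a presentational difference.
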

\begin{straightforwardproof}
It is sufficient to prove for fixed \m{m\in\N} that the
\nbdd{m}ary part of the left set,
\m{\sLOC[s][m]{\biguplus_{n\in\N}\lset{\apply{\rho,\rho}}{\rho\in\Fn{Q}}}
= \sLOC{\lset{\apply{\rho,\rho}}{\rho\in\Fn[m]{Q}}}}, coincides with
\m{\lset{\apply{\sigma,\sigma}}{\sigma\in\sLOC[s][m]{Q}}}.
By definition of the \nbdd{s}local closure we have that
\m{\sLOC{\lset{\apply{\rho,\rho}}{\rho\in\Fn[m]{Q}}}}
equals
\begin{equation*}
\lset{\apply{\sigma,\sigma'}\in\Relps[m]}{\forall B\subs\sigma,
\abs{B}\leq s\,\exists \rho\in\Fn[m]{Q}\colon
B\subs\rho\land \rho\subs\sigma'}.
\end{equation*}
Due to \m{s>0} any relation pair \m{\apply{\sigma,\sigma'}} belonging to
the previously displayed set satisfies
\m{\sigma=\bigcup\lset{B\subs\sigma}{\abs{B}\leq s}\subs
\sigma'\subs\sigma}, \ie\ \m{\sigma=\sigma'}.
Therefore, we obtain
\begin{align*}
&\sLOC{\lset{\apply{\rho,\rho}}{\rho\in\Fn[m]{Q}}}\\
&= \lset{\apply{\sigma,\sigma}}{\sigma\in\Rels[m]\land
\forall B\subs\sigma,\abs{B}\leq s\,\exists\rho\in\Fn[m]{Q}\colon
B\subs\rho\subs\sigma'}\\
&=\lset{\apply{\sigma,\sigma}}{\sigma\in\Rels[m]\land
\sigma\in\sLOC{Q}}
\end{align*}
as desired.
\end{straightforwardproof}

We may now characterise the closure operator \m{\Inv{\Pol[{\leq s}]{}}}
in terms of the \nbdd{s}local closure and the generated relational clone.
\begin{theorem}\label{thm:inv-pol}
For any parameter \m{s\in\N} and any set of relations \m{Q\subs\Rels} the
equalities \m{\sLOC{\genRelClone{Q}} = \Inv{\Pol[{\leq s}]{Q}}=
\Inv{\Pol[0,s]{Q}}} hold.
\end{theorem}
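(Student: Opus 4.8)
The plan is to transfer the whole statement to the already-proved relation-pair \name{Galois} correspondence by means of the diagonal embedding \m{\rho\mapsto\apply{\rho,\rho}}. The three ingredients I would rely on are the dictionary between \m{\PolOp\text{-}\InvOp} and \m{\PolpOp\text{-}\InvpOp} in Lemma~\ref{lem:rel-PolInv-PolpInvp}, the translation of generated relational clones into generated relation pair clones in Lemma~\ref{lem:gen-Relclone--gen-Rpclone}, and the relation-pair closure characterisations Theorem~\ref{thm:char-Invp-Polp-leq-s} and Corollary~\ref{cor:char-Invp-Polp-s}. The key observation that makes the argument short is that, after diagonalising, both operators \m{\Invp{\Polp[{\leq s}]{}}} and \m{\Invp{\Polp[{0,s}]{}}} land on the \emph{same} relation pair set, so the two right-hand equalities of the theorem fall out simultaneously.

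Concretely, I would put \m{P\defeq\biguplus_{m\in\N}\lset{\apply{\rho,\rho}}{\rho\in\Fn[m]{Q}}}. By Lemma~\ref{lem:rel-PolInv-PolpInvp} one has \m{\Pol[n]{Q}=\Polp[n]{P}} for every arity~\m{n}, whence \m{\Pol[{\leq s}]{Q}=\Polp[{\leq s}]{P}} and \m{\Pol[{0,s}]{Q}=\Polp[{0,s}]{P}}. Using the invariant half of the same lemma, \m{\Inv[n]{F}=\lset{\rho}{\apply{\rho,\rho}\in\Invp[n]{F}}}, I would rewrite both invariant sets as diagonal preimages: \m{\Inv{\Pol[{\leq s}]{Q}}=\lset{\rho}{\apply{\rho,\rho}\in\Invp{\Polp[{\leq s}]{P}}}} and likewise for \m{\Pol[{0,s}]}. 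Now Theorem~\ref{thm:char-Invp-Polp-leq-s} gives \m{\Invp{\Polp[{\leq s}]{P}}=\sLOC{\genRpclone{P}}}, while Corollary~\ref{cor:char-Invp-Polp-s} gives \m{\Invp{\Polp[{0,s}]{P}}=\sLOC{\genRpclone{P}}}. Since these coincide, the two claimed invariant sets are equal, settling the second equality at once, and both reduce to \m{\lset{\rho}{\apply{\rho,\rho}\in\sLOC{\genRpclone{P}}}}.

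It then remains to match this diagonal preimage with the relation-side closure \m{\sLOC{\genRelClone{Q}}}. By Lemma~\ref{lem:gen-Relclone--gen-Rpclone} the clone \m{\genRpclone{P}} is again purely diagonal, namely \m{\biguplus_{m\in\N}\lset{\apply{\rho,\rho}}{\rho\in\genRelClone[m]{Q}}}. Restricting the relation-pair operator \m{\sLOC{}} to its diagonal members reproduces the relation-side definition verbatim: a pair \m{\apply{\sigma,\sigma}} lies in \m{\sLOC{\genRpclone{P}}} exactly when for every \m{B\subs\sigma} with \m{\abs{B}\leq s} there is \m{\rho\in\genRelClone[m]{Q}} with \m{B\subs\rho} and \m{\rho\subs\sigma}, \ie\ \m{B\subs\rho\subs\sigma}, which is precisely \m{\sigma\in\sLOC{\genRelClone{Q}}}. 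For \m{s\in\Np} this is exactly Lemma~\ref{lem:sLOC-vs-sLOC} applied to \m{\genRelClone{Q}} in place of~\m{Q}, which I would simply quote.

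The one genuinely delicate point is the boundary value \m{s=0}, for which Lemma~\ref{lem:sLOC-vs-sLOC} is unavailable, being stated only for \m{s\in\Np}; its proof really uses singletons \m{\set{x}\subs\sigma} to force \m{\sigma=\sigma'}, and for \m{s=0} the closure \m{\sLOC[0]{\genRpclone{P}}} may well contain non-diagonal pairs. However, the reduction above never needs the \emph{full} equality of Lemma~\ref{lem:sLOC-vs-sLOC}; it only needs the description of the \emph{diagonal} members of \m{\sLOC{\genRpclone{P}}}, and the equivalence displayed in the previous paragraph holds for every \m{s\in\N} uniformly, because on the diagonal the condition ``\m{B\subs\rho\land\rho'\subs\sigma'}'' collapses to ``\m{B\subs\rho\subs\sigma}'' irrespective of~\m{s}. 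For \m{s=0} the only admissible~\m{B} is \m{\emptyset}, so this simply reads ``\m{\exists\rho\in\genRelClone[m]{Q}\colon\rho\subs\sigma}'', agreeing on both sides. I would therefore dispose of \m{s=0} by this direct check, closing the chain \m{\sLOC{\genRelClone{Q}}=\Inv{\Pol[{\leq s}]{Q}}=\Inv{\Pol[{0,s}]{Q}}} for all \m{s\in\N}.
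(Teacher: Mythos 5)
Your proposal is correct, and for \m{s\in\Np} it coincides with the paper's proof: the same diagonal transfer \m{P\defeq\biguplus_{m\in\N}\lset{\apply{\rho,\rho}}{\rho\in\Fn[m]{Q}}}, followed by the same chain through Lemma~\ref{lem:rel-PolInv-PolpInvp}, Theorem~\ref{thm:char-Invp-Polp-leq-s} (with Corollary~\ref{cor:char-Invp-Polp-s} supplying the variant with arities \m{0,s}), Lemma~\ref{lem:gen-Relclone--gen-Rpclone}, and Lemma~\ref{lem:sLOC-vs-sLOC}. Where you genuinely depart from the paper is the boundary case \m{s=0}. The paper declares Lemma~\ref{lem:sLOC-vs-sLOC} inapplicable there and gives a separate manual argument entirely on the relational side: it computes \m{\Inv{\Pol[0]{Q}}} as the set of relations containing the diagonal relation~\m{\mu} of constant tuples satisfying all members of~\m{Q}, proves \m{\mu\in\genRelClone{Q}} by a general superposition along constant index maps, and compares this with \m{\sLOC[0]{\genRelClone{Q}}=\lset{\sigma\in\Rels}{\exists\rho\in\genRelClone{Q}\colon\rho\subs\sigma}}. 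You instead observe that the full strength of Lemma~\ref{lem:sLOC-vs-sLOC} is never needed: by Lemma~\ref{lem:gen-Relclone--gen-Rpclone} every member of \m{\genRpclone{P}} is a diagonal pair, so unfolding Definition~\ref{def:s-local-closure} shows, uniformly for all \m{s\in\N}, that \m{\apply{\sigma,\sigma}\in\sLOC{\genRpclone{P}}} holds if and only if \m{\sigma\in\sLOC{\genRelClone{Q}}}; the non-diagonal pairs that make the cited lemma fail at \m{s=0} (your diagnosis of that failure is accurate) never enter the computation, since the preimage under \m{\sigma\mapsto\apply{\sigma,\sigma}} only sees diagonal members, and for \m{s=0} both conditions degenerate to the existence of some \m{\rho\in\genRelClone{Q}} of the appropriate arity with \m{\rho\subs\sigma}. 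This check is valid, so your proof closes the case \m{s=0} without any extra construction and in fact removes the case distinction from the argument altogether. The trade-off is small but real: the paper's manual route yields along the way an explicit description of \m{\Inv{\Pol[0]{Q}}} via the relation~\m{\mu}, which has some independent interest, whereas your route is shorter and keeps the whole theorem inside the relation-pair machinery.
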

\begin{proof}
Using the previous results we may calculate for \m{Q\subs\Rels} and
\m{s\in\Np}
\begin{align*}
\Inv{\Pol[{\leq s}]{Q}}
&\refeq{lem:rel-PolInv-PolpInvp}
\Inv{\Polp[{\leq s}]{\biguplus_{m\in\N}
\lset{\apply{\rho,\rho}}{\rho\in\Fn[m]{Q}}}}\\
&\refeq{lem:rel-PolInv-PolpInvp}
\lset{\sigma}{\apply{\sigma,\sigma}\in\Invp{\Polp[{\leq s}]{%
\biguplus_{m\in\N}\lset{\apply{\rho,\rho}}{\rho\in\Fn[m]{Q}}}}}\\
&\refeq{thm:char-Invp-Polp-leq-s}
\lset{\sigma}{\apply{\sigma,\sigma}\in\sLOC{\genRpclone{%
\biguplus_{m\in\N}\lset{\apply{\rho,\rho}}{\rho\in\Fn[m]{Q}}}}}\\
&\refeq{lem:gen-Relclone--gen-Rpclone}
\lset{\sigma}{\apply{\sigma,\sigma}\in\sLOC{%
\biguplus_{m\in\N}\lset{\apply{\rho,\rho}}{\rho\in\genRelClone[m]{Q}}}}\\
&\refeq{lem:sLOC-vs-sLOC}
\lset{\sigma}{\apply{\sigma,\sigma}\in\biguplus_{m\in\N}
  \lset{\apply{\rho,\rho}}{\rho\in\sLOC[s][m]{\genRelClone{Q}}}}\\
&\stackrel{\phantom{\text{\ref{lem:sLOC-vs-sLOC}}}}{=}
\sLOC{\genRelClone{Q}}.
\end{align*}
Employing Corollary~\ref{cor:char-Invp-Polp-s} instead of
Theorem~\ref{thm:char-Invp-Polp-leq-s} in the previous calculation, one
may replace the operator \m{\Inv{\Pol[{\leq s}]{}}} by
\m{\Inv{\Pol[{0,s}]{}}}, and \m{\Invp{\Polp[{\leq s}]{}}} by
\m{\Invp{\Polp[{0,s}]{}}}, respectively, in the manipulations above.
\par
Due to inapplicability of Lemma~\ref{lem:sLOC-vs-sLOC} for \m{s=0}, this
case needs a manual proof. Clearly, we have
\m{\sLOC[0]{\genRelClone{Q}}=
\lset{\sigma\in\Rels}{\exists\rho\in\genRelClone{Q}\colon \sigma\sups\rho}}
and
\m{\Inv{\Pol[0]{Q}}
=\Inv{\lset{\cna[0]{a}}{\forall\rho\in Q\colon\apply{a,\dotsc,a}\in\rho}}
=\lset{\sigma\in\Rels}{\sigma\sups\mu}},
in which \m{\mu\defeq\lset{\apply{a,\dotsc,a}}{a\in A\land
\forall \rho\in Q\colon \apply{a,\dotsc,a}\in \rho}}
and \m{\cna[0]{a}} denotes the nullary operation with image \m{\set{a}}.
It is easy to see that \m{\mu\in\genRelClone{Q}}, namely, for
\m{\sigma\in\Rels}, let \m{\functionhead{\beta}{\arity\apply{\sigma}}{1}}
and \m{\functionhead{\alpha_{\rho}}{\arity\apply{\rho}}{1}} for
\m{\rho\in Q} be the unique constant mappings, then
\m{\mu=\genComp{\beta}{\apply{\alpha_{\rho}}_{\rho\in Q}}%
                              \apply{\rho}_{\rho\in Q}\in\genRelClone{Q}}.
This proves the inclusion
\m{\Inv{\Pol[0]{Q}}\subs\sLOC[0]{\genRelClone{Q}}}. The converse is
simple: if \m{\sigma\in\Rels} includes some \m{\rho\in\genRelClone{Q}},
and \m{\cna[0]{a}\in \Pol[0]{Q}=\Pol[0]{\genRelClone{Q}}}, then
\m{\apply{a,\dotsc,a}\in \rho\subs\sigma}. As this holds for all
constants in \m{\Pol[0]{Q}}, we obtain \m{\sigma\in\Inv{\Pol[0]{Q}}}.
\end{proof}

The previous theorem directly entails Theorem~3.20
of~\cite[p.~31]{BehClonesWithNullaryOperations}:
\begin{corollary}\label{cor:char-inv-pol}
For \m{Q\subs\Rels} we have \m{\LOC{\genRelClone{Q}} = \Inv{\Pol{Q}}}.
\end{corollary}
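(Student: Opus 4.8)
The plan is to mirror the derivation of Corollary~\ref{cor:char-Pol-Inv} on the operational side, substituting its relational-side counterparts. First I would unfold the definition of the local closure operator, writing \m{\LOC{\genRelClone{Q}} = \bigcap_{s\in\N}\sLOC{\genRelClone{Q}}}. To each term I would apply Theorem~\ref{thm:inv-pol}, which yields \m{\sLOC{\genRelClone{Q}} = \Inv{\Pol[{\leq s}]{Q}}} for every \m{s\in\N}, so that \m{\LOC{\genRelClone{Q}} = \bigcap_{s\in\N}\Inv{\Pol[{\leq s}]{Q}}}.

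The remaining task is purely formal. Since \m{\Inv{}} is the invariant-side operator of a \name{Galois} connection, it converts arbitrary unions of function sets into intersections of the corresponding relation sets; hence \m{\bigcap_{s\in\N}\Inv{\Pol[{\leq s}]{Q}} = \Inv{\bigcup_{s\in\N}\Pol[{\leq s}]{Q}}}. Finally I would observe that \m{\bigcup_{s\in\N}\Pol[{\leq s}]{Q} = \Pol{Q}}, because \m{\Pol[{\leq s}]{Q}=\biguplus_{0\leq n\leq s}\Pol[n]{Q}} exhausts \m{\Pol{Q}=\biguplus_{n\in\N}\Pol[n]{Q}} as~\m{s} ranges over~\m{\N} (every polymorphism has some finite arity). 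Chaining these equalities gives \m{\LOC{\genRelClone{Q}} = \Inv{\Pol{Q}}}, as claimed.

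I do not anticipate a genuine obstacle: the argument is a short computation once Theorem~\ref{thm:inv-pol} is in hand, and it parallels the proof of Corollary~\ref{cor:char-Pol-Inv} line by line. The only point requiring a moment's care is the interchange of the intersection with~\m{\Inv{}}, but this is nothing more than the standard antitonicity of the \m{\PolOp\text{-}\InvOp} \name{Galois} connection and needs no separate verification.
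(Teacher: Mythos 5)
Your proposal is correct and coincides with the paper's own proof: both unfold \m{\LOC{}} as \m{\bigcap_{s\in\N}\sLOC{}}, apply Theorem~\ref{thm:inv-pol} termwise to get \m{\bigcap_{s\in\N}\Inv{\Pol[{\leq s}]{Q}}}, and then use the standard \name{Galois}-connection interchange \m{\bigcap_{s\in\N}\Inv{\Pol[{\leq s}]{Q}}=\Inv{\bigcup_{s\in\N}\Pol[{\leq s}]{Q}}=\Inv{\Pol{Q}}}. No gaps; the steps you flag as routine are exactly the ones the paper treats as routine.
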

\begin{straightforwardproof}
By definition of the operator \m{\LOC{}} we have for \m{Q\subs\Rels}:
\begin{align*}
\LOC{\genRelClone{Q}} &= \bigcap_{s\in\N}\sLOC{\genRelClone{Q}} \\
&=\bigcap_{s\in\N}\Inv{\Pol[{\leq s}]{Q}}
=\Inv{\bigcup_{s\in\N}\Pol[{\leq s}]{Q}}
=\Inv{\Pol{Q}}.\qedhere
\end{align*}
\end{straightforwardproof}
\par

The following evident observation will be needed for the next corollaries.
\begin{lemma}\label{lem:Pol-emptyset}
We have \m{\Pol{\set{\emptyset}}=\Ops\setminus\Ops[0]} and thereby
\m{\Pol[s]{\set{\emptyset}} =\Ops[s]}
whenever \m{s\in\Np}; therefore,
\m{\Pol[s]{\apply{Q\cup\set{\emptyset}}} = \Pol[s]{Q}}
for any \m{Q\subs\Rels}.
\end{lemma}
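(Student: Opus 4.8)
The plan is to establish the first equality \m{\Pol{\set{\emptyset}}=\Ops\setminus\Ops[0]} directly from the definition of preservation by distinguishing operations according to their arity; the other two equalities then drop out immediately.

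For the inclusion \m{\Ops\setminus\Ops[0]\subs\Pol{\set{\emptyset}}} I would argue that every operation of positive arity preserves the empty relation vacuously. Indeed, for \m{f\in\Ops[n]} with \m{n\geq 1} one has \m{\emptyset^{n}=\emptyset}, so there is no tuple \m{\mathbf{r}\in\emptyset^{n}} at all, and the requirement \m{\composition{f}{\mathbf{r}}\in\emptyset} from Definition~\ref{def:preservation}\eqref{cond:preservation-tupling} is satisfied trivially.

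For the reverse inclusion I would check that no nullary operation preserves \m{\emptyset}. If \m{f\in\Ops[0]} has value \m{a\in\CarrierSet}, then \m{\emptyset^{0}} is \emph{not} empty: it contains precisely the one empty tuple \m{\mathbf{r}}, so the preservation condition is a genuine, non-vacuous demand. Applying \m{f} row-wise to \m{\mathbf{r}} returns the constant tuple \m{\apply{a}_{i<m}}, where \m{m} denotes whatever arity is attributed to the empty relation (respectively the empty tuple when \m{m=0}), and in neither case does this lie in the empty relation. Hence \m{f} does not preserve \m{\emptyset}. This reasoning is insensitive to the arity of \m{\emptyset}, and it also covers \m{\CarrierSet=\emptyset}, where \m{\Ops[0]=\emptyset} and the claimed equality reduces to \m{\Pol{\set{\emptyset}}=\Ops}. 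Combining the two inclusions gives the first equality.

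The two consequences are then formal. Restricting to the \m{s}-ary part for \m{s\in\Np} and noting that discarding the nullary operations leaves \m{\Ops[s]} unchanged yields \m{\Pol[s]{\set{\emptyset}}=\apply{\Ops\setminus\Ops[0]}\cap\Ops[s]=\Ops[s]}. Since preservation of a set of relations is the conjunction of the individual preservation conditions, one finally obtains \m{\Pol[s]{\apply{Q\cup\set{\emptyset}}}=\Pol[s]{Q}\cap\Pol[s]{\set{\emptyset}}=\Pol[s]{Q}\cap\Ops[s]=\Pol[s]{Q}} for every \m{Q\subs\Rels}. There is no real obstacle here; the only point that repays attention is the nullary direction, where one must observe that the empty tuple really is an element of \m{\emptyset^{0}}, so that the preservation test is non-trivial and indeed fails.
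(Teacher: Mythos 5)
Your proof is correct. The paper gives no proof of this lemma at all---it is introduced as an ``evident observation''---so there is nothing to diverge from; your argument is precisely the one being taken for granted, and it coincides with the argument the paper does spell out for the relation-pair analogue, Lemma~\ref{lem:rel-pair-clones-nullary-ops}: operations of positive arity~\m{n} preserve \m{\emptyset} vacuously because \m{\emptyset^{n}=\emptyset}, whereas a nullary operation with value~\m{a} fails because \m{\emptyset^{0}} contains the empty tuple, so the test is non-vacuous and the resulting constant tuple \m{\apply{a,\dotsc,a}} (or the empty tuple, if \m{\emptyset} is read as nullary) cannot lie in \m{\emptyset}. Your attention to the two edge cases---the arity attributed to the empty relation, and the carrier \m{\CarrierSet=\emptyset}, where \m{\Ops[0]=\emptyset} makes the first equality collapse to \m{\Pol{\set{\emptyset}}=\Ops}---is exactly the care this framework with nullary operations requires.
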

\par

Similarly as in Corollary~\ref{cor:char-Invp-Polp-s-wo-null},
for sets of relations comprising the empty relation, nullary polymorphisms
are not required.
\begin{corollary}\label{cor:char-inv-pol-s-wo-null}
Let \m{Q\subs\Rels} and \m{s\in\N}. Then
\m{\sLOC{\genRelClone{Q}}=\Inv{\Pol[s]{Q}}} if (and, provided that \m{s>0},
also only if) \m{\emptyset\in\Inv{\Pol{Q}}} (which is true in particular
if \m{\emptyset\in Q}).
\end{corollary}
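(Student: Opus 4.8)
The plan is to reduce the statement to the identity \m{\sLOC{\genRelClone{Q}}=\Inv{\Pol[{0,s}]{Q}}} already furnished by Theorem~\ref{thm:inv-pol}, and to translate the hypothesis \m{\emptyset\in\Inv{\Pol{Q}}} into the purely functional condition \m{\Pol[0]{Q}=\emptyset}. The bridge is Lemma~\ref{lem:Pol-emptyset}: because \m{\Pol{\set{\emptyset}}=\Ops\setminus\Ops[0]}, an operation preserves a positive-arity empty relation exactly when it is non-nullary. Hence \m{\emptyset\in\Inv{\Pol{Q}}} amounts to saying that every \m{f\in\Pol{Q}} preserves~\m{\emptyset}, \ie\ to \m{\Pol[0]{Q}=\emptyset}. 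I would state this equivalence at the outset, and also note the parenthetical claim: if \m{\emptyset\in Q} then \m{\emptyset\in Q\subs\Inv{\Pol{Q}}}, so the hypothesis holds automatically.

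For the \emph{if} part I would assume \m{\emptyset\in\Inv{\Pol{Q}}}, equivalently \m{\Pol[0]{Q}=\emptyset}. Then \m{\Pol[{0,s}]{Q}=\Pol[0]{Q}\uplus\Pol[s]{Q}=\Pol[s]{Q}}, so Theorem~\ref{thm:inv-pol} gives \m{\sLOC{\genRelClone{Q}}=\Inv{\Pol[{0,s}]{Q}}=\Inv{\Pol[s]{Q}}} directly. This direction is valid for every \m{s\in\N}.

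For the \emph{only if} part I would suppose \m{s>0} together with \m{\sLOC{\genRelClone{Q}}=\Inv{\Pol[s]{Q}}}. The crucial input, again from Lemma~\ref{lem:Pol-emptyset}, is that for \m{s>0} one has \m{\Pol[s]{\set{\emptyset}}=\Ops[s]}, so every \m{s}-ary operation preserves~\m{\emptyset} and thus \m{\emptyset\in\Inv{\Pol[s]{Q}}}. Feeding this into the assumed equality and Theorem~\ref{thm:inv-pol} yields \m{\emptyset\in\Inv{\Pol[s]{Q}}=\sLOC{\genRelClone{Q}}=\Inv{\Pol[{0,s}]{Q}}=\Inv{\Pol[0]{Q}}\cap\Inv{\Pol[s]{Q}}}, so in particular \m{\emptyset\in\Inv{\Pol[0]{Q}}}. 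As no nullary operation preserves a positive-arity empty relation, this forces \m{\Pol[0]{Q}=\emptyset}, which is \m{\emptyset\in\Inv{\Pol{Q}}} by the equivalence above.

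I do not expect a serious obstacle; the only delicate point is the role of \m{s>0} in the converse. It is precisely what makes~\m{\emptyset} a member of \m{\Inv{\Pol[s]{Q}}} that nonetheless detects a nonempty \m{\Pol[0]{Q}}. For \m{s=0} the \m{s}-ary operations are themselves nullary, this witness evaporates, and the \emph{only if} implication fails, which explains the proviso. The whole argument mirrors the relation-pair analogue in Corollary~\ref{cor:char-Invp-Polp-s-wo-null}.
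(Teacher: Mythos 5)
Your proposal is correct and follows essentially the same route as the paper's proof: both directions rest on Lemma~\ref{lem:Pol-emptyset} to translate \(\emptyset\in\Inv{\Pol{Q}}\) into \(\Pol[0]{Q}=\emptyset\), use Theorem~\ref{thm:inv-pol} for the main identity, and in the converse exploit that for \(s>0\) every \(s\)-ary operation preserves \(\emptyset\) to push the empty relation through the assumed equality into \(\Inv{\Pol[0]{Q}}\), forcing \(\Pol[0]{Q}=\emptyset\). The only cosmetic difference is that you chain through \(\Inv{\Pol[{0,s}]{Q}}\) where the paper uses \(\Inv{\Pol[{\leq s}]{Q}}\), both of which Theorem~\ref{thm:inv-pol} provides.
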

\begin{easyproof}
The conditions \m{\emptyset\in\Inv{\Pol{Q}}} and
\m{\Pol{Q}\subs \Pol{\set{\emptyset}} = \Ops\setminus\Ops[0]} (cf.\
Lemma~\ref{lem:Pol-emptyset} above) are equivalent; moreover, the latter
one holds if and only if \m{\Pol[0]{Q}=\emptyset}. Combining this with the
equality from Theorem~\ref{thm:inv-pol} yields
\m{\sLOC{\genRelClone{Q}} = \Inv{\Pol[s]{Q}}}.
Conversely, if we assume this equality and suppose \m{s>0}, which
entails \m{\Pol[s]{Q}\subs\Ops\setminus\Ops[0]}, then via
Theorem~\ref{thm:inv-pol} we get
\begin{align*}
\emptyset\in\Inv{\apply{\Ops\setminus\Ops[0]}}
&\subs\Inv{\Pol[s]{Q}} =\sLOC{\genRelClone{Q}} = \Inv{\Pol[{\leq s}]{Q}}\\
&\subs \Inv{\Pol[0]{Q}}.
\end{align*}
This is equivalent to
\m{\Pol[0]{Q}\subs\Pol{\set{\emptyset}}=\Ops\setminus\Ops[0]}, \ie\
\m{\Pol[0]{Q}=\emptyset}.
\end{easyproof}
\par

\begin{corollary}\label{cor:char-sLOC[Q+emptyset]}
We have
\m{\LOC{\genRelClone{Q\cup\set{\emptyset}}} =\Inv{\Pol[{>0}]{Q}}}
for \m{Q\subs\Rels}. Moreover, for \m{s\in\Np} the equality
\m{\sLOC{\genRelClone{Q\cup\set{\emptyset}}}=\Inv{\Pol[s]{Q}}} holds.
\end{corollary}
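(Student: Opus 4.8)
The plan is to read off both equalities from the two characterisation results already established---Corollary~\ref{cor:char-inv-pol} for the local closure and Theorem~\ref{thm:inv-pol} for the \nbdd{s}local closure---by instantiating them at the enlarged set \m{Q\cup\set{\emptyset}}, and then simplifying the resulting polymorphism sets with Lemma~\ref{lem:Pol-emptyset}. The single fact that carries the whole argument is that adjoining the nullary empty relation~\m{\emptyset} to~\m{Q} removes exactly the nullary polymorphisms, because \m{\Pol{\set{\emptyset}}=\Ops\setminus\Ops[0]}.

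For the first equality I would apply Corollary~\ref{cor:char-inv-pol} to \m{Q\cup\set{\emptyset}}, which gives \m{\LOC{\genRelClone{Q\cup\set{\emptyset}}} = \Inv{\Pol{\apply{Q\cup\set{\emptyset}}}}}. Since a function preserves a union of relations precisely when it preserves each part, we have \m{\Pol{\apply{Q\cup\set{\emptyset}}} = \Pol{Q}\cap\Pol{\set{\emptyset}}}, and Lemma~\ref{lem:Pol-emptyset} rewrites the second factor as \m{\Ops\setminus\Ops[0]}. Hence \m{\Pol{\apply{Q\cup\set{\emptyset}}} = \Pol{Q}\cap\apply{\Ops\setminus\Ops[0]} = \Pol[{>0}]{Q}}, and the claimed identity \m{\LOC{\genRelClone{Q\cup\set{\emptyset}}} = \Inv{\Pol[{>0}]{Q}}} follows immediately.

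For the second equality, fix \m{s\in\Np} and apply Theorem~\ref{thm:inv-pol} to \m{Q\cup\set{\emptyset}} in the form \m{\sLOC{\genRelClone{Q\cup\set{\emptyset}}} = \Inv{\Pol[{0,s}]{\apply{Q\cup\set{\emptyset}}}}}. It then remains to compute \m{\Pol[{0,s}]{\apply{Q\cup\set{\emptyset}}} = \Pol[0]{\apply{Q\cup\set{\emptyset}}}\uplus\Pol[s]{\apply{Q\cup\set{\emptyset}}}}. The nullary summand is empty, as no nullary operation preserves~\m{\emptyset}, i.e.\ \m{\Pol[0]{\set{\emptyset}}=\emptyset} again by Lemma~\ref{lem:Pol-emptyset}; and the \nbdd{s}ary summand equals \m{\Pol[s]{Q}} directly by the same lemma. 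Thus \m{\Pol[{0,s}]{\apply{Q\cup\set{\emptyset}}} = \Pol[s]{Q}}, so \m{\sLOC{\genRelClone{Q\cup\set{\emptyset}}} = \Inv{\Pol[s]{Q}}}, as wanted.

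There is no genuine obstacle here; the argument is entirely the slogan that preserving the nullary empty relation is the same as having positive arity. The only point to watch is the hypothesis \m{s\in\Np} in the second statement: for \m{s=0} the equality \m{\Pol[s]{\apply{Q\cup\set{\emptyset}}}=\Pol[s]{Q}} breaks down, since a nullary operation may preserve~\m{Q} yet fail to preserve~\m{\emptyset}. This is precisely why the local-closure statement is paired with \m{\Pol[{>0}]{}} rather than \m{\Pol[0]{}}, and why the \nbdd{s}local statement is restricted to positive~\m{s}.
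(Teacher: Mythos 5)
Your proposal is correct and takes essentially the same route as the paper: the first equality is argued exactly as there, via Corollary~\ref{cor:char-inv-pol} applied to \m{Q\cup\set{\emptyset}} together with \m{\Pol{\apply{Q\cup\set{\emptyset}}}=\Pol{Q}\cap\apply{\Ops\setminus\Ops[0]}=\Pol[{>0}]{Q}} from Lemma~\ref{lem:Pol-emptyset}. For the second equality the paper cites Corollary~\ref{cor:char-inv-pol-s-wo-null} combined with Lemma~\ref{lem:Pol-emptyset}, whereas you inline that corollary's content by applying Theorem~\ref{thm:inv-pol} to \m{Q\cup\set{\emptyset}} and noting \m{\Pol[0]{\apply{Q\cup\set{\emptyset}}}=\emptyset}; this is the same underlying argument, so no substantive difference remains.
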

\begin{easyproof}
We have
\m{\Pol{\apply{Q\cup\set{\emptyset}}}
= \Pol{Q}\cap \apply{\Ops\setminus\Ops[0]}= \Pol[{>0}]{Q}},
applying Lemma~\ref{lem:Pol-emptyset}; thus
\m{\LOC{\genRelClone{Q\cup\set{\emptyset}}}
= \Inv{\Pol{\apply{Q\cup\set{\emptyset}}}} = \Inv{\Pol[{>0}]{Q}}} by
Corollary~\ref{cor:char-inv-pol}.
Combining for \m{s\in\Np} the statements of
Corollary~\ref{cor:char-inv-pol-s-wo-null} and Lemma~\ref{lem:Pol-emptyset}
yields the remaining claim.
\end{easyproof}
\par

Restricting the statement of Corollary~\ref{cor:char-sLOC[Q+emptyset]} to
sets \m{Q\subs\Rels\setminus\Rels[0]} and intersecting the equalities on
both sides with \m{\Rels\setminus\Rels[0]} yields the characterisations
\m{\LOC{\apply{\genRelClone[{>0}]{Q\cup\set{\emptyset}}}}
   = \Inv[{> 0}]{\Pol[{>0}]{Q}}}
and, for positive parameters~\m{s},
\m{\sLOC{\apply{\genRelClone[{>0}]{Q\cup\set{\emptyset}}}}
    =\Inv[{>0}]{\Pol[s]{Q}}}.
The closure \m{\genRelClone[{>0}]{Q\cup\set{\emptyset}}} describes the
appropriate notion of generated relational clone (as employed \eg\
in~\cite{PoeGeneralGaloisTheoryForOperationsAndRelations}) if one does
neither consider nullary operations nor relations in connection with
\m{\PolOp\text{-}\InvOp}. With the two stated equalities we have
therefore established the two main results (see Theorem~4.2, p.~32, and
Theorem~3.3, p.~260, respectively)
of~\cite{PoeGeneralGaloisTheoryForOperationsAndRelations,%
PoeConcreteRepresentationOfAlgebraicStructuresAndGeneralGaloisTheory}
regarding the relational side of the mentioned \name{Galois} connection.
\par

\section{Possible applications}\label{sect:applications}

In the literature the \m{\PolOp\text{-}\InvOp} \name{Galois} connection
has been very successfully employed to discover the structure of the
lattice of all clones (\eg~\cite{%
RosenbergMaximalClones,%
JanovMucnik1959,
ZhukCardinalitySetOfClonesContainingMinimalCloneOnThreeElements,
ZhukSelfDualFunctionsInThreeValuedLogic}),
but it is also fundamentally involved in investigating other problems in
algebra and theoretical computer science (\cite{%
BulatovCSPDichotomyOn3,
BartoKozikAbsorbingSubalgebrasCyclicTermsCSP,
BartoDichotomyForConservativeCSP,
BodirskyPinskerPongraczReconstructingTopologyOfClones,
VargasCRelsCclones}).
It is to be expected that the theory developed within this article will
find similar applications \wrt\ semiclones in the future, especially
regarding infinite carrier sets.
\par

In this connection we briefly outline one possible idea, picking up again
the topic of topologically closed (proper) transformation semigroups from
the previous section.
According to Proposition~\ref{prop:char-loc-transf-sg}, for any set
\m{Q\subs\Relps} of relation pairs, the set of all locally
closed transformation semigroups \m{S\subs\Ops[1]} lying properly below
\m{\Polp[1]{Q}} can be described as all those sets \m{S=\Polp[1]{\Sigma}}
(\m{\Sigma\subs\Relps}) satisfying
\m{\Polp[1]{\Sigma}\subsetneq\Polp[1]{Q}}.
If~\m{S} is a maximal member of this collection with respect to
inclusion, then \m{Q\subs\Invp{\Polp[1]{Q}}\subsetneq \Invp{S}}.
One may take any pair
\m{\apply{\rho,\rho'}\in\Invp{S}\setminus\Invp{\Polp[1]{Q}}} and obtains
that
\m{S=\Polp[1]{\Invp{S}}
     \subs\Polp[1]{\apply{Q\cup\set{\apply{\rho,\rho'}}}}
     \subsetneq\Polp[1]{Q}}, which by maximality of~\m{S} entails that
\m{S=\Polp[1]{\apply{Q\cup\set{\apply{\rho,\rho'}}}}}.
In case that \m{\Polp[1]{Q}} is a monoid, \ie\ \m{Q} contains only
relation pairs with identical components and \m{\Polp{Q}} is a real clone,
then one may also be interested in the maximal locally closed proper
transformation semigroups below it. This additional requirement enforces
that the pair \m{\apply{\rho,\rho'}} one had to add above even has to be a
proper relation pair, \ie\ \m{\rho'\subsetneq \rho}.
\par

In a similar way all maximal locally closed (or \nbdd{s}locally closed)
(possibly proper) semiclones (or \nbdd{s}locally closed transformation
semigroups) below one specific structure of the respective sort can be
described by preserving one additional relation pair. It is plausible that
for certain sets~\m{Q} a complete characterisation in analogy
to~\cite{RosenbergMaximalClones} can be attempted. Furthermore, on infinite
carrier sets, the machinery developed in this paper can also be useful to
reveal counterexamples, \eg\ structures having no maximal proper (locally
or \nbdd{s}locally closed) substructures below them.
It is, for example, not hard to prove for \m{Q=\emptyset} that proper
semigroups of the form \m{\Polp[1]{\set{\apply{\rho,\rho'}}}} with
\m{\rho'\subsetneq\rho\subs\CarrierSet} can never be maximal among all
locally closed proper transformation semigroups on any at least
two\dash{}element set~\m{\CarrierSet}.
\par

The author is, moreover, confident that a generalisation of the presented
theory to categories with finite powers is possible along the lines
of~\cite{KerkhoffGeneralGaloisTheoryFunRelInCats}, where a similar project
has been realised for clones and the \m{\PolOp{\text{-}}\InvOp}
\name{Galois} connection (at the same time dualising the involved notions,
which is not in our focus).
Most of our results do not impose any restrictions on the carrier set,
\ie\ the particular object of the category of sets the \name{Galois}
theory is based on. Therefore, the main theorems of this article could be a
guideline and used to hint at what form of results to expect in the general
setting.
Once such a generalisation has been established, the corresponding results
can be instantiated in any category of interest, as long as it has finite
powers, for instance, in that of topological spaces. In this way, it may be
possible to perform similar investigations as sketched above also for
transformation semigroups consisting of continuous functions.

\input{semiclones.bibl}

\end{document}